\definecolor{mygreen}{rgb}{0,0.7,0.3}
\definecolor{myblue}{rgb}{0,0.50,1.20}%cyan={0,1.74,2.39}
\definecolor{orange}{rgb}{2.55,1.65,0}
\definecolor{olive}{rgb}{0.65, 0.75, 0.2}
\definecolor{fillred}{rgb}{1,0.9,0.9}
\definecolor{fillgreen}{rgb}{0.9,1,0.9}
\definecolor{refkey}{rgb}{0,0.7,0.3}
\definecolor{labelkey}{rgb}{1,0,0}
\pgfplotsset{compat=1.17}
\crefname{thm}{Theorem}{Theorems}
\crefname{cor}{Corollary}{Corollaries}
\crefname{lem}{Lemma}{Lemmas}
\crefname{prop}{Proposition}{Propositions}
\crefname{dfn}{Definition}{Definitions}
\crefname{ex}{Example}{Examples}
\crefname{claim}{Claim}{Claims}
\crefname{conj}{Conjecture}{Conjectures}
\crefname{conv}{Notation}{Notations}
\crefname{rem}{Remark}{Remarks}
\crefname{prob}{Problem}{Problems}
\crefname{figure}{Figure}{Figures}
\crefname{table}{Table}{Tables}
\crefname{section}{Section}{Sections}
\crefname{subsection}{Section}{Sections}
\crefname{appendix}{Appendix}{Appendices}
\crefname{lemdef}{Lemma-Definition}{Lemma-Definitions}
\crefname{conv}{Convention}{Conventions}
\crefname{introthm}{Theorem}{Theorems}
\crefname{introcor}{Corollary}{Corollaries}
\crefname{introconj}{Conjecture}{Conjectures}
\newtheorem{thm}{Theorem}
\newtheorem{lem}[thm]{Lemma}
\newtheorem{prop}[thm]{Proposition}
\newtheorem{cor}[thm]{Corollary}
\theoremstyle{definition}
\newtheorem{dfn}[thm]{Definition}
\newtheorem{rem}[thm]{Remark}
\newtheorem{ex}[thm]{Example}
\numberwithin{figure}{section}
\numberwithin{equation}{section}
\numberwithin{thm}{section}
\newcommand{\bZ}{\mathbb{Z}}
\newcommand{\bR}{\mathbb{R}}
\newcommand{\bL}{\mathbb{L}}
\newcommand{\bT}{\mathbb{T}}%--------changed
\newcommand{\bM}{M} %-------changed
\newcommand{\bP}{\mathbb{P}}
\newcommand{\bW}{\mathbb{W}}
\newcommand{\bZP}{\mathbb{Z}_q\mathbb{P}}
\newcommand{\bp}{\mathbf{p}}
\newcommand{\sfs}{\mathsf{s}}
\newcommand{\sfC}{\mathsf{C}}
\newcommand{\uf}{\mathrm{uf}}
\newcommand{\f}{\mathrm{f}}
\newcommand{\spe}{M}
\newcommand{\punc}{{\bM_{\circ}}}%-----temp
\newcommand{\tri}{\triangle}
\newcommand{\ve}{\varepsilon}
\newcommand{\A}{\mathcal{A}}%----------shortened
\newcommand{\cF}{\mathcal{F}}
\newcommand{\cI}{\mathcal{I}}
\newcommand{\cL}{\mathcal{L}}
\newcommand{\cO}{\mathcal{O}}
\newcommand{\cR}{\mathcal{R}}
\newcommand{\cT}{\mathcal{T}}
\newcommand{\X}{\mathcal{X}}%--------------shortened
\newcommand{\cX}{\mathcal{X}}
\newcommand{\scS}{\mathscr{S}}
\newcommand{\SK}[1]{\mathscr{S}^{A}_{#1}}%------------temporary
\newcommand{\Skein}[1]{\mathscr{S}_{#1}^q}
\newcommand{\Tri}{\mathrm{Tri}}
\newcommand{\Bweb}[1]{\mathsf{BWeb}_{#1}}%------------temporary
\newcommand{\Teich}{Teichm\"uller}
\newcommand{\CA}{\mathscr{A}}%------------temporary
\DeclareMathOperator{\Spec}{\mathrm{Spec}}
\DeclareMathOperator{\Frac}{\mathrm{Frac}}
\newcommand{\oset}[3][0ex]{%
  \mathrel{\mathop{#3}\limits^{
    \vbox to#1{\kern-2\ex@
    \hbox{$\scriptstyle#2$}\vss}}}}
\newcommand{\osetnear}[3][0ex]{%
  \mathrel{\mathop{#3}\limits^{
    \vbox to#1{\kern-.3\ex@
    \hbox{$\scriptstyle#2$}\vss}}}}
\newcommand{\overbarnear}[1]{\osetnear{#1}{-\!\!\!-\!\!\!-}}
\newcommand\qarrow[2]{\draw[->,shorten >=2pt,shorten <=2pt] (#1) -- (#2) [thick];} %arrow
\def\centerarc(#1)(#2:#3:#4)% Syntax: [draw options] (center) (initial angle:final angle:radius)
\tikzset{
  % style to add an arrow in the middle of a path
  mid arrow/.style={postaction={decorate,decoration={
        markings,
        mark=at position .5 with {\arrow[#1]{stealth}}
      }}},
}
\tikzset{->-/.style 2 args={
	postaction={decorate},
	decoration={markings, mark=at position #1 with {\arrow[thick, #2]{>}}} 
    },
    ->-/.default={0.5}{}
}%refer arrow tips
\tikzset{-<-/.style 2 args={
	postaction={decorate},
	decoration={markings, mark=at position #1 with {\arrow[thick, #2]{<}}} 
    },
    -<-/.default={0.5}{}
}
\tikzset{->>-/.style 2 args={
	postaction={decorate},
	decoration={markings, mark=at position #1 with {\arrow[thick, #2]{>>}}} 
    },
    ->-/.default={0.5}{}
}%refer arrow tips
\tikzset{-<<-/.style 2 args={
	postaction={decorate},
	decoration={markings, mark=at position #1 with {\arrow[thick, #2]{<<}}} 
    },
    -<-/.default={0.5}{}
}
\tikzset{-/-/.style 2 args={
	postaction={decorate},
	decoration={markings, mark=at position #1 with {\arrow[thick,rotate=-30, #2]{|}}} 
    },
    -<-/.default={0.5}{}
}
\tikzset{
	overarc/.style={
		white, double=red, double distance=1.6pt, line width=2.4pt
	}
}
\tikzset{
	webline/.style={
		red, ultra thick
	}
}
\tikzset{
	wline/.style={
		olive, line width=2.4pt
	}
}
\newcommand{\bdryline}[3]{
    \coordinate (temp1) at #1;
    \coordinate (temp2) at #2;
    \coordinate (temp11) at ($(temp1)!#3!-90:(temp2)$);
    \coordinate (temp22) at ($(temp2)!#3!90:(temp1)$);
    \filldraw[gray!30] (temp1) -- (temp11) -- (temp22) -- (temp2) --cycle;
    \draw[very thick] (temp1) -- (temp2);
}
\title[Skein and cluster algebras with coefficients for unpunctured surfaces]{Skein and cluster algebras with coefficients for unpunctured surfaces}
\author{Tsukasa Ishibashi}
\address{Tsukasa Ishibashi, Mathematical Institute, Tohoku University, 
6-3 Aoba, Aramaki, Aoba-ku, Sendai, Miyagi 980-8578, Japan.}
\email{tsukasa.ishibashi.a6@tohoku.ac.jp}
\urladdr{https://sites.google.com/view/tsukasa-ishibashi/home} 
\author{Shunsuke Kano}
\address{Shunsuke Kano, Mathematical Science Center for Co-creative Society, Tohoku University, 
468-1 Aoba, Aramaki, Aoba-ku, Sendai, Miyagi 980-0845, Japan.}
\email{s.kano@tohoku.ac.jp}
\urladdr{https://sites.google.com/view/shunsuke-kano} 
\author[Wataru Yuasa]{Wataru Yuasa}
\address{Wataru Yuasa, Department of Mathematics, Kyoto University,
Kitashirakawa Oiwake-cho, Sakyo-ku, Kyoto 606-8502, Japan.}
\email{yuasa.wataru.6m@kyoto-u.ac.jp}
\urladdr{https://wataruyuasa.github.io/math/} 
\subjclass[2020]{13F60, 57K31 (Primary), 57K20 (Secondary)}
\keywords{Cluster algebra; Skein algebra; Lamination; Quasi-homomorphism}
\begin{document}

%---------- abstract ----------
\begin{abstract}
We propose a skein model for the quantum cluster algebras of surface type with coefficients. We introduce a skein algebra $\mathscr{S}_{\Sigma,\mathbb{W}}^{A}$ of a \emph{walled surface} $(\Sigma,\mathbb{W})$, and prove that it has a quantum cluster structure. 
The walled surfaces naturally generalize the marked surfaces with multi-laminations, which have been used to describe the quantum cluster algebras of geometric type for marked surfaces by Fomin--Thurston \cite{FT18}.
%In the case where the walls with the same label have no crossings, the quantum cluster structure is normalized and corresponds to the (quantum) cluster algebras of geometric type described by integral laminations \cite{FT18}. 
Moreover, we give skein theoretic interpretation for some of quasi-homomorphisms \cite{Fra16} between these quantum cluster algebras.
%For a marked surface $\Sigma$ equipped with an integral lamination $\mathbb{L}$, we introduce a skein algebra $\mathscr{S}^q(\Sigma_\mathbb{L})$ which realizes the cluster algebra $\mathscr{A}_{\Sigma}[\mathbb{L}^\pm]$ with geometric coefficients determined by $\mathbb{L}$ when $q=1$. 
%Moreover, we give a formula for the cluster expansion of an element of $\mathscr{S}^1(\Sigma_\mathbb{L})$ in the initial cluster as a generalization of the matrix formula given by Musiker--Schiffler--Williams.
\end{abstract}
\maketitle

\setcounter{tocdepth}{1}
\tableofcontents

%----------Body text----------

\section{Introduction}\label{sec:intro}

% \begin{align*}
%     A_2 A'_2 = z_2^+ A_5 A_7 + z_2^- A_1 A_6.
% \end{align*}
% Relation to laminated \Teich\ spaces and Iwaki--Nakanishi WKB analysis.

\subsection{Skein realization of the quantum cluster algebra of marked surfaces}
For a compact oriented surface $\Sigma$ equipped with a finite number of marked points (called a \emph{marked surface}), we may associate several algebras of interest. One such algebra is the \emph{cluster algebra} introduced by Fomin--Zelevinsky \cite{FZ-CA1}, which is a commutative algebra equipped with distinguished generators called the \emph{cluster variables}. There is a cluster algebra $\CA_\Sigma$ defined from topological data of the marked surface $\Sigma$. There are also many instances of cluster algebras arising from various contexts including topology, representation theory, integrable system, mirror symmetry and so on, yielding fruitful connections among these areas. 
When $\Sigma$ has no interior marked points (called \emph{punctures}), we can further quantize $\CA_\Sigma$ to a \emph{quantum cluster algebra} $\CA_\Sigma^q$ by means of Berenstein--Zelevinsky \cite{BZ}, which is a non-commutative deformation of $\CA_\Sigma$ by a quantum parameter $q$. Since the cluster algebra $\CA_\Sigma$ describes the combinatorial structure of the decorated \Teich\ space of Penner \cite{Penner} and the moduli space of decorated twisted $SL_2$-local systems on $\Sigma$ \cite{FG06}, the non-commutative algebra $\CA_\Sigma^q$ is expected to encode the quantum geometry of these spaces. 
%Also associated is a \emph{quantum upper cluster algebra} $\UCA_\Sigma^q$, which includes $\CA_\Sigma^q$ as a subalgebra and more geometrically introduced as a non-commutative deformation of the function algebra of the \emph{cluster $\A$-variety} \cite{FG09}. 

Another algebra associated with a topological surface $\Sigma$ is the \emph{Kauffman bracket skein algebra} of $\Sigma\times [0,1]$.
Przytycki~\cite{PrzytyckiSkein} and Turaev~\cite{TuraevSkein} introduce a Kauffman bracket skein module for an oriented $3$-manifold $M$, which consists of links in $M$ with skein relations (\cref{def:KBSR}).
Its relation to the $SL_2$-character variety of the fundamental group of the $3$-manifold is discovered by Bullock~\cite{Bul97,Bul98}.
In the case where $M=\Sigma\times [0,1]$, it has a natural algebra structure and also gives a deformation quantization of the $SL_2$-character variety, namely the moduli space of $SL_2$-local systems on $\Sigma$ (see, for example, Turaev~\cite{Turaev}). 
%Another algebra associated with a topological surface $\Sigma$ is the \emph{Kauffman bracket skein algebra} \cite{}, introduced also aiming at quantizing the moduli space of $SL_2$-local systems (or the \emph{$SL_2$-character variety}), and encoding the quantum $\mathfrak{sl}_2$-invariants of tangles in $\Sigma \times [0,1]$. 
Muller \cite{Muller16} introduced a variant $\mathscr{S}^q_\Sigma$ of the Kauffman bracket skein algebra for a marked surface $\Sigma$ without punctures by assigning an appropriate \emph{boundary skein relations} at marked points. He obtained the following comparison result of this algebra with the quantum cluster algebra $\CA_\Sigma^q$:

\begin{thm}[{\cite{Muller16}}]\label{thm:Muller}
For any (triangulable) marked surface $\Sigma$ without punctures, we have $\mathscr{A}_\Sigma^q \subset \mathscr{S}^q_\Sigma$
in $\mathop{\mathrm{Frac}} \mathscr{S}^q_\Sigma$. 
Moreover, if $\Sigma$ has at least two marked points, we have 
\begin{align*} 
    \mathscr{A}_\Sigma^q= \mathscr{S}^q_\Sigma[\partial^{-1}].
\end{align*}
Here the right-hand side is the localized skein algebra along the boundary arcs, corresponding to the frozen variables.
\end{thm}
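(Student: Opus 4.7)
The plan is to fix an ideal triangulation $\Delta$ of $\Sigma$, interpret its arcs as elements of $\mathscr{S}^q_\Sigma$, and match the resulting structure with the initial seed of $\mathscr{A}_\Sigma^q$. The theorem then reduces to three claims: (i) the arcs of $\Delta$ form a quasi-commuting family in $\mathscr{S}^q_\Sigma$ whose commutation matrix matches the one determined by the quiver of $\Delta$; (ii) a flip of $\Delta$ at an interior arc $\alpha$ realizes a quantum cluster exchange relation in $\mathscr{S}^q_\Sigma$; and (iii) after inverting the boundary arcs, every element of $\mathscr{S}^q_\Sigma$ lies in $\mathscr{A}_\Sigma^q$.

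For (i), I would compute the quasi-commutation between two arcs by resolving them near each shared marked point using the boundary skein relations. The resulting $q$-exponent depends only on the cyclic order of the arcs at each endpoint, and a case-by-case verification matches it with the compatibility pair defining the quantum structure on $\mathscr{A}_\Sigma^q$. For (ii), choose a quadrilateral neighborhood of $\alpha$ with cyclic sides $\beta_1, \beta_2, \beta_3, \beta_4$ and flipped diagonal $\alpha^{\ast}$. Applying the Kauffman bracket skein relation at the unique transverse crossing of $\alpha$ and $\alpha^{\ast}$, and then reducing each resulting boundary bigon using the boundary skein relations, yields an identity of the form
\[
\alpha \cdot \alpha^{\ast} = q^{a}\, \beta_1 \beta_3 + q^{-a}\, \beta_2 \beta_4
\]
for an appropriate exponent $a$, which is precisely the quantum exchange relation at $\alpha$. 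Combined with (i), this shows by induction on mutation distance that every cluster variable is realized as a skein element, yielding $\mathscr{A}_\Sigma^q \subset \mathscr{S}^q_\Sigma$ inside their common fraction field.

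The harder inclusion is (iii). I would use that $\mathscr{S}^q_\Sigma$ is spanned by classes of simple multicurves and argue that any simple multicurve $C$ can be written as a Laurent polynomial in the arcs of $\Delta$ whose denominators are monomials in the boundary arcs. The argument proceeds by induction on the geometric intersection number $|C \cap \Delta|$. When $|C \cap \Delta|=0$, the components of $C$ are either arcs of $\Delta$ or peripheral loops in a triangle; the latter are expanded using the skein relation inside a triangle of $\Delta$. When $|C \cap \Delta|>0$, pick an intersection of $C$ with an arc $\delta \in \Delta$, apply the Kauffman bracket relation to split $C$ as a combination of simpler multicurves, and then clear the new endpoints landing on $\delta$ by multiplying with a suitable power of $\delta$ and applying the boundary skein relation, which only introduces denominators supported on boundary arcs; each resulting term has strictly smaller intersection number with $\Delta$.

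The main obstacle is the termination and the denominator control of the resolution in (iii): one must find a complexity measure, for instance the lexicographic pair of total intersection number and number of boundary bigons, that strictly decreases under each skein move, and one must verify that no interior arcs appear in the denominators introduced by the boundary skein relations. The hypothesis that $\Sigma$ has at least two marked points is exactly what guarantees the existence of a triangulation with enough boundary arcs for the procedure to succeed; with fewer marked points, one runs out of frozen variables to absorb endpoints and the equality can genuinely fail.
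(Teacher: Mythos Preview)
The statement you are asked to prove is quoted in this paper as a result of Muller; the paper itself proves only the walled generalization, \cref{thm:wskein_cluster}. Comparing your proposal to that proof (which specializes to the unwalled case), your steps (i) and (ii) match the paper's treatment: the compatibility matrix $\Pi^\tri$ is read off from the cyclic order of arc-ends at each marked point, and the quantum exchange relation is exactly the Kauffman resolution of $\kappa\kappa'$ in a quadrilateral (\cref{lem:Muller}, \cref{lem:mut_in_skein}, and for the walled version \cref{eq:polygon}). This gives $\mathscr{A}_\Sigma^q\subset\mathscr{S}_\Sigma^q$.

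Your step (iii), however, diverges from the paper and has a gap. The paper does \emph{not} induct on $|C\cap\tri|$; instead it uses a direct ``sticking trick'' \eqref{rel:stick}: for any boundary interval $E$, the relation
\[
\text{(curve passing near $E$)} \;=\; qA_E^{-1}(\text{arc with both ends on }\partial E)\;-\;q^2A_E^{-1}(\text{two arcs})
\]
lets one attach a loop to the marked points bounding $E$ at the cost of a single factor $A_E^{-1}$. With two distinct boundary intervals $E_1,E_2$ (guaranteed by $|M|\ge2$), applying this twice turns any simple loop into a $\bZ_q$-polynomial in ideal arcs with only $A_{E_1}^{-1},A_{E_2}^{-1}$ as denominators (\cref{fig:loop-exp}). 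Since ideal arcs are cluster variables, this lands in $\mathscr{A}_\Sigma^q$ immediately, and the role of the two-marked-point hypothesis is transparent.

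By contrast, your inductive step is not well-posed as written. You propose to resolve $C$ at a crossing with some $\delta\in\tri$, but $C$ is simple, so there is no crossing of $C$ to resolve; you must mean resolving the product $\delta\cdot C$. Doing so yields $\delta\cdot C=\sum q^{\bullet}C_i$, hence $C=\delta^{-1}\sum q^{\bullet}C_i$. If $\delta$ is an interior arc this introduces an \emph{unfrozen} inverse, which is not an element of $\mathscr{A}_\Sigma^q$, so your claim that ``only boundary-arc denominators appear'' is unjustified. Moreover, the smoothings $C_i$ have no ``endpoints landing on $\delta$'' to clear, and they need not have smaller total intersection with $\tri$ (resolving against one arc can increase intersection with the others), so your proposed complexity does not obviously decrease. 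The paper's sticking trick sidesteps all of this by only ever inverting boundary arcs and by reducing the problem for loops to that for arcs in a single step rather than an induction.
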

This result tells us that the two ``quantization'' approaches of the moduli space of $SL_2$-local systems agree with each other. Moreover, the topological/diagrammatic manipulations in the skein algebra make it much easier to compute Laurent expressions of an element of the quantum cluster algebra in a given cluster, and it leads to a natural construction of a linear basis \cite{Muller16} and quantum duality map \cite{Thu14,MQ23}. 

\subsection{Quantum cluster algebras with coefficients and skein algebras of walled surfaces}\label{subsec:intro_wskein_cluster}
In general, one may introduce \emph{coefficients} for a cluster algebra \cite{FZ-CA4}. A cluster algebra with coefficients in a semifield $\bP$ is an algebra over the group ring $\bZ\bP$, where the exchange relations among the cluster variables involve coefficients in $\bP$. 
%, defined from a seed $(\ve,\mathbf{A},\mathbf{p})$, where $\ve$ is a certain $n\times m$-matrix called the \emph{exchange matrix}, $\mathbf{A}=(A_i)$ is an $m$-tuple of independent elements (cluster variables), and $\mathbf{p}=(p_i^{\pm})$ is a $2n$-tuple of elements of $\bP$ called the \emph{coefficients}. It is said to be \emph{normalized} if $p_i^+\oplus p_i^-=1$ holds for all $i$.
Various important concepts in the theory of cluster algebras are defined by using a particular kind of coefficients, called the \emph{principal coefficients}.
For example, $F$-polynomials, $c$-/$g$-vectors and $Y$-patterns \cite{FZ-CA4} are such concepts. 
%The cluster algebra is said to have \emph{trivial coefficients} if $\bP=\{1\}$, but possibly with some frozen variables. Here, we are going to carefully distinguish the ``frozen variables'' $\{A_i\}_{i=n+1,\dots,m}$ from the ``coefficients'' $\{p_i^\pm\}_{i=1,\dots,n}$, following the geometric approach of \cite{BFMN20}. 
Also, Fomin--Shapiro--Thurston \cite{FST} exhibited various examples that are isomorphic to cluster algebras of marked surfaces with non-trivial coefficients, including the homogeneous coordinate rings of Grassmannians, and certain affine varieties related to the Lie theory. 
In the above mentioned cases, the coefficients are in a tropical semifield. Let us confine ourselves to this case. 

Our aim in this paper is to provide a skein realization of the quantum cluster algebras associated with a marked surface $\Sigma$ and with (not necessarily normalized) coefficients in a tropical semifield, generalizing the Muller's work ($\bP=\{1\}$). Here we remark that we are going to carefully distinguish the ``frozen variables'' from the ``coefficients'', following the geometric approach of \cite{BFMN20}. 
The quantum cluster algebras both having frozen variables and coefficients are formulated in \cite{CFMM}. 

%Let $\mathscr{A}_\Sigma$ be a cluster algebra of geometric type over the semifield $\bP_J := \mathrm{Trop}(u_j \,|\, j \in J)$ whose exchange matrices arise from ideal triangulations of a marked surface $\Sigma$.
% Formin--Thurston give a positive realization of the cluster algebra $\mathscr{A}$ on the \Teich\ space $\cT(\Sigma, \bL)$ of $\Sigma$ ``laminated'' by some multi-lamination $\bL = (L_j \mid j \in J)$.
% Namely, they give a faithful representation of $\mathscr{A}$ in the space of positive real functions of $\cT(\Sigma, \bL)$ by ``laminated'' lambda lengths\footnote{It is given by the ordinary lambda length and the shear coordinate of multi-lamination $\bL$.}.
% In this sense, we write $\CA_{\Sigma, \bL}$ for this cluster algebra.
% 

%The aim of this paper is to generalize the Muller's result \cref{thm:Muller} for $\CA_{\Sigma, \bL}$ by giving a suitable skein theoretic object.
We introduce the skein algebra $\Skein{\Sigma, \bW}$ of a ``walled surface'' $(\Sigma, \bW)$, which is a marked surface $\Sigma$ equipped with a certain datum $\bW=(\sfC,J,\ell)$ called a \emph{wall system}. Here $\sfC$ is a set of certain curves (which may have crossings), $J$ is a finite index set, and $\ell: \sfC \to J$ is a surjective map called a \emph{labeling}. The index set $J$ corresponds to that of the multi-lamination $\bL=(L_j)_{j \in J}$ in the Fomin--Thurston's approach \cite{FT18} as we will discuss in \cref{subsec:intro_comparison_FT}, and hence each subset $\sfC_j:=\ell^{-1}(j)$ with common labeling can be viewed as a generalization of a lamination. 
We introduce a new kind of skein relations which we call the \emph{wall-passing relations} (\cref{def:wall-pass}). 
Walls are allowed to cross each other. In the case where each subset $\sfC_j$ has no crossings for all $j \in J$, then it corresponds to a \emph{normalized coefficients}, and the wall system corresponds to a multi-lamination by certain ``shiftings'' of endpoints in several ways. See \cref{subsec:intro_comparison_FT} below. 
%\textcolor{red}{If $\ell$ is bijective and $\sfC$ an ideal triangulation, then the wall system corresponds to the principal coefficients.}

Before stating our results, we show our motivating example of the skein relation in $\Skein{\Sigma, \bW}$.
For an ideal triangulation $\tri$, we take the wall system $\bW_\tri$ which consists of the walls parallel to the ideal arcs in $\tri$.
We call $\bW_\tri$ the \emph{principal wall system} associated with $\tri$ (\cref{ex:wall_principal}), as it will correspond to the principal coefficients.
Then, we consider the multiplication of $\kappa \in \tri$ and the ideal arc $\kappa'$ obtained by flipping $\kappa$ in $\tri$:
\begin{align}
    \begin{tikzpicture}[scale=0.8, baseline=30]
    \draw [gray!30, line width=8pt] (-2.55,2.9) .. controls (-2.2,3) and (-2,3.2) .. (-1.9,3.55);
    \draw [gray!30, line width=8pt] (0.9,3.55) .. controls (1,3.25) and (1.25,3) .. (1.55,2.9);
    \draw [gray!30, line width=8pt] (-2.55,0.1) .. controls (-2.25,0) and (-2,-0.25) .. (-1.9,-0.55);
    \draw [gray!30, line width=8pt] (0.9,-0.55) .. controls (1.05,-0.25) and (1.3,0) .. (1.55,0.1);
    \draw (-2.5,2.75) .. controls (-2.1,2.9) and (-1.9,3.1) .. (-1.75,3.5);
    \draw (0.75,3.5) .. controls (0.9,3.15) and (1.15,2.9) .. (1.5,2.75);
    \draw (1.5,0.25) .. controls (1.15,0.1) and (0.95,-0.15) .. (0.75,-0.5);
    \draw (-1.75,-0.5) .. controls (-1.9,-0.15) and (-2.15,0.1) .. (-2.5,0.25);
    \node [fill, circle, inner sep=1.3] (v1) at (-2.05,3.05) {};
    \node [fill, circle, inner sep=1.3] (v2) at (-2.05,-0.05) {};
    \node [fill, circle, inner sep=1.3] (v3) at (1.05,-0.05) {};
    \node [fill, circle, inner sep=1.3] (v4) at (1.05,3.05) {};
    \draw [blue] (v1) edge (v2);
    \draw [blue] (v2) edge (v3);
    \draw [blue] (v3) edge (v4);
    \draw [blue] (v1) edge (v4);
    \draw [webline] (v1) -- (v3);
    \draw [webline, overarc] (v2) -- (v4);
    \node [blue] at (-0.5,3.35) {$\beta_1$};
    \node [blue] at (-2.4,1.5) {$\alpha_1$};
    \node [blue] at (-0.5,-0.4) {$\beta_2$};
    \node [blue] at (1.4,1.5) {$\alpha_2$};
    \node [red] at (0,2.4) {$\kappa$};
    \node [red] at (-1,2.45) {$\kappa'$};
    \draw[wline] (v2) to[bend right=15] node[midway,right]{$\gamma_\kappa$} (v4);
    \end{tikzpicture}
    =
    q
    \begin{tikzpicture}[scale=0.8, baseline=30]
    \draw [gray!30, line width=8pt] (-2.55,2.9) .. controls (-2.2,3) and (-2,3.2) .. (-1.9,3.55);
    \draw [gray!30, line width=8pt] (0.9,3.55) .. controls (1,3.25) and (1.25,3) .. (1.55,2.9);
    \draw [gray!30, line width=8pt] (-2.55,0.1) .. controls (-2.25,0) and (-2,-0.25) .. (-1.9,-0.55);
    \draw [gray!30, line width=8pt] (0.9,-0.55) .. controls (1.05,-0.25) and (1.3,0) .. (1.55,0.1);
    \draw (-2.5,2.75) .. controls (-2.1,2.9) and (-1.9,3.1) .. (-1.75,3.5);
    \draw (0.75,3.5) .. controls (0.9,3.15) and (1.15,2.9) .. (1.5,2.75);
    \draw (1.5,0.25) .. controls (1.15,0.1) and (0.95,-0.15) .. (0.75,-0.5);
    \draw (-1.75,-0.5) .. controls (-1.9,-0.15) and (-2.15,0.1) .. (-2.5,0.25);
    \node [fill, circle, inner sep=1.3] (v1) at (-2.05,3.05) {};
    \node [fill, circle, inner sep=1.3] (v2) at (-2.05,-0.05) {};
    \node [fill, circle, inner sep=1.3] (v3) at (1.05,-0.05) {};
    \node [fill, circle, inner sep=1.3] (v4) at (1.05,3.05) {};
    \draw [blue] (v1) edge (v2);
    \draw [blue] (v2) edge (v3);
    \draw [blue] (v3) edge (v4);
    \draw [blue] (v1) edge (v4);
    \draw [webline, rounded corners] (v1) -- (-0.55,1.55) -- (v2);
    \draw [webline, rounded corners] (v3) -- (-0.55,1.55) -- (v4);
    \node [blue] at (-0.5,3.35) {$\beta_1$};
    \node [blue] at (-2.4,1.5) {$\alpha_1$};
    \node [blue] at (-0.5,-0.4) {$\beta_2$};
    \node [blue] at (1.4,1.5) {$\alpha_2$};
    \draw[wline] (v2) to[bend right=15] node[midway,right]{$\gamma_\kappa$} (v4);
    \end{tikzpicture}
    +q^{-1}
    \begin{tikzpicture}[scale=0.8, baseline=30]
    \draw [gray!30, line width=8pt] (-2.55,2.9) .. controls (-2.2,3) and (-2,3.2) .. (-1.9,3.55);
    \draw [gray!30, line width=8pt] (0.9,3.55) .. controls (1,3.25) and (1.25,3) .. (1.55,2.9);
    \draw [gray!30, line width=8pt] (-2.55,0.1) .. controls (-2.25,0) and (-2,-0.25) .. (-1.9,-0.55);
    \draw [gray!30, line width=8pt] (0.9,-0.55) .. controls (1.05,-0.25) and (1.3,0) .. (1.55,0.1);
    \draw (-2.5,2.75) .. controls (-2.1,2.9) and (-1.9,3.1) .. (-1.75,3.5);
    \draw (0.75,3.5) .. controls (0.9,3.15) and (1.15,2.9) .. (1.5,2.75);
    \draw (1.5,0.25) .. controls (1.15,0.1) and (0.95,-0.15) .. (0.75,-0.5);
    \draw (-1.75,-0.5) .. controls (-1.9,-0.15) and (-2.15,0.1) .. (-2.5,0.25);
    \node [fill, circle, inner sep=1.3] (v1) at (-2.05,3.05) {};
    \node [fill, circle, inner sep=1.3] (v2) at (-2.05,-0.05) {};
    \node [fill, circle, inner sep=1.3] (v3) at (1.05,-0.05) {};
    \node [fill, circle, inner sep=1.3] (v4) at (1.05,3.05) {};
    \draw [blue] (v1) edge (v2);
    \draw [blue] (v2) edge (v3);
    \draw [blue] (v3) edge (v4);
    \draw [blue] (v1) edge (v4);
    \draw [webline, rounded corners] (v1) -- (-0.55,1.55) -- (v4);
    \draw [webline, rounded corners] (v3) -- (-0.55,1.55) -- (v2);
    \node [blue] at (-0.5,3.35) {$\beta_1$};
    \node [blue] at (-2.4,1.5) {$\alpha_1$};
    \node [blue] at (-0.5,-0.4) {$\beta_2$};
    \node [blue] at (1.4,1.5) {$\alpha_2$};
    \draw[wline] (v2) to[bend right=15] node[midway,right]{$\gamma_\kappa$} (v4);
    \end{tikzpicture}
    \label{eq:walled_skein_rel_elem}
\end{align}
By the wall passing relation \eqref{rel:wall-pass-ext}, the equation \eqref{eq:walled_skein_rel_elem} is rewritten as
\begin{align}
\kappa \kappa' = q z_{\kappa, +} \alpha_1 \alpha_2 + q^{-1} z_{\kappa, -} \beta_1 \beta_2.
\end{align}
Specializing $q=1$ and $z_{\kappa, -}=1$, we get the mutation relation in the cluster algebra with principal coefficients.
See the equation just before Example 3.4 in \cite{FZ-CA4}, for instance.

%The walls correspond to the multi-laminations in two ways. 

Our first result establishes a quantum cluster structure of the skein algebra $\Skein{\Sigma, \bW}$ for a general wall system $\bW$ (with taut walls, \cref{def:W-minimal}).
Namely, given an ideal triangulation $\tri$ of the underlying marked surface $\Sigma$, we construct a quantum seed with non-normalized coefficients in the fraction skew-field $\Frac\Skein{\Sigma, \bW}$ (\cref{lem:mut_in_skein}).
Let $\CA^q_{\Sigma, \bW}$ be the quantum cluster algebra associated with such a quantum seed. Then we have the following: 

\begin{thm}[\cref{thm:wskein_cluster}]\label{introthm:wskein_cluster}
For any (triangulable) walled surface $(\Sigma, \bW)$ (with taut walls), we have $\CA^q_{\Sigma, \bW} \subset \Skein{\Sigma, \bW}$ 
in $\mathop{\mathrm{Frac}}\Skein{\Sigma, \bW}$.
Moreover, the equality
\begin{align*}
    \CA^q_{\Sigma, \bW} =\Skein{\Sigma, \bW}[\partial^{-1}]
\end{align*}
holds if $\Sigma$ has at least two marked points, where $\Skein{\Sigma, \bW}[\partial^{-1}]$ is the boundary-localized skein algebra (\cref{def:boundary_localization}).
\end{thm}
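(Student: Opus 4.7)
The plan is to reproduce Muller's proof strategy of \cref{thm:Muller} in the enriched setting of walled surfaces. The key upgrade is that walls contribute coefficient monomials in the exchange relations via the wall-passing relations; apart from this bookkeeping, the structure of the argument is essentially Muller's.

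\textbf{Inclusion.} For any ideal triangulation $\tri$, the construction in \cref{lem:mut_in_skein} produces a quantum seed inside $\Frac \Skein{\Sigma,\bW}$ whose cluster variables are the arcs of $\tri$ (mutable interior arcs, frozen boundary arcs) and whose coefficient monomials $z_{\kappa,\pm}$ encode the wall configurations around each internal arc $\kappa$. The motivating equation \eqref{eq:walled_skein_rel_elem}, combined with the wall-passing relations, is exactly the quantum exchange relation $\kappa\,\kappa' = q\, z_{\kappa,+}\,\alpha_1\alpha_2 + q^{-1}\,z_{\kappa,-}\,\beta_1\beta_2$ corresponding to the flip $\tri \rightsquigarrow \mu_\kappa(\tri)$. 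Hence each mutation is realized inside $\Skein{\Sigma,\bW}$, and by induction every cluster monomial lies there, yielding $\CA^q_{\Sigma,\bW}\subset \Skein{\Sigma,\bW}$.

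\textbf{Equality after boundary localization.} When $\Sigma$ has at least two marked points, I would adapt Muller's generation argument to the walled setting: $\Skein{\Sigma,\bW}[\partial^{-1}]$ is generated, over the wall coefficient ring, by arcs and simple loops. Every arc appears in some ideal triangulation and is therefore a cluster variable after finitely many mutations. Every simple loop, after being resolved against a suitably chosen transverse arc by the Kauffman bracket skein relation, becomes a Laurent expression in arcs whose denominators are products of boundary arcs, hence lies in $\CA^q_{\Sigma,\bW}$ by the previous step. The coefficient monomials that arise from wall-passing are, by construction, already in the coefficient ring of $\CA^q_{\Sigma,\bW}$, so no new elements are introduced.

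\textbf{Main obstacle.} The principal difficulty will lie in the wall bookkeeping. First, I must verify that the quasi-commutation form $\Lambda_\tri$ encoding the framing/intersection data of the wall system is compatible (in the sense of Berenstein--Zelevinsky) with the exchange matrix even when walls are allowed to cross each other, which is precisely the non-normalized-coefficients situation and where the generality beyond \cite{FT18} is tested. Second, Muller's loop-resolution step has to be checked to commute with wall-passing: when a loop is pushed across an arc to break it into arc-monomials, each intermediate skein move must pick up only factors in the coefficient ring, without leaking elements outside $\CA^q_{\Sigma,\bW}$. Both checks reduce to a careful local analysis of how walls thread through the relevant triangles and crossings, and once they are in place the Muller-style argument goes through essentially verbatim.
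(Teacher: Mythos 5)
Your overall architecture matches the paper's: the inclusion $\CA^q_{\Sigma,\bW}\subset\Skein{\Sigma,\bW}$ is delegated to \cref{lem:mut_in_skein}, and the reverse inclusion (after boundary-localization) is proved by showing that the generators of $\Skein{\Sigma,\bW}[\partial^{-1}]$ — simple arcs and simple loops, via the basis theorem \cref{thm:basis-web} — all lie in the cluster algebra. You also correctly note that arcs are cluster variables. So far, same route as the paper.

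The gap is in the loop-resolution step, which you describe as ``resolved against a suitably chosen transverse arc by the Kauffman bracket skein relation.'' A simple loop is disjoint from the boundary, so it is \emph{not} transverse to any boundary arc, and if you instead pick an interior arc $\alpha$ transverse to the loop, then $\alpha^{-1}$ is not available inside $\CA^q_{\Sigma,\bW}$ (only frozen variables are inverted). The actual mechanism is the ``sticking trick'' \eqref{rel:stick}: using the clasped relations at marked points, an arc segment that dips near a boundary interval $E$ equals $q\,A_E^{-1}$ times a diagram with endpoints at the two marked points of $E$, minus $q^2 A_E^{-1}$ times another such diagram. This introduces only boundary-arc denominators without requiring any geometric intersection with $E$. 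Critically, a \emph{single} application of this trick to a loop leaves one term that is still a loop; two applications, at two distinct boundary intervals $E_1$ and $E_2$, are needed to reduce everything to arcs (\cref{fig:loop-exp}). That second application is precisely why the theorem requires $|\bM|\ge 2$, a hypothesis your proposal invokes but does not explain. Without identifying this mechanism, the equality direction is not actually established.

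On your listed ``obstacles'': the compatibility check for $(\ve^\tri,\Pi^\tri)$ is already settled, because the quasi-commutation form $\Pi^\tri$ records only elevation/intersection data at marked points and is unchanged by walls — it is identical to Muller's, and the walled compatibility is folded into \cref{lem:mut_in_skein} (cf.\ \cref{lem:Muller}). Your second concern (that wall-passing might leak new elements during loop resolution) is the right thing to worry about, and the paper dispatches it exactly as you hope: the extra factors produced are monomials in $z_{j,\pm}$, already units in the coefficient ring, so no new generators appear.
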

%The quantum seeds for $\CA^q_{\Sigma, \bW}$ are normalized if $\sfC_j$ has no crossings for all $j \in J$. 
%In this case, we discuss the relationship with the Fomin--Thurston's approach \cite{FT18} by multi-laminations in the next subsection. 
We construct a linear basis $\Bweb{\Sigma,\bW}$ of $\Skein{\Sigma, \bW}$ in \cref{thm:basis-web}.  In \cref{ex:MW,ex:MWgeneral}, we demonstrate how to get a Laurent expression of an element of $\Skein{\Sigma, \bW}$ in a given cluster. Our computation by skein relations is much simpler and geometrically clearer, compared to the \emph{snake graph} method developed by \cite{MW13}.

\subsection{Comparison to the Fomin--Thurston's approach}\label{subsec:intro_comparison_FT}
A cluster algebra with coefficients is said to be of \emph{geometric type} if $\bP$ is a tropical semifield, and the coefficients are \emph{normalized} (see \cref{sect:cluster} for details). Fomin--Thurston \cite{FT18} observed that any 
%\underline{normalized} coefficients of 
cluster algebra of geometric type associated with a marked surface can be ``encoded'' in the (unbounded) integral laminations \cite{FG07}
(see also \cref{sec:lamination,sec:comparison}).
Let $\CA_{\Sigma, \bL}$ denote the cluster algebra of geometric type whose exchange matrices are induced from ideal triangulations of a marked surface $\Sigma$, and the normalized coefficients given by a multi-lamination $\bL$
% =(L_j \mid j \in J)$
on $\Sigma$.
If we specialize the frozen variables to be $1$, then it has a positive realization by \emph{laminated \Teich\ spaces} $\cT(\Sigma,\bL)$ \cite[Chapter 15]{FT18}:

\begin{thm}[\cite{FT18}]
For any (triangulable) marked surface $\Sigma$ and a multi-lamination $\bL$, we have a positive realization
\begin{align*}
    \CA_{\Sigma, \bL}\big|_{\mathrm{frozen}=1} \subset C^\infty(\cT(\Sigma,\bL)).
\end{align*}
Here the cluster variables are realized as laminated lambda length, while coefficients are certain weight parameters on $\cT(\Sigma,\bL)$. 
\end{thm}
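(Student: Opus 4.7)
The plan is to construct explicit smooth coordinate functions on $\cT(\Sigma,\bL)$ that realize the cluster variables, and to verify directly that these functions satisfy the exchange relations of $\CA_{\Sigma,\bL}$ with the prescribed normalized coefficients from $\bL$.

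First, I would recall Penner's decorated \Teich\ space $\widetilde{\cT}(\Sigma)$, parametrizing hyperbolic structures on $\Sigma$ together with a choice of horocycle at each marked point. For each ideal arc $\kappa$, the \emph{lambda length} $\lambda_\kappa$ (essentially the exponentiated signed distance between horocycles) is a smooth positive function on $\widetilde{\cT}(\Sigma)$, and under a flip $\kappa\leftrightarrow\kappa'$ inside a quadrilateral with sides $\alpha_1,\alpha_2,\beta_1,\beta_2$ the classical Ptolemy identity
\begin{align*}
\lambda_\kappa\,\lambda_{\kappa'} \;=\; \lambda_{\alpha_1}\lambda_{\alpha_2} \;+\; \lambda_{\beta_1}\lambda_{\beta_2}
\end{align*}
holds. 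This is the $\bL=\emptyset$ base case, corresponding to coefficient-free mutation.

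Next, I would enhance this to the laminated version. The laminated \Teich\ space takes the form $\cT(\Sigma,\bL)\cong\widetilde{\cT}(\Sigma)\times\prod_{j\in J}\bR_{>0}$, the extra coordinates $w_j$ being the weight parameters attached to the laminations $L_j$. For an arc $\kappa$ I would define the \emph{laminated lambda length} by shearing $\lambda_\kappa$ according to the transverse intersections with $\bL$, schematically
\begin{align*}
\bar\lambda_\kappa \;:=\; \lambda_\kappa \cdot \prod_{j\in J} w_j^{\,l_{L_j}(\kappa)/2},
\end{align*}
where $l_{L_j}(\kappa)$ is the (signed) transverse intersection number of $L_j$ with $\kappa$, regularized near the endpoints using the horocycles. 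The key step is then to verify the \emph{laminated Ptolemy relation}
\begin{align*}
\bar\lambda_\kappa\,\bar\lambda_{\kappa'} \;=\; y^+_\kappa\,\bar\lambda_{\alpha_1}\bar\lambda_{\alpha_2} \;+\; y^-_\kappa\,\bar\lambda_{\beta_1}\bar\lambda_{\beta_2},
\end{align*}
where $y^\pm_\kappa$ are Laurent monomials in the $w_j$ that match exactly the normalized coefficients associated with the multi-lamination $\bL$ at the flip of $\kappa$. This reduces to a local analysis in the flipped quadrilateral: each leaf of $L_j$ that crosses the quadrilateral picks up, after rearranging, a definite $w_j$-weight on precisely one of the two terms of the classical Ptolemy identity, and the normalization $y^+_\kappa\oplus y^-_\kappa=1$ is automatic because at most one of the two combinatorial crossing patterns occurs per leaf.

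Once this laminated Ptolemy identity is in hand, the inclusion $\CA_{\Sigma,\bL}|_{\mathrm{frozen}=1}\subset C^\infty(\cT(\Sigma,\bL))$ follows by induction on the length of mutation sequences starting from a triangulation $\tri$: the initial cluster variables $\bar\lambda_\kappa$ for $\kappa\in\tri$ are smooth positive functions, the coefficients $w_j$ are smooth positive by construction, and each flip substitutes a smooth positive expression by the laminated Ptolemy identity, so every iterated mutation stays inside $C^\infty(\cT(\Sigma,\bL))$. The main obstacle is the middle step: establishing the laminated Ptolemy identity with precisely the right coefficient monomials. The difficulty is genuinely hyperbolic-geometric, requiring one to track how the horocycle-regularized distance changes when the arc is deformed across a transverse lamination; after reducing to a single taut leaf in a quadrilateral it becomes a finite case analysis, but getting the signs and the normalization conventions to match the cluster-algebra coefficients on the nose is the delicate point.
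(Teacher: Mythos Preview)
This theorem is not proved in the present paper at all: it is stated in the introduction as a background result and attributed entirely to Fomin--Thurston \cite{FT18} (specifically their Chapter~15). There is no proof in the paper to compare your proposal against.

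That said, your outline is a faithful sketch of the Fomin--Thurston argument itself: define the laminated \Teich\ space as a product of the decorated \Teich\ space with positive weight parameters, rescale Penner's lambda lengths by tropical intersection data with the laminations, and verify a laminated Ptolemy relation whose monomial coefficients are exactly the normalized tropical coefficients \eqref{eq:coefficients_lamination}. The only caveat is that the exponent in your definition of $\bar\lambda_\kappa$ should be the \emph{transverse measure} $a^\tri_\kappa(L_j)$ (half the geometric intersection number, as in \cref{sec:lamination}) rather than a signed intersection number; the sign information enters only through the shear coordinates $x^\tri_\kappa(L_j)$ that appear in the coefficients $y^\pm_\kappa$, not in the rescaling of the cluster variables themselves. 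With that correction, the local verification you describe is exactly \cite[Theorem~15.6]{FT18}.
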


When $\sfC_j$ has no crossings for all $j \in J$, we see that the quantum cluster algebra constructed in \cref{introthm:wskein_cluster} are of geometric type (in particular, normalized). 
In this case, we can canonically construct a multi-lamination $\bL(\bW)$ from the wall system $\bW$. For example, if $\bW_\tri$ is the {principal wall system} associated with an ideal triangulation $\tri$ (\cref{ex:wall_principal}), 
%if $\sfC$ consists of the interior edges of an ideal triangulation $\tri$ and $\ell:\sfC \to e_{\interior}(\tri)$ is the tautological bijection, 
then $\bL(\bW_\tri)=\bL_\tri^\pm$ corresponds to the \emph{double principal coefficients} (\cref{ex:principal}). 
% Then we get:
Generally, we have the following:
\begin{thm}[\cref{cor:wskein_cluster_lam}]
If $\sfC_j$ has no crossings for any $j \in J$, then
% For any walled surface $(\Sigma, \bW)$,
we have a canonical $\bZ_{q,\bW}$-algebra isomorphism
\begin{align*}
    \CA^q_{\Sigma, \bW} \xrightarrow{\sim} \CA^q_{\Sigma, \bL(\bW)}.
\end{align*}
Here, $\bZ_{q, \bW}$ is the base ring of the skein algebra $\Skein{\Sigma, \bW}$ and the right-hand side is the quantum cluster algebra with normalized coefficients given by the multi-lamination $\bL(\bW)$ (\cref{subsec:comparison}).%(\cref{subsec:coefficients_laminations}).
\end{thm}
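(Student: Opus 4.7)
The plan is to construct the isomorphism by matching quantum seeds at a fixed ideal triangulation $\tri$ of $\Sigma$, and then to propagate the identification by mutation-equivariance. By \cref{introthm:wskein_cluster}, the algebra $\CA^q_{\Sigma,\bW}$ is presented as the $\bZ_{q,\bW}$-subalgebra of $\Frac\Skein{\Sigma,\bW}$ generated by the cluster variables in the mutation class of an explicit quantum seed at $\tri$, whose unfrozen cluster variables are the interior ideal arcs of $\tri$ and whose coefficient data encode the walls through the wall-passing relations. Under the hypothesis that each $\sfC_j$ has no crossings, I would first construct the multi-lamination $\bL(\bW)=(L_j)_{j\in J}$ by taking each connected component of $\sfC_j$ and shifting its endpoints off the marked points into canonically chosen adjacent boundary intervals, producing an unbounded integral lamination compatible with the Fomin--Thurston normalization.

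Next, I would compare the quantum seed at $\tri$ in $\CA^q_{\Sigma,\bW}$ with the Fomin--Thurston quantum seed at $\tri$ in $\CA^q_{\Sigma,\bL(\bW)}$. Both seeds share the same index set, and the base ring $\bZ_{q,\bW}$ is to be identified with the quantum tropical coefficient ring of $\bL(\bW)$ by sending each generator $z_{\kappa,\pm}$ associated to a wall to the corresponding tropical coefficient monomial determined by the shifted strand in $\bL(\bW)$. The principal part of the exchange matrix agrees, being the signed adjacency of $\tri$ in both constructions. The coefficient rows should agree because the signed count of wall traversals across an arc $\kappa\in\tri$ by the curves in $\sfC_j$ equals the shear coordinate of $L_j$ along $\kappa$, and the $\Lambda$-form matches from the common quantum-torus description of the monomials in the ideal arcs inside $\Skein{\Sigma,\bW}$.

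The main obstacle is the precise sign-and-exponent matching between the wall-passing skein relation exemplified by \eqref{eq:walled_skein_rel_elem} and the Fomin--Thurston tropical mutation rule induced by $\bL(\bW)$. I would handle this by a local computation inside the quadrilateral containing the flipped arc $\kappa$: a wall entering the quadrilateral from one boundary interval and exiting through another contributes $\pm 1$ to the exponent of the appropriate coefficient variable $z_{\kappa,\pm}$, with the sign determined by the combinatorics of which pair of triangles it separates; the sign convention of the endpoint shifts defining $\bL(\bW)$ must be arranged so that this exponent equals the signed contribution of the corresponding lamination strand to the shear coordinate along $\kappa$. Once this local identity is verified, the two seeds coincide as quantum seeds with normalized coefficients, and since both $\CA^q_{\Sigma,\bW}$ and $\CA^q_{\Sigma,\bL(\bW)}$ are by construction the $\bZ_{q,\bW}$-subalgebras generated by the cluster variables in the mutation class of this common seed, the identification at $\tri$ extends to the desired canonical $\bZ_{q,\bW}$-algebra isomorphism.
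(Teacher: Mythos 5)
The overall strategy you propose --- fixing a triangulation $\tri$, identifying the two quantum seeds there, and then propagating the identification along mutations --- is exactly the paper's approach, and the final step (both algebras are the $\bZ_{q,\bW}$-subalgebras generated by the mutation class of a common seed) is sound.

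However, there is a genuine gap in your construction of $\bL(\bW)$. You write $\bL(\bW)=(L_j)_{j\in J}$ with a single lamination per label $j$, obtained by shifting endpoints ``into canonically chosen adjacent boundary intervals.'' This cannot work: the base ring $\bZ_{q,\bW}=\bZ[q^{\pm 1/2},z_{j,+}^{\pm 1},z_{j,-}^{\pm 1}\mid j\in J]$ has \emph{two} independent coefficient variables per label, reflecting the two wall-passing parameters at marked points, whereas the Fomin--Thurston quantum cluster algebra attached to a multi-lamination with $|J|$ components has a coefficient semifield $\mathrm{Trop}(u_j \mid j\in J)$ with only $|J|$ generators. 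The rings would not match, so no $\bZ_{q,\bW}$-algebra isomorphism can exist. The correct construction takes \emph{both} shifts: $\bL(\bW)=(L_{j,\epsilon})_{j\in J,\,\epsilon\in\{+,-\}}$, where $L_{j,+}$ (resp.\ $L_{j,-}$) is obtained by shifting every arc in $\sfC_j$ with (resp.\ against) the boundary orientation. Then $\bP_\bW\cong\bP_{\bL(\bW)}$ is the evident generator-by-generator isomorphism $z_{j,\epsilon}\mapsto u_{j,\epsilon}$, and one can check that $\mathbf{p}^\tri_\bW=\mathbf{p}^\tri_{\bL(\bW)}$ directly by comparing \eqref{eq:coefficients_walls_normalized} with \eqref{eq:coefficients_lamination}, both expressed in shear coordinates. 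Once you make this correction, your local quadrilateral computation is a valid (if somewhat redundant) route to verifying seed compatibility; the paper instead cites the already-established exchange patterns $\sfs_{\Sigma,\bW}$ and $\sfs_{\Sigma,\bL}$ from \cref{lem:mut_in_skein} and \cref{lem:ex_rel_coeff}, whose coincidence at $\tri$ then forces coincidence everywhere.
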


% First, given a wall $\bW$ on $\Sigma$, we consider the multi-lamination $\bL(\bW)$ obtained by ``shifting" the ends of walls along the boundary. Then we construct the quantum cluster algebra $\CA^q_{\Sigma, \bL(\bW)}$ in the skew-field $\mathop{\mathrm{Frac}}\Skein{\Sigma, \bW}$. Then we get the following:
% and show the generalization of \cref{thm:Muller}:
% \begin{introthm}
% For any (triangulable) walled surface $(\Sigma, \bW)$, we have
% \begin{align*}
%     \CA^q_{\Sigma, \bL(\bW)} \subset \Skein{\Sigma, \bW}[\partial^{-1}]
% \end{align*}
% in $\mathop{\mathrm{Frac}}\Skein{\Sigma, \bW}$.
% Moreover, if $\Sigma$ has at least two marked points, these two algebras coincide. 
% \end{introthm}

% Here, we note that there is some coefficient ring $\bZ_{q,\bW}$ for the wall $\bW$, and $\mathop{\mathrm{Frac}}\Skein{\Sigma, \bW} \subset \cR_\bW \otimes \mathop{\mathrm{Frac}}\Skein{\Sigma}$.

The multi-lamination $\bL(\bW)$ constructed as above does not realize all possible normalized coefficients. Roughly speaking, they need to contain ``pairs'' of curves arising from each wall. This reflects the fact that the skein algebra $\CA^q_{\Sigma, \bW}$ contains two parameters $z_{j,+}$, $z_{j,-}$ for each $j \in J$. 

In order to realize the coefficients associated with any multi-lamination $\bL$, % = ({L}_j \mid j \in J)$,
we conversely define a wall system $\bW(\bL)$ out of $\bL$. For example, if $\bL_\tri^+$ is the multi-lamination corresponding to the principal coefficients, then $\bW(\bL_\tri^+)=\bW_\tri$ is the principal wall system. 
Then we take the quotient $\Skein{\Sigma, \bW(\bL)}\big|_{\boldsymbol{z}_{-}=1}$ of $\Skein{\Sigma, \bW(\bL)}$ that specializes the redundant parameters $z_{j,-}$ to be $1$ (see \eqref{eq:quotient_z-} for a precise definition). Then we get the following:

\begin{thm}[\cref{thm:realization_any_lamination}]
For any multi-lamination $\bL$ on $\Sigma$, we have a canonical inclusion $\CA^q_{\Sigma, \bL} \subset \Skein{\Sigma, \bW(\bL)}\big|_{\boldsymbol{z}_{-}=1}$ of $\bZP_{\bL}$-algebras. Moreover, the equality 
\begin{align*}
    \CA^q_{\Sigma, \bL} =\Skein{\Sigma, \bW(\bL)}\big|_{\boldsymbol{z}_{-}=1}[\partial^{-1}]
\end{align*}
holds if $\Sigma$ has at least two marked points. 
\end{thm}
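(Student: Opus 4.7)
The plan is to reduce this statement to \cref{introthm:wskein_cluster} applied to the walled surface $(\Sigma, \bW(\bL))$, and then pass to the specialization $\boldsymbol{z}_{-}=1$. By the construction of $\bW(\bL)$ from the multi-lamination $\bL$, each constituent subsystem $\sfC_j$ consists of the walls produced by a single lamination and hence has no self-crossings. Consequently \cref{cor:wskein_cluster_lam} applies and identifies $\CA^q_{\Sigma, \bW(\bL)}$ with $\CA^q_{\Sigma, \bL(\bW(\bL))}$ as algebras over the base ring $\bZ_{q, \bW(\bL)}$, which contains the pair $z_{j,+}, z_{j,-}$ for each $j \in J$.

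First I would check that the multi-lamination $\bL(\bW(\bL))$ is precisely the ``doubled'' lamination obtained by pairing the two parallel shifts of $\bL$ created by the wall construction; this is the lamination-theoretic counterpart of having two parameters $z_{j,\pm}$ per wall. Under this identification, the coefficients of $\CA^q_{\Sigma, \bL(\bW(\bL))}$ factorize as monomials in $z_{j,+}$ and $z_{j,-}$ whose exponents are shear coordinates of the two shifts of the original laminations in $\bL$. The key combinatorial point I would verify is that after the specialization $z_{j,-} = 1$, the two contributions combine to give exactly the shear coordinate exponents of $\bL$ itself, so that the quotient algebra $\CA^q_{\Sigma, \bL(\bW(\bL))}\big|_{\boldsymbol{z}_{-}=1}$ is canonically isomorphic to $\CA^q_{\Sigma, \bL}$ as $\bZP_{\bL}$-algebras.

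Next I would transport the inclusion and the equality from \cref{introthm:wskein_cluster} through the quotient. For the inclusion $\CA^q_{\Sigma, \bW(\bL)} \subset \Skein{\Sigma, \bW(\bL)}$ this is automatic: the specialization $z_{j,-}=1$ is a ring homomorphism $\Skein{\Sigma, \bW(\bL)} \to \Skein{\Sigma, \bW(\bL)}\big|_{\boldsymbol{z}_{-}=1}$, and it sends the cluster generators of $\CA^q_{\Sigma, \bW(\bL)}$ to those of $\CA^q_{\Sigma, \bL}$ under the identifications above. For the equality under boundary localization, I would show that the specialization commutes with inverting the boundary arcs, since the $z_{j,-}$ are central elements not involving the boundary variables; combined with the equality $\CA^q_{\Sigma, \bW(\bL)} = \Skein{\Sigma, \bW(\bL)}[\partial^{-1}]$ from \cref{introthm:wskein_cluster}, this yields the desired equality after specialization.

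The main obstacle I anticipate is the bookkeeping in the middle step: correctly matching the exchange matrix and coefficient tuple of $\CA^q_{\Sigma, \bL(\bW(\bL))}\big|_{\boldsymbol{z}_{-}=1}$ with those of $\CA^q_{\Sigma, \bL}$ across a chosen ideal triangulation $\tri$, and checking that this matching is compatible with mutations (so that it actually defines an algebra isomorphism, not just a matching of seeds at $\tri$). The other delicate point is flatness of the specialization on the boundary-localized skein algebra, which is needed to ensure no spurious relations are introduced when setting $z_{j,-}=1$; this should follow from the fact that each $z_{j,-}$ is a monomial in independent central parameters of $\bZ_{q, \bW(\bL)}$.
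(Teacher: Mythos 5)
Your proposal follows essentially the same route as the paper: apply \cref{cor:wskein_cluster_lam} to $\bW(\bL)$ (using that each $\sfC_j$ is crossing-free, so $\CA^q_{\Sigma,\bW(\bL)}\cong\CA^q_{\Sigma,\bL(\bW(\bL))}$), then specialize $\boldsymbol{z}_-=1$, and transport the inclusion/equality of \cref{thm:wskein_cluster} through a commutative square relating specialization on the cluster side to the quotient on the skein side. One small correction in phrasing: the two families of exponents do not ``combine'' under $z_{j,-}=1$; the $z_{j,-}$-exponents (coming from $L_j^-$) are simply killed, leaving exactly the exponents $[\pm x^\tri_\alpha(L_j)]_+$ that define $p^{\tri,\pm}_{\bL;\alpha}$. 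More importantly, the obstacle you rightly flag in your final paragraph — that the seedwise matching must be compatible with all mutations to produce an algebra map — is precisely what the paper resolves by invoking the \emph{coefficient specialization} machinery of \cite[Definition 12.1]{FZ-CA4}: the map $\psi:\bP_{\bL_\pm}\to\bP_\bL$, $u_{j,+}\mapsto u_j$, $u_{j,-}\mapsto 1$ is a semifield homomorphism, and the criterion (12.1) of \emph{loc.\ cit.} then guarantees that the induced map $\Psi:\CA^q_{\Sigma,\bL_\pm}\to\CA^q_{\Sigma,\bL}$ commutes with mutations automatically. Naming that tool would have closed the gap you anticipated without any hand-verification over the exchange graph.
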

Thus we get a ``skein realization'' of the quantum cluster algebras $\CA^q_{\Sigma, \bL}$ with any normalized coefficients.
Again we emphasize that our general construction in \cref{introthm:wskein_cluster} also gives skein realizations in non-normalized cases. 

% (OLD) Conversely, given any multi-lamination $\bL = (L_j \mid j \in J)$ on $\Sigma$, we consider the corresponding wall $\bW_+(\bL)$ obtained by ``shifting" the ends of laminations in the positive direction along the boundary. 
% %To compare the algebras $\mathscr{A}^q_\Sigma[\bL]$ and $\mathscr{S}^q(\Sigma_{\bW_+(\bL)})$, we give 
% Under a certain specialization homomorphism
% \begin{align*}
%     \pi_J: \cR_{\bW_+(\bL)} \to \bQ \bP_J
% \end{align*}
% of coefficient parameters, we get:

% \begin{introthm}
% For any (triangulable) marked surface $\Sigma$ and any multi-lamination $\bL = (L_j \mid j \in J)$ on $\Sigma$, we have
% \begin{align*}
%     \mathscr{A}^q_\Sigma[\bL] \subset 
%     % (\pi_J)_! (\mathscr{S}^q(\Sigma_{\bW_+(\bL)})[\partial^{-1}])
%     \bQ\bP_J \otimes_{\bZ_{q,\bW_+(\bL)}} \Skein{\Sigma}[\bW_+(\bL)][\partial^{-1}]
% \end{align*}
% in $\bQ\bP_J \otimes \mathop{\mathrm{Frac}}\Skein{\Sigma}$.
% Moreover, if $\Sigma$ has at least two marked points, these two algebras coincide.
% \end{introthm}

\subsection{Quasi-homomorphisms of skein algebras}
Fraser \cite{Fra16} introduced a map of cluster algebras of the same type but different coefficients, called \emph{quasi-homomorphism}, which rescales the cluster variables and coefficients in a way commuting with mutations. This notion is especially useful to extract certain structures of (quantum) cluster algebras that do not essentially depend on coefficients, and in particular ``rescaling'' known bases (for instance, see \cite{MQ23}). 
In this paper, we try to understand some quasi-homomorphisms between quantum cluster algebras of surface type in terms of skein algebras of walled surfaces.

\paragraph{\textbf{(1) Resolution of crossings.}}
Recall that a wall system $\bW=(\sfC,J,\ell)$ is allowed to have crossings. 
Let $\bW'$ be a wall system obtained from $\bW$ by resolving some of the crossings of walls with a common label in any possible directions as shown below, keeping their labels: 
 \begin{align}
            \ \tikz[baseline=-.6ex, scale=.1]{
                \draw[dashed] (0,0) circle(5cm);
                \draw[wline] (-45:5) -- (135:5) node[olive,left,scale=0.8]{$j$};
                \draw[wline] (-135:5) -- (45:5)node[olive,right,scale=0.8]{$j$};
            \ }
            \mapsto 
            \ \tikz[baseline=-.6ex, scale=.1]{
                \draw[dashed] (0,0) circle(5cm);
                \draw[wline] (45:5) node[olive,right,scale=0.8]{$j$} to[out=south west, in=north west] (-45:5);
                \draw[wline] (-135:5) to[out=north east, in=south east] (135:5) node[olive,left,scale=0.8]{$j$};
            \ }
            \mbox{ or }\ \tikz[baseline=-.6ex, scale=.1]{
                \draw[dashed] (0,0) circle(5cm);
                \draw[wline] (-45:5) node[olive,right,scale=0.8]{$j$} to[out=north west, in=north east] (-135:5);
                \draw[wline] (135:5) node[olive,left,scale=0.8]{$j$} to[out=south east, in=south west] (45:5);
            \ }
    \end{align}
%Note that $\bP_\bW = \bP_{\bW'}$ in this case.

% \begin{align}\label{eq:rescaling_skein}
%     \Psi: \Skein{\Sigma, \bW} \to \Skein{\Sigma, \bW'}, \quad [K]_\bW \mapsto [K]_{\bW'}.
% \end{align}
% Here, $K$ denotes a tangle diagram in $(\Sigma, \bW)$ so it is also a tangle diagram in $\Sigma_{\bW'}$.
% We note that $\bP_\bW = \bP_{\bW'}$ in this case.

\begin{thm}[\cref{thm:qhom_sm}]
For two wall systems $\bW$ and $\bW'$ related as above, we have a canonical $\bZ_{q, \bW}$-algebra homomorphism
% \eqref{eq:rescaling_skein}
\begin{align*}
    \Psi: \Skein{\Sigma, \bW} \to \Skein{\Sigma, \bW'}.
\end{align*}
Moreover, if $\Sigma$ has at least two marked points and $\bW'$ is still taut, then the map $\Psi$ is a quasi-homomorphism
% from $\sfs_{\Sigma, \bW}$ to $\sfs_{\Sigma, \bW'}$
with respect to the cluster structures described in \cref{introthm:wskein_cluster}.
\end{thm}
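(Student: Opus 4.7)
The plan is to construct $\Psi$ as an algebra homomorphism by defining it on diagrammatic generators and then checking compatibility with all defining relations. First, we isotope $\bW'$ so that it agrees with $\bW$ outside a small disk $D$ enclosing the resolved crossing; outside $D$ the two wall systems are literally identical. We then define $\Psi$ on a generator represented by a framed tangle $T$ in $\Sigma$ by a formula of the form $\Psi([T]_{\bW})=c(T)\cdot[T]_{\bW'}$, where $[T]_{\bW'}$ denotes the class of $T$ in $\Skein{\Sigma,\bW'}$ and $c(T)\in\bZ_{q,\bW}$ is a correction monomial in the coefficients $z_{j,\pm}$ designed to absorb the change in signed $j$-wall crossings of $T$ inside $D$. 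The parameters $z_{j,\pm}$ themselves are sent identically, making $\Psi$ tautologically $\bZ_{q,\bW}$-linear. The task is then to verify that the Kauffman bracket, boundary, and wall-passing relations of $\Skein{\Sigma,\bW}$ are sent to valid relations in $\Skein{\Sigma,\bW'}$. The first two are local in an arbitrarily small neighborhood and can be isotoped to lie outside $D$, hence are preserved on the nose. The wall-passing relations across walls outside $D$ are identical in the two algebras; the only nontrivial case is a relation across a strand entering $D$, where a direct computation using \eqref{rel:wall-pass-ext} shows that the correction $c$ interchanges the relations on either side.

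For the quasi-homomorphism claim, fix an ideal triangulation $\tri$ of $\Sigma$; such a $\tri$ exists because $\Sigma$ has at least two marked points. By \cref{introthm:wskein_cluster}, both $\CA^q_{\Sigma,\bW}$ and $\CA^q_{\Sigma,\bW'}$ are generated inside their respective skein algebras by the arcs of $\tri$ together with the boundary arcs, with exchange matrix on the unfrozen indices equal to the signed adjacency matrix of $\tri$, which is independent of the wall system. The map $\Psi$ sends each interior arc $\kappa\in\tri$ to $c(\kappa)$ times itself, and analogously for boundary arcs; in particular the unfrozen exchange matrices agree. It remains to compare the coefficient tuples. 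Applying $\Psi$ to the flip relation \eqref{eq:walled_skein_rel_elem} in $\Skein{\Sigma,\bW}$ yields an identity in $\Skein{\Sigma,\bW'}$ which, after dividing by the common correction, coincides with the analogous flip relation in $\Skein{\Sigma,\bW'}$ up to a monomial in the boundary arcs. Tautness of $\bW'$ ensures that this latter relation genuinely governs the $\bW'$-cluster structure, so the coefficient-rescaling condition of \cite{Fra16} is satisfied and $\Psi$ is a quasi-homomorphism.

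The principal obstacle is the construction and well-definedness of the correction factor $c(T)$: one must exhibit a consistent assignment of a Laurent monomial in $z_{j,\pm}$ to each isotopy class of tangle in $\Sigma$ such that all defining relations of $\Skein{\Sigma,\bW}$ transport correctly. The delicate part is the interaction of the wall-passing moves with the non-commutative quantum skein, since reordering two wall-crossings inside $D$ introduces a power of $q$, and one must check that these powers match between the X-configuration in $\bW$ and the chosen smoothing in $\bW'$. Once this local identity is verified, the quasi-homomorphism property at the initial seed propagates to all mutation-equivalent seeds via the standard machinery of \cite{Fra16}.
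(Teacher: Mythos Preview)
Your overall architecture (define $\Psi$ on tangles, check it respects the skein relations, then invoke Fraser's criterion) matches the paper's, but you have introduced a phantom obstacle and missed the actual one.

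\textbf{The correction factor $c(T)$ is unnecessary.} The paper simply sets $\widetilde\Psi([K]_{\bW})=[K]_{\bW'}$ with no monomial prefactor: since the smoothing is local, any tangle diagram $K$ can be assumed to avoid the small disk $D$, so $K$ has literally the same wall-crossings with $\bW$ and with $\bW'$. All of the Kauffman, clasped, and wall-passing relations \eqref{rel:Kauffman}--\eqref{rel:wall-pass-ext} involve a single wall strand and can likewise be pushed off $D$, so they are preserved on the nose. There is nothing to ``absorb''.

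\textbf{The $q$-commutation issue you flag does not exist.} The wall parameters $z_{j,\pm}$ lie in the commutative ground ring $\bZ_{q,\bW}$; wall-passing never introduces powers of $q$. What you call ``the delicate part'' is a non-issue.

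\textbf{The genuine content you miss is relation \eqref{rel:wall-R3}.} This is the unique defining relation that involves a crossing of two walls, and it is the only thing that does not automatically transport to $\Skein{\Sigma,\bW'}$ after smoothing. The paper handles the one nontrivial resolution pattern by the two-line identity
\[
\Psi(\text{left}-\text{right})
=(\text{left}'-a_j\cdot\text{middle})+(a_j\cdot\text{middle}-\text{right}'),
\]
each bracket lying in the ideal of $\Skein{\Sigma,\bW'}$ by \eqref{rel:wall-pass-int}. That computation is the entire proof of well-definedness.

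\textbf{For the quasi-homomorphism part}, the paper does not push the exchange relation through $\Psi$ as you propose; it simply notes that $\Psi([\alpha_{\bW}]_{\bW})=[\alpha_{\bW}]_{\bW'}$ differs from the $\bW'$-minimal representative $[\alpha_{\bW'}]_{\bW'}$ only by wall-passing moves, hence by an element of $\bP_{\bW'}=\bP_{\bW}$, giving $\Psi(A_{\bW;\alpha})\asymp A_{\bW';\alpha}$. Fraser's \cite[Proposition~5.2]{Fra16} then finishes. Your sketch of comparing coefficient tuples via the flip relation would also work, but it is more laborious and the tautness hypothesis on $\bW'$ enters exactly as in the paper: it is needed so that the $\bW'$-minimal representatives, and hence the seed $\sfs_{\Sigma,\bW'}$, are defined at all (cf.\ \cref{rem:taut_cluster}).
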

In the case where $\bW'$ has no crossings, the cluster structure of $\Skein{\Sigma, \bW'}$ is normalized. In this case, the map $\Psi$ gives a skein-theoretic interpretation of the ``normalization'' construction given in \cite[Proposition 4.3]{Fra16}.

\paragraph{\textbf{(2) Specialization from the principal wall.}}
We investigate another example of quasi-homomorphisms from the principal wall system
$\bW_\tri$ associated with an ideal triangulation $\tri$, which corresponds to the double principal coefficients.
% (\cref{ex:principal}). 
% $\bW_\tri = \{W_\alpha\}_{\alpha \in \tri_\uf}$, where each $W_\alpha$ consists of a single wall parallel to the edge $\alpha$.
%Let $z_{\alpha,\pm}^\tri$ denote the generators of the coefficient ring $\bZ_{A, \bW_{\tri}}$ of .

\begin{thm}[\cref{thm:specialization_prin}]
For any taut wall system $\bW = (\mathsf{C}, J, \ell)$ on $\Sigma$, we have a $\bZ_{q,\bW_\tri}$-algebra homomorphism 
\begin{align*}
    \Psi_\tri: \Skein{\Sigma, \bW_\tri} \to \Skein{\Sigma, \bW}
\end{align*}
such that it is a quasi-homomorphism.
% from $\sfs_{\Sigma, \bW_\tri}$ to $\sfs_{\Sigma, \bW}$.
Here the $\bZ_{q,\bW_\tri}$-algebra structure of $\Skein{\Sigma, \bW}$ is defined by \eqref{eq:prin_psi}.
% For any walled surface $(\Sigma, \bW)$, 
% we have a $\bZ_{A,\bW_\tri}$-algebra homomorphism $\Psi: \mathscr{S}_\Sigma^A[\bW_\tri] \to \mathscr{S}_\Sigma^A[\bW]$ such that 
% \begin{align}
%     \frac{\Psi([\gamma]_{\bW_\tri})}{c_\bW(\gamma)} &= [\gamma]_\bW \quad \mbox{for any ideal arc $\gamma$}, \label{eq:rescaling}\\
%     \Psi(z_{\alpha,\pm}^\tri) &= \prod_{j \in J} z_{j,+}^{x_\alpha^\tri(L_{j,+})} z_{j,-}^{x_\alpha^\tri(L_{j,-})}. \nonumber
% \end{align}
% Here $c_{\bW}(\gamma) \in \bZ_{A,\bW}$ is a monomial of the coefficients $z_{j,\pm}$ that only depends on the ideal arc $\gamma$ and the wall system $\bW$. 
\end{thm}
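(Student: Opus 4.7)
The plan is to define $\Psi_\tri$ on the natural diagrammatic generators of $\Skein{\Sigma, \bW_\tri}$, check that it descends through the defining relations, and then read off the quasi-homomorphism property from Theorem~\ref{introthm:wskein_cluster} together with the motivating mutation identity from \eqref{eq:walled_skein_rel_elem}.

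First, I would set $\Psi_\tri$ to be the identity on every tangle diagram drawn in $\Sigma$: any curve or web representing an element of $\Skein{\Sigma, \bW_\tri}$ is sent to the same diagram now regarded in the walled surface $(\Sigma, \bW)$. On the coefficient ring, the definition \eqref{eq:prin_psi} of the $\bZ_{q, \bW_\tri}$-structure prescribes the image of each principal coefficient $z_{\kappa, \pm}$ ($\kappa \in \tri$) as a Laurent monomial in the coefficients $\{ z_{j, \pm}\}_{j \in J}$ of $\bW$ and in the boundary arcs, the exponents being read off from the signed intersection pattern of the principal wall parallel to $\kappa$ with the walls of $\bW$. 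This is forced: taking $\Psi_\tri$ to be the identity on curves fixes what $z_{\kappa, \pm}$ must map to, since in $\Skein{\Sigma, \bW}$ a strand parallel to $\kappa$ resolves, via iterated applications of the wall-passing relation \eqref{rel:wall-pass-ext} applied to every intersection with a $\bW$-wall, into precisely this Laurent monomial times the same strand.

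Next, I would verify that $\Psi_\tri$ respects all relations of $\Skein{\Sigma, \bW_\tri}$. The Kauffman bracket skein relations are strictly local and involve no walls, so they are automatic. The only substantive relations are the wall-passing relations for principal walls; but by the preceding paragraph this is precisely what the monomial substitution for $z_{\kappa, \pm}$ was engineered to ensure. Concretely, one fixes a small disk around each intersection of a diagram strand with a principal wall parallel to $\kappa$, expresses both sides of the wall-passing relation after applying $\Psi_\tri$, and checks equality by appealing to the wall-passing relations of $\bW$ at each intersection of that strand with walls of $\bW$. This reduces to a bookkeeping of signs determined by the canonical co-orientation of principal walls (arising from the sides of the triangle containing $\kappa$).

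Finally, to conclude that $\Psi_\tri$ is a quasi-homomorphism in the sense of \cite{Fra16}, I would pick any ideal triangulation $\tri'$ and compare the quantum seeds produced by Theorem~\ref{introthm:wskein_cluster} in $\Skein{\Sigma, \bW_\tri}$ and in $\Skein{\Sigma, \bW}$. The cluster variables attached to arcs $\kappa' \in \tri'$ and to boundary arcs are common to both algebras and are fixed by $\Psi_\tri$; the coefficient $y$-variables are by construction (combining the formula for seeds in Lemma~\ref{lem:mut_in_skein} with \eqref{eq:prin_psi}) sent to the corresponding $y$-variables of the target seed multiplied by a Laurent monomial in the cluster variables, which is the defining relation of a quasi-homomorphism. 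Compatibility with mutation reduces, in each exchange direction, to the single identity illustrated just after \eqref{eq:walled_skein_rel_elem}, and this is preserved by $\Psi_\tri$ because both $\kappa\kappa'$, $\alpha_i$, $\beta_i$ are fixed and the coefficients $z_{\kappa, \pm}$ map to Laurent monomials in the $z_{j, \pm}$ and boundary arcs of $(\Sigma, \bW)$.

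The main obstacle I anticipate is Step~2, the well-definedness check: the principal wall parallel to $\kappa$ may meet several walls of $\bW$, a diagram strand may meet several principal walls, and the resulting expansions mix both families of wall-passing relations. The verification reduces to showing that the Laurent-monomial exponents produced by \eqref{eq:prin_psi} are exactly the ones dictated by resolving every $\bW_\tri$-wall into the $\bW$-diagram locally; this is a matter of consistent side and sign conventions for walls (which are canonical on the $\bW_\tri$ side because principal walls are co-oriented by $\tri$, but require a fixed convention on the $\bW$ side), and I expect it to be done once and for all by a local model computation near a single intersection of wall systems.
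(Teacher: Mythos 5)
There is a genuine gap in Step~1. You propose that $\Psi_\tri$ ``send every tangle diagram to the same diagram now regarded in $(\Sigma,\bW)$''. But an element of $\Skein{\Sigma, \bW_\tri}$ is an equivalence class of diagrams under $\bW_\tri$-isotopy, and $\bW_\tri$-isotopic diagrams need \emph{not} give the same element of $\Skein{\Sigma, \bW}$: a $\bW_\tri$-isotopy may push a strand back and forth across a wall of $\bW$, and after wall-passing this changes the resulting class in $\Skein{\Sigma,\bW}$ by the unit $a_j = z_{j,+}z_{j,-}$. Since $\bW$ is arbitrary, no choice of $\psi$ can make this go away, so the ``identity on diagrams'' assignment is simply not well-defined. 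Your Step~2 addresses only the wall-passing relations for $\bW_\tri$-walls; it does not address $\bW_\tri$-isotopy itself, which is exactly where the ambiguity sits. (A smaller inaccuracy: the image of $z_{\kappa,\pm}$ under \eqref{eq:prin_psi} is a Laurent monomial in the $z_{j,\pm}$ alone; it does not involve boundary arcs, which live in the skein algebra rather than in $\bZ_{q,\bW}$.)

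The paper's proof avoids this by never defining $\Psi_\tri$ on arbitrary diagrams. It sets $\Psi([\alpha_{\bW_\tri}]_{\bW_\tri}) := [\alpha_\bW]_\bW$ only for arcs $\alpha\in\tri$ of a fixed triangulation, sending the \emph{$\bW_\tri$-minimal} position to the \emph{$\bW$-minimal} position (these are in general different diagrams, so this is not the identity on diagrams). Since these elements generate the quantum torus $\bT^\tri_{\bW_\tri}$ and the compatibility matrices agree, one gets a $\bZ_{q,\bW_\tri}$-algebra map on the fraction field $\Frac\bT^\tri_{\bW_\tri}\to\Frac\bT^\tri_\bW$ with no relations to check. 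The quasi-homomorphism property is then established by verifying $\Psi(\hat y^\tri_{\bW_\tri;\alpha})=\hat y^\tri_{\bW;\alpha}$ at this single seed and invoking Fraser's one-seed criterion \cite[Proposition 3.2]{Fra16}, rather than by comparing mutated seeds at every $\tri'$ as you propose. Finally, the fact that $\Psi$ restricts to a map $\Skein{\Sigma,\bW_\tri}\to\Skein{\Sigma,\bW}$ is deduced \emph{a posteriori} from the resulting proportionality of all cluster variables, not assumed from the outset. If you want to salvage a direct diagrammatic construction, you would have to fix a canonical representative of each element (e.g.\ $\bW_\tri$-minimal webs) and compare it against a canonical $\bW$-minimal form, which amounts to reproving the proportionality statement that the quantum-torus route gives you automatically.
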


For any wall system $\bW$, let $\bZ^+_{q,\bW} \subset \bZ_{q,\bW}$ be the sub-monoid consisting of elements of the form $\sum_{\lambda,\mu,\nu} c_{\lambda,\mu,\nu} q^{\lambda/2} \prod_{j \in J} z_{j,+}^{\mu_j} z_{j,-}^{\nu_j}$ with $c_{\lambda,\mu,\nu} \geq 0$.
A $\bZ_{q,\bW}$-basis $\mathbf{B}=\{B_\lambda \mid \lambda \in \Lambda\}$ of $\Skein{\Sigma, \bW}$ is said to be \emph{positive} if its structure constants belong to $\bZ^+_{q,\bW}$. 

\begin{cor}[\cref{cor:positive_bases}]
If $\mathbf{B}=\{B_\lambda \mid \lambda \in \Lambda\}$ is a positive $\bZ_{q,\bW_\tri}$-basis of $\Skein{\Sigma, \bW_\tri}$, then $(\Psi_\tri)_\ast\mathbf{B}:=\{\Psi_\tri(B_\lambda) \mid \lambda \in \Lambda\}$ is a positive $\bZ_{q,\bW}$-basis of $\Skein{\Sigma, \bW}$. 
\end{cor}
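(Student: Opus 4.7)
The plan is to use the algebra homomorphism $\Psi_\tri$ to transport both the basis property and positivity of $\mathbf{B}$ from $\Skein{\Sigma, \bW_\tri}$ to $\Skein{\Sigma, \bW}$. First I expand the product of two basis elements. By positivity of $\mathbf{B}$ in $\Skein{\Sigma, \bW_\tri}$, for any $\lambda, \mu \in \Lambda$ there exist $c_{\lambda\mu}^\nu \in \bZ^+_{q,\bW_\tri}$ with
\[
B_\lambda \cdot B_\mu = \sum_{\nu \in \Lambda} c_{\lambda\mu}^\nu \, B_\nu.
\]
Applying the $\bZ_{q,\bW_\tri}$-algebra homomorphism $\Psi_\tri$ of \cref{thm:specialization_prin}, and letting $\iota \colon \bZ_{q,\bW_\tri} \to \bZ_{q,\bW}$ denote the structure map defined in \eqref{eq:prin_psi}, this yields
\[
\Psi_\tri(B_\lambda) \cdot \Psi_\tri(B_\mu) = \sum_{\nu} \iota(c_{\lambda\mu}^\nu) \, \Psi_\tri(B_\nu).
\]

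Next I verify that each $\iota(c_{\lambda\mu}^\nu)$ lies in $\bZ^+_{q,\bW}$. By inspection of \eqref{eq:prin_psi}, each principal coefficient $z_{j,\pm}$ of $\bZ_{q,\bW_\tri}$ is sent to a product of coefficients $z_{j',\pm}$ of $\bZ_{q,\bW}$, with a positive integer coefficient and a power of $q^{1/2}$ recording the quasi-commutation. Hence $\iota(\bZ^+_{q,\bW_\tri}) \subset \bZ^+_{q,\bW}$, which immediately makes every structure constant of $(\Psi_\tri)_\ast \mathbf{B}$ positive. This is the straightforward half of the argument.

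The main obstacle is showing that $(\Psi_\tri)_\ast \mathbf{B}$ is in fact a $\bZ_{q,\bW}$-basis of $\Skein{\Sigma, \bW}$. For this I use the web basis $\Bweb{\Sigma, \bullet}$ from \cref{thm:basis-web}, whose underlying index set of isotopy classes of webs on $\Sigma$ is independent of the wall system. Tracing through the construction in \cref{thm:specialization_prin}, the map $\Psi_\tri$ should act on a basis web $W \in \Bweb{\Sigma, \bW_\tri}$ by $W \mapsto m_W \cdot W'$, where $W'$ is the corresponding web viewed in $\Skein{\Sigma, \bW}$ and $m_W \in \bZ^+_{q,\bW}$ is a positive monomial encoding the wall-crossings of $W$ under the specialization from the principal to the general wall system. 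This promotes $\Psi_\tri$ to a $\bZ_{q,\bW}$-algebra isomorphism
\[
\Skein{\Sigma, \bW_\tri} \otimes_{\bZ_{q,\bW_\tri}} \bZ_{q,\bW} \xrightarrow{\sim} \Skein{\Sigma, \bW}
\]
via $\iota$. Base change then transforms any $\bZ_{q,\bW_\tri}$-basis of $\Skein{\Sigma, \bW_\tri}$ into a $\bZ_{q,\bW}$-basis of $\Skein{\Sigma, \bW}$, and applied to $\mathbf{B}$ this completes the proof. The delicate point is precisely the explicit web description of $\Psi_\tri$: the identification must be consistent with the wall-passing relation and with the quasi-commutation among the various $z_{j,\pm}$, and this is where the quasi-homomorphism hypothesis in \cref{thm:specialization_prin} is essential.
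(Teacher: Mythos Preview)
The paper states this corollary without proof, so there is no argument to compare against directly; your outline is in the right spirit, but it has one small misstatement and one genuine gap.

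\medskip
\textbf{The minor point.} Your description of the ring map $\iota=\psi$ from \eqref{eq:prin_psi} is inaccurate: the variables $z_{j,\pm}$ lie in the \emph{commutative} coefficient ring $\bZ_{q,\bW}$, so there is no ``power of $q^{1/2}$ recording the quasi-commutation''. The formula \eqref{eq:prin_psi} sends each $z^\tri_{\alpha,\pm}$ to a genuine monomial in the $z_{j,\pm}$ with coefficient $1$, and this already gives $\psi(\bZ^+_{q,\bW_\tri})\subset \bZ^+_{q,\bW}$. Your conclusion about positivity of the structure constants is correct; only the justification needs to be cleaned up.

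\medskip
\textbf{The real gap.} The substantive claim is that $(\Psi_\tri)_\ast\mathbf{B}$ is a $\bZ_{q,\bW}$-\emph{basis}, and here your argument is incomplete. You assert that $\Psi_\tri$ sends a basis web $W\in\Bweb{\Sigma,\bW_\tri}$ to $m_W\cdot W'$ with $m_W$ an invertible monomial, but you only hedge (``should act'') and do not prove it. The proof of \cref{thm:specialization_prin} establishes $\Psi_\tri([\gamma_{\bW_\tri}])\asymp[\gamma_\bW]$ only for ideal \emph{arcs} $\gamma$, via the quasi-homomorphism property of cluster variables. Basis webs, however, may contain simple loops, and the proportionality for loops is not covered by the quasi-homomorphism axioms (loops are not cluster variables). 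Since $\Psi_\tri$ is defined implicitly through the quantum torus $\bT^\tri$ rather than diagrammatically, the effect on a loop is a priori a Laurent polynomial in the torus, not obviously a single monomial times the corresponding loop in $\Skein{\Sigma,\bW}$.

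One clean way to close the gap (at least when $|\bM|\ge 2$) is to invoke \cref{rmk:compair_coeff}: both $\Skein{\Sigma,\bW_\tri}$ and $\Skein{\Sigma,\bW}$ are identified with $\bZP_{\bullet}\otimes_{\bZ_q}\Skein{\Sigma}$, and one then checks that under these identifications $\Psi_\tri$ agrees with the base change along $\psi$, since both send the torus generators $[\alpha_{\bW_\tri}]\mapsto[\alpha_\bW]$. This yields the isomorphism $\Skein{\Sigma,\bW_\tri}\otimes_{\bZ_{q,\bW_\tri}}\bZ_{q,\bW}\xrightarrow{\sim}\Skein{\Sigma,\bW}$ you need. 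Without this (or an equivalent argument handling loops), your proof of the basis property does not go through.
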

As a particular example, the \emph{bracelets basis} \cite{Thu14} is known to be positive for the principal coefficients by Mandel--Qin \cite{MQ23}. Therefore it gives a positive basis of $\Skein{\Sigma, \bW_\tri}$, which induces a positive basis of $\Skein{\Sigma, \bW}$ via the corollary above.

\subsection{Future topics of research}

\paragraph{\textbf{Stated skein algebras of walled surfaces.}}
In the forthcoming work \cite{IKY2}, we will investigate a generalization of the \emph{stated skein algebras} \cite{Le_triangular} for a walled surface, in particular the quantum trace maps associated with ideal triangulations. We will state a generalization of \cite[Theorem 5.2]{LY22}, giving an isomorphism between the reduced version of such stated skein algebra and $\Skein{\Sigma, \bW}[\partial^{-1}]$. The transition between the quantum trace maps associated with different triangulations would realize the quantum cluster $\X$-transformations with coefficients of \cite{CFMM}. 

\paragraph{\textbf{Betti model for the classical limit $q^{1/2}=1$.}}
In the case of empty wall $\bW=\emptyset$ (\emph{i.e.}, Muller's setting), the classical limit $\mathscr{S}_{\Sigma}^1[\partial^{-1}]$ coincides with the function algebra $\cO(\A_{SL_2,\Sigma}^\times)$ of (the generic part of) the moduli space of decorated twisted $SL_2$-local systems \cite{IOS}. So it would be interesting to seek for a moduli space $\A_{SL_2,(\Sigma,\bW)}$ for a walled surface $(\Sigma,\bW)$ that corresponds to the skein algebra $\mathscr{S}_{\Sigma,\bW}^1$. It should be fibered over $\Spec \bZ_{1,\bW}$. 
%In the case of principal wall $\bW_\tri$, such a model will be discussed elsewhere, based on the (non-)abelianization theory of Gaiotto--Moore--Neitzke \cite{GMN,KR22}. 

\subsection*{Organization of the paper}
Our notation on marked surfaces and quantum tori is summarized below in this section. 

In \cref{sect:skein}, we introduce the skein algebras of walled surfaces, and investigate their basic properties. Proofs for some technical statements on the minimal position are postponed until \cref{sec:minimal}. 
In \cref{subsec:skein_formula}, we obtain a formula for computing a skein relation in an embedded square in an arbitrary walled surface $(\Sigma,\bW)$, which is important for the comparison with the quantum exchange relations. Some generalizations of walled surfaces are proposed in \cref{sec:generalization}, which would be of independent interest from the skein point of view (and possible new connection to the cluster algebra). 

In \cref{sect:cluster}, we 
% review
naturally extend the concepts of quantum cluster algebras for the setting with coefficients, following \cite{BZ,FZ-CA4,CFMM}.
We recall the (quantum) cluster algebras of surface type, and the realization of normalized coefficients by integral laminations \cite{FT18}. 
In \cref{sec:comparison}, we connect the two theories and prove all the statements in the introduction above. 

\cref{sec:minimal} is devoted to the discussion on the \emph{$\bW$-minimal positions} of curves (\cref{cor:minimal-curve}), which is necessary to unambiguously construct cluster variables and a graphical basis inside $\Skein{\Sigma,\bW}$. 

\subsection*{Acknowledgements.}
T. I. is supported by JSPS KAKENHI Grant Number~JP20K22304, JP24K16914.
S. K. is partially supported by scientific research support of Research Alliance Center for Mathematical Sciences and Mathematical Science Center for Co-creative Society, Tohoku University.
W. Y. is supported by Grant-in-Aid for Early-Career Scientists Grant Number JP19K14528, JP23K12972.
\bigskip

\subsection*{Notation on marked surfaces}
% A marked surface is called a \emph{punctured surface} if it has empty boundary (and hence $M_\partial=\emptyset$). 
A marked surface $(\Sigma,M)$ is a connected compact oriented surface $\Sigma$ together with a fixed non-empty finite set $M \subset \partial\Sigma$ of \emph{marked points}. Notice that we do not consider interior marked points (``punctures'').  
When the choice of $M$ is clear from the context, we simply denote a marked surface by $\Sigma$. 
We always assume the following conditions:
\begin{enumerate}
    \item[(S1)] Each boundary component (if exists) has at least one marked point.
    \item[(S2)] $-2\chi(\Sigma)+|\spe| >0$.
    % , where $b$ denotes the number of boundary components.
    %\item[(S3)] If $g=0$ and $b=0$, then $h \geq 4$.
\end{enumerate}
% An \emph{ideal arc} in $\Sigma$ is the isotopy class of a non-contractible curve $\alpha$ such that $\partial\alpha \subset P \cup M_\partial$. 
% An {\it ideal triangulation} $\tri$ of $\Sigma$ is a collection of ideal arcs such that
% \begin{itemize}
% \item each pair of ideal arcs in $\tri$ can intersect only at their endpoints;
% \item each region complementary to $\tri$ is a triangle whose edges (resp. vertices) are ideal arcs (resp. marked points).
% \end{itemize}
An \emph{ideal arc} on $\Sigma$ is an immersed arc on $\Sigma$ having endpoints on $\bM$, without self-crossings except for the endpoints. 
The conditions (S1) and (S2) ensure that the existence of an \emph{ideal triangulation}, which is 
%an isotopy class of 
a maximal collection $\tri$ of ideal arcs that are mutually non-isotopic and disjoint except for their endpoints. The complementary regions of $\Sigma \setminus \bigcup \tri$ are called \emph{ideal triangles}. 

For an ideal triangulation $\tri$, let $\tri_\f \subset \tri$ denote the subset consisting of boundary arcs and hence $\tri_\uf:=\tri\setminus\tri_\f$ is the subset consisting of interior ideal arcs. Then we have
\begin{align*}
    |\tri|= -3 \chi(\Sigma) + 2|\spe|, \quad 
    |\tri_\uf|= -3 \chi(\Sigma) + |\spe|. 
\end{align*}
For $\alpha \in \tri_\uf$, the \emph{flip} operation produces a new ideal triangulation $\tri'$ of $\Sigma$ obtained by replacing $\alpha$ with the other diagonal of the square containing $\alpha$ as a diagonal (see \cref{fig:tri_quivl}). Let $\Tri_\Sigma$ denote the $|\tri_\uf|$-regular graph with vertices given by the ideal triangulations of $\Sigma$, and edges given by flips among them. It is classically known to be connected.  

% A marked surface $\Sigma = (\Sigma, \bM)$ is a compact oriented surface with a fixed non-empty finite set $\bM$ of \emph{marked points} on it.
% A marked point is called a \emph{puncture} if it lies in the interior of $\Sigma$, and a \emph{special point} otherwise. 
% Let $\punc=\punc(\Sigma)$ (resp. $\spe=\spe(\Sigma)$) denote the set of punctures (resp. special points). (\emph{i.e.}, $\bM = \punc \sqcup \spe$.)

\smallskip

\paragraph{ \textbf{Notation on quantum tori.}} 
Let $\cR$ be a unital commutative ring with an invertible element $q \in \cR$. 
Our typical choice of $\cR$ is the ring $\bZ_q:=\bZ[q^{\pm 1/2}]$ of Laurent polynomials on a formal quantum parameter $q^{1/2}$, or its extensions by adding extra parameters. 
%let us consider the ring $\bZ_q:=\bZ[q^{\pm 1/2}]$. 

\begin{dfn}
Given a lattice $\Lambda$ equipped with a skew-symmetric form $(-,-):\Lambda \times \Lambda \to \bZ$, the associated \emph{based quantum torus} over $\cR$ is  is the associative $\cR$-algebra $\bT$ having
\begin{itemize}
    \item a free $\cR$-basis $B_\lambda$ parametrized by $\lambda \in \Lambda$, and
    \item the product given by $B_\lambda\cdot B_\mu = q^{(\lambda,\mu)/2} B_{\lambda+\mu}$. 
\end{itemize}
The \emph{rank} of $\bT_\Lambda$ is defined to be the rank of $\Lambda$. The map $B:\Lambda \to \bT$, $\lambda \mapsto B_\lambda$ is called the \emph{framing} of $\bT$. 
\end{dfn}
With this understanding, we will simply say that $B:\Lambda \to \bT$ is a based quantum torus. Observe that for a given a basis $(e_i)_{i=1,\dots,N}$ of the lattice $\Lambda$, the based quantum torus $B:\Lambda \to \bT$ is determined by its values on the basis $B_i:=B_{e_i}$ for $i=1,\dots,N$. Indeed, the element associated with a vector $\lambda=\sum_i \lambda_i e_i$ can be written as
\begin{align*}
    B_{\lambda} = q^{-\frac 1 2 \sum_{i < j}\lambda_i\lambda_j\omega_{ij}} B_{1}\cdots B_{N},
\end{align*}
where $\omega_{ij}:=(e_i,e_j)$. In view of this, we will also say that $B:\Lambda \to \bT$ is a based quantum torus generated by the elements $(B_i)_{i=1,\dots,N}$.

\section{Skein algebras of walled surfaces}\label{sect:skein}
\subsection{Walled surfaces and skein relations}
We will consider an additional structure on a marked surface, called a \emph{wall}, and define the skein algebra for such a marked surface with a wall.
Let $(\Sigma,\bM)$ be a marked surface. 
\begin{dfn}\label{dfn:wall}
    A \emph{wall system} $\bW=(\mathsf{C}, J, \ell)$ consists of the following data: 
    \begin{itemize}
        \item A set $\mathsf{C}=\mathsf{C}_{\mathrm{loop}}\sqcup\mathsf{C}_{\mathrm{arc}}$ of mutually distinct simple loops $\mathsf{C}_{\mathrm{loop}}$ and ideal arcs $\mathsf{C}_{\mathrm{arc}}$ on $\Sigma$ only with transverse double points except for their endpoints;
        \item A surjective map $\ell\colon \mathsf{C}\to J$ called a \emph{labeling} valued in a finite index set $J$. $\mathsf{C}_j:=\ell^{-1}(j)$ is the subset of curves with a label $j \in J$.
    \end{itemize} 
    A pair $\xi=(\gamma,j)$ with $\gamma \in \mathsf{C}$ and $j \in J$ with $\ell(\gamma)=j$ is called a \emph{wall}.
    The underlying curve $\gamma$ of a wall $\xi = (\gamma, j)$ is also sometimes called a wall.
    The pair $(\Sigma,\bW)$ is called a \emph{walled surface}. 
    %A marked surface $\Sigma$ equipped with a wall system $\bW$ is called a \emph{walled surface}, and denoted by $(\Sigma, \bW)$.
    A connected component of $\Sigma\setminus \bigcup\mathsf{C}$ is called a \emph{chamber} of $(\Sigma,\bW)$.
% See \cref{fig:walled-polygons} for examples. 
\end{dfn}
%Given a wall system $\bW$, let $\mathsf{C}_j:=\ell^{-1}(j)$ for $j \in J$. In what follows, \emph{$\bW$-isotopy} means an isotopy on $\Sigma$ relative to $M \cup \bigcup \sfC$. 
% It is further decomposed into $\mathsf{C}_{j,\mathrm{loop}}:=\mathsf{C}_j \cap \mathsf{C}_{\mathrm{loop}}$ and $\mathsf{C}_{j,\mathrm{arc}}:=\mathsf{C}_j \cap \mathsf{C}_{\mathrm{arc}}$. 

\begin{ex}\label{ex:wall_principal}
Given an ideal triangulation $\tri$ of $\Sigma$, consider the wall system $\bW_\tri=(\sfC_\tri,J,\ell)$ given by 
$\sfC_\tri:=\tri_\uf$, $J:=\{1,\dots,|\tri_\uf|\}$, together with a bijection $\ell:\sfC_\tri \to J$. We call $\bW_\tri$ the \emph{principal wall system} associated with $\tri$. One may also consider any smaller index set $J'$ together with a surjection $\ell': \sfC_\tri \to J'$, which will correspond to assigning a common coefficient variable to some of the walls. 
\end{ex}

See \cref{fig:walled-polygons} for more examples related to the examples in \cite{FT18}. 

\begin{figure}[ht]
    \begin{center}
        \begin{tikzpicture}[scale=.1]
            \coordinate (A) at (-135:15);
            \coordinate (B) at (135:15);
            \coordinate (C) at (45:15);
            \coordinate (D) at (-45:15);
            \draw[wline] (A) to[bend right] (C);
            \draw[wline] (A) to[bend left] (C);
            \draw[wline] (B) to[bend right] (D);
            \draw[wline] (B) to[bend left] (D);
            \bdryline{(A)}{(B)}{-2cm}
            \bdryline{(B)}{(C)}{-2cm}
            \bdryline{(C)}{(D)}{-2cm}
            \bdryline{(D)}{(A)}{-2cm}
            \draw[fill] (A) circle (20pt);
            \draw[fill] (B) circle (20pt);
            \draw[fill] (C) circle (20pt);
            \draw[fill] (D) circle (20pt);
            \node at (A) [xshift=15pt,yshift=10pt]{\scriptsize $1$};
            \node at (B) [xshift=10pt,yshift=-15pt]{\scriptsize $2$};
            \node at (A) [xshift=10pt,yshift=15pt]{\scriptsize $3$};
            \node at (B) [xshift=15pt,yshift=-10pt]{\scriptsize $4$};
        \end{tikzpicture}
        \hspace{1em}
        \begin{tikzpicture}[scale=.1]
            \foreach \i [evaluate=\i as \x using (\i+1)*60]in {0,1,...,5}
            {
                \coordinate (A\i) at (\x:15);
            }
            \foreach \i [evaluate=\i as \j using {mod(\i+2,6)}]in {0,1,...,5}
            {
                \draw[wline] (A\i) -- (A\j);
            }
            \foreach \i [evaluate=\i as \j using {mod(\i+1,6)}]in {0,1,...,5}
            {
                \bdryline{(A\i)}{(A\j)}{2cm}
            }
            \foreach \k in {0,1,...,5}
            {
                \draw[fill] (A\k) circle (20pt);
            }
        \end{tikzpicture}
        \hspace{1em}
        \begin{tikzpicture}[scale=.1, rotate=-60]
            \foreach \i [evaluate=\i as \x using (\i+1)*60]in {0,1,...,5}
            {
                \coordinate (A\i) at (\x:15);
            }
            \foreach \i [evaluate=\i as \j using {mod(\i+2,6)}]in {1,2,4,5}
            {
                \draw[wline] (A\i) -- (A\j);
            }
            \draw[wline] (A0) -- (A3);
            \draw[wline] (A2) -- (A5);
            \foreach \i [evaluate=\i as \j using {mod(\i+1,6)}]in {0,1,...,5}
            {
                \bdryline{(A\i)}{(A\j)}{2cm}
            }
            \foreach \k in {0,1,...,5}
            {
                \draw[fill] (A\k) circle (20pt);
            }
        \end{tikzpicture}
        \hspace{.6cm}
        \begin{tikzpicture}[scale=.9]
        \filldraw [gray!30]  (-1.7,2) rectangle (-1.5,-1);
        \draw [wline] (-1.5,-1) .. controls (-0.65,-0.7) and (0.25,-0.6) .. (0.75,-0.6);
        \draw [wline] (-1,2) .. controls (-0.5,1) and (0.5,0) .. (1,-0.25);
        \draw [wline] (-0.5,2) .. controls (0,1) and (0.5,0.5) .. (1.25,0.1);
        \draw [wline] (0,2) .. controls (0.35,1.4) and (0.75,0.95) .. (1.5,0.5);
        \draw [very thick] (-1.5,2) node [fill, circle, inner sep=1.5] (v5) {} -- (-1.5,0.5) node [fill, circle, inner sep=1.5] {} -- (-1.5,-1) node [fill, circle, inner sep=1.5] (v1) {};
        \node [fill, circle, inner sep=1.5] (v2) at (0.5,-1) {};
        \node [fill, circle, inner sep=1.5] (v3) at (1.5,0.5) {};
        \node [fill, circle, inner sep=1.5] (v4) at (0.5,2) {};
        \draw [->>-={.35}{}] (v1) -- (v2);
        \draw [->-={.4}{}] (v2) -- (v3);
        \draw [-<<-={.8}{}] (v3) -- (v4);
        \draw [-<-={.65}{}] (v4) -- (v5);
        % \draw [red, ultra thick] (v4) edge (v2);
        % \draw [red, ultra thick] (v5) edge (v3);
        \end{tikzpicture}
    \end{center}
    \caption{Examples of walled surfaces. The first three examples correspond to surfaces with laminations in \cite{FT18}. They are related to $\mathrm{Gr}_{2,4}$ and $\mathrm{Gr}_{2,6}$, and $\mathrm{SL}_4/N$. The fourth example is a torus with one boundary component equipped with a single wall.}
    \label{fig:walled-polygons}
\end{figure}
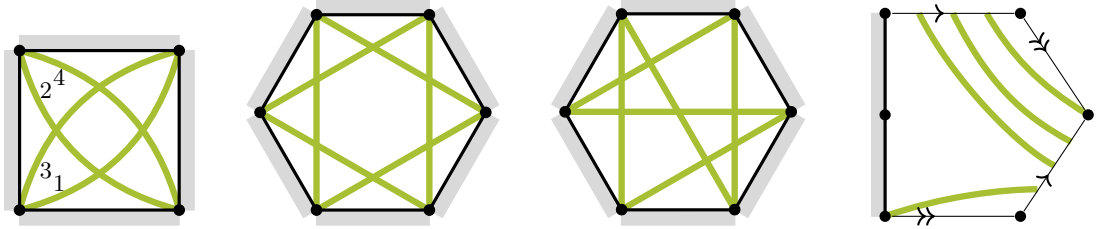

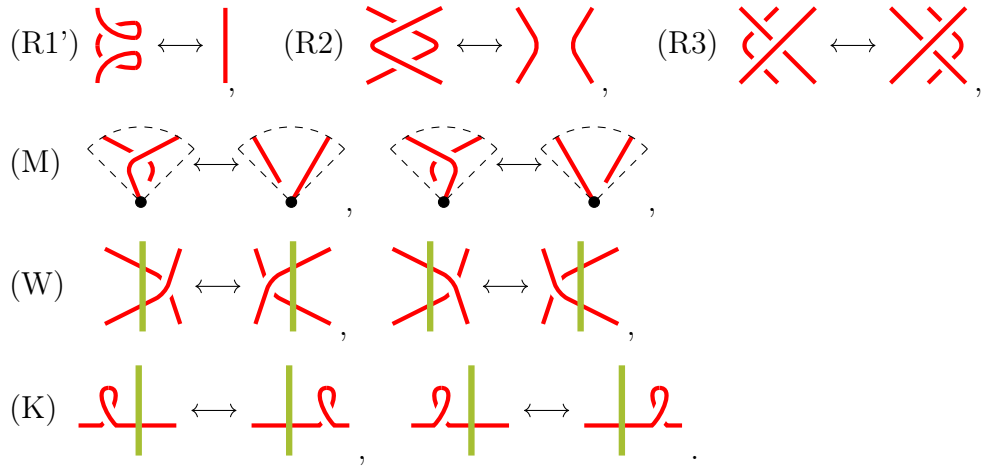
\begin{figure}[ht] %-----Reidemeister moves
    \centering
\begin{align*}
    &\begin{tikzpicture}[scale=.1]
        \begin{scope}[xshift=-6cm]
            \draw[webline] (-5,5) to[out=south, in=west] (0,2);
            \draw[webline, overarc] (-5,0) to[out=north, in=west] (0,4);
            \draw[webline] (0,4) to[out=east, in=east] (0,2);
            \draw[webline] (-5,5) to[out=north, in=west] (0,8);
            \draw[webline, overarc] (-5,10) to[out=south, in=west] (0,6);
            \draw[webline] (0,8) to[out=east, in=east] (0,6);
            \node at (-12,5) {(R1')};
        \end{scope}
        \begin{scope}
            \draw[<->] (-3,5) -- (3,5);
        \end{scope}
        \begin{scope}[xshift=6cm]
            \draw[webline] (0,0) -- (0,10);
        \end{scope}
    \end{tikzpicture}
    , \quad%\hspace{1em}
    \begin{tikzpicture}[scale=.1]
        \begin{scope}[xshift=-10cm]
            \draw[webline, rounded corners] (-5,0) -- (5,5) -- (-5,10);
            \draw[webline, overarc, rounded corners] (5,0) -- (-5,5) -- (5,10);
            \node at (-12,5) {(R2)};
        \end{scope}
        \begin{scope}
            \draw[<->] (-3,5) -- (3,5);
        \end{scope}
        \begin{scope}[xshift=10cm]
            \draw[webline, rounded corners] (-5,0) -- (-2,5) -- (-5,10);
            \draw[webline, overarc, rounded corners] (5,0) -- (2,5) -- (5,10);
        \end{scope}
    \end{tikzpicture}
    , \quad%\hspace{1em}
    \begin{tikzpicture}[scale=.1]
        \begin{scope}[xshift=-10cm]
            \draw[webline, rounded corners] (0,0) -- (-5,5) -- (0,10);
            \draw[webline, overarc] (5,0) -- (-5,10);
            \draw[webline, overarc] (-5,0) -- (5,10);
            \node at (-12,5) {(R3)};
        \end{scope}
        \begin{scope}
            \draw[<->] (-3,5) -- (3,5);
        \end{scope}
        \begin{scope}[xshift=10cm]
            \draw[webline, rounded corners] (0,0) -- (5,5) -- (0,10);
            \draw[webline, overarc] (5,0) -- (-5,10);
            \draw[webline, overarc] (-5,0) -- (5,10);
        \end{scope}
    \end{tikzpicture}
    ,\\ %\hspace{1em}
    &\begin{tikzpicture}[scale=.1]
        \begin{scope}[xshift=-10cm]
            \draw[webline, rounded corners, shorten <=.3cm] (0,0) -- (2,5) -- (120:10);
            \draw[webline, overarc, rounded corners] (0,0) -- (-2,5) -- (60:10);
            \draw[fill] (0,0) circle (20pt);
            \draw[dashed] (45:10) arc (45:135:10cm);
            \draw[dashed] (45:10) -- (0,0);
            \draw[dashed] (135:10) -- (0,0);
            \node at (-14,5) {(M)};
        \end{scope}
        \begin{scope}
            \draw[<->] (-3,5) -- (3,5);
        \end{scope}
        \begin{scope}[xshift=10cm]
            \draw[webline, shorten <=.3cm] (0,0) -- (120:10);
            \draw[webline] (0,0) -- (60:10);
            \draw[fill] (0,0) circle (20pt);
            \draw[dashed] (45:10) arc (45:135:10cm);
            \draw[dashed] (45:10) -- (0,0);
            \draw[dashed] (135:10) -- (0,0);
        \end{scope}
    \end{tikzpicture}
    , \quad%\hspace{1em}
    \begin{tikzpicture}[scale=.1]
        \begin{scope}[xshift=-10cm]
            \draw[webline, overarc, rounded corners, shorten <=.3cm] (0,0) -- (-2,5) -- (60:10);
            \draw[webline, overarc, rounded corners] (0,0) -- (2,5) -- (120:10);
            \draw[fill] (0,0) circle (20pt);
            \draw[dashed] (45:10) arc (45:135:10cm);
            \draw[dashed] (45:10) -- (0,0);
            \draw[dashed] (135:10) -- (0,0);
        \end{scope}
        \begin{scope}
            \draw[<->] (-3,5) -- (3,5);
        \end{scope}
        \begin{scope}[xshift=10cm]
            \draw[webline] (0,0) -- (120:10);
            \draw[webline, shorten <=.3cm] (0,0) -- (60:10);
            \draw[fill] (0,0) circle (20pt);
            \draw[dashed] (45:10) arc (45:135:10cm);
            \draw[dashed] (45:10) -- (0,0);
            \draw[dashed] (135:10) -- (0,0);
        \end{scope}
    \end{tikzpicture}
    ,\\ %\hspace{1em}
    &\begin{tikzpicture}[scale=.1]
        \begin{scope}[xshift=-10cm]
            \draw[webline, overarc, rounded corners] (5,0) -- (3,6) -- (-5,10);
            \draw[webline, overarc, rounded corners] (-5,0) -- (3,4) -- (5,10);
            \draw[wline] (0,-1) -- (0,11);
            \node at (-14,5) {(W)};
        \end{scope}
        \begin{scope}
            \draw[<->] (-3,5) -- (3,5);
        \end{scope}
        \begin{scope}[xshift=10cm]
            \draw[webline, overarc, rounded corners] (5,0) -- (-3,4) -- (-5,10);
            \draw[webline, overarc, rounded corners] (-5,0) -- (-3,6) -- (5,10);
            \draw[wline] (0,-1) -- (0,11);
        \end{scope}
    \end{tikzpicture}
    , \quad%\hspace{1em}
    \begin{tikzpicture}[scale=.1]
        \begin{scope}[xshift=-10cm]
            \draw[webline, overarc, rounded corners] (-5,0) -- (3,4) -- (5,10);
            \draw[webline, overarc, rounded corners] (5,0) -- (3,6) -- (-5,10);
            \draw[wline] (0,-1) -- (0,11);
        \end{scope}
        \begin{scope}
            \draw[<->] (-3,5) -- (3,5);
        \end{scope}
        \begin{scope}[xshift=10cm]
            \draw[webline, overarc, rounded corners] (-5,0) -- (-3,6) -- (5,10);
            \draw[webline, overarc, rounded corners] (5,0) -- (-3,4) -- (-5,10);
            \draw[wline] (0,-1) -- (0,11);
        \end{scope}
    \end{tikzpicture}
    ,\\ %\hspace{1em}
    &\begin{tikzpicture}[scale=.1]
        \begin{scope}[xshift=-10cm]
            \draw[webline, overarc] (-8,3) -- (-5,3) to[out=north east, in=east] (-4,8);
            \draw[webline, overarc] (-4,8) to[out=west, in=north west] (-3,3) -- (5,3);
            \draw[wline] (0,-1) -- (0,11);
            \node at (-14,5) {(K)};
        \end{scope}
        \begin{scope}
            \draw[<->] (-3,5) -- (3,5);
        \end{scope}
        \begin{scope}[xshift=10cm]
            \draw[webline, overarc] (-5,3) -- (4,3) to[out=north east, in=east] (5,8);
            \draw[webline, overarc] (5,8) to[out=west, in=north west] (6,3) -- (8,3);
            \draw[wline] (0,-1) -- (0,11);
        \end{scope}
    \end{tikzpicture}
    , \quad
    \begin{tikzpicture}[scale=.1]
        \begin{scope}[xshift=-10cm]
            \draw[webline, overarc] (-4,8) to[out=west, in=north west] (-3,3) -- (5,3);
            \draw[webline, overarc] (-8,3) -- (-5,3) to[out=north east, in=east] (-4,8);
            \draw[wline] (0,-1) -- (0,11);
        \end{scope}
        \begin{scope}
            \draw[<->] (-3,5) -- (3,5);
        \end{scope}
        \begin{scope}[xshift=10cm]
            \draw[webline, overarc] (5,8) to[out=west, in=north west] (6,3) -- (8,3);
            \draw[webline, overarc] (-5,3) -- (4,3) to[out=north east, in=east] (5,8);
            \draw[wline] (0,-1) -- (0,11);
        \end{scope}
    \end{tikzpicture}.
\end{align*}
    \vspace{-2mm}
    \caption{
        The Reidemeister moves with a wall. (R1'), (R2), and (R3) are the ordinary (framed) Reidemeister moves. (M) describes a local diagram in a chamber, where an over-passing arc passes through the under-passing arc at a marked point in the chamber. (W) is related to an ``isotopy of the thickened walled surface $(\Sigma, \bW)\times [0,1]$''. A kink can pass through a wall by (K).
    }
    \label{fig:Reidemeister}
\end{figure}

\begin{dfn}[Tangles in a walled surface]\label{def:tangle}
    Let $\bW=(\sfC,J,\ell)$ be a wall system on $\Sigma$.
    \begin{enumerate}
    \item 
    A \emph{tangle diagram $D$} in the walled surface $(\Sigma,\bW)$ is an immersion of a disjoint union of (unoriented) circles and intervals to $\Sigma$ satisfying the following condition:
    \begin{itemize}
        \item each intersection of $D\cup \bigcup\mathsf{C}$ in $\Sigma\setminus\partial\Sigma$ is a transverse double-point;
        \item each endpoint of arcs of $D$ lies in $\bM$;
        \item each intersection point of $D$ in $\Sigma\setminus\partial\Sigma$ (called \emph{internal crossings}) has over/under-passing information; 
        \item for each $p\in\bM$, there is a map from the set of half-edges incident to $p$ to a totally ordered set. The images of half-edges are called the \emph{elevations at $p$}, and the half-edges are said to have a \emph{simultaneous crossing} if they have the same elevation.
    \end{itemize}
    The first condition is rephrased that the diagram $D$ is \emph{$\bW$-transverse}.
    We denote the set of tangle diagrams in $(\Sigma,\bW)$ by $\mathsf{Diag}(\Sigma,\bW)$.
    %We call the intersection points in $\Sigma\setminus\partial\Sigma$ \emph{internal crossings}, 
    %The order of (image of) half-edges at $p$ the \emph{elevation at $p$}, and half-edges with the same elevation a \emph{simultaneous crossing}.
    \item Two tangle diagrams $D$ and $D'$ are said to be \emph{equivalent} if they are related by a sequence of the Reidemeister moves with walls defined in \cref{fig:Reidemeister} and $\bW$-isotopy. Here \emph{$\bW$-isotopy} means an isotopy on $\Sigma$ relative to $M \cup \bigcup \sfC$.
    The equivalence class of a tangle diagrams $D$ in $(\Sigma, \bW)$ is called a \emph{framed tangle} in $(\Sigma, \bW)$. 
    We denote the set of framed tangles in $(\Sigma, \bW)$ by $\mathsf{Tang}(\Sigma, \bW)$.
    \end{enumerate}
\end{dfn}

The over/under-passing information is indicated as
\ \tikz[baseline=-.6ex, scale=.1]{
    \draw[webline] (-45:5) -- (135:5);
    \draw[webline, overarc] (-135:5) -- (45:5);
    \draw[dashed] (0,0) circle [radius=5];
\ }.
Let $e_1$ and $e_2$ be adjacent half-edges sharing $p\in\bM$.
They are indicated (abbreviating walls) as 
\ \tikz[baseline=-.6ex, scale=.1, yshift=-2cm]{
    \coordinate (P) at (0,0);
    \draw[webline, shorten <=.2cm] (P) -- (110:6);
    \draw[webline] (P) -- (70:6);
    \draw[dashed] (30:6) arc (30:150:6cm);
    \draw[dashed] (P) -- (30:6);
    \draw[dashed] (P) -- (150:6);
    \draw[fill=black] (P) circle [radius=20pt];
    \node at (110:6) [left]{\scriptsize $e_{1}$};
    \node at (70:6) [right]{\scriptsize $e_{2}$};
    \node at (P) [below]{\scriptsize $p$};
\ } if the elevation of $e_1$ is lower than that of $e_2$, and as
\ \tikz[baseline=-.6ex, scale=.1, yshift=-2cm]{
    \coordinate (P) at (0,0);
    \draw[webline] (P) -- (110:6);
    \draw[webline] (P) -- (70:6);
    \draw[dashed] (30:6) arc (30:150:6cm);
    \draw[dashed] (P) -- (30:6);
    \draw[dashed] (P) -- (150:6);
    \draw[fill=black] (P) circle [radius=20pt];
    \node at (110:6) [left]{\scriptsize $e_{1}$};
    \node at (70:6) [right]{\scriptsize $e_{2}$};
    \node at (P) [below]{\scriptsize $p$};
\ } if $e_1$ and $e_2$ have the same elevation.
Similarly, the elevations of more than two half-edges at $p\in\bM$ are distinguished by their distance from $p$. 

\begin{rem}
Only the orders among the elevation are relevant in what follows. Local moves of a tangle shown in \cref{fig:forbidden} are forbidden.
%By the definition of the Reidemeister moves, 
\end{rem}

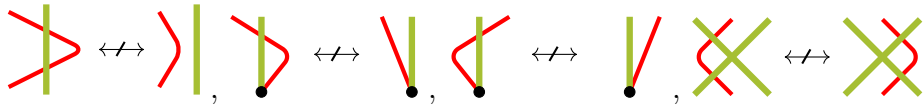
\begin{figure}[ht] %-----Forbidden moves
    \begin{tikzpicture}[scale=.1]
        \begin{scope}[xshift=-10cm]
            \draw[webline, rounded corners] (-5,0) -- (5,5) -- (-5,10);
            \draw[wline] (0,-1) -- (0,11);
        \end{scope}
        \begin{scope}
            \draw[-/-={.5}{}, <->] (-3,5) -- (3,5);
        \end{scope}
        \begin{scope}[xshift=10cm]
            \draw[webline, rounded corners] (-5,0) -- (-2,5) -- (-5,10);
            \draw[wline] (0,-1) -- (0,11);
        \end{scope}
    \end{tikzpicture}
    ,
    \begin{tikzpicture}[scale=.1]
        \begin{scope}[xshift=-10cm]
            \draw[webline, rounded corners] (0,0) -- (4,5) -- (-4,10);
            \draw[wline] (0,0) -- (0,10);
            \draw[fill] (0,0) circle (20pt);
        \end{scope}
        \begin{scope}
            \draw[-/-={.5}{}, <->] (-3,5) -- (3,5);
        \end{scope}
        \begin{scope}[xshift=10cm]
            \draw[webline] (0,0) -- (-4,10);
            \draw[wline] (0,0) -- (0,10);
            \draw[fill] (0,0) circle (20pt);
        \end{scope}
    \end{tikzpicture}
    ,
    \begin{tikzpicture}[scale=.1]
        \begin{scope}[xshift=-10cm]
            \draw[webline, rounded corners] (0,0) -- (-4,5) -- (4,10);
            \draw[wline] (0,0) -- (0,10);
            \draw[fill] (0,0) circle (20pt);
        \end{scope}
        \begin{scope}
            \draw[-/-={.5}{}, <->] (-3,5) -- (3,5);
        \end{scope}
        \begin{scope}[xshift=10cm]
            \draw[webline] (0,0) -- (4,10);
            \draw[wline] (0,0) -- (0,10);
            \draw[fill] (0,0) circle (20pt);
        \end{scope}
    \end{tikzpicture}
    ,
    \begin{tikzpicture}[scale=.1]
        \begin{scope}[xshift=-10cm]
            \draw[webline, rounded corners] (0,0) -- (-5,5) -- (0,10);
            \draw[wline] (5,0) -- (-5,10);
            \draw[wline] (-5,0) -- (5,10);
        \end{scope}
        \begin{scope}
            \draw[-/-={.5}{}, <->] (-3,5) -- (3,5);
        \end{scope}
        \begin{scope}[xshift=10cm]
            \draw[webline, rounded corners] (0,0) -- (5,5) -- (0,10);
            \draw[wline] (5,0) -- (-5,10);
            \draw[wline] (-5,0) -- (5,10);
        \end{scope}
    \end{tikzpicture}
    \\ %\hspace{1em}

    \caption{Forbidden moves of tangles in a walled surface.}
    \label{fig:forbidden}
\end{figure}

\begin{dfn}[Algebra of framed tangles on a walled surface]
    Let $(\Sigma, \bW)$ be a walled surface, and $\cR$ a unital commutative ring.
    The \emph{algebra $\cR\mathsf{Tang}(\Sigma, \bW)$ of framed tangles in $(\Sigma, \bW)$} is the free $\cR$-module spanned by $\mathsf{Tang}(\Sigma, \bW)$.
    For two tangle diagrams $D_1$ and $D_2$, we define the product $D_1D_2$ by superposing $D_1$ over $D_2$, where $D_1$ has over-passing arcs than those in $D_2$ at any crossings $D_1\cap D_2$.
    The linear extension defines a multiplication in $\cR\mathsf{Tang}(\Sigma, \bW)$.
    %is induced by the linear extension of a superposition of tangle diagrams.
\end{dfn}

\begin{rem}
    We remark that the union of walls in $(\Sigma, \bW)$ is setwisely fixed by the Reidemeister moves and the $\bW$-isotopy. 
    Thus, the multiplication is well-defined.
\end{rem}

We will define the skein algebra $\SK{\Sigma,\bW}$ as a quotient of the free $\cR$-module $\cR\mathsf{Diag}(\Sigma,\bW)$ by skein relations. Then this quotient map factors through $\cR\mathsf{Tang}(\Sigma,\bW)$ as in the commutative diagram in below:
\begin{equation}\label{eq:def-skein}
\begin{tikzcd}
    \cR\mathsf{Diag}(\Sigma,\bW) \arrow[rd, two heads, "\text{skein rel.}"']\arrow[r,two heads, "\substack{\text{Reidemeister}\\ \text{moves}}"] & [1em]\cR\mathsf{Tang}(\Sigma,\bW) \arrow[d,dashed,two heads, "\exists\ \text{algebra hom.}"]\\
    &\SK{\Sigma,\bW}
\end{tikzcd}
\end{equation}

Before defining our skein relations, let us review some related skein relations defined in previous works.

The Kauffman bracket skein algebra of a surface $\Sigma$
%introduced by Przytycki~\cite{Przytycki} and Turaev~\cite{Turaev} 
is defined as a quotient of the $\bZ[A,A^{-1}]$-module spanned by link diagrams (tangle diagrams without endpoints) on $\Sigma$ modulo isotopy of $\Sigma$ and \emph{the Kauffman bracket skein relation}:

\begin{dfn}[Kauffman bracket skein relations]\label{def:KBSR}
    \begin{align}
            \ \tikz[baseline=-.6ex, scale=.1]{
                \draw[webline] (-45:5) -- (135:5);
                \draw[webline, overarc] (-135:5) -- (45:5);
            \ }
            &=A\ \tikz[baseline=-.6ex, scale=.1]{
                \draw[webline] (45:5) to[out=south west, in=north west] (-45:5);
                \draw[webline] (-135:5) to[out=north east, in=south east] (135:5);
            \ }
            +A^{-1}\ \tikz[baseline=-.6ex, scale=.1]{
                \draw[webline] (-45:5) to[out=north west, in=north east] (-135:5);
                \draw[webline] (135:5) to[out=south east, in=south west] (45:5);
            \ },\label{rel:Kauffman}\\
        \ \tikz[baseline=-.6ex, scale=.1]{
                \draw[webline] (0,0) circle [radius=4cm];
            \ }
            &=(-A^2-A^{-2}) \textcolor{red}{\bm{\varnothing}},\label{rel:trivial-loop}
    \end{align}
    where $\textcolor{red}{\bm{\varnothing}}$ means that there is no diagram in the neighboorhood.
\end{dfn}
These relations induce the following relation on the framing:
\begin{lem}[Kink relations]
    \begin{align}
        \ \tikz[baseline=-.6ex, scale=.1,yshift=-2cm]{
            \draw[webline, overarc] (-4,0) -- (-1,0) to[out=north east, in=east] (0,5);
            \draw[webline, overarc] (0,5) to[out=west, in=north west] (1,0) -- (4,0);
        \ }
        =-A^3
        \ \tikz[baseline=-.6ex, scale=.1,yshift=-2cm]{
            \draw[webline] (-4,0) -- (4,0);
        \ },
        \ \tikz[baseline=-.6ex, scale=.1,yshift=-2cm]{
            \draw[webline, overarc] (0,5) to[out=west, in=north west] (1,0) -- (4,0);
            \draw[webline, overarc] (-4,0) -- (-1,0) to[out=north east, in=east] (0,5);
        \ }
        =-A^{-3}
        \ \tikz[baseline=-.6ex, scale=.1,yshift=-2cm]{
            \draw[webline] (-4,0) -- (4,0);
        \ }
    \end{align}
\end{lem}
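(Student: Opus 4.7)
The plan is to resolve the unique self-crossing of each kink using the Kauffman bracket skein relation \eqref{rel:Kauffman}, and then simplify the two resulting terms by $\bW$-isotopy combined with the trivial loop relation \eqref{rel:trivial-loop}. Since the kink is a local picture drawn in a disk that we may assume is disjoint from $\bigcup \sfC$, every isotopy used in the simplification is automatically a $\bW$-isotopy, so the issue of walls does not intervene.

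Concretely, for the positive kink (left-hand diagram of the first identity), applying \eqref{rel:Kauffman} at the unique crossing produces an $A$-smoothing and an $A^{-1}$-smoothing. One of the two smoothings (say the $A$-smoothing, depending on the orientation convention fixed in \cref{fig:Reidemeister}) is $\bW$-isotopic to a straight strand together with a disjoint small circle bounding a disk in the chamber; by \eqref{rel:trivial-loop} this evaluates to $(-A^2-A^{-2})$ times a straight strand. The other smoothing is $\bW$-isotopic to a straight strand via a Reidemeister (R2) move. Summing the two contributions gives
\begin{align*}
    A\cdot(-A^2-A^{-2}) + A^{-1}\cdot 1 \;=\; -A^3 - A^{-1} + A^{-1} \;=\; -A^3,
\end{align*}
which is the first claimed identity. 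The negative kink (right-hand diagram) is handled by the same argument, but now the coefficient $A^{-1}$ is paired with the trivial circle, so the arithmetic becomes $A\cdot 1 + A^{-1}\cdot(-A^2-A^{-2}) = -A^{-3}$.

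I do not anticipate any real obstacle: the computation is the classical derivation of the framing factor in the Kauffman bracket, and the only point of novelty in the walled setting is that we must verify that all intermediate manipulations are $\bW$-isotopies and that the trivial loop produced in one of the smoothings lies in an embedded disk inside a single chamber. Both are immediate from the locality of the diagrams in \cref{fig:Reidemeister}.
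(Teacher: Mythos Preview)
Your argument is correct and is exactly the standard computation the paper has in mind when it says the Kauffman bracket relations ``induce'' the kink relations; the paper does not spell out a proof. One small remark: this lemma is stated immediately after \cref{def:KBSR} and before any wall-passing relations are introduced, so your discussion of $\bW$-isotopy and chambers is unnecessary here (though harmless), and the smoothing that yields a plain strand is obtained by ordinary planar isotopy rather than an (R2) move.
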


The skein algebra has many variants~\cite{BonahonWong11,Muller16,PS19,RogerYang14}, according to several purposes and types of surfaces.

Let us define skein relations related to marked points.
These relations will play a similar role as the Jones--Wenzl projector, also known as the $\mathfrak{sl}_2$-clasp.
The skein algebra $\SK{\Sigma}$ of a marked surface $\Sigma$ (with empty wall system) defined by Muller~\cite{Muller16} is a quotient of $\bZ[A^{\pm 1/2}]\mathsf{Diag}(\Sigma, \emptyset)$ modulo isotopy relative to $\partial\Sigma$, the Kauffman bracket skein relations, and the following relations at marked points:

\begin{dfn}[Clasped skein relations~\cite{Muller16}]\label{def:KBSR-mark}
    \begin{align}
        A^{-\frac{1}{2}}\ \tikz[baseline=-.6ex, scale=.1, yshift=-2cm]{
            \coordinate (A) at (-6,0);
            \coordinate (B) at (6,0);
            \coordinate (P) at (0,0);
            \draw[webline, shorten <=.2cm] (P) -- (110:6);
            \draw[webline] (P) -- (70:6);
            \draw[dashed] (30:6) arc (30:150:6cm);
            \draw[dashed] (P) -- (30:6);
            \draw[dashed] (P) -- (150:6);
            \bdryline{(A)}{(B)}{2cm}
            \draw[fill=black] (P) circle [radius=20pt];
        }\ 
        &=\ \tikz[baseline=-.6ex, scale=.1, yshift=-2cm]{
            \coordinate (A) at (-6,0);
            \coordinate (B) at (6,0);
            \coordinate (P) at (0,0);
            \draw[webline] (P) -- (110:6);
            \draw[webline] (P) -- (70:6);
            \draw[dashed] (30:6) arc (30:150:6cm);
            \draw[dashed] (P) -- (30:6);
            \draw[dashed] (P) -- (150:6);
            \bdryline{(A)}{(B)}{2cm}
            \draw[fill=black] (P) circle [radius=20pt];
        }\ 
        =A^{\frac{1}{2}}\ \tikz[baseline=-.6ex, scale=.1, yshift=-2cm]{
            \coordinate (A) at (-6,0);
            \coordinate (B) at (6,0);
            \coordinate (P) at (0,0);
            \draw[webline] (P) -- (110:6);
            \draw[webline, shorten <=.2cm] (P) -- (70:6);
            \draw[dashed] (30:6) arc (30:150:6cm);
            \draw[dashed] (P) -- (30:6);
            \draw[dashed] (P) -- (150:6);
            \bdryline{(A)}{(B)}{2cm}
            \draw[fill=black] (P) circle [radius=20pt];
        }\ ,\label{rel:elevation}\\
        \ \tikz[baseline=-.6ex, scale=.1, yshift=-2cm]{
            \coordinate (A) at (-6,0);
            \coordinate (B) at (6,0);
            \coordinate (P) at (0,0);
            \draw[webline, rounded corners] (P) -- (110:4) -- (70:4) -- (P);
            \draw[dashed] (30:6) arc (30:150:6cm);
            \draw[dashed] (P) -- (30:6);
            \draw[dashed] (P) -- (150:6);
            \bdryline{(A)}{(B)}{2cm}
            \draw[fill=black] (P) circle [radius=20pt];
        }\ 
        &=0.\label{rel:monogon}
    \end{align}
    Here, the two half-edges in \labelcref{rel:elevation} have consecutive elevations.
\end{dfn}

One can confirm that the skein relations in \cref{def:KBSR} and \cref{def:KBSR-mark} realize Reidemeister moves (R1)--(R3) in the first line of \cref{fig:Reidemeister}.
This fact induces a map $\bZ[A^{\pm^{1/2}}]\mathsf{Tang}(\Sigma,\emptyset)\twoheadrightarrow\scS_{\Sigma}^{A}$, and it defines the algebra structure on $\scS_{\Sigma}^{A}$.

We are going to introduce new skein relations for tangle diagrams in a walled surface $(\Sigma, \bW)$ and define a skein algebra $\SK{\Sigma, \bW}$
%$\mathscr{S}(\Sigma_\bW)_{\bZ_{A,\bW}}$ 
as a quotient of $\bZ_{A,\bW}\mathsf{Diag}(\Sigma, \bW)$.
The coefficient ring $\bZ_{A,\bW}$ is defined by
\begin{align*}
    \bZ_{A,\bW}:=\bZ[A^{\pm 1/2}, z_{j,+}^{\pm 1}, z_{j,-}^{\pm 1} \mid j \in J]
    %\bZ_{A,\bW}:=\bZ[A^{\pm 1/2}, z_{\xi,+}^{\pm 1}, z_{\xi,-}^{\pm 1}, a_{\zeta}^{\pm 1} \mid \xi\in\ell(\mathsf{C}_{\mathrm{arc}}), \zeta\in\ell(\mathsf{C}_{\mathrm{loop}})]
\end{align*}
for a wall system $\bW=(\sfC,J,\ell)$.
Let $a_j:=z_{j,+}z_{j,-}$ for $j \in J$. 

\begin{dfn}[Wall-passing relations]\label{def:wall-pass}
    For any walls, we introduce the relations
    \begin{gather}
    \ \tikz[baseline=-.6ex, scale=.1, yshift=-5cm]{
        \draw[webline, rounded corners] (-5,0) -- (5,5) -- (-5,10);
        \draw[wline] (0,-1) -- (0,11);
        \node at (0,11) [below right]{\scriptsize $j$};
    }\ 
    =a_j
    \ \tikz[baseline=-.6ex, scale=.1, yshift=-5cm]{
            \draw[webline, rounded corners] (-5,0) -- (-2,5) -- (-5,10);
            \draw[wline] (0,-1) -- (0,11);
            \node at (0,11) [below right]{\scriptsize $j$};
    }\ ,\label{rel:wall-pass-int}\\
    \ \tikz[baseline=-.6ex, scale=.1, yshift=-2cm]{
        \coordinate (A) at (-10,0);
        \coordinate (B) at (10,0);
        \coordinate (P) at (0,0);
        \draw[webline, rounded corners] (P) -- (60:6) -- (120:10);
        \draw[wline] (P) -- (90:10);
        \draw[dashed] (30:10) arc (30:150:10cm);
        \draw[dashed] (P) -- (30:10);
        \draw[dashed] (P) -- (150:10);
        %\bdryline{(A)}{(B)}{2cm}
        \draw[fill=black] (P) circle [radius=20pt];
        \node at (90:10) [below right]{\scriptsize $j$};
    }\ 
    =z_{j,+}
    \ \tikz[baseline=-.6ex, scale=.1, yshift=-2cm]{
        \coordinate (A) at (-10,0);
        \coordinate (B) at (10,0);
        \coordinate (P) at (0,0);
        \draw[webline, rounded corners] (P) -- (120:10);
        \draw[wline] (P) -- (90:10);
        \draw[dashed] (30:10) arc (30:150:10cm);
        \draw[dashed] (P) -- (30:10);
        \draw[dashed] (P) -- (150:10);
        %\bdryline{(A)}{(B)}{2cm}
        \draw[fill=black] (P) circle [radius=20pt];
        \node at (90:10) [below right]{\scriptsize $j$};
    }\ ,
    \ \tikz[baseline=-.6ex, scale=.1, yshift=-2cm]{
        \coordinate (A) at (-10,0);
        \coordinate (B) at (10,0);
        \coordinate (P) at (0,0);
        \draw[webline, rounded corners] (P) -- (120:6) -- (60:10);
        \draw[wline] (P) -- (90:10);
        \draw[dashed] (30:10) arc (30:150:10cm);
        \draw[dashed] (P) -- (30:10);
        \draw[dashed] (P) -- (150:10);
        %\bdryline{(A)}{(B)}{2cm}
        \draw[fill=black] (P) circle [radius=20pt];
        \node at (90:10) [below left]{\scriptsize $j$};
    }\ 
    =z_{j,-}
    \ \tikz[baseline=-.6ex, scale=.1, yshift=-2cm]{
        \coordinate (A) at (-10,0);
        \coordinate (B) at (10,0);
        \coordinate (P) at (0,0);
        \draw[webline, rounded corners] (P) -- (60:10);
        \draw[wline] (P) -- (90:10);
        \draw[dashed] (30:10) arc (30:150:10cm);
        \draw[dashed] (P) -- (30:10);
        \draw[dashed] (P) -- (150:10);
        %\bdryline{(A)}{(B)}{2cm}
        \draw[fill=black] (P) circle [radius=20pt];
        \node at (90:10) [below left]{\scriptsize $j$};
    }\ ,\label{rel:wall-pass-ext}\\
    \ \tikz[baseline=-.6ex, scale=.1, yshift=-5cm]{
        \draw[webline, rounded corners] (0,0) -- (-5,5) -- (0,10);
        \draw[wline] (5,0) -- (-5,10);
        \draw[wline] (-5,0) -- (5,10);
        \node at (5,10) [right]{\scriptsize $j_1$};
        \node at (-5,10) [left]{\scriptsize $j_2$};
    }\ 
    =
    \ \tikz[baseline=-.6ex, scale=.1, yshift=-5cm]{
        \draw[webline, rounded corners] (0,0) -- (5,5) -- (0,10);
        \draw[wline] (5,0) -- (-5,10);
        \draw[wline] (-5,0) -- (5,10);
        \node at (5,10) [right]{\scriptsize $j_1$};
        \node at (-5,10) [left]{\scriptsize $j_2$};
    }\ .\label{rel:wall-R3}
\end{gather}

    where $j,j_1,j_2 \in J$ are the labels of nearby walls.
\end{dfn}

\begin{lem}[The relation (W) in \cref{fig:Reidemeister}]\label{lem:realize-W3}%We have
    \begin{align}
        \ \tikz[baseline=-.6ex, scale=.1, yshift=-5cm]{
            \draw[webline, overarc, rounded corners] (5,0) -- (3,6) -- (-5,10);
                \draw[webline, overarc, rounded corners] (-5,0) -- (3,4) -- (5,10);
                \draw[wline] (0,-1) -- (0,11);
        }\ 
        &=
        \ \tikz[baseline=-.6ex, scale=.1, yshift=-5cm]{
                \draw[webline, overarc, rounded corners] (5,0) -- (-3,4) -- (-5,10);
                \draw[webline, overarc, rounded corners] (-5,0) -- (-3,6) -- (5,10);
                \draw[wline] (0,-1) -- (0,11);
        }\ ,&
        \ \tikz[baseline=-.6ex, scale=.1, yshift=-5cm]{
                \draw[webline, overarc, rounded corners] (-5,0) -- (3,4) -- (5,10);
                \draw[webline, overarc, rounded corners] (5,0) -- (3,6) -- (-5,10);
                \draw[wline] (0,-1) -- (0,11);
        }\ 
        &=
        \ \tikz[baseline=-.6ex, scale=.1, yshift=-5cm]{
            \draw[webline, overarc, rounded corners] (-5,0) -- (-3,6) -- (5,10);
            \draw[webline, overarc, rounded corners] (5,0) -- (-3,4) -- (-5,10);
            \draw[wline] (0,-1) -- (0,11);
        }\ .
\end{align}
\end{lem}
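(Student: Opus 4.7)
The plan is to derive both identities in the lemma from the Kauffman bracket skein relation \eqref{rel:Kauffman} together with the wall-passing relation \eqref{rel:wall-pass-int}. The configuration takes place in a small disk containing a single vertical wall labeled by $j$ and exactly one transverse crossing between the two web strands, so I work locally with only these features.

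I first apply \eqref{rel:Kauffman} at the unique web crossing on each side of the first equation. The LHS crossing (on the right of the wall, near $(3,5)$) decomposes as $A\cdot S_L^A + A^{-1}\cdot S_L^B$, where $S_L^A$ is the vertical smoothing connecting the NW-to-SW and NE-to-SE strand ends, and $S_L^B$ is the horizontal smoothing connecting NW-to-NE and SW-to-SE. The RHS crossing (on the left of the wall, near $(-3,5)$) decomposes analogously into $A\cdot S_R^A + A^{-1}\cdot S_R^B$.

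Next I analyze the $A$-smoothings. In $S_L^A$, the NE-SE reconnection yields a short arc from $(5,0)$ to $(5,10)$ lying entirely to the right of the wall, while the NW-SW reconnection yields an arc from $(-5,0)$ to $(-5,10)$ carrying a ``finger'' whose tip pokes through the wall near the former crossing point. This finger is precisely the configuration on the left-hand side of \eqref{rel:wall-pass-int}, so it retracts at the cost of a factor $a_j$, leaving a direct vertical arc on the left half-disk. The same computation for $S_R^A$, in which the finger now pokes from the right half-disk into the left, produces the same expression $a_j\cdot(\text{right vertical arc})\cdot(\text{left vertical arc})$. Hence $S_L^A = S_R^A$.

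Each of $S_L^B$ and $S_R^B$ consists of two disjoint arcs: an upper arc from $(-5,10)$ to $(5,10)$ and a lower arc from $(-5,0)$ to $(5,0)$, each crossing the wall exactly once. In the local disk split by the wall into two half-disks, the $\bW$-isotopy class of a simple arc between two fixed boundary points is determined by its algebraic intersection number with the wall, so the corresponding arcs of $S_L^B$ and $S_R^B$ are $\bW$-isotopic in pairs, giving $S_L^B = S_R^B$. Combining this with the previous step yields the first equation. The second equation is proved by the same argument, since reversing the over/under at the web crossing merely swaps the coefficients of the $A$- and $B$-smoothings in \eqref{rel:Kauffman} while leaving the comparisons of $S_L^A$ with $S_R^A$ and $S_L^B$ with $S_R^B$ intact. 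The only delicate point is the $\bW$-isotopy classification used for the $B$-smoothings, but this is a purely topological fact about arcs in a disk containing a single wall, and it is the main conceptual ingredient distinguishing this argument from the ordinary $(\text{R3})$-type reasoning for pure tangles.
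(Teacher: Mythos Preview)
Your proof is correct and follows precisely the approach indicated in the paper, which merely states that the relations are verified using \eqref{rel:Kauffman} and \eqref{rel:wall-pass-int}. You have carried out this verification in full detail: resolving the crossing via the Kauffman bracket, reducing the $A$-smoothings to a common form using the wall-passing relation (and implicitly its $180^\circ$-rotation), and matching the $A^{-1}$-smoothings by $\bW$-isotopy.
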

\begin{proof}
    One can easily verify these relations by \labelcref{rel:Kauffman,rel:wall-pass-int}.
\end{proof}

%\begin{lem}
%    The Kauffman bracket skein %relations~\labelcref{rel:Kauffman,rel:trivial-loop} in %\cref{def:KBSR}, the clasped skein %relations~\labelcref{rel:elevation,rel:monogon} in %\cref{def:KBSR-mark}, and the wall-passing %relations~\labelcref{rel:wall-pass-int,rel:wall-R3} in %\cref{def:wall-pass} realize all the Reidemeister moves %of tangle diagrams in a walled surface $(\Sigma, \bW)$.
%\end{lem}
%\begin{proof}
%    It follows from \cref{lem:realize-W3} and Muller's %clasped skein relations.
%\end{proof}

\begin{dfn}[Skein algebra of a walled surface]
    For a walled surface $(\Sigma, \bW)$,
    the \emph{skein algebra $\SK{\Sigma, \bW}$ of a walled surface $(\Sigma,\bW)$} is the quotient of the free $\bZ_{A,\bW}$-module $\bZ_{A,\bW}\mathsf{Diag}(\Sigma, \bW)$ generated by framed tangles in $(\Sigma, \bW)$ modulo $\bW$-isotopy and the skein relations \labelcref{rel:Kauffman,rel:trivial-loop,rel:elevation,rel:monogon,rel:wall-pass-int,rel:wall-pass-ext,rel:wall-R3}.
    % It is also denoted by $\mathscr{S}(\Sigma_{\bW})_{\bZ_{A,J}}$.
\end{dfn}
% Observe that we have a projection
% \begin{align}\label{eq:forgetful}
%     \SK{\Sigma,\bW} \to \SK{\Sigma}:=\SK{\Sigma, \emptyset}
% \end{align}
% forgetting walls. 

The following lemma implies the commutative diagram \labelcref{eq:def-skein}. 
In particular, the algebra structure on $\SK{\Sigma,\bW}$ is inherited from $\bZ_{A,\bW}\mathsf{Tang}(\Sigma,\bW)$: 
\begin{lem}
    The Kauffman bracket skein relations~\labelcref{rel:Kauffman,rel:trivial-loop} in \cref{def:KBSR}, the clasped skein relations~\labelcref{rel:elevation,rel:monogon} in \cref{def:KBSR-mark}, and the wall-passing relations~\labelcref{rel:wall-pass-int,rel:wall-R3} in \cref{def:wall-pass} realize all the Reidemeister moves of tangle diagrams in a walled surface $(\Sigma, \bW)$.
\end{lem}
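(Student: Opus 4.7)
The plan is to verify each Reidemeister move listed in \cref{fig:Reidemeister} as an identity in $\SK{\Sigma,\bW}$, organizing them into three groups by the local feature involved: the interior moves (R1'), (R2), (R3); the marked-point move (M); and the wall-interaction moves (W), (K). The first two groups are essentially Muller's argument from \cite{Muller16} for the empty wall system, so the new content is the third group together with the book-keeping of wall-passing factors.

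For (R1'), (R2), (R3), I would invoke the classical Kauffman computation. The relations \eqref{rel:Kauffman} and \eqref{rel:trivial-loop} imply the framed kink relation (assigning $-A^{\pm 3}$ to a positive/negative kink), which realizes (R1'). Expanding each crossing via \eqref{rel:Kauffman} and then collapsing the resulting trivial loop via \eqref{rel:trivial-loop} yields (R2) after collecting the $A^{\pm 1}$ coefficients, and (R3) then follows from (R2) and \eqref{rel:Kauffman} by the standard three-term expansion. For (M), I would resolve the internal crossing near $p\in\bM$ via \eqref{rel:Kauffman}; one of the two smoothings produces a monogon based at $p$ that vanishes by \eqref{rel:monogon} (after normalizing elevations via \eqref{rel:elevation} if needed), while the other smoothing reproduces the right-hand side up to an elevation mismatch at $p$ which is absorbed by a single application of \eqref{rel:elevation}, so that the $A^{\pm 1}$ factor from \eqref{rel:Kauffman} combines with the $A^{\pm 1/2}$ factor from \eqref{rel:elevation} to give the correct overall coefficient. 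The symmetric variants of (M) are handled identically.

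For the wall moves, (W) is exactly \cref{lem:realize-W3} already proved above. For (K), each side depicts a framed kink on an arc which crosses a single wall once; applying the framed kink relation derived above to both sides simultaneously reduces them to $(-A^{\pm 3})$ times the same straight arc crossing the wall once, and since both sides have identical transverse intersection with the wall, no spurious factor from \eqref{rel:wall-pass-int} or \eqref{rel:wall-pass-ext} is produced. The main obstacle, though minor, is confirming that no step in the first two groups inadvertently alters the transverse intersection pattern with $\bigcup\sfC$: this is handled by observing that every disk in which (R1'), (R2), (R3) or (M) is applied can be isotoped off the walls via $\bW$-isotopy before applying the relation, so the wall-passing coefficients $a_j$, $z_{j,\pm}$ remain inert throughout the verification.
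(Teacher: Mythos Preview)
Your proposal is correct and follows essentially the same approach as the paper, which simply cites \cref{lem:realize-W3} for the move (W) and defers (R1'), (R2), (R3), (M) to Muller's argument for the clasped skein relations; you have merely unpacked these references and made the verification of (K) explicit via the kink relation. One small phrasing issue: in your final sentence you cannot ``isotope the disk off the walls via $\bW$-isotopy'' since $\bW$-isotopy fixes $\bigcup\sfC$; rather, the local moves (R1')--(R3), (M) are by definition applied only in disks whose depicted pattern contains no wall, so no such isotopy is needed.
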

\begin{proof}
    It follows from \cref{lem:realize-W3} and Muller's clasped skein relations.
\end{proof}

Observe that we have an isomorphism
\begin{align}\label{eq:forgetful}
    \SK{\Sigma,\bW}\otimes_{\bZ_{A,\bW}} \bZ[A^{\pm 1}] \cong \SK{\Sigma},
\end{align}
where the ring homomorphism $\bZ_{A,\bW} \to \bZ[A^{\pm 1}]$ is given by $z_{j,\pm} \mapsto 1$ for all $j \in J$. In effect, this specialization amounts to forgetting the walls. 

\begin{figure}
    \centering
    \begin{tikzpicture}[scale=.15]
            \coordinate (M1) at (90:15);
            \coordinate (M2) at (90:5);
            \coordinate (M3) at (-90:5);
            \coordinate (M4) at (-90:15);
            \draw[very thick] (0,0) circle [radius=5];
            \draw[very thick] (0,0) circle [radius=15];
            \draw[wline] (M1) -- (M2);
            \draw[wline] (M3) -- (M4);
            \draw[wline] (M2) to[out=north west, in=north] (180:12) to[out=south, in=north west] (M4);
            \draw[wline] (M2) to[out=north east, in=north] (0:12) to[out=south, in=north east] (M4);
            \draw[webline] (0,0) circle [radius=7];
            \fill[black] (M1) circle [radius=.5];
            \fill[black] (M2) circle [radius=.5];
            \fill[black] (M3) circle [radius=.5];
            \fill[black] (M4) circle [radius=.5];
            \node at (-135:10) {$1$};
            \node at (90:10) [right]{$2$};
            \node at (-45:10) {$3$};
            \node at (-90:10) [right]{$4$};
            \node at (0:7) [right, red] {$\gamma$};
            \node at (180:15) [left]{$5$};
            \node at (180:5) [right]{$6$};
            \node at (0:5) [left]{$7$};
            \node at (0:15) [right]{$8$};
    \end{tikzpicture}
    \hspace{1cm}
    \begin{tikzpicture}[scale=.15]
            \coordinate (L1) at (-8,-15);
            \coordinate (L2) at (-8,0);
            \coordinate (L3) at (-8,15);
            \coordinate (R1) at (8,-15);
            \coordinate (R2) at (8,0);
            \coordinate (R3) at (8,15);
            \draw[very thick] (L1) -- (L3) -- (R3) -- (R1) -- cycle;
            \draw[wline] (L1) -- (R1);
            \draw[wline] (L2) -- (R2);
            \draw[wline] (L3) -- (R3);
            \draw[wline] (L1) -- (R2);
            \draw[wline] (L3) -- (R2);
            \draw[webline] ($(L1)!.5!(R1)$) -- ($(L3)!.5!(R3)$);
            \fill[black] (L1) circle [radius=.5];
            \fill[black] (L2) circle [radius=.5];
            \fill[black] (L3) circle [radius=.5];
            \fill[black] (R1) circle [radius=.5];
            \fill[black] (R2) circle [radius=.5];
            \fill[black] (R3) circle [radius=.5];
            \node at ($(L1)!.7!(R1)$) [above]{$4$};
            \node at ($(L1)!.7!(R2)$) [below]{$1$};
            \node at ($(L2)!.3!(R2)$) [above]{$2$};
            \node at ($(L3)!.7!(R2)$) [above]{$3$};
            \node at ($(L3)!.7!(R3)$) [below]{$4$};
            \node at ($(L1)!.5!(L2)$) [left]{$5$};
            \node at ($(L2)!.5!(L3)$) [left]{$8$};
            \node at ($(R1)!.5!(R2)$) [right]{$6$};
            \node at ($(R2)!.5!(R3)$) [right]{$7$};
            \node at (0,5) [left, red] {$\gamma$};
    \end{tikzpicture}
    \caption{A walled annulus $(\Sigma,\bW)$ and a simple loop $\gamma$. The principal wall system $\bW$ consists of $4$ walls labeled with the index set $J=\{1,2,3,4\}$ as indicated in the figure. The boundary intervals are labeled by $\{5,6,7,8\}$. The right picture is the left annulus cut by a wall labeled by $4$.}
    \label{fig:MWex}
\end{figure}
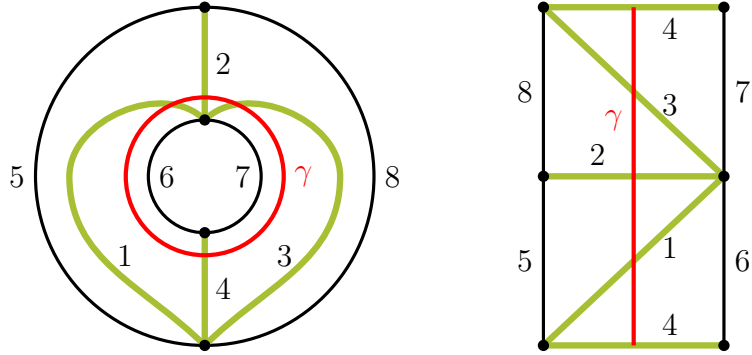

\begin{ex}[Laurent expansion of a simple loop in an annulus]\label{ex:MW}
    Let $\Sigma$ be an annulus with a principal wall system $\bW$ and a simple loop $\gamma$ as shown in \cref{fig:MWex}.
    Using skein relations at $A=1$, we expand $\gamma$ as a polynomial in simple arcs $\alpha_1,\alpha_2,\ldots,\alpha_8$ which corresponds to the walls in $\bW$ and boundary intervals of $\Sigma$.
    This is a skein theoretical calculation of the Laurent expansion of $\gamma$ by Musiker and Williams in \cite[Example~{3.24}]{MW13}, where they used snake graphs and perfect matching.
    We use the abbreviated notation $z_{i_1i_2\cdots i_k,\varepsilon}\coloneqq z_{i_1,\varepsilon}z_{i_2,\varepsilon}\cdots z_{i_k,\varepsilon}$. Since $\gamma$ intersects once with each of $\alpha_1,\alpha_2,\alpha_3,\alpha_4$, we are going to compute the product $\alpha_1\alpha_2\alpha_3\alpha_4 \gamma$. We compute 
    \begin{align*}
    \alpha_2\alpha_4\gamma
    &=
    \tikz[baseline=-.6ex, scale=.1]{
        \coordinate (L1) at (-8,-15);
        \coordinate (L2) at (-8,0);
        \coordinate (L3) at (-8,15);
        \coordinate (R1) at (8,-15);
        \coordinate (R2) at (8,0);
        \coordinate (R3) at (8,15);
        \coordinate (C1) at ($(L1)!.5!(R1)$);
        \coordinate (C2) at ($(L2)!.5!(R2)$);
        \coordinate (C3) at ($(L3)!.5!(R3)$);
        \draw[very thick] (L1) -- (L3) --(R3) -- (R1) -- cycle;
        \draw[wline] (L1) -- (R1);
        \draw[wline] (L2) -- (R2);
        \draw[wline] (L3) -- (R3);
        \draw[wline] (L1) -- (R2);
        \draw[wline] (L3) -- (R2);
        \draw[webline] (C1) -- (C3);
        \draw[webline] (L2) to[bend right] (R2);
        \draw[webline] (L3) to[bend right] (R3);
        \fill[black] (L1) circle[radius=1];
        \fill[black] (L2) circle[radius=1];
        \fill[black] (L3) circle[radius=1];
        \fill[black] (R1) circle[radius=1];
        \fill[black] (R2) circle[radius=1];
        \fill[black] (R3) circle[radius=1];
        \node at (0,5) [left, red]{\small $\gamma$};
        \node at (C3) [below right=5pt, red]{\small $\alpha_4$};
        \node at (C2) [below left=5pt, red]{\small $\alpha_2$};
    }
    =
    \tikz[baseline=-.6ex, scale=.1]{
        \coordinate (L1) at (-8,-15);
        \coordinate (L2) at (-8,0);
        \coordinate (L3) at (-8,15);
        \coordinate (R1) at (8,-15);
        \coordinate (R2) at (8,0);
        \coordinate (R3) at (8,15);
        \coordinate (C1) at ($(L1)!.5!(R1)$);
        \coordinate (C2) at ($(L2)!.5!(R2)$);
        \coordinate (C3) at ($(L3)!.5!(R3)$);
        \draw[very thick] (L1) -- (L3) --(R3) -- (R1) -- cycle;
        \draw[wline] (L1) -- (R1);
        \draw[wline] (L2) -- (R2);
        \draw[wline] (L3) -- (R3);
        \draw[wline] (L1) -- (R2);
        \draw[wline] (L3) -- (R2);
        \draw[webline, rounded corners] (L3) -- ($(C3)+(0,-2)$) -- (C3);
        \draw[webline, rounded corners] (L2) -- ($(C2)+(0,-2)$) -- ($(C3)+(0,-3)$) -- (R3);
        \draw[webline, rounded corners] (C1) -- ($(C2)+(0,-3)$) -- (R2);
        \fill[black] (L1) circle[radius=1];
        \fill[black] (L2) circle[radius=1];
        \fill[black] (L3) circle[radius=1];
        \fill[black] (R1) circle[radius=1];
        \fill[black] (R2) circle[radius=1];
        \fill[black] (R3) circle[radius=1];
    }
    +
    \tikz[baseline=-.6ex, scale=.1]{
        \coordinate (L1) at (-8,-15);
        \coordinate (L2) at (-8,0);
        \coordinate (L3) at (-8,15);
        \coordinate (R1) at (8,-15);
        \coordinate (R2) at (8,0);
        \coordinate (R3) at (8,15);
        \coordinate (C1) at ($(L1)!.5!(R1)$);
        \coordinate (C2) at ($(L2)!.5!(R2)$);
        \coordinate (C3) at ($(L3)!.5!(R3)$);
        \draw[very thick] (L1) -- (L3) --(R3) -- (R1) -- cycle;
        \draw[wline] (L1) -- (R1);
        \draw[wline] (L2) -- (R2);
        \draw[wline] (L3) -- (R3);
        \draw[wline] (L1) -- (R2);
        \draw[wline] (L3) -- (R2);
        \draw[webline, rounded corners] (L3) -- ($(C3)+(0,-2)$) -- (C3);
        \draw[webline, rounded corners] (R2) -- ($(C2)+(0,-2)$) -- ($(C3)+(0,-3)$) -- (R3);
        \draw[webline, rounded corners] (C1) -- ($(C2)+(0,-3)$) -- (L2);
        \fill[black] (L1) circle[radius=1];
        \fill[black] (L2) circle[radius=1];
        \fill[black] (L3) circle[radius=1];
        \fill[black] (R1) circle[radius=1];
        \fill[black] (R2) circle[radius=1];
        \fill[black] (R3) circle[radius=1];
    }
    +
    \tikz[baseline=-.6ex, scale=.1]{
        \coordinate (L1) at (-8,-15);
        \coordinate (L2) at (-8,0);
        \coordinate (L3) at (-8,15);
        \coordinate (R1) at (8,-15);
        \coordinate (R2) at (8,0);
        \coordinate (R3) at (8,15);
        \coordinate (C1) at ($(L1)!.5!(R1)$);
        \coordinate (C2) at ($(L2)!.5!(R2)$);
        \coordinate (C3) at ($(L3)!.5!(R3)$);
        \draw[very thick] (L1) -- (L3) --(R3) -- (R1) -- cycle;
        \draw[wline] (L1) -- (R1);
        \draw[wline] (L2) -- (R2);
        \draw[wline] (L3) -- (R3);
        \draw[wline] (L1) -- (R2);
        \draw[wline] (L3) -- (R2);
        \draw[webline, rounded corners] (R3) -- ($(C3)+(0,-2)$) -- (C3);
        \draw[webline, rounded corners] (L2) -- ($(C2)+(0,-2)$) -- ($(C3)+(0,-3)$) -- (L3);
        \draw[webline, rounded corners] (C1) -- ($(C2)+(0,-3)$) -- (R2);
        \fill[black] (L1) circle[radius=1];
        \fill[black] (L2) circle[radius=1];
        \fill[black] (L3) circle[radius=1];
        \fill[black] (R1) circle[radius=1];
        \fill[black] (R2) circle[radius=1];
        \fill[black] (R3) circle[radius=1];
    }
    +
    \tikz[baseline=-.6ex, scale=.1]{
        \coordinate (L1) at (-8,-15);
        \coordinate (L2) at (-8,0);
        \coordinate (L3) at (-8,15);
        \coordinate (R1) at (8,-15);
        \coordinate (R2) at (8,0);
        \coordinate (R3) at (8,15);
        \coordinate (C1) at ($(L1)!.5!(R1)$);
        \coordinate (C2) at ($(L2)!.5!(R2)$);
        \coordinate (C3) at ($(L3)!.5!(R3)$);
        \draw[very thick] (L1) -- (L3) --(R3) -- (R1) -- cycle;
        \draw[wline] (L1) -- (R1);
        \draw[wline] (L2) -- (R2);
        \draw[wline] (L3) -- (R3);
        \draw[wline] (L1) -- (R2);
        \draw[wline] (L3) -- (R2);
        \draw[webline, rounded corners] (R3) -- ($(C3)+(0,-2)$) -- (C3);
        \draw[webline, rounded corners] (R2) -- ($(C2)+(0,-2)$) -- ($(C3)+(0,-3)$) -- (L3);
        \draw[webline, rounded corners] (C1) -- ($(C2)+(0,-3)$) -- (L2);
        \fill[black] (L1) circle[radius=1];
        \fill[black] (L2) circle[radius=1];
        \fill[black] (L3) circle[radius=1];
        \fill[black] (R1) circle[radius=1];
        \fill[black] (R2) circle[radius=1];
        \fill[black] (R3) circle[radius=1];
    }\\
    &=z_{1234,+}
    \tikz[baseline=-.6ex, scale=.1]{
        \coordinate (L1) at (-8,-15);
        \coordinate (L2) at (-8,0);
        \coordinate (L3) at (-8,15);
        \coordinate (R1) at (8,-15);
        \coordinate (R2) at (8,0);
        \coordinate (R3) at (8,15);
        \coordinate (C1) at ($(L1)!.5!(R1)$);
        \coordinate (C2) at ($(L2)!.5!(R2)$);
        \coordinate (C3) at ($(L3)!.5!(R3)$);
        \draw[very thick] (L1) -- (L3) --(R3) -- (R1) -- cycle;
        \draw[wline] (L1) -- (R1);
        \draw[wline] (L2) -- (R2);
        \draw[wline] (L3) -- (R3);
        \draw[wline] (L1) -- (R2);
        \draw[wline] (L3) -- (R2);
        \draw[webline] (L2) -- (R3);
        \draw[webline] (L1) to[bend left] (R2);
        \fill[black] (L1) circle[radius=1];
        \fill[black] (L2) circle[radius=1];
        \fill[black] (L3) circle[radius=1];
        \fill[black] (R1) circle[radius=1];
        \fill[black] (R2) circle[radius=1];
        \fill[black] (R3) circle[radius=1];
    }
    +z_{14,+}z_{23,-}
    \tikz[baseline=-.6ex, scale=.1]{
        \coordinate (L1) at (-8,-15);
        \coordinate (L2) at (-8,0);
        \coordinate (L3) at (-8,15);
        \coordinate (R1) at (8,-15);
        \coordinate (R2) at (8,0);
        \coordinate (R3) at (8,15);
        \coordinate (C1) at ($(L1)!.5!(R1)$);
        \coordinate (C2) at ($(L2)!.5!(R2)$);
        \coordinate (C3) at ($(L3)!.5!(R3)$);
        \draw[very thick] (L1) -- (L3) --(R3) -- (R1) -- cycle;
        \draw[wline] (L1) -- (R1);
        \draw[wline] (L2) -- (R2);
        \draw[wline] (L3) -- (R3);
        \draw[wline] (L1) -- (R2);
        \draw[wline] (L3) -- (R2);
        \draw[webline] (R2) to[bend left] (R3);
        \draw[webline] (L1) to[bend right] (L2);
        \fill[black] (L1) circle[radius=1];
        \fill[black] (L2) circle[radius=1];
        \fill[black] (L3) circle[radius=1];
        \fill[black] (R1) circle[radius=1];
        \fill[black] (R2) circle[radius=1];
        \fill[black] (R3) circle[radius=1];
    }
    +z_{12,+}z_{34,-}
    \tikz[baseline=-.6ex, scale=.1]{
        \coordinate (L1) at (-8,-15);
        \coordinate (L2) at (-8,0);
        \coordinate (L3) at (-8,15);
        \coordinate (R1) at (8,-15);
        \coordinate (R2) at (8,0);
        \coordinate (R3) at (8,15);
        \coordinate (C1) at ($(L1)!.5!(R1)$);
        \coordinate (C2) at ($(L2)!.5!(R2)$);
        \coordinate (C3) at ($(L3)!.5!(R3)$);
        \draw[very thick] (L1) -- (L3) --(R3) -- (R1) -- cycle;
        \draw[wline] (L1) -- (R1);
        \draw[wline] (L2) -- (R2);
        \draw[wline] (L3) -- (R3);
        \draw[wline] (L1) -- (R2);
        \draw[wline] (L3) -- (R2);
        \draw[webline] (L2) to[bend right] (L3);
        \draw[webline] (R1) to[bend left] (R2);
        \fill[black] (L1) circle[radius=1];
        \fill[black] (L2) circle[radius=1];
        \fill[black] (L3) circle[radius=1];
        \fill[black] (R1) circle[radius=1];
        \fill[black] (R2) circle[radius=1];
        \fill[black] (R3) circle[radius=1];
    }
    +z_{234,-}
    \tikz[baseline=-.6ex, scale=.1]{
        \coordinate (L1) at (-8,-15);
        \coordinate (L2) at (-8,0);
        \coordinate (L3) at (-8,15);
        \coordinate (R1) at (8,-15);
        \coordinate (R2) at (8,0);
        \coordinate (R3) at (8,15);
        \coordinate (C1) at ($(L1)!.5!(R1)$);
        \coordinate (C2) at ($(L2)!.5!(R2)$);
        \coordinate (C3) at ($(L3)!.5!(R3)$);
        \draw[very thick] (L1) -- (L3) --(R3) -- (R1) -- cycle;
        \draw[wline] (L1) -- (R1);
        \draw[wline] (L2) -- (R2);
        \draw[wline] (L3) -- (R3);
        \draw[wline] (L1) -- (R2);
        \draw[wline] (L3) -- (R2);
        \draw[webline] (R2) to[bend left] (L3);
        \draw[webline] (R1) -- (L2);
        \fill[black] (L1) circle[radius=1];
        \fill[black] (L2) circle[radius=1];
        \fill[black] (L3) circle[radius=1];
        \fill[black] (R1) circle[radius=1];
        \fill[black] (R2) circle[radius=1];
        \fill[black] (R3) circle[radius=1];
    }\ ,
    \end{align*}
    \begin{align*}
    \alpha_3\ 
    \tikz[baseline=-.6ex, scale=.1]{
        \coordinate (L1) at (-8,-15);
        \coordinate (L2) at (-8,0);
        \coordinate (L3) at (-8,15);
        \coordinate (R1) at (8,-15);
        \coordinate (R2) at (8,0);
        \coordinate (R3) at (8,15);
        \coordinate (C1) at ($(L1)!.5!(R1)$);
        \coordinate (C2) at ($(L2)!.5!(R2)$);
        \coordinate (C3) at ($(L3)!.5!(R3)$);
        \draw[very thick] (L1) -- (L3) --(R3) -- (R1) -- cycle;
        \draw[wline] (L1) -- (R1);
        \draw[wline] (L2) -- (R2);
        \draw[wline] (L3) -- (R3);
        \draw[wline] (L1) -- (R2);
        \draw[wline] (L3) -- (R2);
        \draw[webline] (L2) -- (R3);
        \draw[webline] (L1) to[bend left] (R2);
        \fill[black] (L1) circle[radius=1];
        \fill[black] (L2) circle[radius=1];
        \fill[black] (L3) circle[radius=1];
        \fill[black] (R1) circle[radius=1];
        \fill[black] (R2) circle[radius=1];
        \fill[black] (R3) circle[radius=1];
    }
    =
    \tikz[baseline=-.6ex, scale=.1]{
        \coordinate (L1) at (-8,-15);
        \coordinate (L2) at (-8,0);
        \coordinate (L3) at (-8,15);
        \coordinate (R1) at (8,-15);
        \coordinate (R2) at (8,0);
        \coordinate (R3) at (8,15);
        \coordinate (C1) at ($(L1)!.5!(R1)$);
        \coordinate (C2) at ($(L2)!.5!(R2)$);
        \coordinate (C3) at ($(L3)!.5!(R3)$);
        \draw[very thick] (L1) -- (L3) --(R3) -- (R1) -- cycle;
        \draw[wline] (L1) -- (R1);
        \draw[wline] (L2) -- (R2);
        \draw[wline] (L3) -- (R3);
        \draw[wline] (L1) -- (R2);
        \draw[wline] (L3) -- (R2);
        \draw[webline] (L3) to[bend left] (R2);
        \draw[webline] (L2) -- (R3);
        \draw[webline] (L1) to[bend left] (R2);
        \fill[black] (L1) circle[radius=1];
        \fill[black] (L2) circle[radius=1];
        \fill[black] (L3) circle[radius=1];
        \fill[black] (R1) circle[radius=1];
        \fill[black] (R2) circle[radius=1];
        \fill[black] (R3) circle[radius=1];
    }
    =
    \tikz[baseline=-.6ex, scale=.1]{
        \coordinate (L1) at (-8,-15);
        \coordinate (L2) at (-8,0);
        \coordinate (L3) at (-8,15);
        \coordinate (R1) at (8,-15);
        \coordinate (R2) at (8,0);
        \coordinate (R3) at (8,15);
        \coordinate (C1) at ($(L1)!.5!(R1)$);
        \coordinate (C2) at ($(L2)!.5!(R2)$);
        \coordinate (C3) at ($(L3)!.5!(R3)$);
        \coordinate (C12) at ($(C1)!.5!(C2)$);
        \coordinate (C23) at ($(C2)!.5!(C3)$);
        \draw[very thick] (L1) -- (L3) --(R3) -- (R1) -- cycle;
        \draw[wline] (L1) -- (R1);
        \draw[wline] (L2) -- (R2);
        \draw[wline] (L3) -- (R3);
        \draw[wline] (L1) -- (R2);
        \draw[wline] (L3) -- (R2);
        \draw[webline, rounded corners] (L3) to[bend right] (R3);
        \draw[webline, rounded corners] (L2) -- ($(C23)+(0,4)$) -- (R2);
        \draw[webline] (L1) to[bend left] (R2);
        \fill[black] (L1) circle[radius=1];
        \fill[black] (L2) circle[radius=1];
        \fill[black] (L3) circle[radius=1];
        \fill[black] (R1) circle[radius=1];
        \fill[black] (R2) circle[radius=1];
        \fill[black] (R3) circle[radius=1];
    }
    +
    \tikz[baseline=-.6ex, scale=.1]{
        \coordinate (L1) at (-8,-15);
        \coordinate (L2) at (-8,0);
        \coordinate (L3) at (-8,15);
        \coordinate (R1) at (8,-15);
        \coordinate (R2) at (8,0);
        \coordinate (R3) at (8,15);
        \coordinate (C1) at ($(L1)!.5!(R1)$);
        \coordinate (C2) at ($(L2)!.5!(R2)$);
        \coordinate (C3) at ($(L3)!.5!(R3)$);
        \coordinate (C12) at ($(C1)!.5!(C2)$);
        \coordinate (C23) at ($(C2)!.5!(C3)$);
        \draw[very thick] (L1) -- (L3) --(R3) -- (R1) -- cycle;
        \draw[wline] (L1) -- (R1);
        \draw[wline] (L2) -- (R2);
        \draw[wline] (L3) -- (R3);
        \draw[wline] (L1) -- (R2);
        \draw[wline] (L3) -- (R2);
        \draw[webline, rounded corners] (L3) -- ($(C23)+(0,4)$) -- (L2);
        \draw[webline] (R2) to[bend left] (R3);
        \draw[webline] (L1) to[bend left] (R2);
        \fill[black] (L1) circle[radius=1];
        \fill[black] (L2) circle[radius=1];
        \fill[black] (L3) circle[radius=1];
        \fill[black] (R1) circle[radius=1];
        \fill[black] (R2) circle[radius=1];
        \fill[black] (R3) circle[radius=1];
    }
    =z_{3,+}
    \tikz[baseline=-.6ex, scale=.1]{
        \coordinate (L1) at (-8,-15);
        \coordinate (L2) at (-8,0);
        \coordinate (L3) at (-8,15);
        \coordinate (R1) at (8,-15);
        \coordinate (R2) at (8,0);
        \coordinate (R3) at (8,15);
        \coordinate (C1) at ($(L1)!.5!(R1)$);
        \coordinate (C2) at ($(L2)!.5!(R2)$);
        \coordinate (C3) at ($(L3)!.5!(R3)$);
        \coordinate (C12) at ($(C1)!.5!(C2)$);
        \coordinate (C23) at ($(C2)!.5!(C3)$);
        \draw[very thick] (L1) -- (L3) --(R3) -- (R1) -- cycle;
        \draw[wline] (L1) -- (R1);
        \draw[wline] (L2) -- (R2);
        \draw[wline] (L3) -- (R3);
        \draw[wline] (L1) -- (R2);
        \draw[wline] (L3) -- (R2);
        \draw[webline] (L3) to[bend right] (R3);
        \draw[webline] (L2) to[bend left] (R2);
        \draw[webline] (L1) to[bend left] (R2);
        \fill[black] (L1) circle[radius=1];
        \fill[black] (L2) circle[radius=1];
        \fill[black] (L3) circle[radius=1];
        \fill[black] (R1) circle[radius=1];
        \fill[black] (R2) circle[radius=1];
        \fill[black] (R3) circle[radius=1];
    }
    +z_{3,-}
    \tikz[baseline=-.6ex, scale=.1]{
        \coordinate (L1) at (-8,-15);
        \coordinate (L2) at (-8,0);
        \coordinate (L3) at (-8,15);
        \coordinate (R1) at (8,-15);
        \coordinate (R2) at (8,0);
        \coordinate (R3) at (8,15);
        \coordinate (C1) at ($(L1)!.5!(R1)$);
        \coordinate (C2) at ($(L2)!.5!(R2)$);
        \coordinate (C3) at ($(L3)!.5!(R3)$);
        \coordinate (C12) at ($(C1)!.5!(C2)$);
        \coordinate (C23) at ($(C2)!.5!(C3)$);
        \draw[very thick] (L1) -- (L3) --(R3) -- (R1) -- cycle;
        \draw[wline] (L1) -- (R1);
        \draw[wline] (L2) -- (R2);
        \draw[wline] (L3) -- (R3);
        \draw[wline] (L1) -- (R2);
        \draw[wline] (L3) -- (R2);
        \draw[webline] (L3) to[bend left] (L2);
        \draw[webline] (R2) to[bend left] (R3);
        \draw[webline] (L1) to[bend left] (R2);
        \fill[black] (L1) circle[radius=1];
        \fill[black] (L2) circle[radius=1];
        \fill[black] (L3) circle[radius=1];
        \fill[black] (R1) circle[radius=1];
        \fill[black] (R2) circle[radius=1];
        \fill[black] (R3) circle[radius=1];
    }
    \end{align*}
    , and
    \begin{align*}
    \alpha_1\ 
    \tikz[baseline=-.6ex, scale=.1]{
        \coordinate (L1) at (-8,-15);
        \coordinate (L2) at (-8,0);
        \coordinate (L3) at (-8,15);
        \coordinate (R1) at (8,-15);
        \coordinate (R2) at (8,0);
        \coordinate (R3) at (8,15);
        \coordinate (C1) at ($(L1)!.5!(R1)$);
        \coordinate (C2) at ($(L2)!.5!(R2)$);
        \coordinate (C3) at ($(L3)!.5!(R3)$);
        \draw[very thick] (L1) -- (L3) --(R3) -- (R1) -- cycle;
        \draw[wline] (L1) -- (R1);
        \draw[wline] (L2) -- (R2);
        \draw[wline] (L3) -- (R3);
        \draw[wline] (L1) -- (R2);
        \draw[wline] (L3) -- (R2);
        \draw[webline] (R2) to[bend left] (L3);
        \draw[webline] (R1) -- (L2);
        \fill[black] (L1) circle[radius=1];
        \fill[black] (L2) circle[radius=1];
        \fill[black] (L3) circle[radius=1];
        \fill[black] (R1) circle[radius=1];
        \fill[black] (R2) circle[radius=1];
        \fill[black] (R3) circle[radius=1];
    }
    =
    \tikz[baseline=-.6ex, scale=.1]{
        \coordinate (L1) at (-8,-15);
        \coordinate (L2) at (-8,0);
        \coordinate (L3) at (-8,15);
        \coordinate (R1) at (8,-15);
        \coordinate (R2) at (8,0);
        \coordinate (R3) at (8,15);
        \coordinate (C1) at ($(L1)!.5!(R1)$);
        \coordinate (C2) at ($(L2)!.5!(R2)$);
        \coordinate (C3) at ($(L3)!.5!(R3)$);
        \draw[very thick] (L1) -- (L3) --(R3) -- (R1) -- cycle;
        \draw[wline] (L1) -- (R1);
        \draw[wline] (L2) -- (R2);
        \draw[wline] (L3) -- (R3);
        \draw[wline] (L1) -- (R2);
        \draw[wline] (L3) -- (R2);
        \draw[webline] (L2) -- (R1);
        \draw[webline] (R2) to[bend right] (L1);
        \draw[webline] (R2) to[bend left] (L3);
        \fill[black] (L1) circle[radius=1];
        \fill[black] (L2) circle[radius=1];
        \fill[black] (L3) circle[radius=1];
        \fill[black] (R1) circle[radius=1];
        \fill[black] (R2) circle[radius=1];
        \fill[black] (R3) circle[radius=1];
    }
    =
    \tikz[baseline=-.6ex, scale=.1]{
        \coordinate (L1) at (-8,-15);
        \coordinate (L2) at (-8,0);
        \coordinate (L3) at (-8,15);
        \coordinate (R1) at (8,-15);
        \coordinate (R2) at (8,0);
        \coordinate (R3) at (8,15);
        \coordinate (C1) at ($(L1)!.5!(R1)$);
        \coordinate (C2) at ($(L2)!.5!(R2)$);
        \coordinate (C3) at ($(L3)!.5!(R3)$);
        \coordinate (C12) at ($(C1)!.5!(C2)$);
        \coordinate (C23) at ($(C2)!.5!(C3)$);
        \draw[very thick] (L1) -- (L3) --(R3) -- (R1) -- cycle;
        \draw[wline] (L1) -- (R1);
        \draw[wline] (L2) -- (R2);
        \draw[wline] (L3) -- (R3);
        \draw[wline] (L1) -- (R2);
        \draw[wline] (L3) -- (R2);
        \draw[webline] (L2) to[bend left] (L1);
        \draw[webline, rounded corners] (R2) -- ($(C12)+(-4,0)$) -- (R1);
        \draw[webline] (R2) to[bend left] (L3);
        \fill[black] (L1) circle[radius=1];
        \fill[black] (L2) circle[radius=1];
        \fill[black] (L3) circle[radius=1];
        \fill[black] (R1) circle[radius=1];
        \fill[black] (R2) circle[radius=1];
        \fill[black] (R3) circle[radius=1];
    }
    +
    \tikz[baseline=-.6ex, scale=.1]{
        \coordinate (L1) at (-8,-15);
        \coordinate (L2) at (-8,0);
        \coordinate (L3) at (-8,15);
        \coordinate (R1) at (8,-15);
        \coordinate (R2) at (8,0);
        \coordinate (R3) at (8,15);
        \coordinate (C1) at ($(L1)!.5!(R1)$);
        \coordinate (C2) at ($(L2)!.5!(R2)$);
        \coordinate (C3) at ($(L3)!.5!(R3)$);
        \coordinate (C12) at ($(C1)!.5!(C2)$);
        \coordinate (C23) at ($(C2)!.5!(C3)$);
        \draw[very thick] (L1) -- (L3) --(R3) -- (R1) -- cycle;
        \draw[wline] (L1) -- (R1);
        \draw[wline] (L2) -- (R2);
        \draw[wline] (L3) -- (R3);
        \draw[wline] (L1) -- (R2);
        \draw[wline] (L3) -- (R2);
        \draw[webline, rounded corners] (L1) -- ($(C12)+(-4,0)$) -- (R1);
        \draw[webline] (R2) to[bend left] (L2);
        \draw[webline] (R2) to[bend left] (L3);
        \fill[black] (L1) circle[radius=1];
        \fill[black] (L2) circle[radius=1];
        \fill[black] (L3) circle[radius=1];
        \fill[black] (R1) circle[radius=1];
        \fill[black] (R2) circle[radius=1];
        \fill[black] (R3) circle[radius=1];
    }
    =z_{1,+}
    \tikz[baseline=-.6ex, scale=.1]{
        \coordinate (L1) at (-8,-15);
        \coordinate (L2) at (-8,0);
        \coordinate (L3) at (-8,15);
        \coordinate (R1) at (8,-15);
        \coordinate (R2) at (8,0);
        \coordinate (R3) at (8,15);
        \coordinate (C1) at ($(L1)!.5!(R1)$);
        \coordinate (C2) at ($(L2)!.5!(R2)$);
        \coordinate (C3) at ($(L3)!.5!(R3)$);
        \coordinate (C12) at ($(C1)!.5!(C2)$);
        \coordinate (C23) at ($(C2)!.5!(C3)$);
        \draw[very thick] (L1) -- (L3) --(R3) -- (R1) -- cycle;
        \draw[wline] (L1) -- (R1);
        \draw[wline] (L2) -- (R2);
        \draw[wline] (L3) -- (R3);
        \draw[wline] (L1) -- (R2);
        \draw[wline] (L3) -- (R2);
        \draw[webline] (L2) to[bend left] (L1);
        \draw[webline] (R2) to[bend right] (R1);
        \draw[webline] (R2) to[bend left] (L3);
        \fill[black] (L1) circle[radius=1];
        \fill[black] (L2) circle[radius=1];
        \fill[black] (L3) circle[radius=1];
        \fill[black] (R1) circle[radius=1];
        \fill[black] (R2) circle[radius=1];
        \fill[black] (R3) circle[radius=1];
    }
    +z_{1,-}
    \tikz[baseline=-.6ex, scale=.1]{
        \coordinate (L1) at (-8,-15);
        \coordinate (L2) at (-8,0);
        \coordinate (L3) at (-8,15);
        \coordinate (R1) at (8,-15);
        \coordinate (R2) at (8,0);
        \coordinate (R3) at (8,15);
        \coordinate (C1) at ($(L1)!.5!(R1)$);
        \coordinate (C2) at ($(L2)!.5!(R2)$);
        \coordinate (C3) at ($(L3)!.5!(R3)$);
        \coordinate (C12) at ($(C1)!.5!(C2)$);
        \coordinate (C23) at ($(C2)!.5!(C3)$);
        \draw[very thick] (L1) -- (L3) --(R3) -- (R1) -- cycle;
        \draw[wline] (L1) -- (R1);
        \draw[wline] (L2) -- (R2);
        \draw[wline] (L3) -- (R3);
        \draw[wline] (L1) -- (R2);
        \draw[wline] (L3) -- (R2);
        \draw[webline] (L1) to[bend left] (R1);
        \draw[webline] (R2) to[bend left] (L2);
        \draw[webline] (R2) to[bend left] (L3);
        \fill[black] (L1) circle[radius=1];
        \fill[black] (L2) circle[radius=1];
        \fill[black] (L3) circle[radius=1];
        \fill[black] (R1) circle[radius=1];
        \fill[black] (R2) circle[radius=1];
        \fill[black] (R3) circle[radius=1];
    }\ .
    \end{align*}
    Thus we get 
    \begin{align*}
        \alpha_1\alpha_2\alpha_3\alpha_4\gamma=&z_{1234,+}\alpha_1^2\alpha_2\alpha_4+z_{124,+}z_{3,-}\alpha_1^2\alpha_7\alpha_8+z_{14,+}z_{23,-}\alpha_1\alpha_3\alpha_5\alpha_7+z_{12,+}z_{34,-}\alpha_1\alpha_3\alpha_6\alpha_8\\
        &+z_{1,+}z_{234,-}\alpha_3^2\alpha_5\alpha_6+z_{1234,-}\alpha_2\alpha_3^2\alpha_4.
    \end{align*}
    From this, we deduce
    \begin{align*}
        \alpha_1\alpha_2\alpha_3\alpha_4\gamma=\alpha_1^2\alpha_2\alpha_4+z_3\alpha_1^2+(z_2z_3+z_3z_4)\alpha_1\alpha_3+z_2z_3z_4\alpha_3^2+z_1z_2z_3z_4\alpha_2\alpha_3^2\alpha_4
    \end{align*} 
    by specializing $\alpha_5=\alpha_6=\alpha_7=\alpha_8=1$, $z_{i,+}=1$, and replacing $z_{i,-}$ with $z_{i}$ for $i=1,2,3,4$.
    The result is the same as the Laurent expansion of $\gamma$ in the cluster algebra of $\Sigma$ with principal coefficients in \cite[Example 3.24]{MW13}.\footnote{In our paper, the principal coefficients should correspond to the specialization $z_{\bullet,-}=1$, while here we specialize $z_{\bullet,+}=1$. This is due to the difference of convention: our exchange matrix is the transpose of that used in \cite{MW13}.}
\end{ex}

We can also easily calculate arcs with self-crossings in $\Sigma$ using a deformation of the recurrence relation for the Chebyshev polynomials. 
\begin{ex}[Laurent expansions of arcs with self-crossings in an annulus]\label{ex:MWgeneral}
    Let $(\Sigma,\bW)$ and $\gamma$ be in \cref{ex:MW}.
    For any positive integer $n$, we define $T_n(\gamma;\bW) \in \mathscr{S}_{\Sigma,\bW}$ as follows:
    \begin{align*}
    T_n(\gamma;\bW)=
    \tikz[baseline=-.6ex, scale=.08]{
        \coordinate (L1) at (-10,-15);
        \coordinate (L2) at (-10,0);
        \coordinate (L3) at (-10,15);
        \coordinate (R1) at (10,-15);
        \coordinate (R2) at (10,0);
        \coordinate (R3) at (10,15);
        \coordinate (C1) at ($(L1)!.5!(R1)$);
        \coordinate (C2) at ($(L2)!.5!(R2)$);
        \coordinate (C3) at ($(L3)!.5!(R3)$);
        \coordinate (C12) at ($(C1)!.5!(C2)$);
        \coordinate (C23) at ($(C2)!.5!(C3)$);
        \draw[very thick] (L1) -- (L3) --(R3) -- (R1) -- cycle;
        \draw[wline] (L1) -- (R1);
        \draw[wline] (L2) -- (R2);
        \draw[wline] (L3) -- (R3);
        \draw[wline] (L1) -- (R2);
        \draw[wline] (L3) -- (R2);
        \draw[webline, rounded corners] ($(L1)!.3!(R1)$) -- ($($(L3)!.3!(R3)$)+(0,-3)$) -- ($($(L3)!.8!(R3)$)+(0,-3)$) -- (R3);
        \draw[webline, rounded corners] ($(L1)!.4!(R1)$) -- ($($(L3)!.4!(R3)$)+(0,-3)$) -- ($(L3)!.3!(R3)$);
        \draw[webline, rounded corners] ($(L1)!.5!(R1)$) -- ($($(L3)!.5!(R3)$)+(0,-3)$) -- ($(L3)!.4!(R3)$);
        \draw[webline, rounded corners] ($(L1)!.8!(R1)$) -- ($($(L3)!.8!(R3)$)+(0,-3)$) -- ($(L3)!.7!(R3)$);
        \draw[webline, rounded corners] (R2) -- ($($(L3)!.9!(R3)$)+(0,-3)$) -- ($(L3)!.8!(R3)$);
        \fill[black] (L1) circle[radius=1];
        \fill[black] (L2) circle[radius=1];
        \fill[black] (L3) circle[radius=1];
        \fill[black] (R1) circle[radius=1];
        \fill[black] (R2) circle[radius=1];
        \fill[black] (R3) circle[radius=1];
        \node at ($($(C1)!.3!(R1)$)+(0,5)$) [red, scale=.7]{$\dots$};
        \node at ($(C1)+(1,-2)$) [red, xscale=2.5, rotate=90] {$\left\{\right.$};
        \node at ($(C1)+(1,-5)$) [red] {$n$};
    }\ .
    \end{align*}
    Then it is easy to see that $T_n(\gamma;\bW)=\gamma T_{n-1}(\gamma;\bW)-a_1a_2a_3a_4T_{n-2}(\gamma;\bW)$ by skein relations. Indeed,
    \begin{align*}
        T_n(\gamma;\bW)=
        \tikz[baseline=-.6ex, scale=.08]{
            \coordinate (L1) at (-10,-15);
            \coordinate (L2) at (-10,0);
            \coordinate (L3) at (-10,15);
            \coordinate (R1) at (10,-15);
            \coordinate (R2) at (10,0);
            \coordinate (R3) at (10,15);
            \coordinate (C1) at ($(L1)!.5!(R1)$);
            \coordinate (C2) at ($(L2)!.5!(R2)$);
            \coordinate (C3) at ($(L3)!.5!(R3)$);
            \coordinate (C12) at ($(C1)!.5!(C2)$);
            \coordinate (C23) at ($(C2)!.5!(C3)$);
            \draw[very thick] (L1) -- (L3) --(R3) -- (R1) -- cycle;
            \draw[wline] (L1) -- (R1);
            \draw[wline] (L2) -- (R2);
            \draw[wline] (L3) -- (R3);
            \draw[wline] (L1) -- (R2);
            \draw[wline] (L3) -- (R2);
            \draw[webline, rounded corners] ($(L1)!.3!(R1)$) -- ($(L3)!.3!(R3)$);
            \draw[webline, rounded corners] ($(L1)!.4!(R1)$) -- ($($(L3)!.4!(R3)$)+(0,-3)$)     -- ($($(L3)!.8!(R3)$)+(0,-3)$) -- (R3);
            \draw[webline, rounded corners] ($(L1)!.5!(R1)$) -- ($($(L3)!.5!(R3)$)+(0,-3)$) -   - ($(L3)!.4!(R3)$);
            \draw[webline, rounded corners] ($(L1)!.8!(R1)$) -- ($($(L3)!.8!(R3)$)+(0,-3)$) -   - ($(L3)!.7!(R3)$);
            \draw[webline, rounded corners] (R2) -- ($($(L3)!.9!(R3)$)+(0,-3)$) -- ($(L3)!.8!   (R3)$);
            \fill[black] (L1) circle[radius=1];
            \fill[black] (L2) circle[radius=1];
            \fill[black] (L3) circle[radius=1];
            \fill[black] (R1) circle[radius=1];
            \fill[black] (R2) circle[radius=1];
            \fill[black] (R3) circle[radius=1];
            \node at ($($(C1)!.3!(R1)$)+(0,5)$) [red, scale=.7]{$\dots$};
            \node at ($(C1)+(1,-2)$) [red, xscale=2.5, rotate=90] {$\left\{\right.$};
            \node at ($(C1)+(1,-5)$) [red] {$n$};
        }
        +
        \tikz[baseline=-.6ex, scale=.08]{
            \coordinate (L1) at (-10,-15);
            \coordinate (L2) at (-10,0);
            \coordinate (L3) at (-10,15);
            \coordinate (R1) at (10,-15);
            \coordinate (R2) at (10,0);
            \coordinate (R3) at (10,15);
            \coordinate (C1) at ($(L1)!.5!(R1)$);
            \coordinate (C2) at ($(L2)!.5!(R2)$);
            \coordinate (C3) at ($(L3)!.5!(R3)$);
            \coordinate (C12) at ($(C1)!.5!(C2)$);
            \coordinate (C23) at ($(C2)!.5!(C3)$);
            \draw[very thick] (L1) -- (L3) --(R3) -- (R1) -- cycle;
            \draw[wline] (L1) -- (R1);
            \draw[wline] (L2) -- (R2);
            \draw[wline] (L3) -- (R3);
            \draw[wline] (L1) -- (R2);
            \draw[wline] (L3) -- (R2);
            \draw[webline, rounded corners] ($(L1)!.3!(R1)$) -- ($($(L3)!.3!(R3)$)+(0,-3)$)     -- ($($(L3)!.4!(R3)$)+(0,-3)$) -- ($(L1)!.4!(R1)$);
            \draw[webline, rounded corners] (R3) -- ($($(L3)!.4!(R3)$)+(0,-3)$) -- ($(L3)!.3!   (R3)$);
            \draw[webline, rounded corners] ($(L1)!.5!(R1)$) -- ($($(L3)!.5!(R3)$)+(0,-3)$) -   - ($(L3)!.4!(R3)$);
            \draw[webline, rounded corners] ($(L1)!.8!(R1)$) -- ($($(L3)!.8!(R3)$)+(0,-3)$) -   - ($(L3)!.7!(R3)$);
            \draw[webline, rounded corners] (R2) -- ($($(L3)!.9!(R3)$)+(0,-3)$) -- ($(L3)!.8!   (R3)$);
            \fill[black] (L1) circle[radius=1];
            \fill[black] (L2) circle[radius=1];
            \fill[black] (L3) circle[radius=1];
            \fill[black] (R1) circle[radius=1];
            \fill[black] (R2) circle[radius=1];
            \fill[black] (R3) circle[radius=1];
            \node at ($($(C1)!.3!(R1)$)+(0,5)$) [red, scale=.7]{$\dots$};
            \node at ($(C1)+(1,-2)$) [red, xscale=2.5, rotate=90] {$\left\{\right.$};
            \node at ($(C1)+(1,-5)$) [red] {$n$};
        }
        =\gamma\ 
        \tikz[baseline=-.6ex, scale=.08]{
            \coordinate (L1) at (-10,-15);
            \coordinate (L2) at (-10,0);
            \coordinate (L3) at (-10,15);
            \coordinate (R1) at (10,-15);
            \coordinate (R2) at (10,0);
            \coordinate (R3) at (10,15);
            \coordinate (C1) at ($(L1)!.5!(R1)$);
            \coordinate (C2) at ($(L2)!.5!(R2)$);
            \coordinate (C3) at ($(L3)!.5!(R3)$);
            \coordinate (C12) at ($(C1)!.5!(C2)$);
            \coordinate (C23) at ($(C2)!.5!(C3)$);
        \draw[very thick] (L1) -- (L3) --(R3) - - (R1) -- cycle;
            \draw[wline] (L1) -- (R1);
            \draw[wline] (L2) -- (R2);
            \draw[wline] (L3) -- (R3);
            \draw[wline] (L1) -- (R2);
            \draw[wline] (L3) -- (R2);
            \draw[webline, rounded corners] ($(L1)!.4!(R1)$) -- ($($(L3)!.4!(R3)$)+(0,-3)$)     -- ($($(L3)!.8!(R3)$)+(0,-3)$) -- (R3);
            \draw[webline, rounded corners] ($(L1)!.5!(R1)$) -- ($($(L3)!.5!(R3)$)+(0,-3)$) -   - ($(L3)!.4!(R3)$);
            \draw[webline, rounded corners] ($(L1)!.8!(R1)$) -- ($($(L3)!.8!(R3)$)+(0,-3)$) -   - ($(L3)!.7!(R3)$);
            \draw[webline, rounded corners] (R2) -- ($($(L3)!.9!(R3)$)+(0,-3)$) -- ($(L3)!.8!   (R3)$);
            \fill[black] (L1) circle[radius=1];
            \fill[black] (L2) circle[radius=1];
            \fill[black] (L3) circle[radius=1];
            \fill[black] (R1) circle[radius=1];
            \fill[black] (R2) circle[radius=1];
            \fill[black] (R3) circle[radius=1];
            \node at ($($(C1)!.3!(R1)$)+(0,5)$) [red, scale=.7]{$\dots$};
            \node at ($(C1)+(2,-2)$) [red, xscale=2.1, rotate=90] {$\left\{\right.$};
            \node at ($(C1)+(2,-5)$) [red] {$n-1$};
        }
        +a_1a_2a_3a_4\ 
        \tikz[baseline=-.6ex, scale=.08]{
            \coordinate (L1) at (-10,-15);
            \coordinate (L2) at (-10,0);
            \coordinate (L3) at (-10,15);
            \coordinate (R1) at (10,-15);
            \coordinate (R2) at (10,0);
            \coordinate (R3) at (10,15);
            \coordinate (C1) at ($(L1)!.5!(R1)$);
            \coordinate (C2) at ($(L2)!.5!(R2)$);
            \coordinate (C3) at ($(L3)!.5!(R3)$);
            \coordinate (C12) at ($(C1)!.5!(C2)$);
            \coordinate (C23) at ($(C2)!.5!(C3)$);
            \draw[very thick] (L1) -- (L3) --(R3) -- (R1) -- cycle;
            \draw[wline] (L1) -- (R1);
            \draw[wline] (L2) -- (R2);
            \draw[wline] (L3) -- (R3);
            \draw[wline] (L1) -- (R2);
            \draw[wline] (L3) -- (R2);
            \draw[webline, rounded corners] (R3) -- ($($(L3)!.3!(R3)$)+(0,-4)$) --  ($($(L3)!.3!(R3)$)+(0,-1)$) -- ($($(L3)!.5!(R3)$)+(0,-1)$) -- ($(L1)!.5!(R1)$);
            \draw[webline, rounded corners] ($(L1)!.8!(R1)$) -- ($($(L3)!.8!(R3)$)+(0,-3)$) -   - ($(L3)!.7!(R3)$);
            \draw[webline, rounded corners] (R2) -- ($($(L3)!.9!(R3)$)+(0,-3)$) -- ($(L3)!.8!   (R3)$);
            \fill[black] (L1) circle[radius=1];
            \fill[black] (L2) circle[radius=1];
            \fill[black] (L3) circle[radius=1];
            \fill[black] (R1) circle[radius=1];
            \fill[black] (R2) circle[radius=1];
            \fill[black] (R3) circle[radius=1];
            \node at ($($(C1)!.3!(R1)$)+(0,5)$) [red, scale=.7]{$\dots$};
            \node at ($(C1)+(3,-2)$) [red, xscale=1.8, rotate=90] {$\left\{\right.$};
            \node at ($(C1)+(3,-5)$) [red] {$n-2$};
        }\ ,
    \end{align*}
    and the kink can be removed with the multiplicative factor $-1$.
\end{ex}

\subsection{Basis of the skein algebra of a walled surface}
In the case of the skein algebra $\SK{\Sigma}=\SK{\Sigma, \emptyset}$ with no walls, it is known that the set $\mathsf{SMulti}(\Sigma)$ of homotopy classes of simple multicurves gives a basis \cite[Lemma~4.1]{Muller16}.
Here, a simple multicurve is a collection of mutually disjoint ideal arcs and simple loops without a component bounding a disk or a monogon. 
For $\Sigma$ with a non-trivial wall system, we cannot consider $\mathsf{SMulti}(\Sigma)$ as a basis of $\SK{\Sigma,\bW}$, since a homotopy may change the $\bW$-isotopy class $[\gamma]_{\bW}$ of a simple multicurve $\gamma$ on the underlying surface $\Sigma$.
We consider a $\bW$-transverse and \emph{$\bW$-minimal} lift $\gamma_{\bW}$ of $\gamma\in\mathsf{SMulti}(\Sigma)$ in $(\Sigma,\bW)$.
Moreover, we describe local moves (the bottom line of \cref{fig:elementary-moves}) relating any two lifts $\tilde{\gamma}_{1,\bW}$ and $\tilde{\gamma}_{2,\bW}$ of $\gamma$.
It is easy to see that $\tilde{\gamma}_{1,\bW}$ and $\tilde{\gamma}_{1,\bW}$ represent the same element in $\SK{\Sigma,\bW}$.
The set of $\bW$-transverse and $\bW$-minimal lifts modulo the local moves gives a $\bZ_{A,\bW}$-basis of the skein algebra $\SK{\Sigma,\bW}$.
We summarize the assertion below. The proof will be provided in \cref{sec:minimal}.

\begin{figure}[ht]
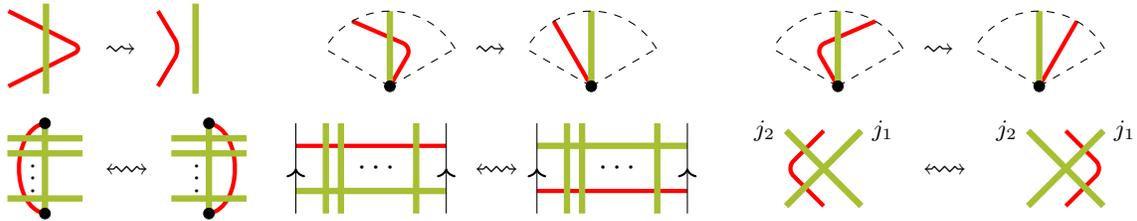

    \begin{align*}
        \ \tikz[baseline=-.6ex, scale=.1, yshift=-5cm]{
            \draw[webline, rounded corners] (-5,0) -- (5,5) -- (-5,10);
            \draw[wline] (0,-1) -- (0,11);
            %\node at (0,11) [below right]{\scriptsize $j$};
        }\ 
        &\rightsquigarrow
        \ \tikz[baseline=-.6ex, scale=.1, yshift=-5cm]{
                \draw[webline, rounded corners] (-5,0) -- (-2,5) -- (-5,10);
                \draw[wline] (0,-1) -- (0,11);
             %   \node at (0,11) [below right]{\scriptsize $j$};
        }\ &
        \ \tikz[baseline=-.6ex, scale=.1, yshift=-5cm]{
            \coordinate (A) at (-10,0);
            \coordinate (B) at (10,0);
            \coordinate (P) at (0,0);
            \draw[webline, rounded corners] (P) -- (60:6) -- (120:10);
            \draw[wline] (P) -- (90:10);
            \draw[dashed] (30:10) arc (30:150:10cm);
            \draw[dashed] (P) -- (30:10);
            \draw[dashed] (P) -- (150:10);
            %\bdryline{(A)}{(B)}{2cm}
            \draw[fill=black] (P) circle [radius=20pt];
            %\node at (90:10) [below right]{\scriptsize $j$};
        }\ 
        &\rightsquigarrow
        \ \tikz[baseline=-.6ex, scale=.1, yshift=-5cm]{
            \coordinate (A) at (-10,0);
            \coordinate (B) at (10,0);
            \coordinate (P) at (0,0);
            \draw[webline, rounded corners] (P) -- (120:10);
            \draw[wline] (P) -- (90:10);
            \draw[dashed] (30:10) arc (30:150:10cm);
            \draw[dashed] (P) -- (30:10);
            \draw[dashed] (P) -- (150:10);
            %\bdryline{(A)}{(B)}{2cm}
            \draw[fill=black] (P) circle [radius=20pt];
            %\node at (90:10) [below right]{\scriptsize $j$};
        }\ &
        \ \tikz[baseline=-.6ex, scale=.1, yshift=-5cm]{
            \coordinate (A) at (-10,0);
            \coordinate (B) at (10,0);
            \coordinate (P) at (0,0);
            \draw[webline, rounded corners] (P) -- (120:6) -- (60:10);
            \draw[wline] (P) -- (90:10);
            \draw[dashed] (30:10) arc (30:150:10cm);
            \draw[dashed] (P) -- (30:10);
            \draw[dashed] (P) -- (150:10);
            %\bdryline{(A)}{(B)}{2cm}
            \draw[fill=black] (P) circle [radius=20pt];
            %\node at (90:10) [below left]{\scriptsize $j$};
        }\ 
        &\rightsquigarrow
        \ \tikz[baseline=-.6ex, scale=.1, yshift=-5cm]{
            \coordinate (A) at (-10,0);
            \coordinate (B) at (10,0);
            \coordinate (P) at (0,0);
            \draw[webline, rounded corners] (P) -- (60:10);
            \draw[wline] (P) -- (90:10);
            \draw[dashed] (30:10) arc (30:150:10cm);
            \draw[dashed] (P) -- (30:10);
            \draw[dashed] (P) -- (150:10);
            %\bdryline{(A)}{(B)}{2cm}
            \draw[fill=black] (P) circle [radius=20pt];
            %\node at (90:10) [below left]{\scriptsize $j$};
        }\ \\
        \ \tikz[baseline=-.6ex, scale=.1, yshift=-6cm]{
            \draw[webline] (0,0) to[bend left=80] (90:12);
            \draw[wline] (0,0) -- (90:12);
            \begin{scope}
                \draw[wline] (-5,10) -- (5,10);
                \draw[wline] (-5,8) -- (5,8);
                \draw[wline] (-5,2) -- (5,2);
                \node at (-1.5,5)[rotate=90]{\small $\cdots$};
            \end{scope}
            \draw[fill=black] (0,0) circle [radius=20pt];
            \draw[fill=black] (90:12) circle [radius=20pt];
        }\ 
        &\leftrightsquigarrow
        \ \tikz[baseline=-.6ex, scale=.1, yshift=-6cm]{
            \draw[webline] (0,0) to[bend right=80] (90:12);
            \draw[wline] (0,0) -- (90:12);
            \begin{scope}
                \draw[wline] (-5,10) -- (5,10);
                \draw[wline] (-5,8) -- (5,8);
                \draw[wline] (-5,2) -- (5,2);
                \node at (-1.5,5)[rotate=90]{\small $\cdots$};
            \end{scope}
            \draw[fill=black] (0,0) circle [radius=20pt];
            \draw[fill=black] (90:12) circle [radius=20pt];
        }\ &
        \tikz[baseline=-.6ex,scale=.1]{
            \draw[webline] (-10,3) -- (10,3);
            \draw[wline] (-10,-3) -- (10,-3);
            \begin{scope}
                \draw[wline] (-6,6) -- (-6,-6);
                \draw[wline] (-4,6) -- (-4,-6);
                \draw[wline] (6,6) -- (6,-6);
                \node at (1,0) {$\cdots$};
            \end{scope}
            \draw[->-={.5}{black}] (-10,-6) -- (-10,6);
            \draw[->-={.5}{black}] (10,-6) -- (10,6);
        }\ 
        &\leftrightsquigarrow
        \tikz[baseline=-.6ex,scale=.1]{
            \draw[wline] (-10,3) -- (10,3);
            \draw[webline] (-10,-3) -- (10,-3);
            \begin{scope}
                \draw[wline] (-6,6) -- (-6,-6);
                \draw[wline] (-4,6) -- (-4,-6);
                \draw[wline] (6,6) -- (6,-6);
                \node at (1,0) {$\cdots$};
            \end{scope}
            \draw[->-={.5}{black}] (-10,-6) -- (-10,6);
            \draw[->-={.5}{black}] (10,-6) -- (10,6);
        }\ &
        \ \tikz[baseline=-.6ex, scale=.1, yshift=-5cm]{
            \draw[webline, rounded corners] (0,0) -- (-5,5) -- (0,10);
            \draw[wline] (5,0) -- (-5,10);
            \draw[wline] (-5,0) -- (5,10);
            \node at (5,10) [right]{\scriptsize $j_1$};
            \node at (-5,10) [left]{\scriptsize $j_2$};
        }\ 
        &\leftrightsquigarrow
        \ \tikz[baseline=-.6ex, scale=.1, yshift=-5cm]{
            \draw[webline, rounded corners] (0,0) -- (5,5) -- (0,10);
            \draw[wline] (5,0) -- (-5,10);
            \draw[wline] (-5,0) -- (5,10);
            \node at (5,10) [right]{\scriptsize $j_1$};
            \node at (-5,10) [left]{\scriptsize $j_2$};
        }\ 
    \end{align*}
    \caption{The top line: \emph{bigon reduction moves}. The bottom line: a \emph{flip move} on arc and loop components, and a \emph{double-point jumping}.}
    \label{fig:elementary-moves}
\end{figure}

\begin{dfn}\label{def:W-minimal}
    Let $\bW=(\sfC,J,\ell)$ be a wall system on $\Sigma$.
    \begin{itemize}
        \item A $\bW$-transverse diagram $D$ in a walled surface $(\Sigma, \bW)$ is said to be \emph{$\bW$-minimal} if it realizes the minimum of the number of intersection points of $D\cup \bigcup\mathsf{C}$ within its equivalence class.
        \item a $\bW$-minimal diagram is a \emph{$\bW$-minimal simple multicurve} if $D_{\bW}$ is a simple multicurve in the underlying surface $\Sigma$.
        \item a $\bW$-minimal simple multicurve $D_{\bW}$ related to $D\in\mathsf{SMulti}(\Sigma)$ if $D_{\bW}$ is homotopic to $D$ in the underlying surface $\Sigma$.
    \end{itemize}
\end{dfn}

\begin{dfn}\label{def:taut}
A wall system $\bW$ on $\Sigma$ is said to be \emph{taut} if each curve in $\mathsf{C}$ is simple, does not bound any disk or monogon, and any pair of curves does not bound a bigon.
\end{dfn}

We can prove the following assertion.
\begin{thm}[{\cref{cor:minimal-multicurve}}]\label{thm:minimal-multicurve}
    Let $\bW$ be a taut wall system on $\Sigma$.
    For any simple multicurve $D$ in $\Sigma$, the $\bW$-minimal simple multicurve $D_{\bW}$ related to $D$ is unique up to homotopy fixing $\bW$ setwisely, flip moves and double-point jumping in \cref{fig:elementary-moves}.
\end{thm}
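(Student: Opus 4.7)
The strategy is to combine the classical bigon criterion for simple curves on surfaces with a controlled analysis of the codimension-one singularities that appear along a generic isotopy. Let $D_0$ and $D_1$ be two $\bW$-minimal simple multicurves both homotopic to $D$ in $\Sigma$ rel $\bM$. Since they represent the same isotopy class as unframed multicurves, there exists an ambient isotopy $\{H_t\}_{t\in[0,1]}$ of $\Sigma$ rel $\bM$ with $H_0 = \mathrm{id}$ and $H_1(D_0) = D_1$. The plan is to analyze how the combinatorial position of $D_t := H_t(D_0)$ relative to $\bigcup \mathsf{C}$ evolves with $t$, and to show that after a suitable modification of $H_t$, the combinatorial type changes only through the permitted moves.

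After a generic perturbation of $H_t$, the curve $D_t$ is $\bW$-transverse except at finitely many times, where exactly one of the following local events occurs: (i) a strand of $D_t$ crosses a marked point, producing a flip move; (ii) a strand becomes tangent to a wall, creating or destroying two intersection points with that wall (a bigon event); (iii) a strand crosses a transverse double point of two walls, producing a double-point jumping. Events of types (i) and (iii) are precisely the elementary moves appearing in the bottom row of \cref{fig:elementary-moves}, so it suffices to remove all events of type (ii) from the isotopy.

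The main obstacle is to eliminate the bigon events. Since $D_0$ and $D_1$ are both $\bW$-minimal, the total count $|D_t \cap \bigcup\mathsf{C}|$ at $t=0$ and $t=1$ is the common minimum; hence along $H_t$, each creation event of type (ii) must be paired with a later destruction event. Each such cancelling pair sweeps out an embedded disk in $\Sigma \times [0,1]$ whose horizontal slice at an intermediate time exhibits a bigon bounded by a sub-arc of $D_t$ and a sub-arc of a single wall $\gamma \in \mathsf{C}$. By the classical bigon criterion of Farb--Margalit, two simple curves on a surface that bound an embedded bigon are not in minimal position; applied to $D_0$ (or $D_1$) and $\gamma$, this contradicts $\bW$-minimality unless the bigon is empty of further walls. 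Here the tautness hypothesis on $\bW$ is essential: it guarantees that no wall--wall bigon exists inside $\Sigma$, so that each bigon appearing in a type-(ii) cancelling pair can be isotoped away by a local modification of $H_t$ supported near the disk, without creating new wall intersections. Iterating this reduction removes all type-(ii) events, leaving an isotopy whose discrete transitions are precisely the flip moves and double-point jumpings, which proves the theorem.
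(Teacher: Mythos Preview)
Your one-parameter transversality approach is natural, but the argument for eliminating the type~(ii) bigon events has a genuine gap, and the classification of events is not quite aligned with the moves in \cref{fig:elementary-moves}.

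First, the bigon you produce at an intermediate time is bounded by a sub-arc of $D_t$ and a wall $\gamma$, not by $D_0$ (or $D_1$) and $\gamma$. Invoking the bigon criterion for $D_0$ and $\gamma$ is therefore beside the point: $D_0$ may well be in minimal position with every wall while $D_t$ forms bigons with them freely during the isotopy. Nothing in your argument rules this out. Second, the inference ``tautness $\Rightarrow$ the bigon is empty of further walls'' is false: tautness only forbids wall--wall bigons, and other walls can (and typically do) cross the $D_t$--$\gamma$ bigon transversally. This is precisely the situation in which double-point jumpings are required, so you cannot simply isotope the event away locally ``without creating new wall intersections.'' Third, the flip moves in the bottom row of \cref{fig:elementary-moves} are not codimension-one events in a generic isotopy: moving an arc from one side of a parallel wall arc to the other (or a loop across a parallel loop wall) decomposes into a sequence of bigon creations/annihilations together with double-point jumpings. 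So your event~(i) does not correspond to a flip move, and the moves you need do not appear cleanly in the one-parameter stratification.

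The paper proceeds quite differently: it superimposes the two $\bW$-minimal representatives and reduces the bigons (for arc components) and annuli (for loop components) they cobound, using the Hass--Scott innermost-triangle lemma (\cref{lem:innermost-triagle}) to sweep wall double points out one at a time via double-point jumpings. The key observation, made in \cref{prop:reduction-bigon} and \cref{prop:reduction-annulus}, is that when \emph{both} boundary curves of the bigon/annulus are $\bW$-minimal, every wall entering the region must exit through the opposite side, so no bigon-reduction moves are ever needed---only double-point jumpings and flip moves remain. This direct comparison avoids the one-parameter family altogether and is where the tautness hypothesis is actually used.
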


\begin{rem}\label{rem:invariance-flip}
    The diagrams before and after the flip moves in \cref{fig:elementary-moves} represent the same element in $\SK{\Sigma, \bW}$.
    One can verify this fact by using (the inverse of) a bigon reduction move and double-point jumpings. 
    The invariance under the double-point jumping is clear from the skein relation~\labelcref{rel:wall-R3}.
\end{rem}

\begin{dfn}\label{def:basis-web}
    Fix a taut wall system $\bW$ on $\Sigma$. A \emph{basis web in $\SK{\Sigma, \bW}$} is an element of $\SK{\Sigma, \bW}$ represented by a $\bW$-minimal simple multicurve on $(\Sigma, \bW)$.
    We denote the set of basis webs in $\SK{\Sigma, \bW}$ by $\Bweb{\Sigma, \bW}$.
\end{dfn}

\begin{thm}\label{thm:basis-web}
     For a taut wall system $\bW$ on $\Sigma$, the set $\Bweb{\Sigma,\bW}$ gives a basis of $\SK{\Sigma,\bW}$ as a $\bZ_{A,\bW}$-module.
\end{thm}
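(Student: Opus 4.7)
The plan is to establish the two standard ingredients for a basis statement---spanning and linear independence---both relying crucially on Theorem~\ref{thm:minimal-multicurve} (proved in \cref{sec:minimal}).

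For spanning, I would show that any $\bW$-transverse diagram $D$ reduces to a $\bZ_{A,\bW}$-linear combination of $\bW$-minimal simple multicurves by iterating the following moves: (a) resolve each internal crossing using the Kauffman bracket~\labelcref{rel:Kauffman}; (b) eliminate trivial loops~\labelcref{rel:trivial-loop} and monogons~\labelcref{rel:monogon}, and normalize elevations at marked points via~\labelcref{rel:elevation}; (c) whenever the diagram forms a bigon with a wall (internal or external), apply~\labelcref{rel:wall-pass-int} or~\labelcref{rel:wall-pass-ext} to remove the bigon, introducing a factor of $a_j$ or $z_{j,\pm}$. Each step strictly decreases either the number of crossings or the number of wall intersections, so the process terminates. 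By Theorem~\ref{thm:minimal-multicurve}, the terminal form is a $\bZ_{A,\bW}$-linear combination of basis webs in $\Bweb{\Sigma,\bW}$; by Remark~\ref{rem:invariance-flip}, the residual ambiguity (flip moves and double-point jumpings) does not affect the resulting element of $\SK{\Sigma,\bW}$.

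For linear independence, I would construct a $\bZ_{A,\bW}$-linear left-inverse $\Phi\colon\SK{\Sigma,\bW}\to F$ to the natural map $F\to\SK{\Sigma,\bW}$, $e_{[\gamma]}\mapsto B_{[\gamma]}$, where $F:=\bigoplus_{[\gamma]\in\mathsf{SMulti}(\Sigma)}\bZ_{A,\bW}\cdot e_{[\gamma]}$. The map $\Phi$ would be defined by the reduction procedure of the spanning step: on a $\bW$-transverse diagram, $\Phi(D)$ is the coefficient vector obtained after full reduction. Well-definedness of $\Phi$ on the quotient $\SK{\Sigma,\bW}$ amounts to a confluence check---any two reduction sequences must yield the same basis-web expansion---after which $\Phi\circ(\text{inclusion})=\mathrm{id}_F$ is immediate, giving injectivity of the inclusion. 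As a sanity check I would use the forgetful isomorphism~\eqref{eq:forgetful}: the specialization $z_{j,\pm}\mapsto 1$ sends $B_{[\gamma]}$ to $[\gamma]\in\SK{\Sigma}$, and Muller's theorem \cite[Lemma~4.1]{Muller16} states that $\mathsf{SMulti}(\Sigma)$ is a $\bZ[A^{\pm 1/2}]$-basis of $\SK{\Sigma}$, so the specialized family is linearly independent and the basis webs are in bijection with $\mathsf{SMulti}(\Sigma)$.

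The main obstacle is the confluence of the reduction procedure in the presence of walls. The Kauffman-bracket-plus-clasp portion is standard from~\cite{Muller16}; the new content is the interaction between the wall-passing moves~\labelcref{rel:wall-pass-int,rel:wall-pass-ext,rel:wall-R3}, the clasped relations at marked points, and the non-uniqueness of $\bW$-minimal lifts. These ambiguities are precisely resolved by the uniqueness statement of Theorem~\ref{thm:minimal-multicurve} together with Remark~\ref{rem:invariance-flip}: flip moves and double-point jumpings induce trivial moves in $\SK{\Sigma,\bW}$, so different reduction orders cannot produce inequivalent terminal forms.
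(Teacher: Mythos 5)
Your proposal is correct and takes essentially the same route as the paper's proof: reduce an arbitrary diagram by the crossing/elliptic-face/bigon rewrites (giving spanning and termination), then appeal to confluence — together with \cref{thm:minimal-multicurve} and \cref{rem:invariance-flip} to identify the terminal forms up to flip moves and double-point jumpings — to get a well-defined expansion map and hence linear independence. The paper outsources the confluence verification to the framework of Sikora--Westbury~\cite{SikoraWestbury07}, which is the one spot your write-up leaves implicit (you say ``a confluence check'' without naming the critical-pair machinery), but this is a presentational rather than a substantive difference.
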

\begin{proof}
    One can show it by combining results in \cref{thm:minimal-multicurve}, \cref{rem:invariance-flip}, and the confluence theory in \cite{SikoraWestbury07} with reduction sequences consisting of reduction of crossings \labelcref{rel:Kauffman}, reduction of elliptic faces \labelcref{rel:trivial-loop,rel:monogon}, and reduction of bigons \labelcref{rel:wall-pass-int,rel:wall-pass-ext}.
\end{proof}

In the forthcoming paper \cite{IKY2}, we will prove the following statement by establishing a $\bZ_{A,\bW}$-algebra embedding of $\SK{\Sigma, \bW}$ into the \emph{stated skein algebra} of $(\Sigma,\bW)$, and the injectivity of the splitting homomorphisms of the latter algebra.

\begin{thm}[{\cite{IKY2}}]\label{thm:ore}
$\SK{\Sigma,\bW}$ is an Ore domain for a taut wall system $\bW$ on $\Sigma$. 
\end{thm}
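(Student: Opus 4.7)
The plan is to establish $\SK{\Sigma,\bW}$ as an Ore domain by realizing it as a subalgebra of a quantum torus, following the strategy indicated by the authors for the forthcoming \cite{IKY2}. Once such an embedding is in place, both the domain property and the Ore condition follow from standard facts about quantum tori and finitely generated algebras of finite growth.

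The first step is to introduce a \emph{stated skein algebra} $\scS^{st}_{\Sigma,\bW}$ of a walled surface, generalizing the stated skein algebra of \cite{Le_triangular} by assigning states $\pm$ to endpoints of tangles lying on boundary arcs, and extending the relations with the wall-passing relations of \cref{def:wall-pass}. The algebra $\SK{\Sigma,\bW}$ embeds into the \emph{reduced} version of $\scS^{st}_{\Sigma,\bW}$ after inverting the boundary arcs, by sending a boundary arc to the appropriate combination of its stated lifts; this is the content of the promised generalization of \cite[Theorem 5.2]{LY22}. Next, for any ideal arc $\alpha$ transverse to the walls, cutting along $\alpha$ yields a splitting homomorphism $\scS^{st}_{\Sigma,\bW} \to \scS^{st}_{\Sigma',\bW'}$ into the stated skein of the cut walled surface $(\Sigma',\bW')$, which one expects to be injective as in the wall-less setting. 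Iterating the splitting along the interior edges of a fixed ideal triangulation $\tri$ reduces the problem to the stated skein algebras of ideal triangles, each carrying a finite wall system induced from $\bW$. For each such triangle piece, the stated skein algebra can be analyzed directly and shown to embed into a quantum torus $\bT_\tri^{\triangle}$ over $\bZ_{A,\bW}$, generated by boundary state variables together with the wall coefficients $z_{j,\pm}$. Composing all these maps produces an injective $\bZ_{A,\bW}$-algebra homomorphism from $\SK{\Sigma,\bW}$ into a quantum torus $\bT_\tri$.

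Once the embedding $\SK{\Sigma,\bW} \hookrightarrow \bT_\tri$ is established, the domain property is immediate, since quantum tori over an integral domain are integral domains. For the Ore condition, observe that $\SK{\Sigma,\bW}$ is finitely generated as a $\bZ_{A,\bW}$-algebra, for instance by the finite collection of simple arcs in a triangulation together with simple loops constructed from arc-concatenations. Using the basis $\Bweb{\Sigma,\bW}$ of \cref{thm:basis-web}, one filters $\SK{\Sigma,\bW}$ by the geometric intersection number of basis webs with $\tri$, and a direct count shows that the number of basis webs in the $n$-th filtration step grows polynomially in $n$. Thus $\SK{\Sigma,\bW}$ has finite Gelfand--Kirillov dimension, bounded above by the rank of $\bT_\tri$, and a classical result of Jategaonkar guarantees that any finitely generated domain of finite GK dimension is an Ore domain.

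The principal obstacle is proving injectivity of the splitting homomorphism for the stated skein in the presence of walls. In the wall-less case this is a nontrivial result of L\^e, and here one must check that no new relations are forced when a tangle and a wall both cross the cutting arc; the state-sum decomposition on the cut edge must correctly reproduce the wall-passing coefficients $z_{j,\pm}$. The triangle-by-triangle analysis required to embed the stated skein of a walled polygon into a quantum torus is the technical heart of the argument, generalizing \cite[Theorem 5.2]{LY22} to the walled setting, and this is where the bulk of the work in \cite{IKY2} is expected to lie.
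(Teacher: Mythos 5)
The paper contains no proof of this statement: it is cited from the forthcoming work \cite{IKY2}, accompanied only by a one-sentence indication of method — embedding $\SK{\Sigma,\bW}$ into a stated skein algebra of the walled surface and establishing injectivity of its splitting homomorphisms. Your proposal reconstructs that announced strategy faithfully (stated skein generalization with wall-passing relations, splitting along triangulation edges, reduction to walled triangles, embedding into quantum tori), so as far as the sketch in this paper is concerned you and the authors are on the same route. What you add that the paper does not spell out is the final passage from the quantum-torus embedding to the Ore condition, via finite Gelfand--Kirillov dimension and the Jategaonkar-type result that a finitely generated domain of subexponential growth is Ore. This is a reasonable way to close the argument, but note two caveats: (i) that theorem is usually stated for algebras over a field, whereas $\bZ_{A,\bW}$ is a Laurent polynomial ring over $\bZ$, so one must first base-change to $\Frac(\bZ_{A,\bW})$ or adapt the free-subalgebra argument; (ii) in the unwalled case Muller deduces Ore from Noetherianity of the boundary-localized skein algebra, and a parallel Noetherian (rather than GK-dimensional) argument seems at least as likely to be what \cite{IKY2} uses. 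Finally, you correctly flag that the real content is the injectivity of the splitting homomorphism in the presence of walls — but you leave that unproved, which is exactly the piece the paper defers to \cite{IKY2}. So your proposal mirrors the paper's plan and identifies the right technical heart; it does not, however, fill the gap the paper acknowledges.
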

% \begin{proof}
%     By state-clasp correspondence for $\SK{\Sigma, \bW}$ and injectivity of the splitting homomorphism of the stated skein algebra with walls.
% \end{proof}
For an ideal triangulation $\tri$ and $\alpha \in \tri$, let $[\alpha_\bW] \in \SK{\Sigma, \bW}$ be the element represented by a $\bW$-minimal diagram $\alpha_\bW$ related to $\alpha$. Observe that the elements
$[\alpha_\bW] \in \SK{\Sigma, \bW}$ are $A$-commuting with each other. Let $\bT_\bW^\tri$ be the quantum torus over $\bZ_{A,\bW}$ generated by $[\alpha_\bW]$ for $\alpha \in \tri$. 
\begin{thm}\label{thm:quantum-torus}
     For a taut wall system $\bW$ on $\Sigma$, $\SK{\Sigma, \bW}$ is embedded into the skew-field $\Frac \bT_\bW^\tri$ of fractions for any ideal triangulation $\tri$ of $\Sigma$. In particular, we have $\Frac\SK{\Sigma, \bW} \cong \Frac \bT_\bW^\tri$.
    %a skew-field of fractions of a quantum torus associated with an ideal triangulation $\tri$.
\end{thm}
\begin{proof}
    Given an ideal triangulation, the localization $\SK{\Sigma, \bW}[[\alpha_\bW]^{-1} \mid \alpha \in \tri]$ is isomorphic to the quantum torus $\bT_\bW^\tri$. 
    % Let $\tri$ be a $\bW$-minimal ideal triangulation.
    % The localization $\SK{\Sigma, \bW}[\tri^{-1}]$ of $\SK{\Sigma, \bW}$ at $\tri$ becomes a quantum torus generated by $\tri$.
    Indeed, any $\bW$-minimal curve $D$ can be expanded into a polynomial in $[\alpha_\bW]$'s by multiplying 
    $\prod_{\alpha\in\tri}[\alpha_\bW]^{n_\alpha}$,  
    %$\prod_{\gamma_i\in\tri}\gamma_i^{n_i}$ 
    where $n_\alpha$ is the geometric intersection number between $D$ and $\alpha$.
\end{proof}

\begin{dfn}[boundary-localization]\label{def:boundary_localization}
Let $\mathrm{mon}(\partial)$ denote the multiplicatively closed set generated by the arcs parallel to boundary intervals and $A^{\pm 1/2}$. Then the Ore localization of $\SK{\Sigma, \bW}$ along $\mathrm{mon}(\partial)$ is called the \emph{boundary-localized skein algebra} of $(\Sigma, \bW)$, and denoted by $\SK{\Sigma, \bW}[\partial^{-1}]$.
\end{dfn}

\subsection{Skein relation in the walled square}
%\subsection{Calculation of the skein algebras of a walled polygon}
\label{subsec:skein_formula}

Let $\bW = (\mathsf{C}, J, \ell)$ be a taut wall system on a surface $\Sigma$.
We give a formula of a skein relation for two distinct diagonals $\kappa$ and $\kappa'$ of any ideal quadrilateral $Q$ in $\Sigma$.
By applying wall-passing relations if necessary, we may assume that these are $\bW$-minimal.
% with any taut wall system $\bW = (\mathsf{C}, J, \ell)$ on $\Sigma$.
Let us color the four complementary regions of the diagram $\kappa \cup \kappa'$ in $Q$ by $A$ and $B$ as shown in the left of \cref{fig:AB-regions1}, and call them $A$-regions and $B$-regions. 
%We define two $A$-regions and two $B$-regions in $Q$ for a diagram $\kappa\cup\kappa'$ with one crossing by a picture in \cref{fig:AB-regions}.
%Let us define the multisets whose elements are in $\sfC$: $\sfC_{A,\kappa}$, $\sfC_{A,\kappa'}$, $\sfC_{B,\kappa}$, $\sfC_{B,\kappa'}$, $\sfC_{A, A}$ and $\sfC_{B,B}$.

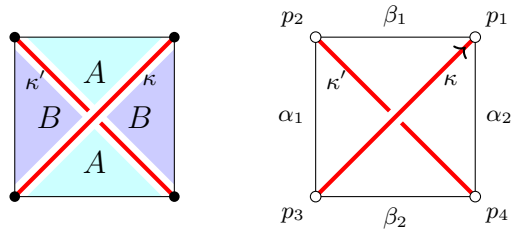
\begin{figure}[ht]
    \centering
    \begin{tikzpicture}[scale=.1]
    \begin{scope}[xshift=-20cm]
        \coordinate (p1) at (45:15);
        \coordinate (p2) at (135:15);
        \coordinate (p3) at (225:15);
        \coordinate (p4) at (315:15);
        \fill[cyan!20] (0,0)--(45:15)--(135:15)--cycle; 
        \fill[cyan!20,rotate=180] (0,0)--(45:15)--(135:15)--cycle; 
        \fill[blue!20,rotate=90] (0,0)--(45:15)--(135:15)--cycle; 
        \fill[blue!20,rotate=-90] (0,0)--(45:15)--(135:15)--cycle; 
        \draw[overarc] (-45:15) -- (135:15);
        \draw[overarc] (45:15) -- (-135:15);
        \draw (45:15)--(135:15)--(225:15)--(315:15)--cycle;
        \node at (90:6) {$A$};
        \node at (-90:6) {$A$};
        \node at (0:6) {$B$};
        \node at (180:6) {$B$};
        \node at (45:7) [right]{\scriptsize $\kappa$};
        \node at (135:7) [left]{\scriptsize $\kappa'$};
        \fill (p1) circle [radius=20pt];
        \fill (p2) circle [radius=20pt];
        \fill (p3) circle [radius=20pt];
        \fill (p4) circle [radius=20pt];
    \end{scope}
    \begin{scope}[xshift=20cm]
        \coordinate (p1) at (45:15);
        \coordinate (p2) at (135:15);
        \coordinate (p3) at (225:15);
        \coordinate (p4) at (315:15);
        \draw[overarc] (-45:15) -- (135:15);
        \draw[overarc, -<-={.1}{black}] (45:15) -- (-135:15);
        \draw (45:15)--(135:15)--(225:15)--(315:15)--cycle;
        \node at (45:7) [right]{\scriptsize $\kappa$};
        \node at (135:7) [left]{\scriptsize $\kappa'$};
        \draw[fill=white] (p1) circle [radius=20pt];
        \draw[fill=white] (p2) circle [radius=20pt];
        \draw[fill=white] (p3) circle [radius=20pt];
        \draw[fill=white] (p4) circle [radius=20pt];
        \node at (p1) [above right]{\scriptsize $p_1$};
        \node at (p2) [above left]{\scriptsize $p_2$};
        \node at (p3) [below left]{\scriptsize $p_3$};
        \node at (p4) [below right]{\scriptsize $p_4$};
        \node at ($(p1)!.5!(p2)$) [above]{\scriptsize $\beta_1$};
        \node at ($(p3)!.5!(p4)$) [below]{\scriptsize $\beta_2$};
        \node at ($(p2)!.5!(p3)$) [left]{\scriptsize $\alpha_1$};
        \node at ($(p4)!.5!(p1)$) [right]{\scriptsize $\alpha_2$};
    \end{scope}
    \end{tikzpicture}        
    \caption{Left: the $A$- and $B$-regions with respect to arcs $\kappa$ and $\kappa'$. Right: labels of vertices and interior of edges of $Q$ with respect to the orientation of $\kappa$.}
    \label{fig:AB-regions1}
\end{figure}

\begin{figure}[ht]
    \centering
    \begin{tikzpicture}[scale=.1]
    \begin{scope}[xshift=-20cm]
        \coordinate (p1) at (45:15);
        \coordinate (p2) at (135:15);
        \coordinate (p3) at (225:15);
        \coordinate (p4) at (315:15);
        \fill[cyan!20] (0,0)--(45:15)--(135:15)--cycle; 
        \fill[cyan!20, rotate=180] (0,0)--(45:15)--(135:15)--cycle; 
        \fill[blue!20, rotate=90] (0,0)--(45:15)--(135:15)--cycle; 
        \fill[blue!20, rotate=-90] (0,0)--(45:15)--(135:15)--cycle; 
        %\draw[overarc] (-45:15) -- (135:15);
        %\draw[overarc] (45:15) -- (-135:15);
        \draw (45:15)--(135:15)--(225:15)--(315:15)--cycle;
        %\node at (90:6) {$A$};
        %\node at (-90:6) {$A$};
        %\node at (0:6) {$B$};
        %\node at (180:6) {$B$};
        %\node at (45:7) [right]{\scriptsize $\kappa$};
        %\node at (135:7) [left]{\scriptsize $\kappa'$};
        \draw[wline] (p1)to[bend left=10](p3);
        \draw[wline] ($(p1)!.4!(p2)$)--(p3);
        \draw[wline] ($(p1)!.6!(p2)$)--(p3);
        \draw[wline] ($(p1)!.8!(p2)$)--(p3);
        \draw[wline] ($(p3)!.5!(p4)$)--(p1);
        \draw[wline] ($(p3)!.8!(p4)$)--(p1);
        \fill (p1) circle [radius=20pt];
        \fill (p2) circle [radius=20pt];
        \fill (p3) circle [radius=20pt];
        \fill (p4) circle [radius=20pt];
        \node at (p1) [above right]{\scriptsize $p_1$};
        \node at (p2) [above left]{\scriptsize $p_2$};
        \node at (p3) [below left]{\scriptsize $p_3$};
        \node at (p4) [below right]{\scriptsize $p_4$};
        \node at ($(p1)!.5!(p2)$) [above]{\scriptsize $\beta_1$};
        \node at ($(p3)!.5!(p4)$) [below]{\scriptsize $\beta_2$};
        \node at ($(p2)!.5!(p3)$) [left]{\scriptsize $\alpha_1$};
        \node at ($(p4)!.5!(p1)$) [right]{\scriptsize $\alpha_2$};
    \end{scope}
    \begin{scope}[xshift=20cm]
        \coordinate (p1) at (45:15);
        \coordinate (p2) at (135:15);
        \coordinate (p3) at (225:15);
        \coordinate (p4) at (315:15);
        \fill[cyan!20] (0,0)--(45:15)--(135:15)--cycle; 
        \fill[cyan!20, rotate=180] (0,0)--(45:15)--(135:15)--cycle; 
        \fill[blue!20, rotate=90] (0,0)--(45:15)--(135:15)--cycle; 
        \fill[blue!20,rotate=-90] (0,0)--(45:15)--(135:15)--cycle; 
        %\draw[overarc] (-45:15) -- (135:15);
        %\draw[overarc] (45:15) -- (-135:15);
        \draw (45:15)--(135:15)--(225:15)--(315:15)--cycle;
        %\node at (90:6) {$A$};
        %\node at (-90:6) {$A$};
        %\node at (0:6) {$B$};
        %\node at (180:6) {$B$};
        %\node at (45:7) [right]{\scriptsize $\kappa$};
        %\node at (135:7) [left]{\scriptsize $\kappa'$};
        \draw[wline] ($(p1)!.2!(p2)$)to[out=south,in=north]($(p4)!.6!(p3)$);
        \draw[wline] ($(p1)!.4!(p2)$)to[out=south,in=north]($(p4)!.1!(p3)$);
        \draw[wline] ($(p1)!.6!(p2)$)to[out=south,in=north]($(p4)!.3!(p3)$);
        \draw[wline] ($(p1)!.8!(p2)$)to[out=south,in=north]($(p4)!.8!(p3)$);
        \fill (p1) circle [radius=20pt];
        \fill (p2) circle [radius=20pt];
        \fill (p3) circle [radius=20pt];
        \fill (p4) circle [radius=20pt];
        \node at (p1) [above right]{\scriptsize $p_1$};
        \node at (p2) [above left]{\scriptsize $p_2$};
        \node at (p3) [below left]{\scriptsize $p_3$};
        \node at (p4) [below right]{\scriptsize $p_4$};
        \node at ($(p1)!.5!(p2)$) [above]{\scriptsize $\beta_1$};
        \node at ($(p3)!.5!(p4)$) [below]{\scriptsize $\beta_2$};
        \node at ($(p2)!.5!(p3)$) [left]{\scriptsize $\alpha_1$};
        \node at ($(p4)!.5!(p1)$) [right]{\scriptsize $\alpha_2$};
    \end{scope}
    \end{tikzpicture}        
    \caption{Left: walls in $\bW\cap Q$ corresponding to $\sfC_{A,\kappa}$. Right: walls in $\bW\cap Q$ corresponding to $\sfC_{A,A}$.}
    \label{fig:AB-regions2}
\end{figure}

Let us define some multisets $\sfC_{A,\kappa}$, $\sfC_{A,\kappa'}$, $\sfC_{B,\kappa}$, $\sfC_{B,\kappa'}$, $\sfC_{A, A}$ and $\sfC_{B,B}$ whose elements are in $\sfC$. 
Fix an auxiliary orientation of $\kappa$, and label the vertices of $Q$ as $p_1,p_2,p_3,p_4$ in this counter-clockwise order from the head of $\kappa$.
Denote 
%the interior of 
the edge $p_1p_2$ (resp.~$p_3p_4$) of $Q$ 
%between $p_1$ and $p_2$ (resp.~$p_3$ and $p_4$) 
by $\beta_1$ (resp. $\beta_2$).
%by $e_A$ (resp.~$e_{A'}$).
Similarly, 
$\alpha_1$ and $\alpha_2$
%let $e_B$ and $e_B'$ 
be 
%the interiors of 
the edges $p_2p_3$ and $p_4p_1$, respectively. 

Let $\sfC_{A,\kappa}$ (resp. $\sfC_{A,\kappa'}$) be the multiset consisting of the curves $\gamma \in \sfC$ having either of the following properties:
\begin{itemize}
    \item parallel to $\kappa$ (resp. $\kappa'$),
    \item some of the components of $\gamma \cap Q$ connect the interior of $\beta_1$ and $p_3$ (resp. $p_4$), or
    \item some of the components of $\gamma \cap Q$ connect the interior of $\beta_2$ and $p_1$ (resp. $p_2$).
\end{itemize}
%$\sfC_{A,\kappa}$ (resp. $\sfC_{A,\kappa'}$) consists of the curves which are parallel to $\kappa$ (resp. $\kappa'$) and the curves whose subarcs clipped in $Q$ connect $e_A$ and $p_3$ (resp. $p_4$), or $e_A'$ and $p_1$ (resp. $p_2$) with multiplicity the number of the components of the subarcs.
The multiplicity is given by the total number of such components. 
% In other words, $\sfC_A$ corresponds to the set of curves between $p_1$ and $p_3$, and a subset of intersection points $E_A\cap\mathsf{C}$ that connect to $p_1$ or $p_3$ by $\mathsf{C}\cap Q$ where $E_A:=e_A\cup e_A'$. 
% $\bW_{A,\kappa'}$ is a multiset of walls in $\bW$ such that each element correspond to walls parallel to $\kappa'$ in $Q$ or intersection points in $E_A\cap\mathsf{C}$ connected by $\mathsf{C}\cap Q$.
% In the same way, $\bW_{B,\kappa}$, and $\bW_{B,\kappa'}$ are defined by introducing $E_B:=e_B\cup e_B'$.
The multisets $\sfC_{B,\kappa}$ and $\sfC_{B,\kappa'}$ are defined in a similar way, by replacing $A$ with $B$.
% We also introduce multisets $\bW_{A,A}$ and $\bW_{B,B}$ consisting of clipped walls of $\bW$ in $Q$.
The multiset $\sfC_{A,A}$ (resp.~$\sfC_{B,B}$) consists of the curves $\gamma \in \sfC$ such that some of the components of $\gamma \cap Q$ connect the interiors of $\beta_1$ and $\beta_2$ (resp.~$\alpha_1$ and $\alpha_2$), with the multiplicity given by the number of such components. 
%consists of the curves whose subarcs clipped in $Q$ connecting $e_A$ and $e_A'$ (resp.~$e_B$ and $e_B'$) with multiplicity the number of the components of the subarcs.
We remark that $\sfC_{A,A}$ and $\sfC_{B,B}$ does not contain walls connecting endpoints of $\kappa$ or $\kappa'$. 
Then, one can easily verify the following formula:
%We consider that these regions contain marked points except for endpoints of $\kappa$ and $\kappa'$.
%For each marked point in an $A$-region, there is a unique endpoint $p$ of $\kappa$ adjacent to the opposite $A$-region.
%We denote the subset of $\bW$ consisting of a component parallel to $\kappa$ (if it exists) and components connecting a marked point in an $A$-region and the opposite endpoint of $\kappa$ by $\bW_{A,\kappa}$.
%$\bW_{A,\kappa'}$, $\bW_{B,\kappa}$, $\bW_{B,\kappa'}$ are defined similarly.
%We also define $\bW_{A,A}$ (resp.~$\bW_{B,B}$) as a subset of components of $\bW$ whose components connecting a marked point in an $A$-region (resp.~$B$-region) and one in the opposite $A$-region (resp.~opposite $B$-region).
%We remark that $\bW_{A,A}$ and $\bW_{B,B}$ does not contain components connecting endpoints of $\kappa$ or $\kappa'$.
%Then, one can confirm the following formula:
\begin{prop}\label{eq:polygon}
With the notation as above, we have
\begin{align*}
    \kappa\kappa'
    &=A
    \Big(\!\prod_{\gamma\in\sfC_{A,A}}\!\!\!a_{\ell(\gamma)}\Big)
    \Big(\!\prod_{\delta\in\sfC_{A,\kappa}}\!\!\! z_{\ell(\delta),+}\Big)
    \Big(\!\prod_{\eta\in\sfC_{A,\kappa'}}\!\!\! z_{\ell(\eta),-}\Big)%e_Be'_B\nonumber\\
    \alpha_1\alpha_2\nonumber\\
    &\hspace{3cm} + A^{-1}
    \Big(\!\prod_{\gamma\in\sfC_{B,B}}\!\!\! a_{\ell(\gamma)}\Big)
    \Big(\!\prod_{\delta\in\sfC_{B,\kappa'}}\!\!\! z_{\ell(\delta),+}\Big)
    \Big(\!\prod_{\eta\in\sfC_{B,\kappa}}\!\!\! z_{\ell(\eta),-}\Big)%e_Ae'_A.\label{eq:polygon}
    \beta_1\beta_2.%\label{eq:polygon}
\end{align*}
\end{prop}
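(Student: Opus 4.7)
The strategy is first to apply the Kauffman bracket skein relation \eqref{rel:Kauffman} at the unique crossing of $\kappa, \kappa'$ in the interior of $Q$, and then to use the wall-passing relations \eqref{rel:wall-pass-int} and \eqref{rel:wall-pass-ext} to reduce each of the two resulting diagrams to a scalar multiple of $\alpha_1\alpha_2$ or $\beta_1\beta_2$.

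Since $\kappa$ and $\kappa'$ are $\bW$-minimal diagonals of $Q$, they meet transversely at a single interior point $x_0$, and a local application of \eqref{rel:Kauffman} at $x_0$ yields
\[
\kappa\kappa' \;=\; A\cdot D_A \;+\; A^{-1}\cdot D_B.
\]
Here $D_A$ is obtained by replacing the crossing with two smoothed arcs $\tilde\alpha_1$ (from $p_3$ to $p_2$) and $\tilde\alpha_2$ (from $p_4$ to $p_1$), built by reconnecting the four halves of $\kappa\cup\kappa'$ through a small disk around $x_0$; the diagram $D_B$ is defined symmetrically with $\tilde\beta_1$ from $p_1$ to $p_2$ and $\tilde\beta_2$ from $p_3$ to $p_4$. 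All walls in $Q$ are undisturbed by this local move.

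The remainder of the argument tracks how each component $c$ of $\gamma\cap Q$, for $\gamma\in\sfC$, simplifies in $D_A$. If $c$ contributes to $\sfC_{A,A}$, that is, $c$ connects the interiors of $\beta_1$ and $\beta_2$, then $c$ crosses both $\kappa$ and $\kappa'$; after the $A$-smoothing at $x_0$, both crossings lie on a single smoothed arc (either $\tilde\alpha_1$ or $\tilde\alpha_2$, depending on the side of $x_0$ that $c$ runs), and the portion of $c$ between them together with the intervening arc-segment bounds a bigon which is absorbed by \eqref{rel:wall-pass-int} into the scalar $a_{\ell(\gamma)}$. If $c$ contributes to $\sfC_{A,\kappa}$ (that is, $c$ is parallel to $\kappa$ in $Q$, or joins the interior of $\beta_1$ to $p_3$, or joins the interior of $\beta_2$ to $p_1$), then $c$ meets one of the smoothed arcs at a shared vertex on the $\kappa$-side of their angular neighborhood; applying \eqref{rel:wall-pass-ext} at this vertex produces the factor $z_{\ell(\gamma),+}$. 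Components contributing to $\sfC_{A,\kappa'}$ are handled by the mirror argument and yield $z_{\ell(\gamma),-}$. Every other wall component in $Q$ (e.g. one connecting the interior of some $\beta_i$ to the interior of some $\alpha_j$) intersects $\tilde\alpha_1\cup\tilde\alpha_2$ in the same number of points as it intersects $\alpha_1\cup\alpha_2$, and therefore produces no scalar factor in the reduction.

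Collecting these factors shows that $D_A$ equals the claimed monomial times $\alpha_1\alpha_2$; the $A^{-1}$-term $D_B$ is treated by the completely symmetric argument with the roles of $A$- and $B$-regions swapped. The principal technical point is the determination of the sign $\pm$ in \eqref{rel:wall-pass-ext} at each shared-vertex reduction: this depends on the auxiliary orientation of $\kappa$ fixed at the outset and on which smoothed arc is incident to $c$, and requires a short case-by-case verification using the conventions of \cref{def:wall-pass}. The taut hypothesis on $\bW$, together with the $\bW$-minimality of $\kappa$ and $\kappa'$, ensures that no wall is prematurely collapsed by a bigon or monogon reduction, so that each qualifying wall component contributes its scalar exactly once as described.
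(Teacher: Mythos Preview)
Your proposal is correct and follows precisely the approach the paper intends: the paper states only that ``one can easily verify the following formula'' and gives no further argument, so the direct verification you outline---apply the Kauffman relation \eqref{rel:Kauffman} at the unique crossing, then reduce each smoothing to $\alpha_1\alpha_2$ or $\beta_1\beta_2$ via the wall-passing relations \eqref{rel:wall-pass-int} and \eqref{rel:wall-pass-ext}, bookkeeping the contributions according to the multisets $\sfC_{A,\bullet}$ and $\sfC_{B,\bullet}$---is exactly what is implied. One minor remark: the sign in \eqref{rel:wall-pass-ext} is determined by the surface orientation (counter-clockwise vs.\ clockwise passage around the marked point), not by the auxiliary orientation of $\kappa$; the latter is only used to fix the labeling $p_1,\dots,p_4$.
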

%where $\alpha_1$ and $\alpha_2$ are two ideal arcs obtained by resolving the crossing to connect two $A$-regions, and $\beta_1, \beta_2$ are obtained by connecting $B$-regions. 

\begin{ex}[Base affine space $\mathrm{SL}_4/N$
%~see \cref{fig:walled-polygons}
]\label{ex:affine_SL4}
    Let us compute the skein algebra corresponding to the cluster algebra associated with the affine base space $\mathrm{SL}_4/N$.
    We use labels of components of the wall and arcs similar to \cite[Example 16.6]{FT18} as follows:
    \begin{center}
        \begin{tikzpicture}[scale=.1, rotate=-60]
            \foreach \i [evaluate=\i as \x using (\i+1)*60]in {0,1,...,5}
            {
                \coordinate (A\i) at (\x:15);
            }
            \foreach \i [evaluate=\i as \j using {mod(\i+2,6)}]in {1,2,4,5}
            {
                \draw[wline] (A\i) -- (A\j);
            }
            \draw[wline] (A0) -- (A3);
            \draw[wline] (A2) -- (A5);
            \foreach \i [evaluate=\i as \j using {mod(\i+1,6)}]in {0,1,...,5}
            {
                \bdryline{(A\i)}{(A\j)}{2cm}
            }
            \foreach \k in {0,1,...,5}
            {
                \draw[fill] (A\k) circle (20pt);
            }
            \node at (120:10) [below=.1cm]{\scriptsize $\texttt{234}$};
            \node at (120:10) [left]{\scriptsize $\texttt{1}$};
            \node at (-120:4) [below]{\scriptsize $\texttt{34}$};
            \node at (-120:9) [below=.2cm]{\scriptsize $\texttt{4}$};
            \node at (0:5) [left]{\scriptsize $\texttt{12}$};
            \node at (0:10) [below left]{\scriptsize $\texttt{123}$};
        \end{tikzpicture}
        , \hspace{1em}
        \begin{tikzpicture}[scale=.1]
            \foreach \i [evaluate=\i as \x using (\i+1)*60]in {0,1,...,5}
            {
                \coordinate (A\i) at (\x:15);
            }
            \foreach \i [evaluate=\i as \j using {mod(\i+2,6)}]in {0,1,...,5}
            {
                \draw[webline] (A\i) -- (A\j);
            }
            \draw[webline] (A0) -- (A3);
            \draw[webline] (A1) -- (A4);
            \draw[webline] (A2) -- (A5);
            \foreach \i [evaluate=\i as \j using {mod(\i+1,6)}]in {0,1,...,5}
            {
                \bdryline{(A\i)}{(A\j)}{2cm}
            }
            \foreach \k in {0,1,...,5}
            {
                \draw[fill] (A\k) circle (20pt);
            }
            \node at (180:4) {\scriptsize $\Omega$};
            \node at (180:11) [above=.1cm]{\scriptsize $\texttt{2}$};
            \node at (180:11) [below=.1cm]{\scriptsize $\texttt{3}$};
            \node at (60:4) {\scriptsize $\texttt{24}$};
            \node at (-60:4) {\scriptsize $\texttt{13}$};
            \node at (-120:11) [above left]{\scriptsize $\texttt{14}$};
            \node at (-120:13) [right=.1cm]{\scriptsize $\texttt{124}$};
            \node at (0:9) [above=.5cm]{\scriptsize $\texttt{23}$};
            \node at (180:2) [above=.9cm]{\scriptsize $\texttt{134}$};
        \end{tikzpicture}
        , \hspace{1em}
        \begin{tikzpicture}[scale=.1]
            \foreach \i [evaluate=\i as \x using (\i+1)*60]in {0,1,...,5}
            {
                \coordinate (A\i) at (\x:15);
            }
            \foreach \i [evaluate=\i as \j using {mod(\i+1,6)}]in {0,1,...,5}
            {
                \draw[webline] (A\i) to[bend left] (A\j);
            }
            \foreach \i [evaluate=\i as \j using {mod(\i+1,6)}]in {0,1,...,5}
            {
                \bdryline{(A\i)}{(A\j)}{2cm}
            }
            \foreach \k in {0,1,...,5}
            {
                \draw[fill] (A\k) circle (20pt);
            }
            \node at (30:8) {\scriptsize $\delta_1$};
            \node at (90:8) {\scriptsize $\delta_2$};
            \node at (150:8) {\scriptsize $\delta_3$};
            \node at (210:8) {\scriptsize $\delta_4$};
            \node at (270:8) {\scriptsize $\delta_5$};
            \node at (330:8) {\scriptsize $\delta_6$};
        \end{tikzpicture}.
    \end{center}
    We write $\gamma_n$ for the wall with the label $n$ and $\alpha_\bullet$ for non-boundary ideal arcs.
    % We write components of the wall by using $\xi$ and (non-boundary) ideal arcs by $\alpha$.
    For example, let us consider the product $\alpha_{23}\alpha_{\Omega}$.
    Then, 
    $\sfC_{A,\alpha_{23}}=\{\gamma_{234},\gamma_{12}\}$,
    $\sfC_{A,\alpha_{\Omega}}=\{\gamma_{34}\}$,
    $\sfC_{B,\alpha_{23}}=\{\gamma_{234}\}$,
    $\sfC_{A,\alpha_{\Omega}}=\{\gamma_{123},\gamma_{34}\}$, $\sfC_{A,A}=\sfC_{B,B}=\emptyset$ (the diagram below is useful for finding the elements in $\sfC_{A,\alpha_{\bullet}}$ and $\sfC_{B,\alpha_{\bullet}}$).
    By \cref{eq:polygon}, we obtain 
    \begin{align}\label{eq:SL4_exchange}
    \alpha_{23}\alpha_{\Omega}=A z_{234,+}z_{12,+}z_{34,-}\alpha_{3}\delta_{1} + A^{-1} z_{123,+}z_{34,+}z_{234,-}\alpha_2\delta_{6}.
    \end{align}
    % where $z_{\bullet,\pm}:=z_{\xi_\bullet,\pm}$.
    %The diagrams below are useful for finding the elements in $\sfC_{A,\alpha_{\bullet}}$ and $\sfC_{B,\alpha_{\bullet}}$.
    \begin{center}
        \begin{tikzpicture}[scale=.1, rotate=-60]
            \foreach \i [evaluate=\i as \x using (\i+1)*60]in {0,1,...,5}
            {
                \coordinate (A\i) at (\x:15);
            }
        \fill[green!20] (A5) -- (A0) -- (A1) -- cycle;
        \fill[red!20] (A1) -- (A3) -- (A5) -- cycle;
            \foreach \i [evaluate=\i as \j using {mod(\i+2,6)}]in {1,2,4,5}
            {
                \draw[wline] (A\i) -- (A\j);
            }
            \draw[wline] (A0) -- (A3);
            \draw[wline] (A2) -- (A5);
            \foreach \i [evaluate=\i as \j using {mod(\i+1,6)}]in {0,1,...,5}
            {
                \bdryline{(A\i)}{(A\j)}{2cm}
            }
            \foreach \k in {0,1,...,5}
            {
                \draw[fill] (A\k) circle (20pt);
            }
            \node at (120:10) [below=.1cm]{\scriptsize $\texttt{234}$};
            \node at (120:10) [left]{\scriptsize $\texttt{1}$};
            \node at (-120:4) [below]{\scriptsize $\texttt{34}$};
            \node at (-120:9) [below=.2cm]{\scriptsize $\texttt{4}$};
            \node at (0:5) [left]{\scriptsize $\texttt{12}$};
            \node at (0:10) [below left]{\scriptsize $\texttt{123}$};
        \end{tikzpicture}
        , \hspace{2em}
        \begin{tikzpicture}[scale=.1, rotate=-60]
            \foreach \i [evaluate=\i as \x using (\i+1)*60]in {0,1,...,5}
            {
                \coordinate (A\i) at (\x:15);
            }
        \fill[cyan!20] (A0) -- (A1) -- (A3) -- cycle;
        \fill[blue!20] (A0) -- (A3) -- (A5) -- cycle;
            \foreach \i [evaluate=\i as \j using {mod(\i+2,6)}]in {1,2,4,5}
            {
                \draw[wline] (A\i) -- (A\j);
            }
            \draw[wline] (A0) -- (A3);
            \draw[wline] (A2) -- (A5);
            \foreach \i [evaluate=\i as \j using {mod(\i+1,6)}]in {0,1,...,5}
            {
                \bdryline{(A\i)}{(A\j)}{2cm}
            }
            \foreach \k in {0,1,...,5}
            {
                \draw[fill] (A\k) circle (20pt);
            }
            \node at (120:10) [below=.1cm]{\scriptsize $\texttt{234}$};
            \node at (120:10) [left]{\scriptsize $\texttt{1}$};
            \node at (-120:4) [below]{\scriptsize $\texttt{34}$};
            \node at (-120:9) [below=.2cm]{\scriptsize $\texttt{4}$};
            \node at (0:5) [left]{\scriptsize $\texttt{12}$};
            \node at (0:10) [below left]{\scriptsize $\texttt{123}$};
        \end{tikzpicture}
    \end{center}
    % Specializing above relation as $\delta_{\bullet}=1$, $A^{\frac{1}{2}}=1$, and $z_{\xi_{\bullet},-}=1$, we obtain the relation in \cite[Example~16.6]{FT18}:
    % \[
    %     \alpha_{23}\alpha_{\Omega}=z_{234,+}z_{12,+}\alpha_3+z_{123,+}z_{34,+}\alpha_2.
    % \]
\end{ex}

\begin{ex}[genus $1$ example]
Here, we see an example where some elements of $\sfC_{A, \bullet}$ or $\sfC_{B, \bullet}$ have multiplicities.
Let us consider the walled surface $(\Sigma, \bW)$ shown in \cref{fig:walled-polygons}, where the underlying surface $\Sigma$ is a torus with one boundary component with two special points, and $\bW$ consists of a single wall $\xi = (\gamma, 0)$.

In this setting, we consider the multiplication of two ideal arcs $\kappa$ and $\kappa'$ shown in \cref{fig:torus_single_wall}.
Then, 
$\sfC_{A, \alpha} = \sfC_{A, \beta} = \sfC_{B, \beta} = \sfC_{A,A} = \emptyset$,
$\sfC_{B, \alpha} = \{\gamma\}$ and
$\sfC_{B,B} = \{ \gamma, \gamma \}$.
By \cref{eq:polygon}, we get
\begin{align*}
    \kappa \kappa' = A \alpha^2 + A^{-1} a_0^2 z_{0,-} \beta_1 \beta_2.
\end{align*}
\end{ex}
\begin{figure}[ht]
    \centering
    \begin{tikzpicture}[scale=1.1]
    \filldraw [gray!30]  (-1.7,2) rectangle (-1.5,-1);
    \draw [wline] (-1.5,-1) .. controls (-0.65,-0.7) and (0.25,-0.6) .. (0.75,-0.6);
    \draw [wline] (-1,2) .. controls (-0.5,1) and (0.5,0) ..node[midway,left]{$\gamma$}  (1,-0.25);
    \draw [wline] (-0.5,2) .. controls (0,1) and (0.5,0.5) ..(1.25,0.1);
    \draw [wline] (0,2) .. controls (0.35,1.4) and (0.75,0.95) .. (1.5,0.5);
    \draw [very thick] (-1.5,2) node [fill, circle, inner sep=1.5] (v5) {} -- (-1.5,0.5) node [fill, circle, inner sep=1.5] {} -- (-1.5,-1) node [fill, circle, inner sep=1.5] (v1) {};
    \node [fill, circle, inner sep=1.5] (v2) at (0.5,-1) {};
    \node [fill, circle, inner sep=1.5] (v3) at (1.5,0.5) {};
    \node [fill, circle, inner sep=1.5] (v4) at (0.5,2) {};
    \draw [->>-={.35}{}] (v1) -- (v2);
    \draw [->-={.4}{}] (v2) -- (v3);
    \draw [-<<-={.8}{}] (v3) -- (v4);
    \draw [-<-={.65}{}] (v4) -- (v5);
    \draw [red, ultra thick] (v4) edge (v2);
    \draw [red, ultra thick] (v5) edge (v3);
    \draw [red, ultra thick] (v5) .. controls (-1.1,0.5) and (-0.5,-0.2) .. (v2);
    \draw [red, ultra thick] (v2) .. controls (0.55,-0.55) and (1.1,0.25) .. (v3);
    \draw [red, ultra thick] (v4) .. controls (0.6,1.5) and (1.1,0.85) .. (v3);
    \node [red] at (-0.25,1.2) {$\kappa$};
    \node [red] at (0.3,-.1) {$\kappa'$};
    \node [red] at (-1,0) {$\beta_1$};
    \node [red] at (1.15,-0.4) {$\alpha$};
    \node [red] at (1.15,1.4) {$\beta_2$};
    \end{tikzpicture}
    \caption{The walled surface $(\Sigma,\bW)$ and some arcs on it. Here two pairs of sides are glued together to form a torus with one boundary component.}
    \label{fig:torus_single_wall}
\end{figure}
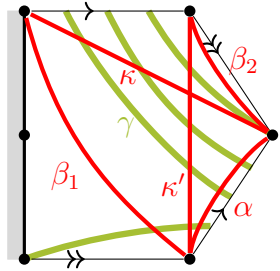
\section{Cluster algebras from marked surfaces}\label{sect:cluster}
In this section, we define the quantum cluster algebra with coefficients, and recall the quantum cluster algebra associated with a marked surface by Muller \cite{Muller16}.
Moreover, we recall the lamination and its shear coordinate, as a preparation for the next section.
% and its geometric realization via laminations by Fomin--Thurston \cite{FT18}.
%First of all, we fix the notation around tropical semi-fields.

% \paragraph{ \textbf{Based quantum torus.}} We use a formal quantum parameter $q$, and the coefficient ring $\bZ_q:=\bZ[q^{\pm 1/2}]$. For a semifield $\bP=(\bP,\oplus,\cdot)$, let 
% % $\bZP:=\bP\otimes_\bZ \bZ_q$
% $\bZP$ denote the group ring of the group $(\bP,\cdot)$ over $\bZ_q$. It consists of the elements of the form $\sum_i c_i(q) u_i$ with $c_i(q) \in \bZ_q$, $u_i \in \mathbb{P}$. 
% Let $\bQ_q\bP$ denote its fraction field, which is an algebra over $\bQ(q)$. 

% \begin{dfn}
% Given a lattice $\Lambda$ equipped with a skew-symmetric form $(-,-):\Lambda \times \Lambda \to \bZ$, the associated \emph{based quantum torus} over $\bZP$ is  is the associative $\bZP$-algebra $\bT$ having
% \begin{itemize}
%     \item a free $\bZP$-basis $B_\lambda$ parametrized by $\lambda \in \Lambda$, and
%     \item the product given by $B_\lambda\cdot B_\mu = q^{(\lambda,\mu)/2} B_{\lambda+\mu}$. 
% \end{itemize}
% The \emph{rank} of $\bT_\Lambda$ is defined to be the rank of $\Lambda$. The map $B:\Lambda \to \bT$, $\lambda \mapsto B_\lambda$ is called the \emph{framing} of $\bT$. 
% \end{dfn}
% With this understood, we will simply say that $B:\Lambda \to \bT$ is a based quantum torus.

\smallskip

We will work on the following coefficient ring. 
For a semifield $\bP=(\bP,\oplus,\cdot)$, let 
% $\bZP:=\bP\otimes_\bZ \bZ_q$
$\bZP$ denote the group ring of the group $(\bP,\cdot)$ over $\bZ_q$. It consists of the elements of the form $\sum_i c_i(q) u_i$ with $c_i(q) \in \bZ_q$, $u_i \in \mathbb{P}$. 
% Let $\bQ_q\bP$ denote its fraction field, which is an algebra over $\bQ(q)$. 

\subsection{Quantum cluster algebras with coefficients}
In this section, we define quantum cluster algebras with coefficients, which is a slight generalization of the quantum cluster algebra \cite{BZ} along with the formulation given in \cite{FZ-CA4}. See also \cite[Section 4]{CFMM}.

% \subsubsection{The exchange graph and the cluster algebra}\label{subsub:CA}
Fix a pair $(I,I_\uf)$, where $I=\{1,\dots,N\}$ is a finite set of indices and $I_\uf \subset I$ is a subset. The indices in $I_\uf$ are called \emph{unfrozen indices}, while those in the complement $I_\f:=I \setminus I_\uf$ are called \emph{frozen indices}. 
Let $\bP$ be a semifield, 
%a tropical semifield in $N' \geq 0$ variables, 
and $\cF$ a skew-field.

A \emph{(labeled) quantum seed} in $\cF$ is a tuple $(\ve,\Pi, \mathbf{A}, \mathbf{p})$, where
\begin{itemize}
    \item $\ve=(\ve_{ij})_{i \in I_\uf, j \in I}$ is an integral matrix such that the submatrix $(\ve_{ij})_{i,j \in I_\uf}$ is skew-symmetric;
    \item $\Pi=(\pi_{ij})_{i,j \in I}$ is a skew-symmetric integral matrix satisfying the \emph{compatibility relation} 
    \begin{align*}
        \sum_{k \in I}\ve_{ik}\pi_{kj} = \delta_{ij}d_j
    \end{align*}
    for all $i \in I_\uf$ and $j \in I$, where $d_i$ is a positive integer for $i \in I_\uf$. It defines a skew-symmetric form on a lattice $M=\bigoplus_{i \in I}\bZ f_i$ by $\Pi(f_i,f_j):=\pi_{ij}$;
    \item $\mathbf{A}: M \to \bT \subset \cF$ is a based quantum torus over $\bZP$ associated with $(M,\Pi)$ 
    % \begin{align*}
    %     \mathbf{A}_{\alpha}\cdot \mathbf{A}_\beta = q^{\Pi(\alpha,\beta)}\mathbf{A}_{\alpha+\beta}
    % \end{align*}
    %for all $\alpha,\beta \in M$, and the $\bZ_q\bP$-span of $\mathbf{A}(M) \subset \cF$ is the based quantum torus associated with $(M,\Pi)$ 
    whose skew-field of fractions coincides with $\cF$;
    \item $\mathbf{p} = (p_i^\pm)_{i \in I_\uf}$ is a $2|I_\uf|$-tuple of elements in $\bP$.
\end{itemize}
We call $\ve$ the \emph{exchange matrix}, $\Pi$ the \emph{compatibility matrix}, $\mathbf{A}$ the \emph{toric frame}, and $\mathbf{p}$ the \emph{coefficient tuple} of the quantum seed.
The elements $A_i:=\mathbf{A}(f_i) \in \cF$ for $i \in I$ are called the \emph{(quantum) cluster variables}, and those for $i \in I_\f$ are called the \emph{frozen variables}.
%The elements $p_i$ for $i \in I_\uf$ (or the tuple $\mathbf{p}$) called \emph{coefficients}.
If the coefficient tuple $\mathbf{p}$ satisfies $p^+_i \oplus p^-_i = 1$ for all $i \in I_\uf$, then the seed $(\varepsilon, \Pi, \mathbf{A}, \mathbf{p})$ is said to be \emph{normalized}.
When %$N'=0$, namely 
$\bP$ is the trivial semifield, we say that the quantum seed $(\ve,\Pi, \mathbf{A}, \mathbf{p}) = (\ve, \Pi,\mathbf{A})$ is \emph{coefficient-free}.

By \cite[Lemma 4.4]{BZ}, a toric frame $\mathbf{A}$ is uniquely determined by the cluster variables. Indeed, 
%for a linear ordering $I \cong \{1,\dots,N\}$, 
we have
\begin{align}\label{eq:extension_toric_frame}
    \mathbf{A}\left(\sum_{i \in I} x_i f_i\right) = q^{\frac{1}{2}\sum_{l < k}x_k x_l \pi_{kl} } A_1^{x_1}\dots A_N^{x_N}
\end{align}
for all $(x_1,\dots,x_N) \in \bZ^N$.
Note that both sides are invariant under permutations of indices. 
Elements of the form $\mathbf{A}_\alpha$ for $\alpha \in M$ are called \emph{cluster monomials}. 

Given two quantum seeds $(\ve,\Pi,\mathbf{A},\mathbf{p})$ and $(\ve',\Pi',\mathbf{A}',\mathbf{p}')$ in $\cF$ are related by a quantum seed mutation at an unfrozen index $k \in I_\uf$ if they are related by the following rule. 
Let $E_{k,\epsilon}= (e_{ij})_{i,j \in I}$ and $F_{k,\epsilon}= (f_{ij})_{i,j \in I}$ be the matrices defined by
% Given quantum seed $(\ve,\Pi,\mathbf{A},\mathbf{p})$ in $\cF$ and an unfrozen index $k \in I_\uf$, the \emph{quantum seed mutation} produces a new quantum seed $(\ve',\Pi',\mathbf{A}',\mathbf{p}')=\mu_k(\ve,\Pi,\mathbf{A},\mathbf{p})$ according to the following rule. Let $E_{k,\epsilon}= (e_{ij})_{i,j \in I}$ and $F_{k,\epsilon}= (f_{ij})_{i,j \in I}$ be the matrices defined by
\begin{align*}%Convention in [IK]. 
%For myself: E^T = \check{E}^{IK}=:F^{IK}, F^T = E^{IK}.
    e_{ij}:=\begin{cases}
    \delta_{ij} & \mbox{if $j \neq k$},\\
    -1 & \mbox{if $i=k=j$},\\
    [-\epsilon \ve_{ki}]_+ & \mbox{if $i\neq k=j$},
    \end{cases}
\end{align*}
and 
\begin{align*}
    f_{ij}:=\begin{cases}
    \delta_{ij} & \mbox{if $i \neq k$},\\
    -1 & \mbox{if $i=k=j$},\\
    [\epsilon \ve_{jk}]_+ & \mbox{if $i=k \neq j$},
    \end{cases}
\end{align*}
respectively for $\epsilon \in \{+,-\}$.
Then,
% we define
    \begin{align}
    \ve' &= F^{\mathsf{T}}_{k,\epsilon} \ve E^{\mathsf{T}}_{k,\epsilon}, \label{eq:matrix-mutation}\\
    \Pi' &= E^{\mathsf{T}}_{k,\epsilon} \Pi E_{k,\epsilon}, \label{eq:compatible-mutation}\\
%Let $\Psi_q(x):= \prod_{k=1}^\infty (1+q^{2k-1}x)^{-1}$ be the quantum dilogarithm power series. Then define
    A'_i &= %\mathrm{Ad}_{\Psi_q(y_k)}(\mathbf{A}(f'_i)),\label{eq:q-mutation} \\
    \begin{cases}
    \mathbf{A}(f_i) & \mbox{if $i \neq k$}, \\
   p_k^+\mathbf{A}\big(-f_k+\sum_{j \in I} [\ve_{kj}]_+ f_j\big) + p_k^-\mathbf{A}\big(-f_k+\sum_{j \in I} [-\ve_{kj}]_+ f_j\big) & \mbox{if $i=k$}.
    \end{cases}\label{eq:q-mutation} \\
    p'^\pm_k &= p^\mp_k,\label{eq:coeff_mut_i=k}\\
    \frac{p'^+_i}{p'^-_i} &=\begin{cases}\label{eq:coeff_mut_i!=k}
    (p^+_k)^{\varepsilon_{ik}} \dfrac{p^+_i}{p^-_i} & \mbox{if } \epsilon_{ik} \geq 0,\\
    (p^-_k)^{\varepsilon_{ik}} \dfrac{p^+_i}{p^-_i} & \mbox{if } \epsilon_{ik} \leq 0
    \end{cases}
    \quad \text{for } i \neq k.
\end{align}
%Here $f'_i:= \sum_{j \in I} f_j (E_{k,+})_{ji}$ is a new basis of $M$, and $y_k:=p_k \prod_{j \in I}A_j^{\ve_{kj}} \in \cF$. 
Here, $A'_i = \textbf{A}'(f'_i)$ for the lattice $M' = \bigoplus_{i \in I}\bZ f'_i$
% Then we get a new toric frame $\mathbf{A}':M' \to \bT' \subset \cF$ from a new lattice $M'=\bigoplus_{i \in I}\bZ f'_i$ by setting $\mathbf{A}'(f'_i):=A'_i$.
It can be verified that \eqref{eq:matrix-mutation} and \eqref{eq:compatible-mutation} do not depend on the sign $\epsilon$ \cite[Proposition 3.4]{BZ}. The mutation rule \eqref{eq:matrix-mutation} is the same one as the well-known matrix mutation formula.
The relation \eqref{eq:q-mutation} is a variant of the \emph{quantum exchange relation} with coefficients. 
% %By the $q$-difference relation $\Psi_q(q^2 x) = (1+qx)\Psi_q(x)$, it is not hard to verify (cf. \cite[Theorem 3.19]{GS19}) that \eqref{eq:q-mutation} is equivalent to
% \begin{align}\label{eq:q-mutation_usual}
%     A'_i=
%     \begin{cases}
%     \mathbf{A}(f_i) & \mbox{if $i \neq k$}, \\
%     p_k^+\mathbf{A}\big(-f_k + \sum_{j \in I} [\ve_{kj}]_+ f_j\big) + p_k^-\mathbf{A}\big(-f_k + \sum_{j \in I} [-\ve_{kj}]_+ f_j\big) & \mbox{if $i=k$}.
%     \end{cases}
% \end{align}

Note that the coefficient tuple (and hence the cluster variables) of the quantum seed $(\ve',\Pi',\mathbf{A}',\mathbf{p}')$ is not uniquely determined from $(\ve,\Pi,\mathbf{A},\mathbf{p})$ by the mutation relations \eqref{eq:coeff_mut_i=k} and \eqref{eq:coeff_mut_i!=k}. This ambiguity can be described as the orbit of a certain rescaling action by elements of $\bP$ \cite[Proposition 2.3]{Fra16}. Fraser thus called them the \emph{seed orbits}. 

%Fraser call these seeds as \emph{seed orbits} since they are written as the orbit of the action as the rescaling the toric frame and the coefficient tuple by the group $\bP$.
If the quantum seed $(\ve,\Pi,\mathbf{A},\mathbf{p})$ is normalized, then the normalization condition for $\mathbf{p}'$ uniquely determines $(\ve',\Pi',\mathbf{A}',\mathbf{p}')$.
%then the seed orbit consists of one element, 
In this case, we write $(\ve',\Pi',\mathbf{A}',\mathbf{p}') = \mu_k (\ve,\Pi,\mathbf{A},\mathbf{p})$.

\begin{rem}
\begin{enumerate}
    \item For $\epsilon \in \{+,-\}$, we have an isomorphism $\mu_{k,\epsilon}^\ast:(M',\Pi') \xrightarrow{\sim} (M,\Pi)$, $f'_i\mapsto \sum_{j \in I} f_j (E_{k,\epsilon})_{ji}$. 
    \item The quantum exchange relation \eqref{eq:q-mutation} is the same one as \cite[(29)]{CFMM}. For $k=i$, it can be further rewritten as
\begin{align}\label{eq:exch_rel}
    A_k A'_k = p_k^+q^{\frac{1}{2} \sum_j [\ve_{kj}]_+\pi_{kj}}\mathbf{A}\left(\sum_{j \in I} [\ve_{kj}]_+ f_j\right) + p_k^-q^{\frac{1}{2} \sum_j [-\ve_{kj}]_+\pi_{kj}}\mathbf{A}\left(\sum_{j \in I} [-\ve_{kj}]_+ f_j\right)
\end{align}
\end{enumerate}
\end{rem}

\begin{dfn}[cf. {\cite[Definition 2.6]{FT18}}]\label{def:ex_pattern}
Let $\mathbf{E}$ be a connected $|I_\uf|$-regular graph. For a vertex $v$ of $\mathbf{E}$, let $\mathrm{st}(v)$ be the set of edges incident to $v$. 
\begin{enumerate}
    \item An \emph{attachment} of a quantum seed $(\ve,\Pi, \mathbf{A}, \mathbf{p})$ in $\cF$ at a vertex $v$ is a bijective labeling of their indices by the set $\mathrm{st}(v)$. With such an attachment, we typically use the notation $\mathbf{A}=\mathbf{A}^{\!(v)}=(A_\alpha^{(v)})_{\alpha \in \mathrm{st}(v)}$, and so on.
    \item An \emph{exchange pattern} $\sfs$ on $\mathbf{E}$ is a quantum seed attachment $\sfs^{(v)}=(\ve^{(v)},\Pi^{(v)}, \mathbf{A}^{\!(v)}, \mathbf{p}^{(v)})$ at each vertex $v$ so that for each edge $v \overbarnear{\alpha} v'$, the quantum seeds $\sfs^{(v)},\sfs^{(v')}$ are related by the mutation $\mu_\alpha$.  
\end{enumerate}
%An \emph{exchange pattern} $\sfs$ on $\mathbf{E}$ in $\cF$ is an assignment of an unlabeled quantum seed $\sfs^{(v)} = (\varepsilon^{(v)}, \Pi^{(v)}, \mathbf{A}^{\!(v)}, \mathbf{p}^{(v)})$ in $\cF$ to each vertex $v \in \textbf{E}$ so that the cluster variables bijectively correspond to the edges incident to $v$
%labeled by the set $\mathrm{st}(v)$ of edges of $\textbf{E}$ of which $v$ is an end, such that for each edge $e$ of $\textbf{E}$ with ends $v$ and $v'$, the seeds $\sfs^{(v)}$ and $\sfs^{(v')}$ are related by the quantum seed mutation at the label $e$.
\end{dfn}

\begin{dfn}
The \emph{quantum cluster algebra} associated with an exchange pattern $\sfs$ 
% a mutation class $\sfs$
of quantum seeds is the $\bZP$-subalgebra $\CA_{\sfs} \subset \cF$ generated by all the quantum cluster variables and the inverses of frozen variables.
\end{dfn}

\subsection{Quantum cluster algebras from marked surfaces}
Following \cite{Muller16}, we recall the construction of a coefficient-free quantum cluster algebra $\CA_\Sigma$ in the skew-field $\Frac\Skein{\Sigma}$ of fractions of the skein algebra $\Skein{\Sigma}$ for a marked surface $\Sigma$.

For an ideal triangle $T$ of an ideal triangulation $\tri$,
%(\emph{i.e.}, a complementary region of $\bigcup \tri$)
we define the matrix $\ve(T) = (\ve^{\tri}_{\alpha \beta}(T))_{\alpha \in \tri_\uf, \beta \in \tri}$ by
\begin{align*}
\ve^{\tri}_{\alpha \beta}(T):= \begin{cases}
        1 & \mbox{if $T$ has $\alpha$ and $\beta$ as its consecutive edges in the counter-clockwise order}, \\
        -1 & \mbox{if the same holds with the clockwise order}, \\
        0 & \mbox{otherwise}.
    \end{cases}
\end{align*}
Moreover, we set $\ve^{\tri}:=\sum_T \ve(T)$, where $T$ runs over all ideal triangles of $\tri$.
% , and we write $\ve^\tri := (\ve^\tri_{\alpha, \beta})_{\alpha \in \tri_\uf, \beta \in \tri}$.
% The definition of the matrix $\ve^{\tri}$ extends to any tagged triangulation (see \cite{FST}). 

For each $\alpha \in \tri$, let $A_\alpha \in \Skein{\Sigma}$ be the corresponding element in the skein algebra.
%the Muller skein algebra. 
%By \cite[Theorem 6.14]{Muller16}, the tuple $\mathbf{A}^\tri:=\{A_\alpha\}_{\alpha \in \tri}$ is an independent set in the fraction $\Frac \Skein{\Sigma}$. 
We define the compatibility matrix $\Pi^{\tri}=(\pi_{\alpha\beta})_{\alpha,\beta \in \tri}$ as follows. For $\alpha \in \tri$, let $\alpha_1,\alpha_2$ denote its two ends (with an arbitrary numbering). For two edges $\alpha,\beta \in \tri$, define 
\begin{align*}
    \pi_{\alpha_i,\beta_j}:=\begin{cases}
    1 & \mbox{if $\alpha_i$ is clockwise to $\beta_j$ at a common marked point}, \\
    -1 & \mbox{if $\alpha_i$ is counter-clockwise to $\beta_j$  at a common marked point}, \\
    0 & \mbox{otherwise}
    \end{cases}
\end{align*}
and set $\pi_{\alpha\beta}:=\sum_{i,j=1}^2 \pi_{\alpha_i,\beta_j}$. Then it is easy to verify that
\begin{align*}
    A_\alpha A_\beta = q^{\pi_{\alpha\beta}} A_\beta A_\alpha
\end{align*}
holds for all $\alpha,\beta \in \tri$. 
Consider a lattice $M^\tri=\bigoplus_{\alpha \in \tri}\bZ f_\alpha$ with basis parametrized by arcs in $\tri$, on which $\Pi^\tri$ defines a skew-symmetric form. 
Define a toric frame $\mathbf{A}^{\!\tri}:M^\tri \to \bT^\tri \subset \Skein{\Sigma}$ by $\mathbf{A}^{\!\tri}(f_\alpha):=A_\alpha$ for $\alpha \in \tri$, where $\bT^\tri$ is a based quantum torus associated with $(M^\tri,\Pi^\tri)$ over $\bZ_q$. 

\begin{lem}[{\cite[Theorem 6.14 and Proposition 7.8]{Muller16}}]\label{lem:Muller}
The triple $(\ve^\tri,\Pi^\tri,\mathbf{A}^{\!\tri})$ forms a quantum seed. Namely:
\begin{enumerate}
    \item The skew-field $\Frac\bT^\tri$ of fractions coincides with $\Frac\Skein{\Sigma}$.
    \item The compatibility relation 
    \begin{align*}
    \sum_{\beta \in \tri} \ve_{\alpha\beta}^\tri \pi_{\beta\gamma} = 4\delta_{\alpha\gamma}
    \end{align*}
    holds for all $\alpha \in \tri_\uf$, $\gamma \in \tri$.
\end{enumerate}
 
\end{lem}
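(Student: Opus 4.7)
The two parts are of different flavors, and I would prove them separately, invoking results already established earlier in the paper for part~(1) and performing a local combinatorial analysis for part~(2).

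For part~(1), the equality $\Frac\bT^\tri=\Frac\Skein{\Sigma}$ is nothing but the $\bW=\emptyset$ specialization of \cref{thm:quantum-torus}. That theorem, proved slightly above in the paper, establishes for any taut wall system that the localization $\Skein{\Sigma,\bW}[[\alpha_\bW]^{-1}:\alpha\in\tri]$ coincides with the quantum torus $\bT^\tri_\bW$. The key point is that for any basis web $D\in\Bweb{\Sigma,\bW}$, multiplying by $\prod_{\alpha\in\tri}[\alpha_\bW]^{n_\alpha}$ — where $n_\alpha$ is the geometric intersection number of $D$ with $\alpha$ — produces a Laurent polynomial in the $[\alpha_\bW]$ after iterated application of the Kauffman bracket skein relation inside each ideal triangle. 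Setting $\bW=\emptyset$ gives the claim.

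For part~(2), I would argue by local analysis. Both matrices decompose as sums of local contributions, namely $\ve^\tri_{\alpha\beta}=\sum_T \ve_{\alpha\beta}(T)$ over ideal triangles $T$, and $\pi_{\beta\gamma}=\sum_{i,j}\pi_{\beta_i,\gamma_j}$ over pairs of half-edges at common marked points. Thus the product $\sum_{\beta\in\tri}\ve^\tri_{\alpha\beta}\,\pi_{\beta\gamma}$ reduces to a finite sum indexed by pairs $(T,p)$, where $T$ is one of the (at most two) triangles containing $\alpha$ as an edge and $p$ is a vertex of $T$; for each such pair, the relevant $\beta$ is the edge of $T$ at $p$ other than $\alpha$, and contributes only when $\beta$ and $\gamma$ share a triangle at some marked point. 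In the generic case where $\alpha$ is the diagonal of a non-degenerate ideal quadrilateral with boundary edges $\beta_1,\dots,\beta_4$, one checks case-by-case that when $\gamma=\alpha$ each of the four corners of the quadrilateral contributes $+1$ (yielding the total $4$), while for other $\gamma$ the corner contributions cancel in pairs or vanish.

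The main obstacle is handling the degenerate configurations, where the quadrilateral around $\alpha$ has some vertices or boundary edges identified under the gluing of $\Sigma$ (for instance, in small surfaces like an annulus with few marked points, where an arc can bound the same triangle on both sides). Because the marked surface is assumed to carry no punctures, self-folded triangles are excluded, which removes the most delicate configurations; the remaining degenerations are then handled by careful bookkeeping, accounting for multiple contributions to $\sum_\beta$ coming from the identification of edges. A detailed case-by-case verification can be found in Muller's original work \cite{Muller16}.
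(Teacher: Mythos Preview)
The paper does not give its own proof of this lemma: it is stated with an explicit citation to \cite[Theorem 6.14 and Proposition 7.8]{Muller16} and no argument follows. So there is nothing in the paper to compare your proposal against directly; what you have written is an independent sketch.

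Your reduction of part~(1) to \cref{thm:quantum-torus} with $\bW=\emptyset$ is formally valid within the paper's logical order (that theorem appears in Section~2 and does not cite \cref{lem:Muller}). Note, however, that \cref{thm:quantum-torus} relies on \cref{thm:ore} (the Ore domain property), which the paper imports from the companion work \cite{IKY2}; in the special case $\bW=\emptyset$ this is again Muller's result, so your argument is not genuinely independent of \cite{Muller16}. It is still a clean way to invoke part~(1) inside this paper.

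For part~(2), your outline is along the right lines and matches Muller's strategy: reduce to the quadrilateral containing $\alpha$, sum over corners, and check cancellation. A small slip: $\pi_{\beta\gamma}$ records whether ends of $\beta$ and $\gamma$ share a \emph{marked point}, not a triangle; and the nonzero $\ve^\tri_{\alpha\beta}$ run over \emph{all} edges of the two triangles adjacent to $\alpha$, not just one per vertex. Since you explicitly defer the degenerate cases to \cite{Muller16}, your treatment of part~(2) is effectively the same as the paper's (a citation), with an added informal sketch of the generic case.
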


%Hence $(\mathbf{A}^\tri,\ve^\tri)$ is an unlabeled seed in the ambient field $\Frac \Skein{\Sigma}$. 
% We can think that an ideal triangulation $\tri$ is a set of $N(\Sigma)$ independent elements of $\Frac \Skein{\Sigma}$, so we write it by $\mathbf{A}^{\!\tri}$.
% By \cref{prop:}, $(\mathbf{A}^{\!\tri}, \ve^\tri)$ is an unlabeled seed in $\Frac \Skein{\Sigma}$.
It is known that the mutations of these quantum seeds are compatible with the flips of ideal triangulations, that is,  $(\ve^{f_\kappa \tri},\Pi^{f_\kappa \tri},\mathbf{A}^{f_\kappa \tri}) = \mu_\kappa (\ve^\tri,\Pi^\tri,\mathbf{A}^{\!\tri})$ for any ideal triangulation $\tri$ and $\kappa \in \tri_\uf$ \cite[Theorem 7.9]{Muller16}. 
Recall the graph $\Tri_\Sigma$ of ideal triangulations of $\Sigma$. For a vertex $\tri$ of this graph, the set $\mathrm{st}(\tri)$ is identified with the set $\tri_\uf$ of interior edges.  
The quantum seed attachment $\sfs_\Sigma: \tri \mapsto \sfs^\tri := (\varepsilon^\tri, \Pi^\tri, \textbf{A}^{\!\tri})$ on the graph $\Tri_\Sigma$ defines an exchange pattern (\cref{def:ex_pattern}).
%, whose vertices are the ideal trinagulations of $\Sigma$ and edges are the single flips, is an exchange pattern.
% Hence the quantum seeds $(\ve^\tri,\Pi^\tri,\mathbf{A}^\tri)$ are mutation-equivalent to each other.
% Let $\sfs_\Sigma$ be the mutation class containing these quantum seeds. 
%It is easy to check that the matrix mutation of tagged triangulations is compatible with the flip of tagged triangulations, that is,  $\ve^{f_\kappa \tri} = \mu_\kappa \ve^\tri$ for any tagged triangulation $\tri$ and $\kappa \in \tri$.
% Thus, there is a mutation class $\sfs_\Sigma$ which contains the seeds $(\mathbf{A}^{\!\tri}, \ve^\tri)$ of all ideal triangulations.
% Moreover, the exchange graph $\Exch_\Sigma := \Exch_{\sfs_\Sigma}$ of the mutation class $\sfs_\Sigma$ is naturally equivalent to the graph of ideal triangulations.
% For a tagged traingulation $(\tri, \varsigma) \in \Exch_\Sigma$, we denote the seed $\sfs_\Sigma^{(\tri, \varsigma)}$ by $(\mathbf{A}^{\!(\tri, \varsigma)}, \ve^{(\tri, \varsigma)} = \ve^\tri)$.
% , and the excha$\sfs_\Siggma$nge graph $\Exch_\Sigma := \Exch_{\sfs_\Sigma}$ of it naturally considered as the graph consisting of the vertices are tagged triangulations and two tagged triangulations are connected by an edge if and only if they are related by a flip.
We denote the corresponding quantum cluster algebra by $\mathscr{A}^q_\Sigma:=\mathscr{A}^q_{\sfs_\Sigma}$.

Now, we have two algebras $\mathscr{A}^q_\Sigma$ and $\Skein{\Sigma}$ in $\Frac \Skein{\Sigma}$, and an obvious inclusion $\mathscr{A}^q_\Sigma \subset \Skein{\Sigma}$ from the construction.

\begin{thm}[{\cite{Muller16}}]
If $\Sigma$ has at least two marked points, then the inclusion $\mathscr{A}^q_\Sigma \hookrightarrow \Skein{\Sigma}$ induces an isomorphism
\begin{align*}
    \mathscr{A}^q_\Sigma \cong \Skein{\Sigma}[\partial^{-1}].
\end{align*}
Here, $\Skein{\Sigma}[\partial^{-1}]$ denotes the boundary-localized skein algebra (\cref{def:boundary_localization}).
\end{thm}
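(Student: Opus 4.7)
The plan is to prove the two inclusions $\mathscr{A}^q_\Sigma \subseteq \Skein{\Sigma}[\partial^{-1}]$ and $\Skein{\Sigma}[\partial^{-1}] \subseteq \mathscr{A}^q_\Sigma$ separately. For the forward inclusion, fix any ideal triangulation $\tri$. By \cref{lem:Muller}, the triple $(\varepsilon^\tri, \Pi^\tri, \mathbf{A}^{\!\tri})$ is a quantum seed in $\Frac \Skein{\Sigma}$ whose cluster variables are the arc classes $A_\alpha \in \Skein{\Sigma}$ for $\alpha \in \tri$. I would verify that the quantum exchange relation \eqref{eq:exch_rel} at $\kappa \in \tri_\uf$ coincides with the skein Ptolemy relation $A_\kappa A_{\kappa'} = A\cdot A_{\alpha_1}A_{\alpha_2} + A^{-1}\cdot A_{\beta_1}A_{\beta_2}$ obtained by applying the Kauffman bracket relation \eqref{rel:Kauffman} to the unique crossing of $\kappa$ with its flip $\kappa'$ in the ideal quadrilateral; this is the specialization $\bW=\emptyset$ of \cref{eq:polygon}. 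Since $\Tri_\Sigma$ is connected, every cluster variable produced by iterated mutation is an arc class $A_\delta \in \Skein{\Sigma}$, and the frozen variables (boundary arcs) become invertible in $\Skein{\Sigma}[\partial^{-1}]$ by construction.

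For the reverse inclusion, by \cref{thm:basis-web} with $\bW=\emptyset$ the skein algebra $\Skein{\Sigma}$ has a basis of simple multicurves, so it suffices to show that every simple multicurve $\gamma$ lies in $\mathscr{A}^q_\Sigma$ after inverting frozen variables. Fix a reference triangulation $\tri$ and induct on the total geometric intersection number $n(\gamma) := \sum_{\alpha \in \tri_\uf} |\gamma \cap \alpha|$. In the base case $n(\gamma)=0$, each component of $\gamma$ lies in a single ideal triangle; being neither contractible nor bounding a monogon, each is isotopic to a boundary edge. Thus $\gamma$ is (up to a power of $A$) a monomial in frozen variables of $\sfs^\tri$, which lies in $\mathscr{A}^q_\Sigma[\partial^{-1}]$ trivially.

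For $n(\gamma) > 0$, choose $\alpha \in \tri_\uf$ with $|\gamma \cap \alpha| > 0$ and let $\alpha'$ be its flip. Multiplying $\gamma$ by $A_{\alpha'}$ in the skein algebra and resolving the resulting crossings via \eqref{rel:Kauffman} yields an expansion $A_{\alpha'} \cdot \gamma = \sum_i c_i \gamma_i$ with each $\gamma_i$ a basis web; a combinatorial argument shows $n(\gamma_i) < n(\gamma)$, since each resolution at a crossing of $\gamma$ with $\alpha'$ reroutes $\gamma$ across the bigon bounded by $\alpha \cup \alpha'$ in such a way that it no longer needs to cross $\alpha$ there. Since $A_{\alpha'} \in \mathscr{A}^q_\Sigma$ is a unit in $\Frac\Skein{\Sigma}$ and each $\gamma_i$ lies in $\mathscr{A}^q_\Sigma$ by the inductive hypothesis, we conclude $\gamma \in \mathscr{A}^q_\Sigma$.

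The main obstacle lies in executing the reduction step uniformly, particularly when $\gamma$ contains essential simple loop components: multiplication by $A_{\alpha'}$ may transiently increase intersection numbers with other interior arcs before the Kauffman resolutions compensate. A clean way around this is to organize the proof as a double induction on the pair (number of loop components of $\gamma$, $n(\gamma)$), first reducing loops to polynomials in arcs via a Chebyshev-like recursion of the type illustrated in \cref{ex:MWgeneral}. The hypothesis $|\bM| \geq 2$ enters essentially here, both to guarantee the existence of an ideal triangulation and to ensure that every essential loop on $\Sigma$ is crossed transversally by some interior arc of $\tri$, which is what makes the inductive procedure terminate and what ultimately forces the need to invert the boundary arcs in $\Skein{\Sigma}$.
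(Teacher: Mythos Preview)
Your inductive step contains a genuine gap: from $A_{\alpha'}\cdot\gamma = \sum_i c_i\gamma_i$ with each $\gamma_i \in \mathscr{A}^q_\Sigma$, you conclude $\gamma \in \mathscr{A}^q_\Sigma$ by inverting $A_{\alpha'}$. But $\alpha'$ is an \emph{interior} arc, so $A_{\alpha'}$ is an unfrozen cluster variable and is \emph{not} invertible in $\mathscr{A}^q_\Sigma$ (only frozen variables are). The equation $\gamma = A_{\alpha'}^{-1}\sum_i c_i\gamma_i$ lives in $\Frac\Skein{\Sigma}$, not in the cluster algebra, and there is no general cancellation principle that lets you remove the $A_{\alpha'}^{-1}$. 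Your proposed double induction does not address this, since the Chebyshev-style recursion of \cref{ex:MWgeneral} still multiplies by interior arcs.

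The paper's argument (given for the generalization \cref{thm:wskein_cluster}, of which this statement is the case $\bW=\emptyset$) avoids this problem entirely. Since every ideal arc is already a cluster variable, the only issue is expressing simple \emph{loops} in $\mathscr{A}^q_\Sigma$. For this the paper uses the ``sticking trick'' \eqref{rel:stick}: near a boundary interval $E$ one can rewrite a strand as $q A_E^{-1}(\cdots) - q^2 A_E^{-1}(\cdots)$, where the only inverse that appears is that of the \emph{boundary} arc $A_E$, which is frozen and hence invertible in $\mathscr{A}^q_\Sigma$. Applying this twice, once at each of two distinct boundary intervals $E_1,E_2$, turns any simple loop into a Laurent polynomial in arc classes with denominators only in $A_{E_1},A_{E_2}$ (\cref{fig:loop-exp}). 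This is precisely where the hypothesis $|\bM|\ge 2$ enters --- not, as you suggest, to guarantee that loops meet interior arcs, but to supply two independent boundary intervals so that the loop can be opened up into arcs while inverting only frozen variables.
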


\begin{figure}[h]
\begin{align*}
\tikz[scale=1.15,xscale=-1,>=latex]{
\path(0,0) node [fill, circle, inner sep=1.5pt] (x1){};
\path(135:2) node [fill, circle, inner sep=1.5pt] (x2){};
\path(0,2*1.4142) node [fill, circle, inner sep=1.5pt] (x3){};
\path(45:2) node [fill, circle, inner sep=1.5pt] (x4){};
\draw[blue](x1) to (x2) to (x3) to (x4) to (x1) to  (x3);
\color{mygreen}{
    \draw(0,1.4142) circle(2pt) coordinate(v0);
    \draw(135:2)++(45:1) circle(2pt) coordinate(v1);
    \draw(45:2)++(135:1) circle(2pt) coordinate(v2);
    \draw(45:1) circle(2pt) coordinate(v3);
    \draw(135:1) circle(2pt) coordinate(v4);
    \qarrow{v0}{v4}
    \qarrow{v4}{v1}
    \qarrow{v1}{v0}
    \qarrow{v0}{v2}
    \qarrow{v2}{v3}
    \qarrow{v3}{v0}
    }
\draw[<->, thick, dashed] (-1.7142, 1.2142) -- (-4.2142, 1.2142);
\draw(-2.9642, 1) node{mutation};
\color{blue}
\draw[<->, thick, dashed] (-1.7142, 1.6142) -- (-4.2142, 1.6142);
\draw(-2.9642, 1.9) node{flip};
\begin{scope}[rotate=90, yshift=130, xshift=1.4142cm]
\color{black}
\path(0,0) node [fill, circle, inner sep=1.5pt] (x1){};
\path(135:2) node [fill, circle, inner sep=1.5pt] (x2){};
\path(0,2*1.4142) node [fill, circle, inner sep=1.5pt] (x3){};
\path(45:2) node [fill, circle, inner sep=1.5pt] (x4){};
\draw[blue](x1) to (x2) to (x3) to (x4) to (x1) to  (x3);
\color{mygreen}{
    \draw(0,1.4142) circle(2pt) coordinate(v0);
    \draw(135:2)++(45:1) circle(2pt) coordinate(v1);
    \draw(45:2)++(135:1) circle(2pt) coordinate(v2);
    \draw(45:1) circle(2pt) coordinate(v3);
    \draw(135:1) circle(2pt) coordinate(v4);
    \qarrow{v0}{v4}
    \qarrow{v4}{v1}
    \qarrow{v1}{v0}
    \qarrow{v0}{v2}
    \qarrow{v2}{v3}
    \qarrow{v3}{v0}
    }
\end{scope}
}
\end{align*}
\vspace{-1cm}
    \caption{The local picture of the quiver representing the matrix $\ve^{\tri}$.}
    \label{fig:tri_quivl}
\end{figure}

\subsection{Laminations on marked surfaces}\label{sec:lamination}
% Let us fix a marked surface $\Sigma = (\Sigma, M)$.
Fomin--Thurston give a cluster algebra of a marked surface with normalized coefficients using integral $\cX$-laminatins for the classical setting ($q=1$) in \cite{FT18}.
Since the shear coordinate of the $\cX$-laminations is useful to describe the cluster structure of the walled skein algebra, we recall them here.
In \cref{subsec:comparison}, we will compare walled skein algebras and Fomin--Thurston's cluster algebra of a marked surface with laminations.

% In the next section, we will give quantum version of it as a brief review of the construction.
% First, we recall the notion of integral $\cX$-laminations here.
\begin{dfn}\label{d:X-lamination}
A \emph{curve} $\gamma$ in $\Sigma$ is a simple curve in $\Sigma$ such that
\begin{itemize}
    \item $\partial \gamma = \emptyset$ or $ \in \partial \Sigma \setminus \spe$ and
    \item $\gamma$ is not a boundary of an unpunctured disk or a once punctured disk.
\end{itemize}
An \emph{integral $\cX$-lamination} (or simply \emph{lamination}) on $\Sigma$ consists of a finite collection of mutually disjoint curves in $\Sigma$ modulo isotopy relative to $M$.
% , and a tuple $\sigma_p \in \{+,0,-\}$ of signs assigned to punctures $p \in \punc$ satisfying $\sigma_p \in \{+,-\}$ if some of the curves are incident to $p$, and otherwise $\sigma_p=0$.
\end{dfn}
% We write a integral $\X$-lamination as $L=(L,\sigma_L)$, where $L = \bigsqcup_j \gamma_j$ is the underlying collection of curves $\gamma_j$ and $\sigma_L=(\sigma_p)_{p \in\punc}$ is the tuple of signs which we call the \emph{lamination signature}.
% We call each component of a lamination $L$ a \emph{leaf} of $L$.
Let $\mathcal{L}^x(\Sigma,\bZ)$ denote the set of integral $\X$-laminations.

Given an ideal triangulaton $\tri$ of $\Sigma$, we define a coordinate system on $\cL^x(\Sigma,\bZ)$ following \cite{FG07}. 
For $L \in \mathcal{L}^x(\Sigma,\bZ)$, represent the components of $L$ and the ideal arcs in $\tri$ so that the intersections of these curves are minimal.

For each interior arc $\alpha\in \tri_\uf$ and a curve $\gamma$, let $\widehat{\mathrm{Int}}_\tri(\alpha,\gamma)$ be the intersection number given by the sum of the following contributions: an intersection as in the left (resp. right) of \cref{f:intersection sign} contributes as $+1$ (resp. $-1$), and the others $0$. 
%Note that $\widehat{\mathrm{Int}}_\tri(\alpha,\gamma_j)$ is a finite integer, while the ordinary intersection number $\mathrm{Int}(\alpha,\gamma_j)$ can be infinite due to spirals.
% When $\alpha$ is not the interior edge of a self-folded triangle,
Then, we define
\begin{align*}
    x^\tri_\alpha(L) := \sum_{j} \widehat{\mathrm{Int}}_\tri(\alpha,\gamma_i).
\end{align*}
for a lamination $L = \bigsqcup_i \gamma_i \in \cL^x(\Sigma, \bZ)$.
% When $\alpha$ is the interior edge of a self-folded triangle, let $\beta$ be the loop enclosing this self-folded triangle and define
% \begin{align*}
%     \mathsf{x}^\tri_\alpha(\widehat{L}):=\sum_j w_j (\widehat{\mathrm{Int}}_\tri(\alpha,\gamma_j)+\widehat{\mathrm{Int}}_\tri(\beta,\gamma_j)).
% \end{align*}

\begin{figure}[h]
\[
\begin{tikzpicture}[scale=1.15, xscale=-1]
\path(0,0) node [fill, circle, inner sep=1.5pt] (x1){};
\path(135:2) node [fill, circle, inner sep=1.5pt] (x2){};
\path(0,2*1.4142) node [fill, circle, inner sep=1.5pt] (x3){};
\path(45:2) node [fill, circle, inner sep=1.5pt] (x4){};
\draw[blue] (x1) to (x2) to (x3) to (x4) to (x1) to node[midway,right]{$\alpha$} node[midway,below left,black]{$\ominus$} (x3);
\draw [red] (-1,0.5)  to[out=45,in=215] (0,1.2) to[out=45,in=215] (1,3);
\draw [red] (-1,0.5) node[below]{$\gamma$}; 

\begin{scope}[xshift=5cm]
\path(0,0) node [fill, circle, inner sep=1.5pt] (x1){};
\path(135:2) node [fill, circle, inner sep=1.5pt] (x2){};
\path(0,2*1.4142) node [fill, circle, inner sep=1.5pt] (x3){};
\path(45:2) node [fill, circle, inner sep=1.5pt] (x4){};
\draw[blue](x1) to (x2) to (x3) to (x4) to (x1) to node[midway,right]{$\alpha$} node[midway,above left,black]{$\oplus$} (x3);
\draw [red] (2,0.7) to[out=135,in=-45] (0,1.5) to[out=135,in=-45] (-1,3);
\draw [red] (2,0.7) node[below]{$\gamma$}; 
\end{scope}
\end{tikzpicture}
\]
\caption{Contribution of a intersection to $\widehat{\mathrm{Int}}_\tri(\alpha,\gamma)$.}
\label{f:intersection sign}
\end{figure}

%We can define the coordinate for an ideal triangulation with self-folded triangles and a tagged triangulation (see \cite[Section 2.2]{IK20}).

\begin{prop}[{\cite[Section 3.1]{FG07}}]\label{p:x-lamination}
For any ideal triangulation $\tri$ of $\Sigma$, the map
\begin{align*}
    x_{\tri}: \mathcal{L}^x(\Sigma,\bZ) \xrightarrow{\sim} \bZ^{\tri_\uf}, \quad L \mapsto \{x^\tri_\alpha(L)\}_{\alpha \in \tri_\uf}
\end{align*}
gives a bijection.
Moreover if an ideal triangulation $\tri'$ is obtained by the flip along an arc $\kappa \in \tri_\uf$, the coordinate transformation $x_{\tri'} \circ x_{\tri}^{-1}$ is given by
\begin{align}\label{eq:trop_X_trans}
    x^{\tri'}_{\alpha}=
    \begin{cases}
    -x^{\tri}_\kappa & \mbox{if } \alpha = \kappa',\\
    x^{\tri}_\alpha - \ve^{\tri}_{\alpha\kappa}\min\left\{0,\ - \mathrm{sgn}(\ve^{\tri}_{\alpha\kappa})x^{\tri}_\kappa\right\} & \mbox{otherwise}.
    \end{cases}
\end{align}
Here, $\kappa' \in \tri'_\uf$ is the arc such that $\tri \setminus \{\kappa\} = \tri' \setminus \{\kappa'\}$.
\end{prop}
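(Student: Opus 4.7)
The plan is to establish the three assertions of the proposition in turn: well-definedness of $x_\tri$, bijectivity, and the flip formula.

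\textbf{Well-definedness.} First I would show that every $\X$-lamination $L$ admits a representative realizing minimal intersection with $\bigcup\tri$, and that any two such representatives are ambient isotopic rel $\bigcup\tri$. The existence follows from standard innermost-bigon removal, and the uniqueness up to ambient isotopy is a consequence of the bigon criterion for simple curves on surfaces. Since $\widehat{\mathrm{Int}}_\tri(\alpha,\gamma)$ depends only on the combinatorial crossing pattern of a minimal diagram, the integer $x_\tri^\alpha(L)$ is intrinsic to $L$.

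\textbf{Bijectivity.} Cut $\Sigma$ along $\bigcup\tri$. Inside each ideal triangle $T$, a simple multicurve in minimal position consists only of ``corner-turning'' arcs, since any arc with both endpoints on one side bounds a half-disk and can be removed by isotopy. The isotopy class of such a configuration in $T$ is therefore classified by three non-negative integers $(m_v^T)_{v\in\mathrm{vert}(T)}$, where $m_v^T$ counts the arcs turning at the vertex $v$. For $\alpha\in\tri_\uf$ shared by triangles $T,T'$, inspection of \cref{f:intersection sign} shows that each corner-turning arc contributes $\pm 1$ to $x_\tri^\alpha(L)$ according to whether its ``Z-shape'' matches the left or right picture, so $x_\tri^\alpha(L)$ is an explicit signed sum in the $m_v^{T},m_v^{T'}$. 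Solving this finite linear system triangle-by-triangle (using that the total crossing count along $\alpha$ must agree from both sides) recovers the $m_v^T$'s from $x_\tri(L)$, proving injectivity. For surjectivity, one may conversely build, for each $\alpha\in\tri_\uf$, an elementary lamination $L_\alpha^{\pm}$ consisting of a single curve that crosses only $\alpha$, with sign $\pm$ as in \cref{f:intersection sign}, and then for any tuple $(n_\alpha)\in\bZ^{\tri_\uf}$ take the disjoint union of $|n_\alpha|$ parallel copies of $L_\alpha^{\sgn n_\alpha}$, verifying directly that its shear coordinates equal $(n_\alpha)$.

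\textbf{Flip formula.} Fix $\kappa\in\tri_\uf$ and let $Q$ be the quadrilateral of which $\kappa$ is a diagonal. Outside $Q$ the triangulations $\tri$ and $\tri'$ agree, so for $\alpha\in\tri_\uf\setminus\{\kappa\}$ the shear coordinates $x_\tri^\alpha(L)$ and $x_{\tri'}^\alpha(L)$ can only differ by the contribution of strands of $L$ lying inside $Q$. Since $x_{\tri'}^{\kappa'}=-x_\tri^\kappa$ is immediate from the sign conventions (a Z-through the quadrilateral reverses orientation upon flipping the diagonal), the main task is to verify the tropical adjustment term for edges $\alpha$ of $Q$ other than $\kappa$. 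Using the bijectivity just proved, it suffices to check the identity on elementary laminations $L_\beta^{\pm}$ for $\beta$ ranging over the interior sides meeting $Q$ (including $\beta=\kappa$). Each such check amounts to drawing one curve through $Q$, computing its contribution to $x_\tri^\alpha$ and $x_{\tri'}^\alpha$, and comparing with the right-hand side of \eqref{eq:trop_X_trans}; the cases split naturally according to whether $\ve^\tri_{\alpha\kappa}$ and $x_\tri^\kappa$ have the same or opposite signs, matching the tropical $\min$ term.

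The main obstacle is book-keeping: one must keep the sign conventions of \cref{f:intersection sign} consistent throughout the flip-formula case analysis, particularly when curves have endpoints on the boundary of $Q$ located exactly at corners. This is handled by observing that such corner-incident strands contribute $0$ to $\widehat{\mathrm{Int}}_\tri$ at all nearby edges, so they play no role in the comparison.
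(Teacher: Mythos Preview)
The paper does not prove this proposition; it is stated with a citation to \cite{FG07} and used as a black box. So there is no proof in the paper to compare against.

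That said, your sketch has two genuine gaps.

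\textbf{Surjectivity.} The construction via elementary laminations does not work as written. A single curve $L_\alpha^{\pm}$ that crosses \emph{only} $\alpha$ need not exist: inside the quadrilateral around $\alpha$ the Z-shaped arc must enter and exit through two of the four sides, and when those sides are themselves interior arcs the curve is forced to cross them before it can reach $\partial\Sigma$. Even granting existence, the curves $L_\alpha^{\pm}$ for different $\alpha$ have no reason to be pairwise disjoint, so their ``disjoint union'' is not an $\X$-lamination, and resolving the crossings can change the shear coordinates. The standard argument instead reconstructs the lamination directly from the target tuple: in each triangle the three adjacent shear values determine the corner-arc multiplicities (together with how many strands exit to the boundary along each boundary side), and one checks these local pieces glue consistently along every interior edge.

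\textbf{Flip formula.} The transformation \eqref{eq:trop_X_trans} is piecewise linear (it contains a $\min$), not linear, so verifying it on a generating family such as $\{L_\beta^{\pm}\}$ does not establish it for arbitrary $L$; bijectivity of $x_\tri$ alone gives no such reduction. One has to treat a general lamination: enumerate the finitely many combinatorial types of strands of $L$ inside the quadrilateral $Q$, compute the contribution of each type to $x^\tri_\alpha$ and $x^{\tri'}_\alpha$, and check that the total matches \eqref{eq:trop_X_trans} in each of the cases $x^\tri_\kappa\ge 0$ and $x^\tri_\kappa\le 0$.
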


This map $x_{\tri}: \cL^x(\Sigma, \bZ) \to \bZ^{\tri_\uf}$ is called the \emph{shear coordinate}.
For later discussions, we define the map
\begin{align*}
    a^\tri_\alpha: \cL^x(\Sigma, \bZ) \to \frac 1 2\bZ
\end{align*}
by the half of the geometric intersection number with $\alpha \in \tri$.
Then, we have the following:
\begin{lem}
\begin{enumerate}
    \item For an ideal triangulation $\tri$ and $\kappa \in \tri$, we have
    \begin{align*}
        a^{\tri'}_\alpha = 
        \begin{cases}
        -a^{\tri}_\kappa + \min\big\{ \sum_{\beta} [\ve^\tri_{\kappa \beta}]_+ a^\tri_\beta,\, \sum_{\beta}[-\ve^\tri_{\kappa \beta}]_+ a^\tri_\beta \big\} & \mbox{if } \alpha = \kappa',\\
        a^\tri_\alpha & \mbox{otherwise}.
        \end{cases}
    \end{align*}
    Here, $\tri'$ is the ideal triangulation such that $\tri' \setminus \{\kappa'\} = \tri \setminus \{\kappa\}$.
    \item For any $\alpha \in \tri_\uf$,
    \begin{align*}
    x^\tri_\alpha = \sum_{\beta \in \tri} \ve^\tri_{\alpha\beta} a^\tri_\beta.
    \end{align*}
\end{enumerate}
\end{lem}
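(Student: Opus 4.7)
The plan is to reduce both parts by linearity of $a^\tri_\alpha$ and $x^\tri_\alpha$ over the curves of a lamination to the case of a single curve $\gamma \in \cL^x(\Sigma,\bZ)$, and then to perform a local analysis inside the relevant ideal quadrilaterals. Throughout, I represent $\gamma$ in minimal position with respect to the relevant triangulation.

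For part (1), observe that for any arc $\alpha \neq \kappa'$, $\alpha$ is common to $\tri$ and $\tri'$ and a $\tri$-minimal representative of $\gamma$ remains minimal with respect to each such $\alpha$; hence $a^{\tri'}_\alpha = a^\tri_\alpha$. The nontrivial case $\alpha = \kappa'$ is local to the ideal quadrilateral $Q$ around $\kappa$ (equivalently $\kappa'$). I would classify the arcs of $\gamma \cap T$ for each of the two triangles $T \subset Q$ sharing $\kappa$, glue them across $\kappa$ into pieces of $\gamma \cap Q$, and then classify these pieces by which unordered pair of boundary sides of $Q$ they connect. The two sums $\sum_\beta[\pm\ve^\tri_{\kappa\beta}]_+ a^\tri_\beta$ correspond to the counts $a^\tri_{\alpha_1}+a^\tri_{\alpha_2}$ and $a^\tri_{\beta_1}+a^\tri_{\beta_2}$ over opposite pairs of sides of $Q$. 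After passing to a $\tri'$-minimal representative (removing any bigons with $\kappa'$ created by the rerouting), a direct count of the crossings of these pieces with $\kappa'$ gives the stated formula.

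For part (2), both sides receive only local contributions: by definition $\ve^\tri_{\alpha\beta}$ is supported on arcs $\beta$ that share a triangle with $\alpha$ as consecutive edges, i.e.\ on the other sides of the quadrilateral $Q$ around $\alpha$, while $\widehat{\mathrm{Int}}_\tri(\alpha,\gamma)$ counts signed intersections of $\gamma$ with $\alpha$, all lying in $Q$. Writing $\ve^\tri_{\alpha\beta} = \sum_T \ve^\tri_{\alpha\beta}(T)$, only the two triangles $T_1, T_2 \subset Q$ sharing $\alpha$ contribute. For a piece of $\gamma \cap T_i$ with endpoints on sides $\sigma_1, \sigma_2$ of $T_i$, its net contribution to the RHS is $\tfrac{1}{2}(\ve^\tri_{\alpha\sigma_1}(T_i) + \ve^\tri_{\alpha\sigma_2}(T_i))$: this vanishes if $\sigma_1, \sigma_2 \neq \alpha$ (the two values are $+1$ and $-1$), and equals $\pm\tfrac{1}{2}$ if exactly one endpoint lies on $\alpha$. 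Each intersection point $p \in \gamma \cap \alpha$ determines one such piece in each of $T_1, T_2$, contributing $\tfrac{1}{2}(\ve^\tri_{\alpha\sigma_1}(T_1) + \ve^\tri_{\alpha\sigma_2}(T_2)) \in \{0, \pm 1\}$ to the RHS. Matching the four $(\sigma_1,\sigma_2)$-shapes against \cref{f:intersection sign} shows that these per-crossing contributions coincide with those of $\widehat{\mathrm{Int}}_\tri(\alpha,\gamma)$: the two ``opposite-side'' shapes (Z and S) yield $\pm 1$ on both sides, while the two ``adjacent-side'' shapes contribute $0$.

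The main obstacle is sign bookkeeping: one must verify in each of the four local configurations at a crossing that the sign prescribed by \cref{f:intersection sign} matches the sign produced by the $\ve$-matrix. A secondary subtlety arises when the two triangles $T_1, T_2$ share a boundary edge of $Q$ other than $\alpha$ (as can happen, e.g., on a once-holed torus), in which case a single arc $\beta$ receives contributions to $\ve^\tri_{\alpha\beta}$ from both $T_1$ and $T_2$; keeping the per-triangle decomposition throughout handles this case uniformly.
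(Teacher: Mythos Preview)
The paper does not actually prove this lemma; it is stated immediately after \cref{p:x-lamination} and then the text moves on, treating both formulas as standard facts about tropical $\mathcal{A}$- and $\mathcal{X}$-coordinates of laminations (part (1) is the tropical Ptolemy relation, part (2) is the tropical ``$p^\ast$-map''). So there is no paper proof to compare against.

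Your proposal is a correct and standard argument. The reduction to a single curve and the local analysis in the quadrilateral $Q$ is exactly how one proves the tropical Ptolemy relation, and your per-triangle, per-crossing decomposition for part (2) is the clean way to verify the identity, including in the degenerate cases you flag where two sides of $Q$ are identified. The only cosmetic point: in part (1) you might say explicitly that the count of $\gamma\cap Q$ pieces connecting each unordered pair of sides is determined by the four numbers $a^\tri_{\alpha_1},a^\tri_{\alpha_2},a^\tri_{\beta_1},a^\tri_{\beta_2}$ together with the nonnegativity constraints, and that the minimum in the formula arises precisely from the fact that at most one of the two ``diagonal'' piece-types (those forcing crossings with $\kappa'$) can be present.
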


% For later, we note that the second formula in \eqref{eq:trop_X_trans} is rewritten as 
% \begin{align}\label{eq:trop_X_transf_matrix}
% x^{(\tri,\varsigma)}_\alpha - \ve^{\tri}_{\alpha\kappa}\min\left\{0,\ - \mathrm{sgn}(\ve^{\tri}_{\alpha\kappa})x^{(\tri,\varsigma)}_\kappa\right\}
% = x^{(\tri,\varsigma)}_\alpha +  \sgn(\ve^{\tri}_{\alpha\kappa})[ \ve^{\tri}_{\alpha\kappa} \cdot x^{(\tri,\varsigma)}_\kappa]_+.
% \end{align}

\if0

\subsection{Quantum cluster algebras from marked surfaces with laminations}\label{subsec:coefficients_laminations}

Now we are going to concentrate on the case where $\bP$ is a \emph{tropical semifield}. 
We write $\mathrm{Trop}(u_j \,|\, j \in J)$ for the tropical semifield in variables $\{u_j \mid j \in J\}$ for a finite index set $J$.
Namely, it is the multiplicative group of Laurent monomials in the formal variables $u_j$ for $j \in J$ equipped with the addition operation $\oplus$ given by
    \begin{align}\label{eq:pm_coeff}
        \prod_{j \in J}u_j^{a_j} \oplus \prod_{j \in J} u_j^{b_j} = \prod_{j \in J} u_j^{\min(a_i,b_i)}.
    \end{align}
    % For an element $u$ of $\mathrm{Trop}(u_j \mid j \in J)$, we write
    % \begin{align*}
    %     u^+ := \frac{u}{u \oplus 1}, \quad u^- := \frac{1}{u \oplus 1}.
    % \end{align*}
    % It is equivalent to
    % \begin{align*}
    %     u^+ = \prod_{j \in J}u_j^{[a_j]_+}, \quad u^- = \prod_{j \in J}u_j^{[-a_j]_+}
    % \end{align*}
    % if $u = \prod_{j \in J} u^{a_j}$ with $a_j \in J$.
    % Here, $[x]_+ = \max\{x, 0\}$ for $x \in \bR$.

A \emph{multi-lamination} is a tuple of integral $\cX$-laminations $\bL = (L_j \mid j \in J)$ for some finite index set $J$.
Following Fomin--Thurston, we
% recall
give the topological description of normalized coefficients of the
% (classical)
quantum cluster algebras of the marked surfaces by the 
multi-laminations.

Let us fix a finite index set $J$, a multi-lamination $\bL = (L_j \mid j \in J)$ and a semi-field $\bP_\bL := \mathrm{Trop}(u_j \,|\, j \in J)$.
We will construct a quantum cluster algebra $\mathscr{A}^q_{\Sigma, \bL}$ with coefficients in the skew-field $\Frac(\bZ_q\bP_\bL \otimes_{\bZ_q} \mathscr{S}^q_\Sigma)$. 
%$\bQ_q\bP_J \otimes_{\bQ(q)} \Frac \mathscr{S}^q_\Sigma$. 
%We use this skew-field as an ambient field of the quantum cluster algebra just for now, which will be replaced with $\Frac\Skein{\Sigma_\bW}$ in \cref{sec:comparison}. 
% Unlike the coefficient-free case, the mutation relation of does not coincide with the skein relation, so we cannot construct the cluster algebra in 
% %$\bQ\bP_J \otimes \Frac \mathscr{S}^q_\Sigma$ 
% $\Frac(\bZ_q\bP_J \otimes_{\bZ_q} \mathscr{S}^q_\Sigma)$
% in the same way as above.
In order for that, we use the variables $A_{\bL;\alpha}$ rescaled by the elements $u_j$ accordining to the coordinates of $\bL$,
%shaped by the multi-lamination $\bL$ 
following \cite[Definition 15.3]{FT18}.
Namely, we define the toric frame $\mathbf{A}^{\!\tri}_\bL: M^\tri \to \bT^\tri_\bL \subset \Frac(\bZ_q\bP_\bL \otimes_{\bZ_q} \mathscr{S}^q_\Sigma)$ by
\begin{align}\label{eq:rescaled_variable}
    \mathbf{A}^{\!\tri}_\bL(f_\alpha) = A_{\bL; \alpha} := \prod_{j \in J}u_j^{-a^\tri_\alpha(L_i)} A_\alpha \in \bZ_q\bP_\bL \otimes_{\bZ_q} \mathscr{S}^q_\Sigma.
\end{align}
Here $\bT^\tri_\bL$ is a based quantum torus associated with $(M^\tri, \Pi^\tri)$ over $\bZ_q \bP_\bL$.
Since $\bT^\tri_\bL = \bZ_q \bP_\bL \otimes_{\bZ_q} \bT^\tri$, we get $\mathop{\mathrm{Frac}} \bT^\tri_\bL=\Frac(\bZ_q\bP_\bL \otimes_{\bZ_q} \mathscr{S}^q_\Sigma)$ from \cref{lem:Muller} (1).

We define the coefficient tuple $\mathbf{p}^\tri_\bL = (p^{\tri, \pm}_{\bL;\alpha})_{\alpha \in \tri_\uf}$ by
\begin{align}\label{eq:coefficients_lamination}
    p^{\tri,\pm}_{\bL;\alpha} :=
    \prod_{j \in J} u_j^{[\pm x^\tri_\alpha(L_j)]_+} \in \bP_\bL.
\end{align}

\begin{lem}\label{lem:ex_rel_coeff}
The attachment $\sfs_{\Sigma, \bL}: \tri \mapsto \sfs^\tri_{\Sigma, \bL} = (\varepsilon^\tri, \Pi^\tri, \mathbf{A}^{\!\tri}_\bL, \mathbf{p}^\tri_\bL)$ of quantum seeds in $\Frac(\bZ_q\bP_\bL \otimes_{\bZ_q} \mathscr{S}^q_\Sigma)$ is an exchange pattern on $\Tri_\Sigma$.
%Namely, the quantum seeds $(\ve^\tri, \Pi^\tri, \mathbf{A}^{\!\tri}_\bL, \mathbf{p}^\tri_\bL)$ are compatible with the flips of ideal triangulations.
\end{lem}

\begin{proof}
It is enough to check the quantum exchange relation with coefficients.
Let $\kappa$ be an arc of $\tri$ and $\kappa'$ be the arc of the ideal triangulation $\tri'$ obtained from $\tri$ by flipping along $\kappa$ such that $\tri \setminus \{\kappa\} = \tri' \setminus \{\kappa'\}$.
Then, 
\begin{align}
    &A_{\bL;\kappa} \cdot A_{\bL; \kappa'}\nonumber\\
    &= \prod_{j \in J} u_j^{-a^\tri_\kappa(L_j) - a^{\tri'}_{\kappa'}(L_j)} A_\kappa \cdot A_{\kappa'}\nonumber\\
    &= \prod_j u_j^{-\min\big\{ \sum_\beta [\ve^\tri_{\kappa\beta}]_+ a^\tri_\beta(L_j),\, \sum_\beta [-\ve^\tri_{\kappa\beta}]_+ a^\tri_\beta(L_j)\big\}}\nonumber\\
    &\quad \cdot \Big(q^{\frac{1}{2} \sum_\beta [\ve^\tri_{\kappa\beta}]_+\pi_{\kappa\beta}} \mathbf{A}^{\!\tri}\Big(\sum_\beta f_\beta^{[\ve^\tri_{\kappa\beta}]_+}\Big) + q^{\frac{1}{2} \sum_\beta [-\ve^\tri_{\kappa\beta}]_+\pi_{\kappa\beta}} \mathbf{A}^{\!\tri}\Big(\sum_\beta f_\beta^{[-\ve^\tri_{\kappa\beta}]_+}\Big)\Big)
    \hspace{1.5cm} (\mbox{by \eqref{eq:exch_rel}})
    \nonumber\\
    &=\prod_j u_j^{-\min\big\{ \sum_\beta [\ve^\tri_{\kappa\beta}]_+ a^\tri_\beta(L_j),\, \sum_\beta [-\ve^\tri_{\kappa\beta}]_+ a^\tri_\beta(L_j)\big\} + \sum_\beta[\ve^\tri_{\kappa\beta}]_+ a^\tri_\beta(L_j)} q^{\frac{1}{2} \sum_\beta [\ve^\tri_{\kappa\beta}]_+\pi_{\kappa\beta}} \mathbf{A}^{\!\tri}_\bL\Big(\sum_\beta f_\beta^{[\ve^\tri_{\kappa\beta}]_+}\Big)\nonumber\\
    &\quad + \prod_j u_j^{-\min\big\{ \sum_\beta [\ve^\tri_{\kappa\beta}]_+ a^\tri_\beta(L_j),\, \sum_\beta [-\ve^\tri_{\kappa\beta}]_+ a^\tri_\beta(L_j)\big\} + \sum_\beta[-\ve^\tri_{\kappa\beta}]_+ a^\tri_\beta(L_j)} q^{\frac{1}{2} \sum_\beta [-\ve^\tri_{\kappa\beta}]_+\pi_{\kappa\beta}} \mathbf{A}^{\!\tri}_\bL \Big(\sum_\beta f_\beta^{[-\ve^\tri_{\kappa\beta}]_+}\Big).\label{eq:prod_var_coeff}
\end{align}
Here,
\begin{align*}
    &-\min\bigg\{ \sum_\beta [\ve^\tri_{\kappa\beta}]_+ a^\tri_\beta(L_j),\, \sum_\beta [-\ve^\tri_{\kappa\beta}]_+ a^\tri_\beta(L_j)\bigg\} + \sum_\beta[\pm\ve^\tri_{\kappa\beta}]_+ a^\tri_\beta(L_j)\\
    &\qquad = -\min\bigg\{0,\, \mp \sum_\beta \ve^\tri_{\kappa\beta} a^\tri_\beta(L_j) \bigg\} = -\min \big\{ 0, \mp x^\tri_\kappa(L_j) \big\} = [\pm x^\tri_\kappa(L_j)]_+.
\end{align*}
Hence,
\begin{align*}
    \eqref{eq:prod_var_coeff} 
    = (\mathbf{p}^\tri_\bL)^+q^{\sum_\beta [\ve^\tri_{\kappa\beta}]_+\pi_{\kappa\beta}} \mathbf{A}^{\!\tri}_\bL\Big(\sum_\beta f_\beta^{[\ve^\tri_{\kappa\beta}]_+}\Big)
    +(\mathbf{p}^\tri_\bL)^- q^{\sum_\beta [-\ve^\tri_{\kappa\beta}]_+\pi_{\kappa\beta}} \mathbf{A}^{\!\tri}_\bL\Big(\sum_\beta f_\beta^{[-\ve^\tri_{\kappa\beta}]_+}\Big).
\end{align*}
On the other hand, one can directly verify the mutation formula for the coefficient $\mathbf{p}^\tri_\bL$ by \eqref{eq:trop_X_trans}.
\end{proof}

\begin{rem}\label{rem:coeff_lam_realize}
We note that there is a multi-lamination $\bL$ for any normalized coefficient tuple $\mathbf{p} = (p_\alpha \mid \alpha \in \tri_\uf)$ such that $\mathbf{p} = \mathbf{p}^{\tri}_\bL$ by \cref{p:x-lamination}, so the quantum cluster algebras $\mathscr{A}^q_{\Sigma, \bL}$ realize all possibilities of normalized coefficients of $\mathscr{A}^q_\Sigma$.
\end{rem}

\begin{ex}[Principal coefficients]\label{ex:principal}
% Let $\sfs$ be a coefficient-free mutation class and $v_0 \in \bExch_\sfs$.
% The \emph{principal extension} at $v_0$ is a mutation class $\sfs^\mathrm{prin}_{v_0}$ with coefficients in $\bP_{v_0} := \mathrm{Trop}(p_i^{(v_0)} \mid i \in I_\uf)$ such that it contains the seed $(\ve^{(v_0)}, \mathbf{A}^{\!(v_0)}, \mathbf{p}^{(v_0)})$, where $\mathbf{p}^{(v_0)} = (p_i^{(v_0)} \mid i \in I_\uf)$.
% We can define the principal extension $\sfs^\mathrm{prin}_{\omega_0}$ at $\omega_0 \in \Exch_\sfs$ in the same way.

% and $\bP = \mathrm{Trop}(u_i \,|\, i=1, \dots, n)$.
% For a coefficient free seed pattern $\sfs: \bT_{I_\uf} \ni t \mapsto (B^{(t)}, \mathbf{A}^{(t)})$, its \emph{principal extension} at $t_0$ is a seed pattern $\sfs^{t_0}_{\mathrm{prin}}: \bT_{I_\uf} \ni t \mapsto (B^{(t)}, \mathbf{A}^{(t)}, \mathbf{p}^{(t)})$ satisfying $\mathbf{p}^{(t_0)} = (u_i \mid i \in I)$ (or $\mathbf{p}^{(t_0)} = (u_i \mid i \in I_\uf)$).
% A seed pattern $\sfs: \bT_{I_\uf} \ni t \mapsto (B^{(t)}, \mathbf{A}^{(t)}, \mathbf{p}^{(t)})$ has a \emph{principal coefficients} at $t_0$ if $\mathbf{p}^{(t_0)} = (u_i \mid i \in I)$.
Given an ideal triangulation $\tri$ of a marked surface $\Sigma$, we define the multi-lamination $\bL^+_\tri = (L_{\alpha,+} \mid \alpha \in \tri)$, where the lamination $L_{\alpha,+}$ consists of a single curve obtained from $\alpha$ by slightly sliding its endpoints on special points following the boundary orientation induced from $\Sigma$. 
%$(\sigma_\gamma)_p = +$ for $p \in \partial \gamma$.
%Then, we have $(\sfs_\Sigma)^\mathrm{prin}_{\tri} = \sfs_{\Sigma} [\bL^+_\tri]$. 
The coefficients realized by $\bL^+_\tri$ is called the \emph{principal coefficients} with respect to $\tri$. 

We can also consider the lamination  $L_{\alpha,-}$ consisting of a single curve obtained from $\alpha$ by slightly sliding its endpoints on special points against the boundary orientation induced from $\Sigma$. Let us call the coefficients realized by $\bL_\tri^\pm:=(L_{\alpha,+},L_{\alpha,-} \mid \alpha \in \tri)$ the \emph{double principal coefficients}. 
\end{ex}

% \begin{ex}[Cluster structure on $\mathrm{Gr}_{2,n}$]

% \end{ex}

% \begin{ex}[Cluster structure on $SL_4/N$]
% Here we again consider the cluster structure associated with the affine base space $SL_4/N$.
% (\emph{cf}. \cref{ex:affine_SL4}.)
% Let us consider the multi-lamination $\bL$ on 

% \end{ex}

\fi
\section{Comparison of skein and cluster algebras}\label{sec:comparison}
%$\mathscr{A}_{\mathfrak{g},\Sigma}[\mathbb{L}^\pm] \cong \mathscr{S}^1_{\mathfrak{g}}(\Sigma_\mathbb{L})$. 
% \begin{thm}
% $\mathscr{A}_{\Sigma}[\mathbb{L}^\pm] \cong \mathscr{S}^1(\Sigma_\mathbb{L})$.
% \end{thm}

% In this section, we compare the skein algebras of walled surfaces and the cluster algebras of marked surfaces with laminations. The quantum parameters are identified as $q=A$. 

We are going to treat only tropical semifield.
We write $\mathrm{Trop}(u_j \,|\, j \in J)$ for the tropical semifield in variables $\{u_j \mid j \in J\}$ for a finite index set $J$.
Namely, it is the multiplicative group of Laurent monomials in the formal variables $u_j$ for $j \in J$ equipped with the addition operation $\oplus$ given by
    \begin{align}\label{eq:pm_coeff}
        \prod_{j \in J}u_j^{a_j} \oplus \prod_{j \in J} u_j^{b_j} = \prod_{j \in J} u_j^{\min(a_i,b_i)}.
    \end{align}

\subsection{Cluster structure of $\Skein{\Sigma, \bW}$}

In this section, we construct the quantum cluster algebra with the suitable coefficients in a walled skein algebra, and prove these are coincide except for small cases.
The quantum parameters are identified as $q=A$. 
% Given a wall system $\bW=(\sfC,J,\ell)$ on a marked surface $\Sigma$, we give the quantum cluster algebra corresponding to the skein algebra $\Skein{\Sigma, \bW}$ of the walled surface $(\Sigma, \bW)$.
Here, the wall system $\bW=(\sfC,J,\ell)$ is always assumed to be taut. 
% In order to compare them, we first 

For a curve $\gamma \in \mathsf{C}$, we define the curves $\gamma_{\pm}$ as follows:
\begin{itemize}
    \item if $\gamma \in \mathsf{C}_{\mathrm{arc}}$, $\gamma_{+}$ (resp. $\gamma_{-}$) is obtained by slightly sliding both of its endpoints following (resp. against) the boundary orientation induced from $\Sigma$ (see \cref{fig:deform_wall_lam});
    %for an arc $\xi \in \bW_{\mathrm{arc}}$, $L_{\xi, +}$ (resp. $L_{\xi, -}$) is a lamination consisting of a single curve $\xi_+$ (resp. $\xi_-$) obtained from the arc $\xi$ by deforming near the boundaries as follows:

    %we slide $\xi$ along the boundary component which contains a boundary of $\xi$ in the clockwise (resp. counterclockwise) direction (see \cref{fig:deform_wall_lam}),
    \item if $\gamma \in \mathsf{C}_\mathrm{loop}$, let $\gamma_{+}=\gamma_{-}:=\gamma$.
    %for a loop $\xi \in \bW_{\mathrm{loop}}$, $L_\zeta$ is a lamination consisting of only $\zeta$.
\end{itemize}

% Given a wall system $\bW=\bigsqcup_{j \in J} W_j$, 
% we define a multi-lamination $\bL(\bW)=\{L_{j,+},L_{j,-}\}_{j \in J}$ on the marked surface $\Sigma$ by $L_{j,\pm}:=\{\gamma_{\xi,\pm} \mid \xi \in W_j\}$, where 
% \begin{align}
%     \bL(\bW) := (L_{\xi,\pm}, L_{\zeta} \mid \xi \in \bW_{\mathrm{arc}},\, \zeta \in \bW_{\mathrm{loop}}).
% \end{align}
% Here 
% \begin{itemize}
%     \item for an arc $\xi \in W_j$, let $\gamma_{\xi,+}$ (resp. $\gamma_{\xi,-}$) denote the arc obtained from $\xi$ by slightly sliding both of its endpoints following (resp. against) the boundary orientation induced from $\Sigma$ (see \cref{fig:deform_wall_lam});
%     %for an arc $\xi \in \bW_{\mathrm{arc}}$, $L_{\xi, +}$ (resp. $L_{\xi, -}$) is a lamination consisting of a single curve $\xi_+$ (resp. $\xi_-$) obtained from the arc $\xi$ by deforming near the boundaries as follows:

%     %we slide $\xi$ along the boundary component which contains a boundary of $\xi$ in the clockwise (resp. counterclockwise) direction (see \cref{fig:deform_wall_lam}),
%     \item for a loop $\xi \in W_j$, let $\gamma_{\xi,+}=\gamma_{\xi,-}:=\xi$.
%     %for a loop $\xi \in \bW_{\mathrm{loop}}$, $L_\zeta$ is a lamination consisting of only $\zeta$.
% \end{itemize}

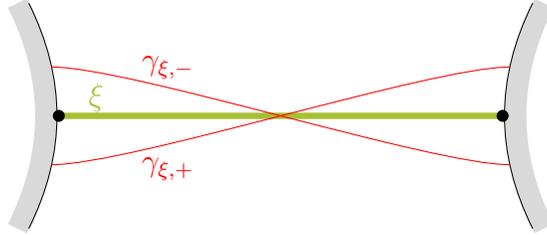
\begin{figure}[ht]
    \centering
    \begin{tikzpicture}
    \node[red] at (0,1.15) {$\gamma_{\xi,-}$};
    \node[red] at (0,-0.2) {$\gamma_{\xi,+}$};
    \node[olive] at (-0.95,0.75) {$\xi$};
    \draw [gray!30, line width=8pt](-2,2) .. controls (-1.5,1) and (-1.5,0) .. (-2,-1);
    \draw [gray!30, line width=8pt](5,2) .. controls (4.5,1) and (4.5,0) .. (5,-1);
    \draw (-1.85,2) .. controls (-1.35,1) and (-1.35,0) .. (-1.85,-1);
    \draw (4.85,2) .. controls (4.35,1) and (4.35,0) .. (4.85,-1);
    \draw [wline] (-1.45,0.5) node [fill, circle, black, inner sep=1.6] {} -- (4.45,0.5) node [fill, circle, black, inner sep=1.6] {};
    \draw [red](-1.55,1.15) .. controls (-0.5,1.15) and (3.5,-0.15) .. (4.55,-0.15);
    \draw [red](-1.55,-0.15) .. controls (-0.5,-0.15) and (3.5,1.15) .. (4.55,1.15);
    \end{tikzpicture}
    \caption{Deformation of walls to laminations.}
    \label{fig:deform_wall_lam}
\end{figure}
We use the semifield $\bP_\bW:=\mathrm{Trop}(z_{j,+}, z_{j,-} \mid j \in J)$, which satisfies 
%$\bP_J := \mathrm{Trop}(z_{\xi, \pm}, a_{\zeta} \mid \xi \in \bW_{\mathrm{arc}}, \zeta \in \bW_{\mathrm{loop}})$.
$\bZ_q \bP_\bW = \bZ_{q,\bW}$.
For an ideal triangulation $\tri$ and $\alpha \in \tri_\uf$, we define 
\begin{align}\label{eq:coefficients_walls}
    p^{\tri,\pm}_{\bW; \alpha}:= \prod_{j \in J} \prod_{\gamma \in \sfC_j} z_{j, +}^{[\pm x^\tri_\alpha(\gamma_{+})]_+} z_{j, -}^{[\pm x^\tri_\alpha(\gamma_{-})]_+}
\end{align}
and let
$\bp^\tri_\bW := (p^{\tri,\pm}_{\bW; \alpha})_{\alpha \in \tri_\uf}$. If $W_j$ has no crossings, then the collection $L_{j,\pm}:=\{ \gamma_{\pm} \mid \xi \in \mathsf{C}_j\}$ defines a lamination. If this is the case for all $j \in J$, then \eqref{eq:coefficients_walls} can be rewritten as
\begin{align}\label{eq:coefficients_walls_normalized}
    p^{\tri,\pm}_{\bW; \alpha}:= \prod_{j \in J}  z_{j, +}^{[\pm x^\tri_\alpha(L_{j,+})]_+} z_{j, -}^{[\pm x^\tri_\alpha(L_{j,-})]_+}.
\end{align}
%In general, even if $L_{j,\pm}$ is not necessarily a lamination, we define its shear coordinates by $x_\alpha^\tri(L_{j,\pm}):=\sum_{\xi \in \mathsf{C}_j} x_\alpha^\tri(\gamma_{\xi,\pm})$.
% (OLD)
% \begin{align*}
%     \bp^\tri_\bW = (p^{\tri,\pm}_{\bW; \alpha}) := \bigg( \prod_{\xi \in \mathsf{C}} z_{\ell(\xi), +}^{[\pm x^\tri_\alpha(\gamma_{\xi, +})]_+} z_{\ell(\xi), -}^{[\pm x^\tri_\alpha(\gamma_{\xi, -})]_+} \bigg)_{\alpha \in \tri_\uf}.
% \end{align*}
Recall that any simple multicurve $C$ admits a $\bW$-minimal diagram $C_\bW$ (\cref{lem:W-minimal-lift}). For an ideal arc $\alpha \in \tri$, let
\begin{align}\label{eq:CV_walled}
    A_{\bW;\alpha}:=[\alpha_\bW]_\bW \in \Skein{\Sigma, \bW}.
\end{align}
Here $[-]_\bW$ stands for the isotopy class relative to $\bigcup \sfC$. 
This assignment is extended to a toric frame $\mathbf{A}^{\!\tri}_\bW: M^\tri \to \bT_\bW^\tri \subset \mathop{\mathrm{Frac}}\Skein{\Sigma, \bW}$, $f_\alpha \mapsto A_{\bW;\alpha}$, where $\bT_\bW^\tri$ is the quantum torus over $\bZ_{q,\bW}$ generated by $A_{\bW;\alpha}$ for $\alpha \in \tri$.
Then the tuple $(\epsilon^{\tri},\Pi^{\tri},\mathbf{A}^{\!\tri}_{\bW}, \mathbf{p}^{\tri}_{\bW})$ is a quantum seed in the ambient field $\mathop{\mathrm{Frac}}\Skein{\Sigma, \bW}$.
Indeed, we have $\mathop{\mathrm{Frac}}\Skein{\Sigma, \bW} \cong \Frac \bT_\bW^\tri$ by \cref{thm:quantum-torus}.
%, where $\bT_\bW^\tri$ is the quantum torus over $\bZ_{q,\bW}$ generated by $A_{\bW;\alpha}$ for $\alpha \in \tri$.

\begin{lem}\label{lem:mut_in_skein}
The attachement $\sfs_{\Sigma, \bW}: \tri \mapsto (\varepsilon^\tri, \Pi^\tri, \mathbf{A}^\tri_\bW, \bp^\tri_\bW)$ of quantum seeds in $\Frac \Skein{\Sigma, \bW}$ is an exchange pattern on $\Tri_\Sigma$.
In particular, it is normalized if $\sfC_j$ has no crossings for any $j \in J$.
\end{lem}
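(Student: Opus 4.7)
The plan is to verify, in order: that each tuple $(\varepsilon^\tri,\Pi^\tri,\mathbf{A}^\tri_\bW,\bp^\tri_\bW)$ is a quantum seed in $\Frac\Skein{\Sigma,\bW}$, that every flip $\tri\to f_\kappa\tri$ at $\kappa\in\tri_\uf$ implements the quantum seed mutation $\mu_\kappa$, and finally normalization under the crossing-free hypothesis. The matrix data $(\varepsilon^\tri,\Pi^\tri)$ depends only on the combinatorics of $\tri$ and not on $\bW$, so the compatibility relation as well as the mutation identities \eqref{eq:matrix-mutation} and \eqref{eq:compatible-mutation} are inherited verbatim from Muller's coefficient-free analysis (\cref{lem:Muller}). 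The toric frame $\mathbf{A}^\tri_\bW$ defines a based quantum torus over $\bZ_{q,\bW}$ whose skew-field of fractions coincides with $\Frac\Skein{\Sigma,\bW}$ by \cref{thm:quantum-torus}.

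The core step is the quantum exchange relation for cluster variables. Fix $\kappa\in\tri_\uf$, let $\kappa'$ be its flip, and let $Q$ be the ambient quadrilateral with sides $\alpha_1,\alpha_2,\beta_1,\beta_2$ labeled as in \cref{fig:AB-regions1}. I would apply the walled square formula \cref{eq:polygon} to the $\bW$-minimal representatives to obtain
\begin{align*}
    A_{\bW;\kappa}\,A_{\bW;\kappa'} \;=\; q\,P_A\,[\alpha_{1,\bW}]_\bW[\alpha_{2,\bW}]_\bW \;+\; q^{-1}P_B\,[\beta_{1,\bW}]_\bW[\beta_{2,\bW}]_\bW,
\end{align*}
where $P_A, P_B\in\bZ_{q,\bW}$ are the products of $a_{\ell(\cdot)}$ and $z_{\ell(\cdot),\pm}$ prescribed by the multisets $\sfC_{A,\bullet}$, $\sfC_{B,\bullet}$. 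Using \eqref{eq:extension_toric_frame} and the geometric description of $\varepsilon^\tri$, the two edge-products rewrite as $q^{(\cdot)/2}\mathbf{A}^\tri_\bW\big(\sum_\beta[\varepsilon^\tri_{\kappa\beta}]_+f_\beta\big)$ and $q^{(\cdot)/2}\mathbf{A}^\tri_\bW\big(\sum_\beta[-\varepsilon^\tri_{\kappa\beta}]_+f_\beta\big)$, with the $q$-exponents furnished by $\Pi^\tri$. Comparing with \eqref{eq:exch_rel}, the remaining point is the identification
\begin{align*}
    P_A \;=\; p^{\tri,+}_{\bW;\kappa},\qquad P_B \;=\; p^{\tri,-}_{\bW;\kappa}.
\end{align*}
This reduces to a local check inside $Q$: each segment of a wall-curve $\gamma$ contributing to $\sfC_{A,A}$, $\sfC_{A,\kappa}$, or $\sfC_{A,\kappa'}$ contributes, after the $\pm$-perturbation of endpoints, the matching sign and multiplicity to the shear intersection number $\widehat{\mathrm{Int}}_\tri(\kappa,\gamma_\pm)$ appearing in \eqref{eq:coefficients_walls}, and symmetrically for the $B$-regions.

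The coefficient mutation formulas \eqref{eq:coeff_mut_i=k} and \eqref{eq:coeff_mut_i!=k} follow by applying the shear-coordinate transformation rule \eqref{eq:trop_X_trans} to each $\gamma_+$ and $\gamma_-$ separately and then reassembling via \eqref{eq:coefficients_walls}; the identity $[x]_+ - [-x]_+ = x$ converts the additive correction in the shear transformation into the multiplicative correction demanded by \eqref{eq:coeff_mut_i!=k}, while the identity $x^{\tri'}_{\kappa'}(\gamma_\pm) = -x^\tri_\kappa(\gamma_\pm)$ yields \eqref{eq:coeff_mut_i=k}. For the normalization claim, suppose $\sfC_j$ is crossing-free for every $j$. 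Then the curves $\gamma_\pm$ for $\gamma\in\sfC_j$ assemble into a lamination $L_{j,\pm}$, so that \eqref{eq:coefficients_walls} collapses to \eqref{eq:coefficients_walls_normalized}; the tropical identity $\min([x]_+,[-x]_+)=0$ for every integer $x$ then yields $p^{\tri,+}_{\bW;\kappa}\oplus p^{\tri,-}_{\bW;\kappa}=1$ in $\bP_\bW$.

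The hard part will be the local bookkeeping in the second paragraph, namely reconciling the side-to-side and side-to-vertex tally of wall contributions underlying \cref{eq:polygon} with the signed intersection tally underlying \eqref{eq:coefficients_walls}. Extra care is needed for walls ending at a vertex of $Q$: perturbing such an endpoint may place the resulting segment of $\gamma_\pm$ in either the $A$-region or the $B$-region depending on the sign, and the resulting symmetry is precisely what produces the $\pm$-structure of the coefficient formula. A further subtlety is that a single wall-curve may traverse $Q$ in several disjoint segments or have self-crossings and mutual crossings inside $Q$ (since crossing-freeness is \emph{not} assumed for the main statement), and each such piece must be accounted for with the correct multiplicity.
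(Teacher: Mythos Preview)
Your approach is essentially the paper's: apply the walled-square formula (\cref{eq:polygon}) inside the flip quadrilateral, identify the two wall-products with the coefficient tuple $p^{\tri,\pm}_{\bW;\kappa}$ defined via shear coordinates, and conclude normalization from $\min([x]_+,[-x]_+)=0$. One caution on bookkeeping: the paper's computation yields $P_A=p^{\tri,-}_{\bW;\kappa}$ and $P_B=p^{\tri,+}_{\bW;\kappa}$ (so the $q\,\alpha_1\alpha_2$ term carries $p^-$, not $p^+$); your sketch has these swapped, so when you carry out the local shear-versus-multiset comparison be sure to track which of $\alpha_1\alpha_2$ and $\beta_1\beta_2$ sits on the $[\varepsilon^\tri_{\kappa\beta}]_+$ side.
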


\begin{proof}
%Since the cluster algebra $\mathscr{A}_\Sigma[\bL(\bW)]$ is generated by the cluster variables, it suffices to check that the exchange relations among the arcs hold in the skein algebra $\mathscr{S}((\Sigma, \bW))$. 
We must check the quantum exchange relation \eqref{eq:exch_rel} and the exchange relation for coefficients \eqref{eq:coeff_mut_i=k}, \eqref{eq:coeff_mut_i!=k} in this realization.
% The mutation rules for the other data have been verified in \cref{lem:ex_rel_coeff}. 

Let us focus on the quadrilateral in $\tri$ with sides are $\alpha_1, \alpha_2, \beta_1, \beta_2 \in \tri$ and diagonal $\kappa \in \tri$.
Let $\kappa' \in \tri'$ satisfying $\tri \setminus \{\kappa\} = \tri' \setminus \{\kappa'\}$ as in \cref{fig:AB-regions1}. 
Note that the coefficients can be rewritten as
\begin{align*}
    p^{\tri,\pm}_{\bW; \alpha} = \prod_{\gamma \in \mathsf{C}} z_{\ell(\gamma), +}^{[\pm x^\tri_\alpha(\gamma_{+})]_+} z_{\ell(\gamma), -}^{[\pm x^\tri_\alpha(\gamma_{-})]_+} 
\end{align*}
for $\alpha \in \tri_\uf$. 
Then, with the notation in \cref{subsec:skein_formula}, we get
\begin{align*}
p_{\bW;\kappa}^{\tri,+}
&= \bigg( \prod_{\gamma \in \sfC_{B, B} \cup \sfC_{B, \kappa'}} z_{\ell(\gamma), +} \bigg) \bigg( \prod_{\zeta \in \sfC_{B, B} \cup \sfC_{B, \kappa}} z_{\ell(\zeta), -} \bigg)\\
&=\bigg(\!\prod_{\eta\in\sfC_{B,B}}\!\!\! a_{\ell(\eta)}\bigg)
\bigg(\!\prod_{\gamma\in\sfC_{B,\kappa'}}\!\!\! z_{\ell(\gamma),+}\bigg)
\bigg(\!\prod_{\zeta\in\sfC_{B,\kappa}}\!\!\! z_{\ell(\zeta),-}\bigg),\\
p_{\bW;\kappa}^{\tri,-}
&= \bigg( \prod_{\gamma \in \sfC_{A, A} \cup \sfC_{A, \kappa}} z_{\ell(\gamma), +} \bigg) \bigg( \prod_{\zeta \in \sfC_{A, A} \cup \sfC_{A, \kappa'}} z_{\ell(\zeta), -} \bigg)\\
&= \bigg(\!\prod_{\eta\in\sfC_{A,A}}\!\!\!a_{\ell(\eta)}\bigg) 
\bigg(\!\prod_{\gamma\in\sfC_{A,\kappa}}\!\!\! z_{\ell(\gamma),+}\bigg)
\bigg(\!\prod_{\zeta\in\sfC_{A,\kappa'}}\!\!\! z_{\ell(\zeta),-}\bigg).
\end{align*}
%for $\mathbf{p}^\tri_{\bW} = (p^\tri_{\bW;\alpha})_{\alpha \in \tri_\uf}$.
By comparing with \cref{eq:polygon}, we see that the quantum exchange relation
\begin{align*}%\label{eq:exch_rel}
    A_{\bW;\kappa} A'_{\bW;\kappa} = q^{-1}(p^\tri_{\bW;\kappa})^+A_{\bW;\beta_1}A_{\bW;\beta_2} + q(p^\tri_{\bW;\kappa})^- A_{\bW;\alpha_1}A_{\bW;\alpha_2}
\end{align*}
holds in $\Skein{\Sigma, \bW}$. Thus the first assertion is proved.

In the case where $W_j$ has no crossings for all $j \in J$, we can use the formula \eqref{eq:coefficients_walls_normalized}. Observe that for a real number $a \in \bR$, either $[a]_+=0$ or $[-a]_+=0$ holds. Hence 
\begin{align*}
    z_{j,\epsilon}^{[x_\alpha^\tri(L_{j,\epsilon})]_+} \oplus z_{j,\epsilon}^{[-x_\alpha^\tri(L_{j,\epsilon})]_+} = 1
\end{align*}
holds for all $\alpha \in \tri_\uf$, $j \in J$ and $\epsilon \in \{+,-\}$. Thus the normalization condition $p_{\bW;\alpha}^{\tri,+} \oplus p_{\bW;\alpha}^{\tri,-}=1$ holds for all $\alpha \in \tri_\uf$. 
\end{proof}

\begin{ex}[Non-normalized seeds from a wall system with crossings]\label{ex:non-normalized seed}
Let us consider a wall system $\bW=(\sfC,J,\ell)$, where $\sfC=\{\gamma_1,\gamma_2\}$ consists of two ideal arcs as shown in \cref{fig:non-normalized}, $J:=\{0\}$ and $\ell : \sfC \to \{0\}$ be the constant map. Observe that $\sfC=\sfC_0$ has a crossing. 
Then the coefficients with respect to the ideal triangulation $\tri=\{\alpha_1,\alpha_2,\beta_1,\beta_2,\kappa\}$ are computed as
\begin{align*}
    p_{\bW;\kappa}^{\tri,+}=z_{0,+}z_{0,-}=p_{\bW;\kappa}^{\tri,-},
\end{align*}
which is non-normalized.
We have the exchange relation
\begin{align}\label{eq:non-normalized relation}
    A_{\kappa;\bW}A_{\kappa';\bW}=z_{0,+}z_{0,-}A_{\alpha_1;\bW}A_{\alpha_2;\bW} + z_{0,+}z_{0,-}A_{\beta_1;\bW}A_{\beta_2;\bW}.
\end{align}
\end{ex}
% Namely, we check that the coefficients of $\alpha_1 \cdot \alpha_2$ and $\beta_1 \cdot \beta_2$ in $\kappa \cdot \kappa' \in \mathscr{S}((\Sigma, \bW))$ are given by $(p_\kappa^{\tri})+$ and $(p_\kappa^\tri)^+$, respectively.
% Here, $\tri$ is an ideal triangulation of $\Sigma$, $\kappa \in \tri$ and $\kappa' \in f_\kappa(\tri)$ are ideal arcs such that $\tri \setminus \{\kappa\} = f_\kappa(\tri) \setminus \{\kappa'\}$, and $\alpha_1, \beta_1, \alpha_2, \beta_2 \in \tri$ are ideal arcs forms quadrangle surrounding $\kappa$; see \cref{fig:quad}.

\begin{figure}[ht]
    \centering
    \begin{tikzpicture}
    \draw [gray!30, line width=8pt] (-2.55,2.9) .. controls (-2.2,3) and (-2,3.2) .. (-1.9,3.55);
    \draw [gray!30, line width=8pt] (0.9,3.55) .. controls (1,3.25) and (1.25,3) .. (1.55,2.9);
    \draw [gray!30, line width=8pt] (-2.55,0.1) .. controls (-2.25,0) and (-2,-0.25) .. (-1.9,-0.55);
    \draw [gray!30, line width=8pt] (0.9,-0.55) .. controls (1.05,-0.25) and (1.3,0) .. (1.55,0.1);
    \draw (-2.5,2.75) .. controls (-2.1,2.9) and (-1.9,3.1) .. (-1.75,3.5);
    \draw (0.75,3.5) .. controls (0.9,3.15) and (1.15,2.9) .. (1.5,2.75);
    \draw (1.5,0.25) .. controls (1.15,0.1) and (0.95,-0.15) .. (0.75,-0.5);
    \draw (-1.75,-0.5) .. controls (-1.9,-0.15) and (-2.15,0.1) .. (-2.5,0.25);
    \node [fill, circle, inner sep=1.3] (v1) at (-2.05,3.05) {};
    \node [fill, circle, inner sep=1.3] (v2) at (-2.05,-0.05) {};
    \node [fill, circle, inner sep=1.3] (v3) at (1.05,-0.05) {};
    \node [fill, circle, inner sep=1.3] (v4) at (1.05,3.05) {};
    \draw [blue] (v1) edge (v2);
    \draw [blue] (v2) edge (v3);
    \draw [blue] (v3) edge (v4);
    \draw [blue] (v1) edge (v4);
    \draw [blue] (v2) edge (v4);
    \draw [blue] (v1) edge (v3);
    \node [blue] at (-0.5,3.35) {$\beta_1$};
    \node [blue] at (-2.4,1.5) {$\alpha_1$};
    \node [blue] at (-0.5,-0.4) {$\beta_2$};
    \node [blue] at (1.4,1.5) {$\alpha_2$};
    \node [blue] at (0,2.4) {$\kappa$};
    \node [blue] at (-1,2.45) {$\kappa'$};
%\node[blue] at (2,2) {$\tri$};
    \draw[wline] (v2) to[bend right=15] node[midway,right]{$\gamma_1$} (v4);
    \draw[wline] (v1) to[bend right=15] node[midway,left]{$\gamma_2$} (v3);
    \end{tikzpicture}
    \caption{A wall system with a crossing where $\ell(\gamma_1)=\ell(\gamma_2)$.}
    %, and an ideal triangulation $\tri$.}
    \label{fig:non-normalized}
\end{figure}
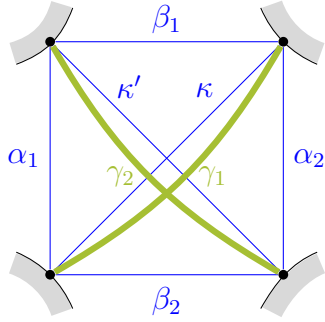

\begin{rem}\label{rem:taut_cluster}
As a technical matter, the taut condition for the wall system $\bW$ is needed to ensure the existence of minimal positions (\cref{lem:W-minimal-lift}), and hence to introduce the cluster variables as in \eqref{eq:CV_walled}. 
\end{rem}

Let $\CA^q_{\Sigma, \bW} := \CA^q_{\sfs_{\Sigma, \bW}}$.
% Hence the mutation class containing the seeds $(\varepsilon^\tri, \Pi^\tri, \mathbf{A}^{\!\tri}_{\bW},\mathbf{p}^{\tri}_{\bW})$ does not depend on the choice of $\tri$, which generates a canonical quantum cluster algebra $\CA^q_{\Sigma, \bL(\bW)}$ inside the ambient skew-field $\mathop{\mathrm{Frac}} \Skein{\Sigma, \bW}$. 
We are going to compare the two algebras $\Skein{\Sigma, \bW}$ and $\CA^q_{\Sigma, \bW}$ over $\bZP_\bW=\bZ_{q,\bW}$ in the skew-field $\mathop{\mathrm{Frac}} \Skein{\Sigma, \bW}$.

\begin{thm}\label{thm:wskein_cluster}
For any walled surface $(\Sigma, \bW)$, we have $\CA^q_{\Sigma, \bW} \subset \Skein{\Sigma, \bW}$ 
in $\mathop{\mathrm{Frac}}\Skein{\Sigma, \bW}$.
Moreover, the equality
\begin{align*}
    \CA^q_{\Sigma, \bW} =\Skein{\Sigma, \bW}[\partial^{-1}]
\end{align*}
holds if $\Sigma$ has at least two marked points, where $\Skein{\Sigma, \bW}[\partial^{-1}]$ is the boundary-localized skein algebra (\cref{def:boundary_localization}).
\end{thm}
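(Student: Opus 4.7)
The plan is to follow Muller's strategy for the unwalled case (\cref{thm:Muller}), adapting each step to accommodate the wall-passing relations. The starting observation is that \cref{lem:mut_in_skein} already realizes the exchange pattern $\sfs_{\Sigma,\bW}$ inside $\Frac\Skein{\Sigma,\bW}$, so the two algebras $\CA^q_{\Sigma,\bW}$ and $\Skein{\Sigma,\bW}$ naturally live in a common ambient skew-field. The Ore property from \cref{thm:ore} ensures that fractions and boundary-localizations behave well.

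For the inclusion $\CA^q_{\Sigma,\bW}\subset\Skein{\Sigma,\bW}$, I would argue by induction on distance from the initial vertex in $\Tri_\Sigma$. At the initial seed $\sfs^{\tri}_{\Sigma,\bW}$, every cluster variable $A_{\bW;\alpha}=[\alpha_\bW]$ for $\alpha\in\tri$ is by construction an element of $\Skein{\Sigma,\bW}$. Mutating along $\kappa\in\tri_\uf$, the quantum exchange relation \eqref{eq:exch_rel} together with the skein-theoretic identity of \cref{eq:polygon} shows that the mutated variable $A'_{\bW;\kappa}$ equals $[\kappa'_\bW]\in\Skein{\Sigma,\bW}$, where $\kappa'$ is the flipped arc; this is precisely the content of \cref{lem:mut_in_skein}. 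Since $\Tri_\Sigma$ is connected by flips, every cluster variable reached by a finite sequence of mutations equals a $\bW$-minimal arc, hence lies in $\Skein{\Sigma,\bW}$. The frozen variables (boundary arcs) also belong to $\Skein{\Sigma,\bW}$, and they are invertible inside the boundary-localization $\Skein{\Sigma,\bW}[\partial^{-1}]$ (which is well defined by \cref{thm:ore}). Thus $\CA^q_{\Sigma,\bW}$ embeds into $\Skein{\Sigma,\bW}[\partial^{-1}]$, and the cluster variables themselves sit in $\Skein{\Sigma,\bW}$.

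For the reverse inclusion $\Skein{\Sigma,\bW}[\partial^{-1}]\subset\CA^q_{\Sigma,\bW}$ under the hypothesis $|M|\geq 2$, I would use the basis web description from \cref{thm:basis-web}: it suffices to show that for each $\bW$-minimal simple multicurve $C$, the class $[C_\bW]$ lies in $\CA^q_{\Sigma,\bW}$ after inverting the boundary arcs. Fix an ideal triangulation $\tri$ and let $n_\alpha:=i(C,\alpha)$ be the geometric intersection number with $\alpha\in\tri_\uf$. Multiplying $[C_\bW]$ by the monomial $\prod_{\alpha\in\tri}A_{\bW;\alpha}^{n_\alpha}$ produces a tangle whose every crossing with an arc of $\tri$ can be resolved by the Kauffman relation \eqref{rel:Kauffman}; each resolution either decreases the total number of crossings with $\tri$, introduces a contractible loop or monogon that is killed by \eqref{rel:trivial-loop} or \eqref{rel:monogon}, or creates a wall-crossing that is absorbed into a scalar of $\bZ_{q,\bW}$ via \labelcref{rel:wall-pass-int,rel:wall-pass-ext}. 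The induction of Muller thus extends verbatim to yield
\[
    \Big(\prod_{\alpha\in\tri}A_{\bW;\alpha}^{n_\alpha}\Big)\cdot[C_\bW]\;\in\;\bT^{\tri}_\bW,
\]
so $[C_\bW]$ is a Laurent polynomial over $\bZ_{q,\bW}$ in the cluster at $\tri$. Since this holds at every vertex of $\Tri_\Sigma$ and the interior-arc cluster variables are invertible in $\CA^q_{\Sigma,\bW}$ after boundary localization, the element $[C_\bW]$ lies in the upper quantum cluster algebra; invoking the equality of upper and lower quantum cluster algebras for surface type (established in Muller's proof of \cref{thm:Muller} and preserved under the coefficient extension $\bZ_q\to\bZ_{q,\bW}$), one concludes $[C_\bW]\in\CA^q_{\Sigma,\bW}$.

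The main obstacle is the last step: showing that the ``upper equals lower'' phenomenon for surface quantum cluster algebras persists after enlarging the coefficient ring from $\bZ_q$ to $\bZ_{q,\bW}$ and modifying the exchange relations by the scalars $z_{j,\pm}$. Since the combinatorics of flips is unchanged and the modifications are by Laurent monomials in $\bZ_{q,\bW}$ appearing only as exchange coefficients, Muller's argument goes through essentially formally, but the verification requires re-running the standardness and acyclicity-type arguments in the coefficient-enriched setting; alternatively, one can derive this by base change from the coefficient-free statement applied to $\Skein{\Sigma}$ via \eqref{eq:forgetful}, combined with the flatness of $\bZ_{q,\bW}$ over $\bZ_q$.
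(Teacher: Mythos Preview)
Your argument for the inclusion $\CA^q_{\Sigma,\bW}\subset\Skein{\Sigma,\bW}$ is fine and matches the paper's: once \cref{lem:mut_in_skein} realizes every flip as an exchange relation inside the skein algebra, every cluster variable is an arc class and the inclusion follows.

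The reverse inclusion, however, has a genuine gap. You reduce to showing that each basis web lies in the upper cluster algebra and then invoke ``upper equals lower for surface type.'' But that equality is not something established independently here, and your proposed justification via base change along $\bZ_q\to\bZ_{q,\bW}$ does not apply: the exchange pattern $\sfs_{\Sigma,\bW}$ is not the base change of the coefficient-free pattern (the exchange relations themselves acquire the scalars $p^{\tri,\pm}_{\bW;\kappa}$), so $\CA^q_{\Sigma,\bW}$ is not simply $\bZ_{q,\bW}\otimes_{\bZ_q}\CA^q_\Sigma$ a priori. You also mischaracterize Muller's argument: his proof of \cref{thm:Muller} does not go through an upper-equals-lower theorem but uses a direct skein-theoretic trick, and that is exactly what the paper does here.

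The paper's route is much shorter. Since $\Skein{\Sigma,\bW}[\partial^{-1}]$ is generated over $\bZ_{q,\bW}$ by ideal arcs and simple loops (\cref{thm:basis-web}), and ideal arcs are already cluster variables, it remains only to show that any simple loop lies in $\CA^q_{\Sigma,\bW}$. For this one uses the \emph{sticking trick}: for any boundary interval $E$, the Kauffman and boundary relations give (up to powers of $z_{j,\pm}$) an identity expressing a strand passing near $E$ as $qA_E^{-1}(\text{crossed diagram})-q^2A_E^{-1}(\text{uncrossed diagram})$. Applying this once at a boundary interval $E_1$ turns a simple loop into arcs that still pass near a second boundary interval $E_2$; applying it again at $E_2$ expresses the loop as a $\bZ_{q,\bW}$-polynomial in ideal arcs times $A_{E_1}^{-1}A_{E_2}^{-1}$. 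Since boundary arcs are frozen and invertible in $\CA^q_{\Sigma,\bW}$, this shows the loop is in $\CA^q_{\Sigma,\bW}$ directly, with no appeal to upper cluster algebras. The hypothesis $|M|\geq 2$ is used precisely to have two distinct boundary intervals for the two applications.
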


\begin{proof}
By \cref{lem:mut_in_skein}, we have $\CA^q_{\Sigma, \bW} \subset \Skein{\Sigma, \bW} \subset \Skein{\Sigma, \bW}[\partial^{-1}]$, since all the cluster variables are realized as elements of the skein algebra.

By \cref{thm:basis-web}, $\Skein{\Sigma, \bW}[\partial^{-1}]$ is generated by simple loops and ideal arcs, where the latter are cluster variables.
%A simple ideal arc is corresponding to a cluster variable.
Therefore in order to prove the converse inclusion, it suffices to prove that any simple arc can be expanded to a polynomial of ideal arcs. 
Here we use the ``sticking trick'': 
the skein relations induce the relation
\begin{align}
        \mathord{
        \ \tikz[baseline=-.6ex, scale=.08, yshift=-4cm]{
            \coordinate (P) at (0,0);
            \coordinate (L) at (-5,0);
            \coordinate (R) at (5,0);
            \coordinate (C) at (0,5);
            \coordinate (T) at (0,10);
            \draw[webline] (120:10) to[out=south, in=west] (C) to[out=east, in=south] (60:10);
            \draw[dashed] (10,0) arc (0:180:10cm) -- cycle;
            \bdryline{(-10,0)}{(10,0)}{1.5cm}
            \fill (L) circle [radius=1cm];
            \fill (R) circle [radius=1cm];
            \node[scale=0.9] at (0,-4) {$E$};
        \ }
    }
    &=qA_E^{-1}
    \mathord{
        \ \tikz[baseline=-.6ex, scale=.08, yshift=-4cm]{
            \coordinate (P) at (0,0);
            \coordinate (L) at (-5,0);
            \coordinate (R) at (5,0);
            \coordinate (C) at (0,5);
            \coordinate (T) at (0,10);
            \begin{scope}
                \clip (10,0) arc (0:180:10cm) -- cycle;
                \draw[webline] (R) to[out=north, in=south] ($(L)+(0,10)$);
                \draw[webline, overarc] (L) to[out=north, in=south] ($(R)+(0,10)$);
            \end{scope}
            \draw[dashed] (10,0) arc (0:180:10cm);
            \bdryline{(-10,0)}{(10,0)}{1.5cm}
            \fill (L) circle [radius=1cm];
            \fill (R) circle [radius=1cm];
        \ }
    }
    -q^{2}A_E^{-1}
    \mathord{
        \ \tikz[baseline=-.6ex, scale=.08, yshift=-4cm]{
            \coordinate (P) at (0,0);
            \coordinate (L) at (-5,0);
            \coordinate (R) at (5,0);
            \coordinate (C) at (0,5);
            \coordinate (T) at (0,10);
            \begin{scope}
                \clip (10,0) arc (0:180:10cm) -- cycle;
                \draw[webline] (L) -- ($(L)+(0,10)$);
                \draw[webline] (R) -- ($(R)+(0,10)$);
            \end{scope}
            \draw[dashed] (10,0) arc (0:180:10cm);
            \bdryline{(-10,0)}{(10,0)}{1.5cm}
            \fill (L) circle [radius=1cm];
            \fill (R) circle [radius=1cm];
        \ }
    }\label{rel:stick}
\end{align}
for any boundary interval $E$, together possibly with additional coefficients $z_{j,\pm}$. 
%$z_{\xi,\pm}$, $a_{\zeta}$. 
%where $\delta$ is an element in $\partial$ represented by the simple ideal arc between the marked points.
%Thus, the inclusion $\Skein{\Sigma, \bW}[\partial^{-1}]\subset\mathscr{A}_\Sigma[\bL(\bW)]$ is shown by expanding any simple loop as a polynomial of simple idel arcs.
If $\Sigma$ has at least two marked points, then there are two distinct boundary intervals $E_1$ and $E_2$.
Then we can expand a simple loop to a polynomial of simple arcs by applying the sticking trick \eqref{rel:stick} twice with respect to $E_1$ and $E_2$, see \cref{fig:loop-exp}.
Hence, any simple loop is an element of the quantum cluster algebra. 
%included in $\mathscr{A}_\Sigma[\bL(\bW)]$ because it becomes a polynomial of cluster variables by the above expansion. 
We remark that the coefficients of the polynomial may have variables $z_{j,\pm}$, 
%$z_{\xi,\pm}$ or $a_{\zeta}$
which causes no problem.
\end{proof}

\begin{figure}[ht]
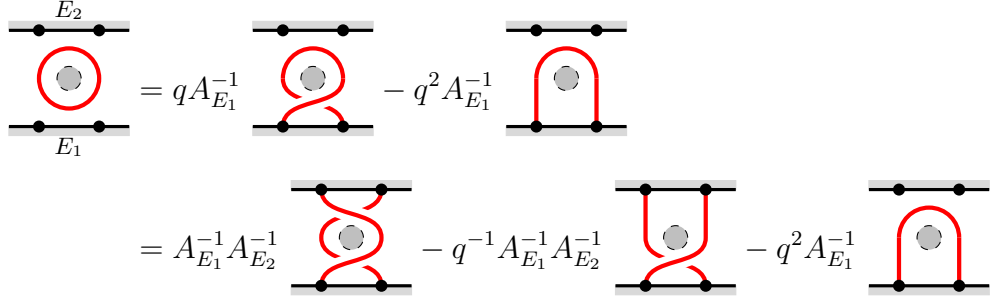

    \centering
    \begin{align*}
    \mathord{
        \ \tikz[baseline=-.6ex, scale=.08, yshift=-6cm]{
            \coordinate (P) at (0,0);
            \coordinate (L) at (-5,0);
            \coordinate (R) at (5,0);
            \coordinate (C) at (0,5);
            \coordinate (T) at (0,10);
            \draw[webline] (0,8) circle [radius=5cm];
            \draw[fill=lightgray, dashed] (0,8) circle [radius=2cm];
            \bdryline{(-10,0)}{(10,0)}{1.5cm}
            \bdryline{(-10,16)}{(10,16)}{-1.5cm}
            \fill (-5,0) circle [radius=1cm];
            \fill (5,0) circle [radius=1cm];
            \fill (-5,16) circle [radius=1cm];
            \fill (5,16) circle [radius=1cm];
            \node at (0,0) [below]{\scriptsize $E_1$};
            \node at (0,16) [above]{\scriptsize $E_2$};
        \ }
    }
    &=qA_{E_1}^{-1}
    \mathord{
        \ \tikz[baseline=-.6ex, scale=.08, yshift=-6cm]{
            \coordinate (P) at (0,0);
            \coordinate (L) at (-5,0);
            \coordinate (R) at (5,0);
            \coordinate (C) at (0,5);
            \coordinate (T) at (0,10);
            \draw[webline, overarc] (R) to[out=north, in=south] (-5,8);
            \draw[webline, overarc] (L) to[out=north, in=south] (5,8);
            \draw[webline] (-5,8) to[out=north, in=west] (0,13) to[out=east, in=north] (5,8);
            \draw[fill=lightgray, dashed] (0,8) circle [radius=2cm];
            \bdryline{(-10,0)}{(10,0)}{1.5cm}
            \bdryline{(-10,16)}{(10,16)}{-1.5cm}
            \fill (-5,0) circle [radius=1cm];
            \fill (5,0) circle [radius=1cm];
            \fill (-5,16) circle [radius=1cm];
            \fill (5,16) circle [radius=1cm];
        \ }
    }
    -q^{2}A_{E_1}^{-1}
    \mathord{
        \ \tikz[baseline=-.6ex, scale=.08, yshift=-6cm]{
            \coordinate (P) at (0,0);
            \coordinate (L) at (-5,0);
            \coordinate (R) at (5,0);
            \coordinate (C) at (0,5);
            \coordinate (T) at (0,10);
            \draw[webline, overarc] (L) to[out=north, in=south] (-5,8);
            \draw[webline, overarc] (R) to[out=north, in=south] (5,8);
            \draw[webline] (-5,8) to[out=north, in=west] (0,13) to[out=east, in=north] (5,8);
            \draw[fill=lightgray, dashed] (0,8) circle [radius=2cm];
            \bdryline{(-10,0)}{(10,0)}{1.5cm}
            \bdryline{(-10,16)}{(10,16)}{-1.5cm}
            \fill (-5,0) circle [radius=1cm];
            \fill (5,0) circle [radius=1cm];
            \fill (-5,16) circle [radius=1cm];
            \fill (5,16) circle [radius=1cm];
        \ }
    }\\
    &=A_{E_1}^{-1}A_{E_2}^{-1}
    \mathord{
        \ \tikz[baseline=-.6ex, scale=.08, yshift=-6cm]{
            \coordinate (P) at (0,0);
            \coordinate (L) at (-5,0);
            \coordinate (R) at (5,0);
            \coordinate (C) at (0,5);
            \coordinate (T) at (0,10);
            \draw[webline, overarc] (R) to[out=north, in=south] (-5,8) to[out=north, in=south] ($(R)+(0,16)$);
            \draw[webline, overarc] (L) to[out=north, in=south] (5,8) to[out=north, in=south] ($(L)+(0,16)$);
            %\draw[webline] (-5,8) to[out=north, in=west] (0,13) to[out=east, in=north] (5,8);
            \draw[fill=lightgray, dashed] (0,8) circle [radius=2cm];
            \bdryline{(-10,0)}{(10,0)}{1.5cm}
            \bdryline{(-10,16)}{(10,16)}{-1.5cm}
            \fill (-5,0) circle [radius=1cm];
            \fill (5,0) circle [radius=1cm];
            \fill (-5,16) circle [radius=1cm];
            \fill (5,16) circle [radius=1cm];
        \ }
    }
    -q^{-1}A_{E_1}^{-1}A_{E_2}^{-1}
    \mathord{
        \ \tikz[baseline=-.6ex, scale=.08, yshift=-6cm]{
            \coordinate (P) at (0,0);
            \coordinate (L) at (-5,0);
            \coordinate (R) at (5,0);
            \coordinate (C) at (0,5);
            \coordinate (T) at (0,10);
            \draw[webline, overarc] (R) to[out=north, in=south] (-5,8) to[out=north, in=south] ($(L)+(0,16)$);
            \draw[webline, overarc] (L) to[out=north, in=south] (5,8) to[out=north, in=south] ($(R)+(0,16)$);
            %\draw[webline] (-5,8) to[out=north, in=west] (0,13) to[out=east, in=north] (5,8);
            \draw[fill=lightgray, dashed] (0,8) circle [radius=2cm];
            \bdryline{(-10,0)}{(10,0)}{1.5cm}
            \bdryline{(-10,16)}{(10,16)}{-1.5cm}
            \fill (-5,0) circle [radius=1cm];
            \fill (5,0) circle [radius=1cm];
            \fill (-5,16) circle [radius=1cm];
            \fill (5,16) circle [radius=1cm];
        \ }
    }
    -q^{2}A_{E_1}^{-1}
    \mathord{
        \ \tikz[baseline=-.6ex, scale=.08, yshift=-6cm]{
            \coordinate (P) at (0,0);
            \coordinate (L) at (-5,0);
            \coordinate (R) at (5,0);
            \coordinate (C) at (0,5);
            \coordinate (T) at (0,10);
            \draw[webline, overarc] (L) to[out=north, in=south] (-5,8);
            \draw[webline, overarc] (R) to[out=north, in=south] (5,8);
            \draw[webline] (-5,8) to[out=north, in=west] (0,13) to[out=east, in=north] (5,8);
            \draw[fill=lightgray, dashed] (0,8) circle [radius=2cm];
            \bdryline{(-10,0)}{(10,0)}{1.5cm}
            \bdryline{(-10,16)}{(10,16)}{-1.5cm}
            \fill (-5,0) circle [radius=1cm];
            \fill (5,0) circle [radius=1cm];
            \fill (-5,16) circle [radius=1cm];
            \fill (5,16) circle [radius=1cm];
        \ }
    }
    \end{align*}
    \caption{The expansion of a simple loop into a polynomial of ideal arcs, together possibly with additional coefficients $z_{j,\pm}$.}
    %$z_{\xi,\pm}$, $a_{\zeta}$.}
    \label{fig:loop-exp}
\end{figure}

% \subsection{Realization of cluster algebras with coefficients inside the fraction of skein algebra of walled surface}
% In this subsection, we define the 

% \begin{thm}
% Let $(\Sigma, \bW)$ be a walled surface and $\bP = \mathrm{Trop}(u_{\xi, \pm}, u_\eta \mid \xi \in \bW_{\mathrm{loop}},\, \eta \in \bW_0)$.
% Then, we have
% \begin{align*}
%     \mathscr{S}((\Sigma, \bW)) \cong \mathscr{A}_\Sigma[\bL(\bW)].
% \end{align*}
% \end{thm}

\subsection{Comparison with cluster algebras for marked surfaces with laminations}\label{subsec:comparison}

Fomin--Thurston gave a topological realization of arbitrary cluster algebras associated with marked surfaces of geometric type via \emph{multi-laminations}, which are tuples of laminations \cite{FT18}.
In this section, we compare our (quantum) cluster algebras realized inside the walled skein algebras to their (quantum) cluster algebras.

% Now we are going to concentrate on the case where $\bP$ is a \emph{tropical semifield}. 
Let us briefly recall the Fomin--Thurston's construction, together with its straightforward generalization to the quantum case. 
%cluster algebras for marked surfaces with laminations and verify that the quantum version can be defined in straightforward.
A \emph{multi-lamination} is a tuple of integral $\cX$-laminations $\bL = (L_j \mid j \in J)$ for some finite index set $J$.
A marked surface with laminations is a pair $(\Sigma, \bL)$ of a marked surface and a multi-lamination on it.
For an multi-lamination $\bL$ with an index set $J$, we define $\bP_\bL := \mathrm{Trop}(u_j \,|\, j \in J)$.

% We write $\mathrm{Trop}(u_j \,|\, j \in J)$ for the tropical semifield in variables $\{u_j \mid j \in J\}$ for a finite index set $J$.
% Namely, it is the multiplicative group of Laurent monomials in the formal variables $u_j$ for $j \in J$ equipped with the addition operation $\oplus$ given by
%     \begin{align}\label{eq:pm_coeff}
%         \prod_{j \in J}u_j^{a_j} \oplus \prod_{j \in J} u_j^{b_j} = \prod_{j \in J} u_j^{\min(a_i,b_i)}.
%     \end{align}
    % For an element $u$ of $\mathrm{Trop}(u_j \mid j \in J)$, we write
    % \begin{align*}
    %     u^+ := \frac{u}{u \oplus 1}, \quad u^- := \frac{1}{u \oplus 1}.
    % \end{align*}
    % It is equivalent to
    % \begin{align*}
    %     u^+ = \prod_{j \in J}u_j^{[a_j]_+}, \quad u^- = \prod_{j \in J}u_j^{[-a_j]_+}
    % \end{align*}
    % if $u = \prod_{j \in J} u^{a_j}$ with $a_j \in J$.
    % Here, $[x]_+ = \max\{x, 0\}$ for $x \in \bR$.

% A \emph{multi-lamination} is a tuple of integral $\cX$-laminations $\bL = (L_j \mid j \in J)$ for some finite index set $J$.
% Following Fomin--Thurston, we
% % recall
% give the topological description of normalized coefficients of the
% % (classical)
% quantum cluster algebras of the marked surfaces by the 
% multi-laminations.

Given a multi-lamination $\bL = (L_j \mid j \in J)$, 
we are going to construct a quantum cluster algebra $\mathscr{A}^q_{\Sigma, \bL}$ with coefficients in the skew-field $\Frac(\bZ_q\bP_\bL \otimes_{\bZ_q} \mathscr{S}^q_\Sigma)$, according to \cite{FT18}.
Here, notice the difference from the construction in the previous section, where the ambient skew-field was $\mathop{\mathrm{Frac}}\mathscr{S}_{\Sigma, \bW}^q$.
%(This is the difference from the previous section. Namely, we constructed $\mathscr{A}_{\Sigma, \bW}^q$ in $\mathop{\mathrm{Frac}}\mathscr{S}_{\Sigma, \bW}^q$.)
%$\bQ_q\bP_J \otimes_{\bQ(q)} \Frac \mathscr{S}^q_\Sigma$. 
%We use this skew-field as an ambient field of the quantum cluster algebra just for now, which will be replaced with $\Frac\Skein{\Sigma_\bW}$ in \cref{sec:comparison}. 
% Unlike the coefficient-free case, the mutation relation of does not coincide with the skein relation, so we cannot construct the cluster algebra in 
% %$\bQ\bP_J \otimes \Frac \mathscr{S}^q_\Sigma$ 
% $\Frac(\bZ_q\bP_J \otimes_{\bZ_q} \mathscr{S}^q_\Sigma)$
% in the same way as above.
We use the variables $A_{\bL;\alpha}$ rescaled by the elements $u_j$ according to the shear coordinates of $\bL$,
%shaped by the multi-lamination $\bL$ 
following \cite[Definition 15.3]{FT18}.
Namely, we define the toric frame $\mathbf{A}^{\!\tri}_\bL: M^\tri \to \bT^\tri_\bL \subset \Frac(\bZ_q\bP_\bL \otimes_{\bZ_q} \mathscr{S}^q_\Sigma)$ by
\begin{align}\label{eq:rescaled_variable}
    \mathbf{A}^{\!\tri}_\bL(f_\alpha) = A_{\bL; \alpha} := \prod_{j \in J}u_j^{-a^\tri_\alpha(L_i)} A_\alpha \in \bZ_q\bP_\bL \otimes_{\bZ_q} \mathscr{S}^q_\Sigma.
\end{align}
Here $\bT^\tri_\bL$ is a based quantum torus associated with $(M^\tri, \Pi^\tri)$ over $\bZ_q \bP_\bL$.
Since $\bT^\tri_\bL = \bZ_q \bP_\bL \otimes_{\bZ_q} \bT^\tri$, we get $\mathop{\mathrm{Frac}} \bT^\tri_\bL=\Frac(\bZ_q\bP_\bL \otimes_{\bZ_q} \mathscr{S}^q_\Sigma)$ from \cref{lem:Muller} (1).

We define the coefficient tuple $\mathbf{p}^\tri_\bL = (p^{\tri, \pm}_{\bL;\alpha})_{\alpha \in \tri_\uf}$ by
\begin{align}\label{eq:coefficients_lamination}
    p^{\tri,\pm}_{\bL;\alpha} :=
    \prod_{j \in J} u_j^{[\pm x^\tri_\alpha(L_j)]_+} \in \bP_\bL.
\end{align}

\begin{lem}\label{lem:ex_rel_coeff}
The attachment $\sfs_{\Sigma, \bL}: \tri \mapsto \sfs^\tri_{\Sigma, \bL} = (\varepsilon^\tri, \Pi^\tri, \mathbf{A}^{\!\tri}_\bL, \mathbf{p}^\tri_\bL)$ of quantum seeds in $\Frac(\bZ_q\bP_\bL \otimes_{\bZ_q} \mathscr{S}^q_\Sigma)$ is an exchange pattern on $\Tri_\Sigma$.
%Namely, the quantum seeds $(\ve^\tri, \Pi^\tri, \mathbf{A}^{\!\tri}_\bL, \mathbf{p}^\tri_\bL)$ are compatible with the flips of ideal triangulations.
\end{lem}

Here, we omit the proof since it is essentially given in \cite[Proof of Theorem 15.6]{FT18}. 
Let $\mathscr{A}^q_{\Sigma, \bL}$ denote the quantum cluster algebra associated with the exchange pattern $\sfs_{\Sigma, \bL}$.

\begin{ex}[Principal coefficients]\label{ex:principal}
% Let $\sfs$ be a coefficient-free mutation class and $v_0 \in \bExch_\sfs$.
% The \emph{principal extension} at $v_0$ is a mutation class $\sfs^\mathrm{prin}_{v_0}$ with coefficients in $\bP_{v_0} := \mathrm{Trop}(p_i^{(v_0)} \mid i \in I_\uf)$ such that it contains the seed $(\ve^{(v_0)}, \mathbf{A}^{\!(v_0)}, \mathbf{p}^{(v_0)})$, where $\mathbf{p}^{(v_0)} = (p_i^{(v_0)} \mid i \in I_\uf)$.
% We can define the principal extension $\sfs^\mathrm{prin}_{\omega_0}$ at $\omega_0 \in \Exch_\sfs$ in the same way.

% and $\bP = \mathrm{Trop}(u_i \,|\, i=1, \dots, n)$.
% For a coefficient free seed pattern $\sfs: \bT_{I_\uf} \ni t \mapsto (B^{(t)}, \mathbf{A}^{(t)})$, its \emph{principal extension} at $t_0$ is a seed pattern $\sfs^{t_0}_{\mathrm{prin}}: \bT_{I_\uf} \ni t \mapsto (B^{(t)}, \mathbf{A}^{(t)}, \mathbf{p}^{(t)})$ satisfying $\mathbf{p}^{(t_0)} = (u_i \mid i \in I)$ (or $\mathbf{p}^{(t_0)} = (u_i \mid i \in I_\uf)$).
% A seed pattern $\sfs: \bT_{I_\uf} \ni t \mapsto (B^{(t)}, \mathbf{A}^{(t)}, \mathbf{p}^{(t)})$ has a \emph{principal coefficients} at $t_0$ if $\mathbf{p}^{(t_0)} = (u_i \mid i \in I)$.
Given an ideal triangulation $\tri$ of a marked surface $\Sigma$, we define the multi-lamination $\bL^+_\tri = (L_{\alpha,+} \mid \alpha \in \tri)$, where the lamination $L_{\alpha,+}$ consists of a single curve obtained from $\alpha$ by slightly sliding its endpoints on special points following the boundary orientation induced from $\Sigma$. 
%$(\sigma_\gamma)_p = +$ for $p \in \partial \gamma$.
%Then, we have $(\sfs_\Sigma)^\mathrm{prin}_{\tri} = \sfs_{\Sigma} [\bL^+_\tri]$. 
The coefficients realized by $\bL^+_\tri$ is called the \emph{principal coefficients} with respect to $\tri$. 

We can also consider the lamination  $L_{\alpha,-}$ consisting of a single curve obtained from $\alpha$ by slightly sliding its endpoints on special points against the boundary orientation induced from $\Sigma$. Let us call the coefficients realized by $\bL_\tri^\pm:=(L_{\alpha,+},L_{\alpha,-} \mid \alpha \in \tri)$ the \emph{double principal coefficients}. 
\end{ex}

Recall that when $\sfC_j$ has no crossings for any $j \in J$, the corrections $L_{j, \pm} = \{ \gamma_{\pm} \mid \gamma \in \sfC_j \}$ define laminations.
Let us consider the multi-lamination $\bL(\bW) := (L_{j, \epsilon})_{j \in J,~\epsilon \in \{+,-\}}$. For example, we have $\bL(\bW_\tri)=\bL_\tri^\pm$ for the principal wall system (\cref{ex:wall_principal}). 
%We use $J_\pm:=J \times \{+,-\}$ for the index set of $\bL(\bW)$. 
Recall that the coefficient tuples $\bp_\bW^\tri$ are normalized by \cref{lem:mut_in_skein}; moreover, we have $\bp^\tri_{\bW}=\bp^\tri_{\bL(\bW)}$ under the semifield isomorphism 
\begin{align}
    \bP_\bW=\mathrm{Trop}(z_{j,\epsilon}\mid j \in J,\ \epsilon \in \{+,-\}) \cong \mathrm{Trop}(u_{j,\epsilon}\mid (j,\epsilon) \in J_\pm)=\bP_{\bL_\pm}, \quad z_{j,\epsilon} \mapsto u_{j,\epsilon}.\label{eq:P_W=P_L_pm}
\end{align}
Compare \eqref{eq:coefficients_lamination} and \eqref{eq:coefficients_walls_normalized}.
%we have $\sfs_{\Sigma, \bW} = \sfs_{\Sigma, \bL(\bW)}$ as an exchange pattern of quantum seeds in $\Frac \Skein{\Sigma, \bW}$.
Therefore, we get the following:

\begin{thm}\label{cor:wskein_cluster_lam}
If $\sfC_j$ has no crossings for any $j \in J$, then
% For any walled surface $(\Sigma, \bW)$,
we have a canonical $\bZP_{\bW}$-algebra isomorphism
\begin{align*}
    \CA^q_{\Sigma, \bW} \xrightarrow{\sim} \CA^q_{\Sigma, \bL(\bW)}
\end{align*}
such that $A_{\bW;\alpha} \mapsto A_{\bL(\bW);\alpha}$ for any ideal arc $\alpha$, and $p_{\bW;\alpha}^{\tri,\pm} \mapsto p_{\bL(\bW);\alpha}^{\tri,\pm}$ for any ideal triangulation $\tri$ and $\alpha \in \tri_\uf$. 
\end{thm}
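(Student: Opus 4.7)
The strategy is to reduce the statement to a matching of initial quantum seeds at a single ideal triangulation. Under the hypothesis that each $\sfC_j$ has no crossings, \cref{lem:mut_in_skein} implies that the exchange pattern $\sfs_{\Sigma,\bW}$ is normalized, and by construction so is $\sfs_{\Sigma,\bL(\bW)}$. For a normalized exchange pattern, the mutation formulas \eqref{eq:matrix-mutation}--\eqref{eq:coeff_mut_i!=k} uniquely determine all later quantum seeds from a single initial seed. Hence it suffices to fix a reference ideal triangulation $\tri$ and construct a $\bZP_\bW$-algebra isomorphism of initial quantum seeds that intertwines the two attachments.

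First, the identification $\bZP_\bW \cong \bZP_{\bL(\bW)}$ via the semifield isomorphism $z_{j,\epsilon} \mapsto u_{j,\epsilon}$ recorded in \eqref{eq:P_W=P_L_pm} lets us compare the two seeds over a common base ring. At the chosen $\tri$, both quantum seeds have exchange matrix $\ve^\tri$ and compatibility matrix $\Pi^\tri$ (these are intrinsic to $\tri$ and independent of the coefficient data), and the equality of the normalized coefficient tuples $\bp^\tri_\bW = \bp^\tri_{\bL(\bW)}$ is already observed in the excerpt by comparing \eqref{eq:coefficients_lamination} with \eqref{eq:coefficients_walls_normalized}. What remains is to produce a $\bZP_\bW$-algebra isomorphism $\Phi^\tri \colon \bT^\tri_\bW \xrightarrow{\sim} \bT^\tri_{\bL(\bW)}$ of the initial quantum tori with $\Phi^\tri(A_{\bW;\alpha}) = A_{\bL(\bW);\alpha}$ for every $\alpha \in \tri$.

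For this, I would verify that both generating families satisfy the same $q$-commutation relations encoded by $\Pi^\tri$. On the right-hand side, the rescaling factors $\prod_j u_j^{-a^\tri_\alpha(L_j)}$ are central in $\bZP_{\bL(\bW)} \otimes_{\bZ_q} \SK{\Sigma}$, so the commutation of the $A_{\bL(\bW);\alpha}$ is governed by the commutation of the underlying $A_\alpha$ in $\SK{\Sigma}$, namely $A_\alpha A_\beta = q^{\pi^\tri_{\alpha\beta}} A_\beta A_\alpha$. On the left-hand side, the commutation of $A_{\bW;\alpha}$ in $\SK{\Sigma,\bW}$ arises only from the elevation data of the endpoints at the marked points, which is unaffected by walls disjoint from the marked points, yielding the same exponents $\pi^\tri_{\alpha\beta}$. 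Since both are based quantum tori on $(M^\tri, \Pi^\tri)$ over $\bZP_\bW$ with free $\bZP_\bW$-bases indexed by $M^\tri$ and the same structure constants, the assignment $A_{\bW;\alpha} \mapsto A_{\bL(\bW);\alpha}$ extends uniquely to a $\bZP_\bW$-algebra isomorphism $\Phi^\tri$, which then extends to the skew-fields of fractions.

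Finally, I would propagate $\Phi^\tri$ through the exchange graph: since both patterns are normalized and share exchange matrix, compatibility matrix, and coefficient tuple at $\tri$, the quantum exchange relation \eqref{eq:exch_rel} for a flip $\tri \to f_\kappa \tri$ yields the same Laurent expression for the mutated cluster variable on both sides, so $\Phi^\tri$ sends $A'_{\bW;\kappa}$ to $A'_{\bL(\bW);\kappa}$, and by induction on the distance in $\Tri_\Sigma$ it matches all cluster variables. Restricting to the $\bZP_\bW$-subalgebras generated by cluster variables and inverses of frozen ones gives the desired isomorphism $\CA^q_{\Sigma,\bW} \xrightarrow{\sim} \CA^q_{\Sigma,\bL(\bW)}$. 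I expect the only subtle step to be the verification that the $q$-commutation of $A_{\bW;\alpha}$ inside $\SK{\Sigma,\bW}$ genuinely coincides with that of $A_\alpha$ inside $\SK{\Sigma}$ (i.e.\ that walls do not contribute extra commutation scalars among minimal representatives), which I would deduce from the wall-passing relations \eqref{rel:wall-pass-int}--\eqref{rel:wall-pass-ext} together with the fact that the cluster variables $A_{\bW;\alpha}$ are represented by $\bW$-minimal diagrams whose endpoints are the marked points of $\alpha$.
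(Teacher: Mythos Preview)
Your proposal is correct and follows essentially the same approach as the paper's proof. The paper is slightly more terse: it invokes \cref{thm:quantum-torus} to identify the ambient skew-fields with $\Frac\bT_\bW^\tri$ and $\Frac\bT_{\bL(\bW)}^\tri$, observes that both are based quantum tori on $(M^\tri,\Pi^\tri)$ so the assignment $A_{\bW;\alpha}\mapsto A_{\bL(\bW);\alpha}$ defines an isomorphism of skew-fields, and then cites \cref{lem:mut_in_skein,lem:ex_rel_coeff} to conclude that the quantum exchange relations agree and hence the isomorphism restricts to the cluster algebras---exactly your strategy. Your ``subtle step'' about the $q$-commutation of the $A_{\bW;\alpha}$ is already built into the paper's construction of $\bT_\bW^\tri$ as a quantum torus with form $\Pi^\tri$ (just before \cref{thm:ore}), so there is no additional work required there.
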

In view of \cref{thm:wskein_cluster}, we get the following ``skein realization'' statement: $\CA^q_{\Sigma, \bL(\bW)} \subset \Skein{\Sigma, \bW}[\partial^{-1}]$ holds
in general, and the equality
\begin{align*}
    \CA^q_{\Sigma, \bL(\bW)} =\Skein{\Sigma, \bW}[\partial^{-1}]
\end{align*}
holds if $\Sigma$ has at least two marked points.

\begin{proof}
By their constructions, the ambient skew-fields of $\CA^q_{\Sigma, \bW}$ and 
$\CA^q_{\Sigma, \bL(\bW)}$ are $\Frac \Skein{\Sigma, \bW}$ and $\Frac (\bZP_{\bL(\bW)}\otimes_{\bZ_q} \mathscr{S}^q_{\Sigma})$, respectively. Fixing an ideal triangulation $\tri$ of $\Sigma$, we have isomorphisms 
\begin{itemize}
    \item $\Frac \Skein{\Sigma, \bW} \cong \Frac\bT_\bW^\tri$ by \cref{thm:quantum-torus};
    \item $\Frac (\bZP_{\bL(\bW)}\otimes_{\bZ_q} \mathscr{S}^q_{\Sigma}) \cong \Frac \bT_{\bL(\bW)}^\tri$ by the construction above.
    % in \cref{subsec:coefficients_laminations}.
\end{itemize}
Let 
\begin{align}\label{eq:tori_isom}
    \Frac \bT_\bW^\tri \xrightarrow{\sim} \Frac \bT_{\bL(\bW)}^\tri, \quad A_{\bW;\alpha} \mapsto A_{\bL(\bW);\alpha}
\end{align}
be the isomorphism between them. Indeed, they are both based quantum tori associated with $(M^\tri,\Pi^\tri)$. By \cref{lem:ex_rel_coeff,lem:mut_in_skein}, the cluster variables in both sides satisfy the same quantum exchange relation for a flip $\tri\setminus\{\kappa\}=\tri'\setminus\{\kappa'\}$. Hence the isomorphisms \eqref{eq:tori_isom} commute with quantum exchange relations and restricts to a $\bZ_q\bP_\bW$-algebra isomorphism $\CA^q_{\Sigma, \bW} \xrightarrow{\sim} \CA^q_{\Sigma, \bL(\bW)}$.
\end{proof}

\begin{rem}\label{rmk:compair_coeff}
By a similar argument to the proof of \cref{cor:wskein_cluster_lam}, we get an algebra isomorphism $\Skein{\Sigma, \bW} \cong \bZP_\bW\otimes_{\bZ_q} \mathscr{S}^q_{\Sigma}$ if $|\bM| \geq 2$. Indeed, they are generated by respective cluster variables in this case, hence the correspondence \eqref{eq:tori_isom} restricts to the desired isomorphism. 
\end{rem}

\subsection*{Skein realization of $\CA^q_{\Sigma, \bL}$.}
Observe that the multi-lamination $\bL(\bW)$ used in above
% the previous subsection
needs to contain the pair $L_{j,\pm}$, in particular does not realize all possible normalized coefficients. 
Let us begin with any multi-lamination $\bL = ({L}_j \mid j \in J)$.
%for a multi-lamination $\bL = ({L}_j \mid j \in J)$, 
We define the wall system $\bW(\bL) = (\sfC_\bL, J, \ell)$ as follows.
For each %leaf 
$\gamma \in L_j$, let $\xi_{\gamma}$ be a curve such that $(\xi_\gamma)_+ = \gamma$.
Then we set $\sfC_\bL := \{ \xi_\gamma \mid \gamma \in L_j,\ j \in J\}$ and $\ell(\xi_\gamma) := j$ if $\gamma \in L_j$. For example, $\bW(\bL_\tri^+)=\bW_\tri$ for the multi-lamination corresponding to the principal coefficients (\cref{ex:wall_principal,ex:principal}).

Let us consider the quotient
\begin{align}\label{eq:quotient_z-}
    \Skein{\Sigma, \bW(\bL)}\big|_{\boldsymbol{z}_{-}=1}:=\Skein{\Sigma, \bW(\bL)}/ (\mathcal{I}_-\cdot \Skein{\Sigma, \bW(\bL)}),
\end{align}
where $\mathcal{I}_- \subset \bZ_{q,\bW(\bL)}$ denote the ideal generated by $(z_{j,-}-1)$ for $j \in J$. We note that its base ring is $\bZ_{q,\bW(\bL)}/\mathcal{I}_- \cong \bZP_{\bL}$. 
%denote the algebra obtained from the walled skein algebra $\Skein{\Sigma, \bW(\bL)}$ by the coefficient specialization $z_{j,-}=1$ for all $j \in J$. 
%\begin{align*}
    %\bW_+(\bL) := \bigcup_{j \in J} \{ \xi_\gamma,\, \zeta \mid \mbox{$\gamma$ (rsep. $\zeta$) is a non-closed (resp. closed) component of $L_j$} \}.
%\end{align*}
%Here, $\xi_\gamma$ is an arc such that $L_{\xi_\gamma, +} = \gamma$. 

\begin{thm}\label{thm:realization_any_lamination}
For any multi-lamination $\bL$ on $\Sigma$, we have a canonical inclusion $\CA^q_{\Sigma, \bL} \subset \Skein{\Sigma, \bW(\bL)}\big|_{\boldsymbol{z}_{-}=1}$ of $\bZP_{\bL}$-algebras. Moreover, the equality 
\begin{align*}
    \CA^q_{\Sigma, \bL} =\Skein{\Sigma, \bW(\bL)}\big|_{\boldsymbol{z}_{-}=1}[\partial^{-1}]
\end{align*}
holds if $\Sigma$ has at least two marked points. 
\end{thm}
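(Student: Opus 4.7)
The plan is to deduce the theorem from Corollary~\cref{cor:wskein_cluster_lam} by specializing the redundant parameters $z_{j,-}$ to $1$. I would first observe that since each $L_j$ is a lamination consisting of mutually disjoint simple curves, the subsystem $\sfC_{\bL,j}$ of $\bW(\bL)$ is automatically non-crossing; any bigons formed between walls of distinct labels can be removed by isotopy, so that $\bW(\bL)$ may be assumed taut. Corollary~\cref{cor:wskein_cluster_lam} then yields a canonical $\bZP_{\bW(\bL)}$-algebra isomorphism $\CA^q_{\Sigma, \bW(\bL)} \xrightarrow{\sim} \CA^q_{\Sigma, \bL(\bW(\bL))}$. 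By the defining property $(\xi_\gamma)_+ = \gamma$, the associated multi-lamination is $\bL(\bW(\bL)) = (L_j, L_{j,-})_{j \in J}$ with $L_{j,-} := \{(\xi_\gamma)_- \mid \gamma \in L_j\}$, and the identification \labelcref{eq:P_W=P_L_pm} matches $z_{j,\pm}$ with $u_{j,\pm}$.

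Next, I would track the effect of the quotient $z_{j,-}=1$, equivalently $u_{j,-}=1$, which collapses $\bZP_{\bL(\bW(\bL))}$ onto $\bZP_\bL$ via $u_{j,+}\mapsto u_j$, $u_{j,-}\mapsto 1$. A direct inspection of the seed data defined in Lemma~\cref{lem:ex_rel_coeff} shows that every seed of $\CA^q_{\Sigma, \bL(\bW(\bL))}$ specializes to the corresponding seed of $\CA^q_{\Sigma,\bL}$: the toric frame
\begin{align*}
A^\tri_{\bL(\bW(\bL));\alpha} = \prod_j u_{j,+}^{-a^\tri_\alpha(L_j)}\,u_{j,-}^{-a^\tri_\alpha(L_{j,-})} A_\alpha
\end{align*}
specializes to $\prod_j u_j^{-a^\tri_\alpha(L_j)} A_\alpha = A^\tri_{\bL;\alpha}$, and the coefficient tuple $p^{\tri,\pm}_{\bL(\bW(\bL));\alpha}$ from \labelcref{eq:coefficients_lamination} reduces to $p^{\tri,\pm}_{\bL;\alpha}$. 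Hence the induced specialization provides a canonical $\bZP_\bL$-algebra isomorphism $\CA^q_{\Sigma, \bW(\bL)}\big|_{z_-=1} \cong \CA^q_{\Sigma, \bL}$.

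For the inclusion claim I would combine this with the inclusion $\CA^q_{\Sigma, \bW(\bL)} \hookrightarrow \Skein{\Sigma, \bW(\bL)}$ of Theorem~\cref{thm:wskein_cluster} and pass to the quotient by $\mathcal{I}_-$; this produces a canonical $\bZP_\bL$-algebra map $\CA^q_{\Sigma, \bL} \to \Skein{\Sigma, \bW(\bL)}\big|_{z_-=1}$ whose injectivity can be verified by embedding both sides into the specialization of the based quantum torus $\bT^\tri_{\bW(\bL)}$ via Theorem~\cref{thm:quantum-torus} --- specializing one tropical parameter preserves freeness of the monomial basis. The equality claim when $|\bM|\geq 2$ then follows from the equality $\CA^q_{\Sigma, \bW(\bL)} = \Skein{\Sigma, \bW(\bL)}[\partial^{-1}]$ of Theorem~\cref{thm:wskein_cluster}, together with the observation that the specialization commutes with Ore localization at $\mathrm{mon}(\partial)$ since the boundary arcs are cluster variables and remain invertible non-zero-divisors in the quotient.

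The principal technical difficulty will be verifying that the isomorphism of Corollary~\cref{cor:wskein_cluster_lam} genuinely descends under the specialization: concretely, that the $\bW(\bL)$-minimal lifts $[\alpha_{\bW(\bL)}]$ representing the cluster variables inside $\Skein{\Sigma, \bW(\bL)}$ reduce, in the quotient, to the rescaled elements $\prod_j u_j^{-a^\tri_\alpha(L_j)}\,A_\alpha$ required by the seeds of $\CA^q_{\Sigma, \bL}$. This amounts to tracking the wall-passing factors produced when deforming a $\bW(\bL)$-minimal arc to a non-walled representative; each interior crossing of a wall labeled $j$ contributes $a_j = z_{j,+}z_{j,-}$ by \labelcref{rel:wall-pass-int}, which specializes to $u_j$ after $z_{j,-}=1$, yielding the expected exponent $-a^\tri_\alpha(L_j)$.
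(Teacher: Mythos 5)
Your argument follows the same route as the paper: set $\bL_\pm = \bL(\bW(\bL))$, invoke \cref{cor:wskein_cluster_lam} (valid since each $(\sfC_\bL)_j$ is crossing-free) and \cref{thm:wskein_cluster} for $\bW(\bL)$, then specialize $z_{j,-}=1$ via the semifield map $u_{j,+}\mapsto u_j$, $u_{j,-}\mapsto 1$. The only difference is presentational: the paper appeals to the notion of \emph{coefficient specialization} from \cite[Definition 12.1]{FZ-CA4} to pass from $\CA^q_{\Sigma,\bL_\pm}$ to $\CA^q_{\Sigma,\bL}$, whereas you verify directly that the toric frame $A^\tri_{\bL_\pm;\alpha}$ and the coefficient tuple $p^{\tri,\pm}_{\bL_\pm;\alpha}$ specialize to those of $\sfs_{\Sigma,\bL}$; the two are equivalent. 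Your added checks (injectivity via specializing $\bT^\tri_{\bW(\bL)}$, compatibility of the quotient with Ore localization at boundary arcs) are legitimate and fill in steps the paper leaves implicit.

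One caution about your final ``principal technical difficulty'' paragraph: the identification of the cluster variable $[\alpha_{\bW(\bL)}]$ with the rescaled element $\prod_j u_j^{-a^\tri_\alpha(L_j)}A_\alpha$ is not obtained by deforming a minimal diagram to a non-walled one within the skein algebra and tallying wall-passing factors. It is built into the isomorphism \labelcref{eq:tori_isom} of ambient quantum tori, which sends $A_{\bW;\alpha}\mapsto A_{\bL(\bW);\alpha}$ by fiat; the agreement of exchange relations (\cref{lem:mut_in_skein} versus \cref{lem:ex_rel_coeff}) is what guarantees that this map restricts to the cluster algebras. Your heuristic (each interior crossing contributing $a_j\mapsto u_j$) is suggestive but mixes up the two sides of that isomorphism, and the per-crossing bookkeeping of $a_j$ versus $z_{j,\pm}$ would in any case be more delicate than stated because $a^\tri_\alpha$ is a half-integer weight accounting for endpoint slidings, not just transverse crossings. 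Fortunately this paragraph is not needed once you use \cref{cor:wskein_cluster_lam}, so the proof stands.
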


\begin{proof}
Let $\bL_{\pm} := \bL(\bW(\bL)) = (L_j, L_j^- \mid j \in J)$, where $L_j^-:=\{(\xi_\gamma)_- \mid \gamma \in L_j\}$.
% The semifield $\bP_{\bL_\pm}$ is generated by $u_{j,\pm}$ for $j \in J$. 
Since $(\sfC_\bL)_j$ has no crossings for any $j \in J$, we have the $\bZ_q \bP_{\bL_\pm}$-algebra isomorphism $\CA^q_{\Sigma, \bL_\pm} \cong \CA^q_{\Sigma,\bW(\bL)}$ by \cref{cor:wskein_cluster_lam}.
Under this identification, we have the inclusion
\begin{align}\label{eq:inclusion_pm}
    \CA^q_{\Sigma, \bL_\pm} \subset \Skein{\Sigma, \bW(\bL)}[\partial^{-1}]
\end{align}
by \cref{thm:wskein_cluster}.
This inclusion becomes an equality if $\Sigma$ has at least two marked points.
Here, they are considered to be $\bZP_{\bL_\pm}$-algebras via the identification $\bP_{\bW(\bL)} \cong \bP_{\bL_\pm}$ (see \eqref{eq:P_W=P_L_pm}).
Let us consider the semifield homomorphism 
\begin{align*}
    \psi: \bP_{\bL_\pm} \to \bP_{\bL}, \quad u_{j,+} \mapsto u_j, \quad u_{j,-} \mapsto 1.
\end{align*}
Then it induces a \emph{coefficient specialization} 
\begin{align*}
    \Psi: \CA^q_{\Sigma, \bL_\pm} \to \CA^q_{\Sigma, \bL}
\end{align*}
in the sense of \cite[Definition 12.1]{FZ-CA4}. Indeed, the criterion \cite[(12.1)]{FZ-CA4} for $\psi$ to be a coefficient specialization holds true, since $\psi$ is a semifield homomorphism in our case.  
%is easily checked in this case (??Better Explanation??). 
Moreover, observe that the specialization $\psi$ exactly corresponds to $z_{j,-}=1$ in the skein algebra side via the inclusion \eqref{eq:inclusion_pm}. Therefore we have the commutative diagram
\begin{equation*}
    \begin{tikzcd}
    \CA^q_{\Sigma, \bL_\pm} \ar[r] \ar[d,"\Psi"'] & \Skein{\Sigma, \bW(\bL)}[\partial^{-1}] \ar[d]\\
    \CA^q_{\Sigma, \bL} \ar[r]& \Skein{\Sigma, \bW(\bL)}\big|_{\boldsymbol{z}_{-}=1}[\partial^{-1}],
    \end{tikzcd}
\end{equation*}
from which we get the desired assertion.  
\end{proof}

\begin{ex}[Base affine space $SL_4/N$]
Let $\Sigma$ be the hexagon. 
Then we have $\mathcal{O}(SL_4/N) \cong \mathscr{A}_{\Sigma,\bL}$, where the multi-lamination $\bL$ is given in \cite[Figure 45]{FT18}. The corresponding wall system $\bW(\bL)$ is exactly the one in \cref{ex:affine_SL4}. Typical cluster variables are represented by flag minors $\Delta_I((z_{ij})N):=\det(z_{ij} \mid i \in I,~j \leq |I|)$ for $I \subsetneq \{1,2,3,4\}$, $I \neq \emptyset$ but we also have a cluster variable $\Omega:=-\Delta_1\Delta_{234}+\Delta_2\Delta_{134}$ that is not a single flag minor. As given in \cite[Example 16.6]{FT18}, it satisfies the exchange relation
\begin{align*}
    \Delta_{23}\Omega = \Delta_{12}\Delta_{234}\Delta_3 + \Delta_{123}\Delta_{34}\Delta_2.
\end{align*}
In our skein realization, it is obtained from \eqref{eq:SL4_exchange} by specializing $A=1$, $z_{\bullet,-}=1$ and $\delta_\bullet=1$. 
\end{ex}

\subsection{Quasi-homomorphism of skein algebras}
%Fraser \cite{Fra16} introduced a map of cluster algebras of the same type (but different coefficients), called \emph{quasi-homomorphism}, which commutes with mutations.
In this section, we try to understand some quasi-homomorphisms of \cite{Fra16} in terms of skein algebras of walled surfaces.

For a quantum cluster algebra $\mathcal{A}$ with a coefficient semifield $\bP$, we say that two elements $A,A' \in \mathcal{A}$ are \emph{proportional} and write $A \asymp A'$ if there exists $\ell \in \bZ$ and $M \in \bP$ such that $A = q^{\ell/2} M A'$. 

\begin{dfn}
Let $\mathscr{A}_\sfs$ and $\mathscr{A}_{\overline{\sfs}}$ be quantum cluster algebras associated with exchange patterns $\sfs$ and $\overline{\sfs}$ on $n$-regular graphs $\mathbf{E}$ and $\overline{\mathbf{E}}$ with coefficient semifields $\bP$ and $\overline{\bP}$, respectively.
A $\bZP$-algebra homomorphism $\Psi: \mathscr{A}_\sfs \to \mathscr{A}_{\overline{\sfs}}$ satisfying $\Psi(\bP) \subset \overline{\bP}$ is called a \emph{quasi-homomorphism} from $\sfs$ to $\overline{\sfs}$ if there is a graph isomorphism $v \mapsto \overline{v}$ between the graphs $\mathbf{E}$ and $\overline{\mathbf{E}}$ inducing $\mathrm{st}(v) \xrightarrow{\sim} \mathrm{st}(\overline{v})$, $\alpha \mapsto \overline{\alpha}$ such that
\begin{align*}
    \Psi(A_\alpha^{(v)}) \asymp A^{(\overline{v})}_{\overline{\alpha}} \quad \mbox{and} \quad \Psi(\hat{y}^{(v)}_\alpha) = \hat{y}^{(\overline{v})}_{\overline{\alpha}}
\end{align*}
for all $\alpha \in \mathrm{st}(v)$. 
Here
%, for elements $A, A'$ of a cluster algebra $\mathscr{A}$ with respective a group $\bP$, $A \asymp A'$ means there exists Laurent monomial $M \in \bP$ such that $A = M A'$, and 
\begin{align*}
    \hat{y}^{(v)}_\alpha := \frac{p^{(v),+}_\alpha}{p^{(v),-}_\alpha} \bigg[\prod_{\beta \in \mathrm{st}(v)} (A^{(v)}_\beta)^{\epsilon^{(v)}_{\alpha\beta}} \bigg].
\end{align*}
\end{dfn}

\begin{rem}
In \cite{CHL24}, Chang--Huang--Li have introduced the notion of \emph{quantum quasi-homomorphisms} between quantum cluster algebras. From our point, they treat the case $\bP=\{1\}$ but with frozen variables. While they allow rescalings by frozen variables, we only allow those by coefficients in the definition of proportionality. 
\end{rem}

\paragraph{\textbf{(1) Resolution of crossings.}}
Fix a wall system $\bW=(\sfC,J,\ell)$ on a surface $\Sigma$ having crossings.
Let $\bW'$ be a wall system obtained from $\bW$ by resolving some of the crossings of walls with a common label in any possible directions as shown below, keeping their labels: 
 \begin{align}
            \ \tikz[baseline=-.6ex, scale=.1]{
                \draw[dashed] (0,0) circle(5cm);
                \draw[wline] (-45:5) -- (135:5) node[olive,left,scale=0.8]{$j$};
                \draw[wline] (-135:5) -- (45:5)node[olive,right,scale=0.8]{$j$};
            \ }
            \mapsto 
            \ \tikz[baseline=-.6ex, scale=.1]{
                \draw[dashed] (0,0) circle(5cm);
                \draw[wline] (45:5) node[olive,right,scale=0.8]{$j$} to[out=south west, in=north west] (-45:5);
                \draw[wline] (-135:5) to[out=north east, in=south east] (135:5) node[olive,left,scale=0.8]{$j$};
            \ }
            \mbox{ or }\ \tikz[baseline=-.6ex, scale=.1]{
                \draw[dashed] (0,0) circle(5cm);
                \draw[wline] (-45:5) node[olive,right,scale=0.8]{$j$} to[out=north west, in=north east] (-135:5);
                \draw[wline] (135:5) node[olive,left,scale=0.8]{$j$} to[out=south east, in=south west] (45:5);
            \ }
    \end{align}
Since the smoothing is a local move, we have the linear map
\begin{align}\label{eq:rescaling_skein}
    \widetilde{\Psi}: \bZ_{q, \bW} \mathsf{Tang}(\Sigma, \bW) \to \bZ_{q, \bW} \mathsf{Tang}(\Sigma, \bW'),\quad
    [K]_\bW \mapsto [K]_{\bW'}.
\end{align}
Here, $[-]_\bW$ and $[-]_{\bW'}$ stand for the isotopy classes relative to $\bW$ and $\bW'$, respectively. 
Note that $\bP_\bW = \bP_{\bW'}$ in this case.

% \begin{align}\label{eq:rescaling_skein}
%     \Psi: \Skein{\Sigma, \bW} \to \Skein{\Sigma, \bW'}, \quad [K]_\bW \mapsto [K]_{\bW'}.
% \end{align}
% Here, $K$ denotes a tangle diagram in $(\Sigma, \bW)$ so it is also a tangle diagram in $\Sigma_{\bW'}$.
% We note that $\bP_\bW = \bP_{\bW'}$ in this case.

\begin{thm}\label{thm:qhom_sm}
For two wall systems $\bW$ and $\bW'$ related as above, the map $\widetilde{\Psi}$ induces a $\bZ_{q, \bW}$-algebra homomorphism
% \eqref{eq:rescaling_skein}
\begin{align*}
    \Psi: \Skein{\Sigma, \bW} \to \Skein{\Sigma, \bW'}.
\end{align*}
Moreover, if $\Sigma$ has at least two marked points and $\bW'$ is still taut, then the map $\Psi$ is a quasi-homomorphism 
from $\sfs_{\Sigma, \bW}$ to $\sfs_{\Sigma, \bW'}$ %with respect to the cluster structures described in \cref{thm:wskein_cluster}.
\end{thm}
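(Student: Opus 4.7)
The plan proceeds in three steps: verify that $\widetilde\Psi$ descends to a $\bZ_{q,\bW}$-algebra homomorphism $\Psi$ (part~1 of the theorem); establish cluster-variable proportionality for every ideal arc; and derive $\hat y$-preservation via the exchange relation in $\Skein{\Sigma,\bW}$ (the two ingredients of part~2). For the first step I must check that every defining relation of $\Skein{\Sigma,\bW}$ is sent to a relation in $\Skein{\Sigma,\bW'}$. The Kauffman bracket relations \eqref{rel:Kauffman}, \eqref{rel:trivial-loop} and the clasped relations \eqref{rel:elevation}, \eqref{rel:monogon} involve only the underlying tangle diagram and not the walls, so they transfer directly. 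The wall-passing relations \eqref{rel:wall-pass-int}, \eqref{rel:wall-pass-ext}, \eqref{rel:wall-R3} depend on walls only through the labels $\ell(\gamma)$, which are preserved under the smoothing $\bW \to \bW'$, so they transfer as well. Multiplicativity is immediate from locality of the superposition product, and $\bZ_{q,\bW}=\bZ_{q,\bW'}$ under our identification of labels.

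For the second step, fix an ideal triangulation $\tri$ and $\alpha\in\tri$. By definition $\Psi(A_{\bW;\alpha})=[\alpha_\bW]_{\bW'}$, where $\alpha_\bW$ is a $\bW$-minimal representative of $\alpha$. Since $\alpha_\bW$ typically crosses walls of $\bW'$ non-minimally, I reduce it in $(\Sigma,\bW')$ by repeatedly applying the bigon-reduction moves (top row of \cref{fig:elementary-moves}); each such move contributes a factor $a_j$ or $z_{j,\pm}$ via \eqref{rel:wall-pass-int}, \eqref{rel:wall-pass-ext}. The result is an identity
\begin{align*}
\Psi(A_{\bW;\alpha})=q^{\ell_\alpha/2}\,M^{\bW\bW'}_\alpha\,A_{\bW';\alpha}
\end{align*}
in $\Skein{\Sigma,\bW'}$, with $\ell_\alpha\in\bZ$ and $M^{\bW\bW'}_\alpha\in \bP_\bW$ a monomial in the $z_{j,\pm}, a_j$; in particular $\Psi(A_{\bW;\alpha})\asymp A_{\bW';\alpha}$. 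The tautness of $\bW'$ together with \cref{thm:minimal-multicurve} guarantee that the resulting cluster variable $A_{\bW';\alpha}$ is unambiguous.

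For the third step, apply $\Psi$ to the skein-theoretic exchange relation \cref{eq:polygon} at a flip $\tri\setminus\{\kappa\}=\tri'\setminus\{\kappa'\}$, substitute the formulas from the second step, and compare with the analogous relation in $\Skein{\Sigma,\bW'}$. Since $\bW'$ is taut, the monomials $A_{\bW';\alpha_1}A_{\bW';\alpha_2}$ and $A_{\bW';\beta_1}A_{\bW';\beta_2}$ are distinct basis webs in $\Bweb{\Sigma,\bW'}$ (\cref{thm:basis-web}), hence $\bZ_{q,\bW'}$-linearly independent in $\Skein{\Sigma,\bW'}$; matching their coefficients and taking the ratio yields the desired $\hat y$-preservation identity. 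The hypothesis that $\Sigma$ has at least two marked points ensures that every $\kappa\in\tri_\uf$ is the diagonal of an ideal quadrilateral, so that this argument applies uniformly to every unfrozen index. The main obstacle lies in the second step: one must show that the monomial $M^{\bW\bW'}_\alpha$ is intrinsic to $\alpha$, independent of the chosen reduction sequence. This is a confluence statement, which should follow from the combinatorics of $\bW$- and $\bW'$-minimal positions developed in \cref{sec:minimal}, together with the invariance of basis webs under the local moves of \cref{fig:elementary-moves} recorded in \cref{rem:invariance-flip}.
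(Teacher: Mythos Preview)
Your step~1 has a genuine gap in the treatment of relation~\eqref{rel:wall-R3}. That relation is stated locally at a \emph{crossing} of two walls, and when the crossing in question is precisely one of those being resolved, the two sides of~\eqref{rel:wall-R3} are sent by $\widetilde\Psi$ to two \emph{different} diagrams in $(\Sigma,\bW')$: one has the arc passing through both (now non-crossing) walls, the other has the arc bouncing off them. These are not equal as tangles, so it is not enough to say that the relation ``depends on walls only through the labels.'' The paper handles exactly this case by an explicit computation: writing the difference as
\[
\Big(\text{arc through both walls} - a_j\cdot\text{arc through neither}\Big) + \Big(a_j\cdot\text{arc through neither} - \text{arc bouncing twice}\Big),
\]
each parenthesis lies in the defining ideal of $\Skein{\Sigma,\bW'}$ via~\eqref{rel:wall-pass-int}. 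Without this verification your well-definedness argument is incomplete.

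Your steps~2--3 take a different route from the paper. The paper establishes proportionality $\Psi([\alpha_\bW]_\bW)\asymp[\alpha_{\bW'}]_{\bW'}$ exactly as you do, but then invokes \cite[Proposition~5.2]{Fra16} directly, without separately verifying $\hat y$-preservation. Your approach of comparing exchange relations to extract $\hat y$-preservation is closer to what the paper does for \cref{thm:specialization_prin} (where \cite[Proposition~3.2]{Fra16} is used instead); it is a legitimate alternative, though the coefficient-matching step via linear independence in $\Bweb{\Sigma,\bW'}$ needs care when some of $\alpha_1,\alpha_2,\beta_1,\beta_2$ coincide. Finally, your explanation of the ``at least two marked points'' hypothesis is incorrect: every $\kappa\in\tri_\uf$ is the diagonal of a quadrilateral regardless of $|M|$ (there are no punctures here, hence no self-folded triangles). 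That hypothesis is used so that $\Skein{\Sigma,\bW}[\partial^{-1}]$ coincides with the cluster algebra, making $\Psi$ genuinely a map of cluster algebras.
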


\begin{proof}
First, we prove the well-definedness of the map $\Psi$.
We need to check that $\Psi$ preserves the defining relations \labelcref{rel:Kauffman,rel:trivial-loop,rel:elevation,rel:monogon,rel:wall-pass-int,rel:wall-pass-ext,rel:wall-R3} for the walled skein algebras. The relations except for \eqref{rel:wall-R3} is obviously preserved, since they do not involve intersections between walls. The following is the only non-trivial pattern of resolution concerning \eqref{rel:wall-R3}:
\begin{align*}
    \hspace{2cm}&\hspace{-2cm} 
    \widetilde{\Psi} \bigg(
    \ \tikz[baseline=-.6ex, scale=.1, yshift=-5cm]{
        \draw[webline, rounded corners] (0,0) -- (-5,5) -- (0,10);
        \draw[wline] (5,0) -- (-5,10);
        \draw[wline] (-5,0) -- (5,10);
        \node at (5,10) [right]{\scriptsize $j$};
        \node at (-5,10) [left]{\scriptsize $j$};
    }\ 
    -
    \ \tikz[baseline=-.6ex, scale=.1, yshift=-5cm]{
        \draw[webline, rounded corners] (0,0) -- (5,5) -- (0,10);
        \draw[wline] (5,0) -- (-5,10);
        \draw[wline] (-5,0) -- (5,10);
        \node at (5,10) [right]{\scriptsize $j$};
        \node at (-5,10) [left]{\scriptsize $j$};
    }\ 
    \bigg)\\
    &=
        \ \tikz[baseline=-.6ex, scale=.1, yshift=-5cm]{
        \draw[webline, rounded corners] (0,0) -- (-5,5) -- (0,10);
        \draw[wline, rounded corners] (-5,0) -- (-1,5) -- (-5,10);
        \draw[wline, rounded corners] (5,0) -- (1,5) -- (5,10);
        \node at (5,10) [right]{\scriptsize $j$};
        \node at (-5,10) [left]{\scriptsize $j$};
    }\ 
    -
    \ \tikz[baseline=-.6ex, scale=.1, yshift=-5cm]{
        \draw[webline, rounded corners] (0,0) -- (5,5) -- (0,10);
        \draw[wline, rounded corners] (-5,0) -- (-1,5) -- (-5,10);
        \draw[wline, rounded corners] (5,0) -- (1,5) -- (5,10);
        \node at (5,10) [right]{\scriptsize $j$};
        \node at (-5,10) [left]{\scriptsize $j$};
    }\\
    &=
    \bigg(
    \ \tikz[baseline=-.6ex, scale=.1, yshift=-5cm]{
        \draw[webline, rounded corners] (0,0) -- (-5,5) -- (0,10);
        \draw[wline, rounded corners] (-5,0) -- (-1,5) -- (-5,10);
        \draw[wline, rounded corners] (5,0) -- (1,5) -- (5,10);
        \node at (5,10) [right]{\scriptsize $j$};
        \node at (-5,10) [left]{\scriptsize $j$};
    }\ 
    -
    a_j \tikz[baseline=-.6ex, scale=.1, yshift=-5cm]{
        \draw[webline, rounded corners] (0,0) -- (0,10);
        \draw[wline, rounded corners] (-5,0) -- (-1,5) -- (-5,10);
        \draw[wline, rounded corners] (5,0) -- (1,5) -- (5,10);
        \node at (5,10) [right]{\scriptsize $j$};
        \node at (-5,10) [left]{\scriptsize $j$};
    }\ 
    \bigg)
    +
    \bigg(
    a_j \tikz[baseline=-.6ex, scale=.1, yshift=-5cm]{
        \draw[webline, rounded corners] (0,0) -- (0,10);
        \draw[wline, rounded corners] (-5,0) -- (-1,5) -- (-5,10);
        \draw[wline, rounded corners] (5,0) -- (1,5) -- (5,10);
        \node at (5,10) [right]{\scriptsize $j$};
        \node at (-5,10) [left]{\scriptsize $j$};
    }\ 
    -
    \ \tikz[baseline=-.6ex, scale=.1, yshift=-5cm]{
        \draw[webline, rounded corners] (0,0) -- (5,5) -- (0,10);
        \draw[wline, rounded corners] (-5,0) -- (-1,5) -- (-5,10);
        \draw[wline, rounded corners] (5,0) -- (1,5) -- (5,10);
        \node at (5,10) [right]{\scriptsize $j$};
        \node at (-5,10) [left]{\scriptsize $j$};
    }\ 
    \bigg),
\end{align*}
where the last two parenthesized terms belong to the defining ideal. Thus $\Psi$ is a well-defined as a $\bZ_{q,\bW}$-linear map. 

It is obvious from the definitions that the map $\Psi$ preserves the multiplication defined by stacking the framed tangles.

Assume that $\bW'$ is still taut. Then the exchange patterns $\sfs_{\Sigma, \bW}$ to $\sfs_{\Sigma, \bW'}$ are well-defined (\cref{rem:taut_cluster}). The proportionality condition $\Psi([\alpha_\bW]_\bW) = [\alpha_\bW]_{\bW'} \asymp [\alpha_{\bW'}]_{\bW'}$ for any arc $\alpha$ on $\Sigma$ holds since the difference between them is only the position with respective to $\bW'$ and hence produces only a product of coefficients.
Thus, $\Psi$ is a quasi-homomorphism from $\sfs_{\Sigma, \bW}$ to $\sfs_{\Sigma, \bW'}$ by \cite[Proposition 5.2]{Fra16}.
\end{proof}

\begin{ex}
Recall \cref{ex:non-normalized seed}, which corresponds to a non-normalized exchange pattern $\sfs_{\Sigma, \bW}$. Let us consider the wall system $\bW'$ obtained from $\bW$ by resolving the crossing of $\gamma_1$ and $\gamma_2$ as shown in the right of \cref{fig:resolution}. Then the resulting wall system $\bW'$ has no crossings, and hence the exchange pattern $\sfs_{\Sigma,\bW'}$ is normalized by \cref{lem:mut_in_skein}. 
We see that
\begin{align*}
    A_{\bW;\kappa} = [\kappa]_\bW \xmapsto{\Psi} [\kappa]_{\bW'}=  z_{0,+} [\overline{\kappa}]_{\bW'} = z_{0,+} A_{\bW';\kappa}.
\end{align*}
Similarly $\Psi(A_{\bW;\kappa'}) = z_{0,-} A_{\bW';\kappa'}$, and
and $\Psi(A_{\gamma;\bW})=A_{\gamma;\bW'}$ for $\gamma \in \{\alpha_1,\alpha_2,\beta_1,\beta_2\}$. The exchange relation corresponding to \eqref{eq:non-normalized relation} is
\begin{align*}
    A_{\kappa;\bW'}A_{\kappa';\bW'}=A_{\alpha_1;\bW'}A_{\alpha_2;\bW'} + A_{\beta_1;\bW'}A_{\beta_2;\bW'},
\end{align*}
which is normalized. 
\end{ex}

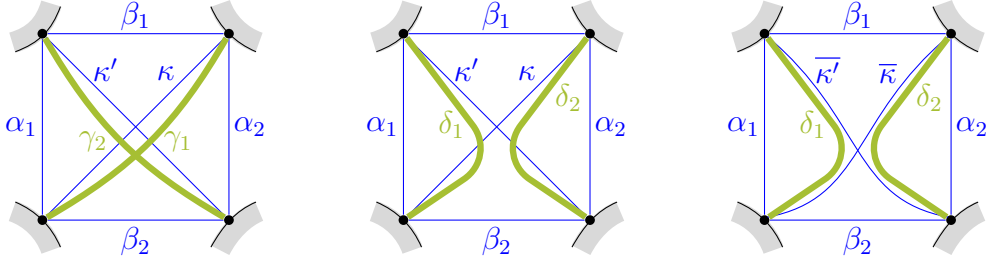
\begin{figure}[ht]
    \centering
    \begin{tikzpicture}[scale=0.8]
    \draw [gray!30, line width=8pt] (-2.55,2.9) .. controls (-2.2,3) and (-2,3.2) .. (-1.9,3.55);
    \draw [gray!30, line width=8pt] (0.9,3.55) .. controls (1,3.25) and (1.25,3) .. (1.55,2.9);
    \draw [gray!30, line width=8pt] (-2.55,0.1) .. controls (-2.25,0) and (-2,-0.25) .. (-1.9,-0.55);
    \draw [gray!30, line width=8pt] (0.9,-0.55) .. controls (1.05,-0.25) and (1.3,0) .. (1.55,0.1);
    \draw (-2.5,2.75) .. controls (-2.1,2.9) and (-1.9,3.1) .. (-1.75,3.5);
    \draw (0.75,3.5) .. controls (0.9,3.15) and (1.15,2.9) .. (1.5,2.75);
    \draw (1.5,0.25) .. controls (1.15,0.1) and (0.95,-0.15) .. (0.75,-0.5);
    \draw (-1.75,-0.5) .. controls (-1.9,-0.15) and (-2.15,0.1) .. (-2.5,0.25);
    \node [fill, circle, inner sep=1.3] (v1) at (-2.05,3.05) {};
    \node [fill, circle, inner sep=1.3] (v2) at (-2.05,-0.05) {};
    \node [fill, circle, inner sep=1.3] (v3) at (1.05,-0.05) {};
    \node [fill, circle, inner sep=1.3] (v4) at (1.05,3.05) {};
    \draw [blue] (v1) edge (v2);
    \draw [blue] (v2) edge (v3);
    \draw [blue] (v3) edge (v4);
    \draw [blue] (v1) edge (v4);
    \draw [blue] (v2) edge (v4);
    \draw [blue] (v1) edge (v3);
    \node [blue] at (-0.5,3.35) {$\beta_1$};
    \node [blue] at (-2.4,1.5) {$\alpha_1$};
    \node [blue] at (-0.5,-0.4) {$\beta_2$};
    \node [blue] at (1.4,1.5) {$\alpha_2$};
    \node [blue] at (0,2.4) {$\kappa$};
    \node [blue] at (-1,2.45) {$\kappa'$};
    \draw[wline] (v2) to[bend right=15] node[midway,right]{$\gamma_1$} (v4);
    \draw[wline] (v1) to[bend right=15] node[midway,left]{$\gamma_2$} (v3);

    \begin{scope}[xshift=6cm]
    \draw [gray!30, line width=8pt] (-2.55,2.9) .. controls (-2.2,3) and (-2,3.2) .. (-1.9,3.55);
    \draw [gray!30, line width=8pt] (0.9,3.55) .. controls (1,3.25) and (1.25,3) .. (1.55,2.9);
    \draw [gray!30, line width=8pt] (-2.55,0.1) .. controls (-2.25,0) and (-2,-0.25) .. (-1.9,-0.55);
    \draw [gray!30, line width=8pt] (0.9,-0.55) .. controls (1.05,-0.25) and (1.3,0) .. (1.55,0.1);
    \draw (-2.5,2.75) .. controls (-2.1,2.9) and (-1.9,3.1) .. (-1.75,3.5);
    \draw (0.75,3.5) .. controls (0.9,3.15) and (1.15,2.9) .. (1.5,2.75);
    \draw (1.5,0.25) .. controls (1.15,0.1) and (0.95,-0.15) .. (0.75,-0.5);
    \draw (-1.75,-0.5) .. controls (-1.9,-0.15) and (-2.15,0.1) .. (-2.5,0.25);
    \node [fill, circle, inner sep=1.3] (v1) at (-2.05,3.05) {};
    \node [fill, circle, inner sep=1.3] (v2) at (-2.05,-0.05) {};
    \node [fill, circle, inner sep=1.3] (v3) at (1.05,-0.05) {};
    \node [fill, circle, inner sep=1.3] (v4) at (1.05,3.05) {};
    \draw [blue] (v1) edge (v2);
    \draw [blue] (v2) edge (v3);
    \draw [blue] (v3) edge (v4);
    \draw [blue] (v1) edge (v4);
    \draw [blue] (v2) edge (v4);
    \draw [blue] (v1) edge (v3);
    \node [blue] at (-0.5,3.35) {$\beta_1$};
    \node [blue] at (-2.4,1.5) {$\alpha_1$};
    \node [blue] at (-0.5,-0.4) {$\beta_2$};
    \node [blue] at (1.4,1.5) {$\alpha_2$};
    \node [blue] at (0,2.4) {$\kappa$};
    \node [blue] at (-1,2.45) {$\kappa'$};
    \draw[wline,rounded corners=15pt] (v2) -- (-0.5,1) -- node[midway,below]{$\delta_1$} (v1);
    \draw[wline,rounded corners=15pt] (v3) -- (-0.5,1) --node[midway,right]{$\delta_2$} (v4);
    \end{scope}

    \begin{scope}[xshift=12cm]
    \draw [gray!30, line width=8pt] (-2.55,2.9) .. controls (-2.2,3) and (-2,3.2) .. (-1.9,3.55);
    \draw [gray!30, line width=8pt] (0.9,3.55) .. controls (1,3.25) and (1.25,3) .. (1.55,2.9);
    \draw [gray!30, line width=8pt] (-2.55,0.1) .. controls (-2.25,0) and (-2,-0.25) .. (-1.9,-0.55);
    \draw [gray!30, line width=8pt] (0.9,-0.55) .. controls (1.05,-0.25) and (1.3,0) .. (1.55,0.1);
    \draw (-2.5,2.75) .. controls (-2.1,2.9) and (-1.9,3.1) .. (-1.75,3.5);
    \draw (0.75,3.5) .. controls (0.9,3.15) and (1.15,2.9) .. (1.5,2.75);
    \draw (1.5,0.25) .. controls (1.15,0.1) and (0.95,-0.15) .. (0.75,-0.5);
    \draw (-1.75,-0.5) .. controls (-1.9,-0.15) and (-2.15,0.1) .. (-2.5,0.25);
    \node [fill, circle, inner sep=1.3] (v1) at (-2.05,3.05) {};
    \node [fill, circle, inner sep=1.3] (v2) at (-2.05,-0.05) {};
    \node [fill, circle, inner sep=1.3] (v3) at (1.05,-0.05) {};
    \node [fill, circle, inner sep=1.3] (v4) at (1.05,3.05) {};
    \draw [blue] (v1) edge (v2);
    \draw [blue] (v2) edge (v3);
    \draw [blue] (v3) edge (v4);
    \draw [blue] (v1) edge (v4);
    \draw [blue] (-2,0) .. controls (-0.4,0.3) and (-0.6,1.65) .. (1,3);
    \draw [blue] (-2,3) .. controls (-0.4,1.65) and (-0.6,0.3) .. (1,0);
    \node [blue] at (-0.5,3.35) {$\beta_1$};
    \node [blue] at (-2.4,1.5) {$\alpha_1$};
    \node [blue] at (-0.5,-0.4) {$\beta_2$};
    \node [blue] at (1.4,1.5) {$\alpha_2$};
    \node [blue] at (0,2.4) {$\overline{\kappa}$};
    \node [blue] at (-1,2.45) {$\overline{\kappa'}$};
    \draw[wline,rounded corners=15pt] (v2) -- (-0.5,1) -- node[midway,below]{$\delta_1$} (v1);
    \draw[wline,rounded corners=15pt] (v3) -- (-0.5,1) --node[midway,right]{$\delta_2$} (v4);
    \end{scope}
    \end{tikzpicture}
    \caption{A resolution of wall system, where $\ell(\gamma_1)=\ell(\gamma_2)=\ell(\delta_1)=\ell(\delta_2)$.}
    \label{fig:resolution}
\end{figure}

% \subsection{Rescaling the coefficients.}
% Fraser \cite{Fra16} generalized the argument of coefficient specializations in \cite{FZ-CA4} to the theory of \emph{quasi-homomorphisms} between cluster algebras, which allows one to change the coefficients more flexibly.
\paragraph{\textbf{(2) Specialization from the principal wall.}}
Let us consider the \emph{principal wall system} 
$\bW_\tri := (\mathsf{C}_\tri, \tri_\uf, \ell_\tri)$, where $\mathsf{C}_\tri$ consists of one arc parallel to each edge of $\tri_\uf$, and the labeling $\ell_\tri: \mathsf{C}_\tri \to \tri_\uf$ given by the tautological bijection. 
%which is given by corresponding the parallel edges.
We note that each $\mathsf{C}_\alpha$ consists of a single arc parallel to the edge $\alpha \in \tri_\uf$.
% $\bW_\tri = \{W_\alpha\}_{\alpha \in \tri_\uf}$, where each $W_\alpha$ consists of a single wall parallel to the edge $\alpha$.
Let $z_{\alpha,\pm}^\tri$ denote the generators of $\bZ_{A, \bW_{\tri}}$. 
The wall system $\bW_\tri$ corresponds to the double principal coefficients (\cref{ex:principal}). 

\begin{thm}\label{thm:specialization_prin}
% Let $\tri$ be an ideal triangulation of $\Sigma$ and $\bW$ be a taut wall system on $\Sigma$.
% By applying an isotopy for $\bW$ if necessary, we can say that $\bW_\tri \sqcup \bW$ is taut.
% Let $\psi: \bZ_{A, \bW_\tri} \to \bZ_{A, \bW}$ be a ring homomorphism defined by $\psi(z_{\alpha,\pm}^\tri) := \prod_{j \in J} z_{j,+}^{x_\alpha^\tri(L_{j,+})} z_{j,-}^{x_\alpha^\tri(L_{j,-})}$.
% Then, the map
% \begin{align*}
%     \Psi: \Skein{\Sigma}[\bW_\tri] \to \Skein{\Sigma, \bW}, \quad
%     [\alpha_{\bW_\tri}]_{\bW_\tri} = [\alpha_{\bW_\tri \sqcup \bW}]_{\bW_\tri} \mapsto [\alpha_{\bW_\tri \sqcup \bW}]_{\bW} = [\alpha_\bW]_\bW
% \end{align*}
% is a quasi-homomorphism from $\sfs_{\Sigma, \bW_\tri}$ to $\sfs_{\Sigma, \bW}$.
Let $\bW = (\mathsf{C}, J, \ell)$ be a taut wall system on $\Sigma$,
% without crossing in each $W_j$, $j \in J$,
and let $\psi: \bZ_{q, \bW_{\tri_\uf}} \to \bZ_{q, \bW}$ be the ring homomorphism defined by
\begin{align}\label{eq:prin_psi}
    \psi(z_{\alpha,\pm}^\tri) := \prod_{\xi \in \mathsf{C}} z_{\ell(\xi),+}^{[\mp x_\alpha^\tri(\gamma_{\xi,+})]_+} z_{\ell(\xi),-}^{[\mp x_\alpha^\tri(\gamma_{\xi,-})]_+}
\end{align}
for $\alpha \in \tri_\uf$. 
Then, we have a $\bZ_{q,\bW_\tri}$-algebra homomorphism $\Psi=\Psi_\tri: \Skein{\Sigma, \bW_\tri} \to \Skein{\Sigma, \bW}$ such that it is a quasi-homomorphism from $\sfs_{\Sigma, \bW_\tri}$ to $\sfs_{\Sigma, \bW}$.
% For any walled surface $(\Sigma, \bW)$, 
% we have a $\bZ_{A,\bW_\tri}$-algebra homomorphism $\Psi: \mathscr{S}_\Sigma^A[\bW_\tri] \to \mathscr{S}_\Sigma^A[\bW]$ such that 
% \begin{align}
%     \frac{\Psi([\gamma]_{\bW_\tri})}{c_\bW(\gamma)} &= [\gamma]_\bW \quad \mbox{for any ideal arc $\gamma$}, \label{eq:rescaling}\\
%     \Psi(z_{\alpha,\pm}^\tri) &= \prod_{j \in J} z_{j,+}^{x_\alpha^\tri(L_{j,+})} z_{j,-}^{x_\alpha^\tri(L_{j,-})}. \nonumber
% \end{align}
% Here $c_{\bW}(\gamma) \in \bZ_{A,\bW}$ is a monomial of the coefficients $z_{j,\pm}$ that only depends on the ideal arc $\gamma$ and the wall system $\bW$. 
\end{thm}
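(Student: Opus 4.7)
The plan is to construct $\Psi_\tri$ by its action on basis webs and then verify the quasi-homomorphism property at the initial seed attached to $\tri$ via Fraser's criterion. Since both $\Bweb{\Sigma,\bW_\tri}$ and $\Bweb{\Sigma,\bW}$ are indexed by isotopy classes of simple multicurves on $\Sigma$, there is a canonical bijection between them. For each basis web $[D_{\bW_\tri}] \in \Bweb{\Sigma,\bW_\tri}$ represented by a simple multicurve $D$, I would choose a representative $\tilde D$ of $D$ that is simultaneously $\bW_\tri$-minimal and $\bW$-transverse (such a choice exists by a small perturbation of a $\bW_\tri$-minimal representative preserving its intersection pattern with $\sfC_\tri$), and set $\Psi_\tri([D_{\bW_\tri}]) := [\tilde D]_\bW \in \Skein{\Sigma,\bW}$, extending $\bZ_{q,\bW_\tri}$-linearly via $\psi$ and using \cref{thm:basis-web}.

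The primary task is then well-definedness and multiplicativity. Two choices of $\tilde D$ within a $\bW_\tri$-isotopy class differ by an ambient isotopy that may cross $\bW$-walls, contributing factors $a_{\ell(\xi)}$ or $z_{\ell(\xi),\pm}$ via \labelcref{rel:wall-pass-int,rel:wall-pass-ext} that cancel in pairs along the closed loop. For multiplicativity, the crucial identity is that the wall-passing coefficient $a^\tri_\alpha$ arising at a $\bW_\tri$-wall parallel to $\alpha \in \tri_\uf$ corresponds under $\psi$ to
\begin{align*}
\psi(a^\tri_\alpha) \;=\; \prod_{\xi \in \mathsf{C}} z_{\ell(\xi),+}^{|x^\tri_\alpha(\gamma_{\xi,+})|}\, z_{\ell(\xi),-}^{|x^\tri_\alpha(\gamma_{\xi,-})|}
\end{align*}
(using $|x|=[x]_+ + [-x]_+$), which should match the accumulated wall-passing factor incurred when the image diagram is translated past all nearby $\bW$-walls, with the signed intersection count governed precisely by the shear coordinate $x^\tri_\alpha$.

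For the quasi-homomorphism property, by \cite[Proposition~5.2]{Fra16} it suffices to verify the two conditions at the initial seed attached to $\tri$. First, for $\alpha \in \tri$, since $\alpha$ does not intersect any wall in $\sfC_\tri$ its $\bW_\tri$-minimal representative is $\alpha$ itself, and in $\Skein{\Sigma,\bW}$ one has $[\alpha] = M_\alpha \cdot A^\tri_{\bW;\alpha}$ for an explicit coefficient $M_\alpha \in \bP_\bW$ obtained from bigon reductions with $\bW$-walls, giving $\Psi_\tri(A^\tri_{\bW_\tri;\alpha}) \asymp A^\tri_{\bW;\alpha}$. Second, for $\alpha \in \tri_\uf$, the $\hat y$-matching $\Psi_\tri(\hat y^\tri_{\bW_\tri;\alpha}) = \hat y^\tri_{\bW;\alpha}$ reduces, after cancelling the common cluster monomial, to the identity
\begin{align*}
\psi\!\left(\frac{p^{\tri,+}_{\bW_\tri;\alpha}}{p^{\tri,-}_{\bW_\tri;\alpha}}\right) \cdot \prod_{\beta \in \tri} M_\beta^{\epsilon^\tri_{\alpha\beta}} \;=\; \frac{p^{\tri,+}_{\bW;\alpha}}{p^{\tri,-}_{\bW;\alpha}},
\end{align*}
which I would verify directly from \eqref{eq:prin_psi}, the coefficient formula \eqref{eq:coefficients_walls} applied to both $\bW$ and $\bW_\tri$, and the geometric interpretation of $M_\beta$ as a product over intersections of $\beta$ with $\bW$-walls.

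The main obstacle is the well-definedness and multiplicativity of $\Psi_\tri$ at the skein level. While the combinatorial shape of the argument is clear, the consistency between the $\bW_\tri$-wall-passing coefficient $a^\tri_\alpha$ and the accumulated $\bW$-wall-passing coefficients under $\psi$ needs a careful geometric analysis. The hardest part will be handling the orientation and sign conventions governing the $[\mp x^\tri_\alpha]_+$ terms in the definition of $\psi$, together with the correct bookkeeping of endpoint contributions via \eqref{rel:wall-pass-ext}; the shear-coordinate interpretation makes the answer morally inevitable, but the precise verification involves a case analysis on how walls in $\bW$ enter and exit the strip neighborhood of the $\bW_\tri$-wall parallel to $\alpha$.
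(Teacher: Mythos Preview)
Your direct construction of $\Psi_\tri$ on basis webs has a genuine well-definedness problem that the ``cancel in pairs along the closed loop'' argument does not resolve. Two $\bW_\tri$-minimal representatives $\tilde D_1,\tilde D_2$ of the same simple multicurve $D$ are related by a $\bW_\tri$-isotopy (or flip moves and double-point jumpings relative to $\bW_\tri$, as in \cref{thm:minimal-multicurve}), but such a move can freely cross $\bW$-walls. In $\Skein{\Sigma,\bW}$ this produces a nontrivial monomial in the $z_{j,\pm}$, and there is no closed loop along which these would cancel: the isotopy runs from $\tilde D_1$ to $\tilde D_2$, not back to $\tilde D_1$. Concretely, any arc $\alpha\in\tri$ is disjoint from all $\bW_\tri$-walls, so every generic representative is $\bW_\tri$-minimal; sliding it across a single $\bW$-wall $\xi$ already changes $[\alpha]_\bW$ by $z_{\ell(\xi),\pm}$. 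Thus $[D_{\bW_\tri}]\mapsto[\tilde D]_\bW$ is not well-defined, and the downstream multiplicativity check inherits the same ambiguity.

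The paper sidesteps this by working at the level of fraction fields rather than on basis webs. One sets $\Psi([\alpha_{\bW_\tri}]_{\bW_\tri}):=[\alpha_\bW]_\bW$ for $\alpha\in\tri$; since these generate the quantum torus $\bT^\tri_{\bW_\tri}$, this extends uniquely to a $\bZ_{q,\bW_\tri}$-algebra homomorphism $\Frac\bT^\tri_{\bW_\tri}\to\Frac\bT^\tri_\bW$, which via \cref{thm:quantum-torus} is a map $\Frac\Skein{\Sigma,\bW_\tri}\to\Frac\Skein{\Sigma,\bW}$. No well-definedness or multiplicativity check is needed. Because the initial cluster variables are sent \emph{exactly} to the initial cluster variables (not merely proportionally), the $\hat y$-matching at $\tri$ reduces to the pure coefficient identity $\psi(z^\tri_{\alpha,-}/z^\tri_{\alpha,+})=p^{\tri,+}_{\bW;\alpha}/p^{\tri,-}_{\bW;\alpha}$, which follows from $[x^\tri_\alpha(\gamma_{\kappa,-\epsilon})]_\sigma=\delta_{\alpha,\kappa}\delta_{\epsilon,\sigma}$; your extra factor $\prod_\beta M_\beta^{\epsilon^\tri_{\alpha\beta}}$ never appears. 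Fraser's criterion \cite[Proposition~3.2]{Fra16} then yields the quasi-homomorphism, and the resulting proportionality $\Psi([\gamma_{\bW_\tri}])\asymp[\gamma_\bW]$ for every ideal arc $\gamma$ shows $\Psi$ restricts to $\Skein{\Sigma,\bW_\tri}\to\Skein{\Sigma,\bW}$.
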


\begin{proof}
Define $\Psi([\alpha_{\bW_\tri}]_{\bW_\tri}) := [\alpha_\bW]_\bW$ for $\alpha \in \tri$
% \begin{align*}
%      \Psi([\alpha]_{\bW_\tri}) &:= [\alpha]_\bW &\mbox{($\alpha \in \tri$)}
%      \Psi(z_{\alpha,\pm}^\tri) &:= \prod_{j \in J} z_{j,+}^{x_\alpha^\tri(L_{j,+})} z_{j,-}^{x_\alpha^\tri(L_{j,-})} & &\mbox{($\alpha \in \tri_\uf$)}, 
% \end{align*}
and extend it to a unique $\bZ_{q,\bW_\tri}$-algebra homomorphism 
\begin{align*}
    \Psi: \Frac \bT_{\bW_\tri}^\tri \to \Frac \bT_\bW^\tri.
\end{align*}
% \begin{align*}
%     \Psi: \bQ_q\bP_{\bW_\tri}([\alpha_{\bW_\tri}]_{\bW_\tri} \mid \alpha \in \tri) \to \bQ_q\bP_\bW([\alpha_\bW]_{\bW} \mid \alpha \in \tri).
% \end{align*}
Recall
$\Frac \bT_{\bW_\tri}^\tri \cong \mathop{\mathrm{Frac}}\Skein{\Sigma, \bW_\tri}$ and $\Frac \bT_\bW^\tri \cong \mathop{\mathrm{Frac}}\Skein{\Sigma, \bW}$ from \cref{thm:quantum-torus}.
% $\bQ_q\bP_{\bW_\tri}([\alpha_{\bW_\tri}]_{\bW_\tri}\mid \alpha \in \tri) \cong \mathop{\mathrm{Frac}}\Skein{\Sigma, \bW_\tri}$ and $\bQ_q\bP_\bW([\alpha_\bW]_{\bW} \mid \alpha \in \tri) \cong \mathop{\mathrm{Frac}}\Skein{\Sigma, \bW}$.

We then claim that $\Psi$ satisfies
$\Psi(\hat{y}^\tri_{\bW_\tri; \alpha}) = \hat{y}^\tri_{\bW;\alpha}$ for $\alpha \in \tri_\uf$.
We can check it directly:
\begin{align*}
    \hat{y}^\tri_{\bW_\tri; \alpha} &=
    \frac{p^{\tri,+}_{\bW_\tri; \alpha}}{p^{\tri,-}_{\bW_\tri; \alpha}} \bigg[\prod_{\beta \in \tri} [\alpha_{\bW_\tri}]^{\varepsilon^\tri_{\alpha \beta}}_{\bW_\tri}\bigg]\\
    &= \frac{z^\tri_{\alpha, -}}{z^\tri_{\alpha, +}} \bigg[\prod_{\beta \in \tri} [\alpha_{\bW_\tri}]^{\varepsilon^\tri_{\alpha \beta}}_{\bW_\tri}\bigg]\\
    &\xmapsto{\Psi} \prod_{\xi \in \mathsf{C}} \frac{z_{\ell(\xi),+}^{[x_\alpha^\tri(\gamma_{\xi,+})]_+} z_{\ell(\xi),-}^{[x_\alpha^\tri(\gamma_{\xi,-})]_+}}{z_{\ell(\xi),+}^{[-x_\alpha^\tri(\gamma_{\xi,+})]_+} z_{\ell(\xi),-}^{[-x_\alpha^\tri(\gamma_{\xi,-})]_+}} \bigg[\prod_{\beta \in \tri} [\alpha_{\bW}]^{\varepsilon^\tri_{\alpha \beta}}_{\bW}\bigg]
    = \hat{y}^\tri_{\bW;\alpha}.
\end{align*}
Here, the second equation follows from $[x_\alpha^\tri(\gamma_{\kappa, -\epsilon})]_\sigma = \delta_{\alpha, \kappa} \delta_{\epsilon, \sigma}$ for $\kappa \in \tri_\uf$ and $\epsilon, \sigma \in \{+,-\}$. (Note that the definition of $\gamma_{\xi, \pm}$, see \cref{fig:deform_wall_lam}.)
Since $\Psi([\alpha_{\bW_\tri}]_{\bW_\tri}) \asymp [\alpha_\bW]_\bW$ for $\alpha \in \tri$, $\Psi$ is a quasi-homomorphism from $\sfs_{\Sigma, \bW_\tri}$ to $\sfs_{\Sigma, \bW}$ by \cite[Proposition 3.2]{Fra16}.

Therefore, $\Psi([\gamma_{\bW_\tri}]_{\bW_\tri}) \asymp [\gamma_\bW]_\bW$ for any ideal arc $\gamma$
% \eqref{eq:rescaling}, 
and hence $\Psi$ restricts to $\Psi: \Skein{\Sigma, \bW_\tri} \to \Skein{\Sigma, \bW}$.
% It is nothing but $\Psi$ is a quasi-homomorphism from $\sfs_{\Sigma, \bW_\tri}$ to $\sfs_{\Sigma, \bW}$ by \cite[Proposition 5.2]{Fra16}.
% % Indeed, we see that $\Psi$ defines (a quantum version of) a \emph{quasi-homomorphism} in the sense of Fraser \cite{Fra16}.
% Let $\cF^q(\bW_\tri)_{>0} \subset \bQ_q\bP_{\bW_\tri}([\alpha]_{\bW_\tri}\mid \alpha \in \tri)$ be the sub-skew-semifield of subtraction-free rational expressions in the elements $[\alpha]_{\bW_\tri}$ with coefficients in $\bP_{\bW_\tri}$, and similarly define $\cF^q(\bW)_{>0}$. Let us consider 
% \begin{align*}
%     c: \cF^q(\bW_\tri)_{>0} \xrightarrow{\Psi} \cF^q(\bW)_{>0} \to \bP_\bW,
% \end{align*}
% where the latter map denotes the unique skew-semifield homomorphism such that $[\alpha]_{\bW_\tri} \mapsto 1$ for all $\alpha \in \tri$, $q \mapsto 1$ and restricts to the identity on $\bP_\bW$. 
% Note that $p_{\bW_\tri;\alpha}^\tri=z_{\alpha,+}^\tri/z_{\alpha,-}^\tri$, and hence $\Psi(p_{\bW_\tri;\alpha}^\tri)=p_{\bW;\alpha}^\tri$ holds for $\alpha \in \tri_\uf$. 
% Then by \cite[Proposition 4.3]{Fra16}, the equation \eqref{eq:rescaling} holds with $c_\bW(\gamma):=c([\gamma]_{\bW_\tri})$. Thus the claim is proved, and the assertion follows. 
\end{proof}

For any wall system $\bW$, let $\bZ^+_{q,\bW} \subset \bZ_{q,\bW}$ be the sub-monoid consisting of elements of the form $\sum_{\lambda,\mu,\nu} c_{\lambda,\mu,\nu} q^{\lambda/2} \prod_{j \in J} z_{j,+}^{\mu_j} z_{j,-}^{\nu_j}$ with $c_{\lambda,\mu,\nu} \geq 0$.
A $\bZ_{q,\bW}$-basis $\mathbf{B}=\{B_\lambda \mid \lambda \in \Lambda\}$ of $\Skein{\Sigma, \bW}$ is said to be \emph{positive} if its structure constants belong to $\bZ^+_{q,\bW}$. 
\begin{cor}\label{cor:positive_bases}
If $\mathbf{B}=\{B_\lambda \mid \lambda \in \Lambda\}$ is a positive $\bZ_{q,\bW_\tri}$-basis of $\Skein{\Sigma, \bW_\tri}$, then $(\Psi_\tri)_\ast\mathbf{B}:=\{\Psi_\tri(B_\lambda) \mid \lambda \in \Lambda\}$ is a positive $\bZ_{q,\bW}$-basis of $\Skein{\Sigma, \bW}$. 
\end{cor}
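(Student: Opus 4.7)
The plan is to verify two complementary facts. First, the ring homomorphism $\psi$ defined by \eqref{eq:prin_psi} satisfies $\psi(\bZ^+_{q,\bW_\tri}) \subset \bZ^+_{q,\bW}$: each $\psi(z^\tri_{\alpha,\pm})$ is a Laurent monomial in the generators $z_{j,\pm}$ whose exponents are all of the form $[\mp x]_+ \geq 0$, so positive Laurent polynomials map to positive Laurent polynomials. Because $\Psi_\tri$ is a $\bZ_{q,\bW_\tri}$-algebra homomorphism, the structure constants of $(\Psi_\tri)_\ast \mathbf{B}$ are obtained from those of $\mathbf{B}$ by applying $\psi$, and therefore they lie in $\bZ^+_{q,\bW}$.

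Second, I would prove that $(\Psi_\tri)_\ast \mathbf{B}$ is actually a $\bZ_{q,\bW}$-basis of $\Skein{\Sigma,\bW}$. The key lemma to establish is the following: for every simple multicurve $D$ on $\Sigma$, there exists $c_D \in \bP_\bW$ (a Laurent monomial, hence a unit in $\bZ_{q,\bW}$) such that $\Psi_\tri([D_{\bW_\tri}]_{\bW_\tri}) = c_D \cdot [D_\bW]_\bW$ inside $\Skein{\Sigma,\bW}$. Granting this, the change-of-basis matrix $M$ expressing $\mathbf{B}$ in terms of $\Bweb{\Sigma,\bW_\tri}$ lies in $\mathrm{GL}(\bZ_{q,\bW_\tri})$; applying $\psi$ entrywise and then right-multiplying by $\mathrm{diag}(c_D)$ yields a matrix in $\mathrm{GL}(\bZ_{q,\bW})$, which is precisely the change-of-basis matrix from $(\Psi_\tri)_\ast \mathbf{B}$ to $\Bweb{\Sigma,\bW}$. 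By \cref{thm:basis-web} the latter is a $\bZ_{q,\bW}$-basis of $\Skein{\Sigma,\bW}$, hence so is $(\Psi_\tri)_\ast \mathbf{B}$.

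The main obstacle is establishing the lemma. For $D$ a single arc $\alpha \in \tri$, the identity holds with $c_\alpha = 1$ by the very definition of $\Psi_\tri$; for any other ideal arc $D$, it follows from the quasi-homomorphism property of $\Psi_\tri$ (\cref{thm:specialization_prin}), which yields $\Psi_\tri([D_{\bW_\tri}]) \asymp [D_\bW]$. For basis webs involving simple loops or several disjoint components, the strategy is to expand both $[D_{\bW_\tri}]_{\bW_\tri} \in \bT^\tri_{\bW_\tri}$ and $[D_\bW]_\bW \in \bT^\tri_\bW$ as Laurent polynomials in the cluster variables by resolving the crossings of $D$ with $\tri$ through the skein relations, and then to check that the wall-passing coefficients generated on the $\bW_\tri$ side match those on the $\bW$ side exactly via the specialization $\psi$, up to a common overall monomial $c_D$ recording the intersection pattern of $D$ itself with the walls. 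The formula \eqref{eq:prin_psi} is designed precisely so that this topological bookkeeping balances out; carrying out the comparison term-by-term is the technical core of the proof.
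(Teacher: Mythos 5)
The paper itself gives no proof of this corollary, so I am evaluating your argument on its own merits rather than against a written proof.

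Your two-part outline is the right one. The positivity half is correct once basis-ness is known: every exponent appearing in \eqref{eq:prin_psi} is of the form $[\mp x]_+ \geq 0$, so $\psi$ carries Laurent monomials to Laurent monomials with nonnegative integer coefficients and hence $\psi(\bZ^+_{q,\bW_\tri}) \subset \bZ^+_{q,\bW}$; applying $\Psi_\tri$ to $B_\lambda B_\mu = \sum_\nu c^\nu_{\lambda\mu}B_\nu$ and using $\psi$-linearity then gives $\Psi_\tri(B_\lambda)\Psi_\tri(B_\mu) = \sum_\nu \psi(c^\nu_{\lambda\mu})\Psi_\tri(B_\nu)$. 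Your change-of-basis argument reducing basis-ness of $(\Psi_\tri)_\ast\mathbf{B}$ to that of $(\Psi_\tri)_\ast\Bweb{\Sigma,\bW_\tri}$ is also sound (both matrices are row-finite and $\psi$ of an invertible matrix is invertible).

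The genuine gap is the key lemma that $\Psi_\tri\bigl([D_{\bW_\tri}]_{\bW_\tri}\bigr) = c_D\,[D_\bW]_\bW$ for a unit $c_D$, for \emph{every} basis web $D \in \mathsf{SMulti}(\Sigma)$. You observe, correctly, that for ideal arcs this follows from the proportionality $\Psi_\tri([\gamma_{\bW_\tri}]) \asymp [\gamma_\bW]$ established in \cref{thm:specialization_prin}, which in turn invokes \cite[Proposition 3.2]{Fra16}. But basis webs containing simple loops are \emph{not} cluster monomials, so the quasi-homomorphism machinery does not yield this directly. Your proposed remedy — expand $[D_{\bW_\tri}]$ and $[D_\bW]$ in the quantum torus $\bT^\tri$ and verify that the specialized $\bW_\tri$-side wall-passing coefficients match the $\bW$-side ones up to a common overall monomial — is the natural route and is plausibly exactly what \eqref{eq:prin_psi} is designed to make work. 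But you explicitly defer this verification, and it is not a triviality: one must check, term by term in the Laurent expansion, a coherent matching of monomials in $z_{j,\pm}$ against the shear-coordinate exponents. Neither your proposal nor any lemma already in the paper (including \cref{rmk:compair_coeff}, which depends on a cluster-variable rescaling encoding the very same monomials you are calling $c_D$) dispenses with this bookkeeping. Until it is carried out, the proof is incomplete.

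A small additional caveat worth flagging: the corollary as stated carries no hypothesis on the number of marked points, whereas \cref{thm:basis-web} and the definition of $\Psi_\tri$ require only tautness, but the final restriction step in the proof of \cref{thm:specialization_prin} and the identification in \cref{rmk:compair_coeff} implicitly lean on the surface having enough marked points (and on $\Bweb{\Sigma,\bW}$ being a well-defined basis). You should confirm that your argument does not silently need $|\bM|\geq 2$ beyond what the corollary assumes.
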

%As a particular example, the \emph{bracelets basis} \cite{Thu14} is known to be positive for the principal coefficients by Mandel--Qin \cite{MQ23}. Therefore it gives a positive basis of $\Skein{\Sigma, \bW_\tri}$, which induces a positive basis of $\Skein{\Sigma, \bW}$ via the corollary above.

% We gave the quasi-homomorphisms between the skein algebras of the same surface $\Sigma$ but with different walls $\bW$, $\bW'$.
% We close this subsection by generalizing these statements as the following problem:
% \begin{prob}
% Let $\bW$ and $\bW'$ be taut wall systems on a marked surface $\Sigma$.
% Then, are there a ring homomorphism $\psi: $
% \end{prob}
\section{Minimal position of curves in a walled surface}\label{sec:minimal}
In this section, we provide a proof of \cref{thm:minimal-multicurve}.
The proof will be given by combinatorial argument about a minimal position of curves relative to a wall system $\bW$.
Observe that the wall-passing relations in \cref{def:wall-pass} realize any homotopy of an arc or loop on $\Sigma$ with no walls up to coefficients.
Thus, any tangle diagram $D$ in $(\Sigma, \bW)$ can be written as a sum of simple multicurves on the underlying surface $\Sigma$.
We show that there is a surjective map from the set of \emph{$\bW$-minimal} simple multicurves to $\mathsf{SMulti}(\Sigma)$ such that flip moves and double-point jumpings (\cref{fig:elementary-moves}) acts transitively on each fiber.

\begin{rem}
    The following discussion on the minimal position works only for a marked surface with a taut wall system.
    In what follows, we will always assume that a walled surface $(\Sigma, \bW)$ has a taut wall system otherwise specified. 
\end{rem}

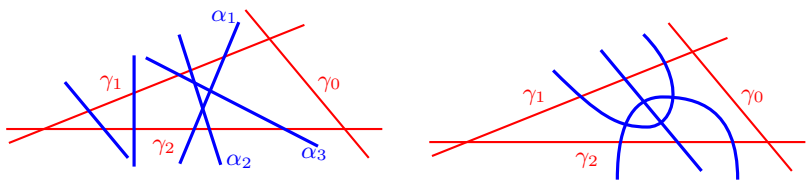
\begin{figure}
    \centering
    \begin{tikzpicture}[scale=.2]
    \coordinate (A) at (10,0);
    \coordinate (B) at (5,6);
    \coordinate (C) at (-10,0);
    \draw[red, thick, shorten >=-.5cm, shorten <=-.5cm] (A) -- (B);
    \draw[red, thick, shorten >=-.5cm, shorten <=-.5cm] (B) -- (C);
    \draw[red, thick, shorten >=-.5cm, shorten <=-.5cm] (C) -- (A);
    \draw[blue, very thick, shorten >=-.5cm, shorten <=-.5cm] ($(C)!.55!(A)$) -- ($(C)!.65!(B)$);
    \draw[blue, very thick, shorten >=-.5cm, shorten <=-.5cm] ($(C)!.3!(A)$) -- ($(C)!.4!(B)$);
    \draw[blue, very thick, shorten >=-.5cm, shorten <=-.5cm] ($(C)!.8!(A)$) -- ($(C)!.6!(B)$);
    \draw[blue, very thick, shorten >=-.5cm, shorten <=-.5cm] ($(C)!.5!(A)$) -- ($(C)!.8!(B)$);
    \draw[blue, very thick, shorten >=-.5cm, shorten <=-.5cm] ($(C)!.2!(A)$) -- ($(C)!.2!(B)$);
    \node at ($(A)!.5!(B)$) [red, right]{\scriptsize $\gamma_0$};
    \node at ($(B)!.7!(C)$) [red, above]{\scriptsize $\gamma_1$};
    \node at ($(C)!.4!(A)$) [red, below]{\scriptsize $\gamma_2$};
    \node at ($(C)!.8!(B)$) [above=.3cm, blue]{\scriptsize $\alpha_1$};
    \node at ($(C)!.65!(A)$) [below=.2cm, blue]{\scriptsize $\alpha_2$};
    \node at ($(C)!.9!(A)$) [below=.1cm, blue]{\scriptsize $\alpha_3$};
\end{tikzpicture}
\hspace{3em}
\begin{tikzpicture}[scale=.2]
    \coordinate (A) at (10,0);
    \coordinate (B) at (5,6);
    \coordinate (C) at (-10,0);
    \draw[red, thick, shorten >=-.5cm, shorten <=-.5cm] (A) -- (B);
    \draw[red, thick, shorten >=-.5cm, shorten <=-.5cm] (B) -- (C);
    \draw[red, thick, shorten >=-.5cm, shorten <=-.5cm] (C) -- (A);
    \draw[blue, very thick, shorten >=-.5cm, shorten <=-.5cm] ($(C)!.5!(A)$) to[out=north, in=west] (3,3) to[out=east, in=north] ($(C)!.9!(A)$);
    \draw[blue, very thick, shorten >=-.5cm, shorten <=-.5cm] ($(C)!.5!(B)$) to[out=south east, in=west] (2,1) to[out=east, in=south east] ($(C)!.9!(B)$);
    \draw[blue, very thick, shorten >=-.5cm, shorten <=-.5cm] ($(C)!.7!(A)$) -- ($(C)!.7!(B)$);
    \node at ($(A)!.5!(B)$) [red, right]{\scriptsize $\gamma_0$};
    \node at ($(B)!.7!(C)$) [red, above]{\scriptsize $\gamma_1$};
    \node at ($(C)!.4!(A)$) [red, below]{\scriptsize $\gamma_2$};
\end{tikzpicture}
    \caption{Left: any pair of blue arcs has at most one intersection point in the triangle consisting of $\gamma_0$, $\gamma_1$, and $\gamma_2$. $\gamma_0$ does not intersect any blue arcs. $\alpha_1$, $\alpha_2$, and $\gamma_2$ make an innermost triangle in \cref{lem:innermost-triagle}. $\alpha_2$, $\alpha_3$, and $\gamma_1$ also make another innermost triangle. Right: there is no innermost triangle adjacent to $\gamma_1$ or $\gamma_2$.}
    \label{fig:HSlem}
\end{figure}

We prepare a fundamental lemma on curves in a surface given by Hass--Scott \cite{HS}.
\begin{lem}[{\cite[Lemma~1.2]{HS}}]\label{lem:innermost-triagle}
    Let us consider a triangle $T$ surrounded by three arcs $\gamma_0,\gamma_1,\gamma_2$ and several embedded arcs across $T$.
    If any two of the embedded arcs have at most one intersection point in $T$ and any of them does not touch $\gamma_0$ in $T$, then $\gamma_i$ has an innermost triangle adjacent to it for each $i=1,2$. See \cref{fig:HSlem}.
\end{lem}

Based on this lemma, Hass--Scott obtained a lemma about an innermost triangle in a bigon.
We rewrite it in the form which fits in with our situation.
%Let $(\Sigma, \bW)$ be a surface with a taut wall.
In a walled surface $(\Sigma, \bW)$ with $\bW = (\sfC, J, \ell)$, a bigon $B$ bounded by a wall $\gamma \in \sfC$ and a curve $\alpha$ is said to be \emph{innermost} if other walls in $\sfC$ bound no bigons inside $B$. 

\begin{figure}
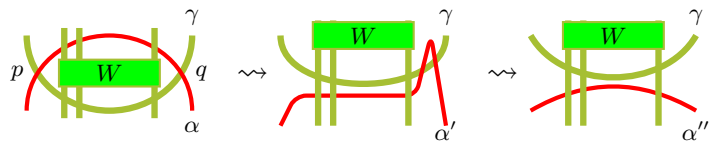

    \[
        \tikz[baseline=-.6ex,scale=.1]{
            \draw[wline] (-11,5) to[out=south,in=west] (0,-5) to[out=east, in=south] (11,5);
            \draw[webline] (-11,-5) to[out=north,in=west] (0,5) to[out=east, in=north] (11,-5);
            \begin{scope}
                \draw[wline] (-6,6) -- (-6,-6);
                \draw[wline] (-4,6) -- (-4,-6);
                \draw[wline] (6,6) -- (6,-6);
                \node[draw=olive, thick, fill=green, minimum height=.2cm, text width=1.2cm, text centered, inner sep=2pt] at (0,0) {\scriptsize $W$};
            \end{scope}
            \node at (11,5) [above]{\scriptsize $\gamma$};
            \node at (11,-5) [below]{\scriptsize $\alpha$};
            \node at (-10,0) [left]{\scriptsize $p$};
            \node at (10,0) [right]{\scriptsize $q$};
        }\ 
%        \rightsquigarrow
%        \tikz[baseline=-.6ex,scale=.1]{
%            \draw[wline] (-11,5) to[out=south,in=west] (0,-5) to[out=east, in=south] (11,5);
%            \draw[webline] (-11,-5) to[out=north,in=west] (0,5) to[out=east, in=north] (11,-5);
%            \begin{scope}
%                \draw[wline] (-6,7) -- (-6,-7);
%                \draw[wline] (-4,7) -- (-4,-7);
%                \draw[wline] (6,7) -- (6,-7);
%                \node at (1,0) {$\cdots$};
%                \node[draw=olive, thick, fill=green, %minimum height=.2cm, text width=1.2cm, %text centered, inner sep=2pt] at (0,8) %{\scriptsize $W$};
%            \end{scope}
%            \node at (11,5) [above]{\scriptsize $\xi$};
%            \node at (11,-5) [below]{\scriptsize %$\alpha'$};
%            \node at (-10,0) [left]{\scriptsize $p$};
%            \node at (10,0) [right]{\scriptsize $q$};
%        }\ 
        \rightsquigarrow
        \tikz[baseline=-.6ex,scale=.1]{
            \draw[wline] (-11,5) to[bend right=90] (11,5);
            \draw[webline,rounded corners] (-11,-7) -- (-9,-3) -- (7,-3) -- (9,5) -- (11,-7);
            \begin{scope}
                \draw[wline] (-6,7) -- (-6,-7);
                \draw[wline] (-4,7) -- (-4,-7);
                \draw[wline] (6,7) -- (6,-7);
                %\node at (1,1) {$\cdots$};
                \node[draw=olive, thick, fill=green, minimum height=.2cm, text width=1.2cm, text centered, inner sep=2pt] at (0,5) {\scriptsize $W$};
            \end{scope}
            \node at (11,5) [above]{\scriptsize $\gamma$};
            \node at (11,-5) [below]{\scriptsize $\alpha'$};
            %\node at (-10,0) [left]{\scriptsize $p$};
            %\node at (10,0) [right]{\scriptsize $q$};
        }\ 
        \rightsquigarrow
        \tikz[baseline=-.6ex,scale=.1]{
            \draw[wline] (-11,5) to[bend right=60] (11,5);
            \draw[webline] (-11,-5) to[bend left] (11,-5);
            \begin{scope}
                \draw[wline] (-6,7) -- (-6,-7);
                \draw[wline] (-4,7) -- (-4,-7);
                \draw[wline] (6,7) -- (6,-7);
                %\node at (1,1) {$\cdots$};
                \node[draw=olive, thick, fill=green, minimum height=.2cm, text width=1.2cm, text centered, inner sep=2pt] at (0,5) {\scriptsize $W$};
            \end{scope}
            \node at (11,5) [above]{\scriptsize $\gamma$};
            \node at (11,-5) [below]{\scriptsize $\alpha''$};
            %\node at (-10,0) [left]{\scriptsize $p$};
            %\node at (10,0) [right]{\scriptsize $q$};
        }\ 
    \]
    \caption{The deformation of $\alpha$ to $\alpha''$ reduces an innermost bigon $B$ with vertices $p,q$. Here $W$ is a part of the taut wall system $\bW$ such that any wall passes from one side of $B$ to the other side. It can be realized by a sequence of double-point jumpings and one bigon reduction move.}%The first deformation is in \cref{lem:innermost-bigon}, and the last one is realized by applying double-point jumpings repeatedly to the left or right-most triangles and finally a bigon reduction move.}
    \label{fig:detour-move}
\end{figure}

\begin{lem}[{\cite[Lemma~1.4]{HS}}]\label{lem:innermost-bigon}
    Let $\Sigma$ be a marked surface with a taut wall system $\bW=(\sfC,J,\ell)$. 
    %associated with curves $\mathsf{C}$ on $\Sigma$.
    %If a component $\xi\in\bW$ and a curve $\alpha$ bound a bigon $B$ with vertices $p,q\in\xi\cap\alpha$ such that any other component bounds no bigons in $B$, we call such bigon \emph{innermost}, 
    Let $B$ be an innermost bigon bounded by a wall $\gamma \in \sfC$ and a curve $\alpha$.
    Then, there exists a sequence of double-point jumpings from $\alpha$ to $\alpha'$ such that no walls of $\bW$ pass through the resulting bigon $B'$ bounded by $\gamma$ and $\alpha'$.
    Moreover, one can reduce the innermost bigon $B'$ using a bigon reduction move from $\alpha'$ to $\alpha''$, see \cref{fig:detour-move}.
    %Denote the vertices of $B$ by $p,q\in\xi\cap\alpha$.
\end{lem}
\begin{proof}
    %Any two components of $\bW$ have at most one intersection point in $B$ because $\bW$ is taut.
    %This fact and the innermost condition of $B$ conclude that any components pass through $B$ from $\xi$ to $\alpha$.
    %By the taut condition of $\bW$ and the innermost condition of $B$, any components of $\bW$ either pass through $B$ from $\xi$ to $\alpha$, or disjoint from $B$. Moreover, any two components of $\bW$ have at most one intersection point in $B$ because $\bW$ is taut.
    By the taut condition of $\bW$ and the innermost condition of $B$, any walls of $\bW\cap B$ pass from $\gamma$ to $\alpha$ in $B$. Moreover, any two walls in $\bW\cap B$ have at most one intersection point in $B$ due to the tautness of $\bW$.
    We prove that $\alpha$ can be deformed into $\alpha'$ fixing $q$ by induction on the number $n$ of all the double points in $B$. (We also count intersections on the boundary of $B$.)

    The assertion holds when $n=0$ by definition.
    %If $n=1$, then it is trivial.
    %For $n>0$, we focus on the components of $\bW$ which have at least one double point in the interior of $B$. 
    %We can ignore the other non-crossing components in the argument below. 
    For $n>0$, we start at $p$ and go along $\gamma$ until arriving at the first intersection point of $\gamma$ and another wall $\gamma_1$. 
    Then $\gamma$, $\gamma_1$, and $\alpha$ form a triangle $T$ such that $\gamma$ has no double point with other walls across $T$.
    Hence, $\alpha$ has an adjacent innermost triangle in $T$ by \cref{lem:innermost-triagle}.
    We remark that the innermost triangle might be $T$ itself. Then, one can sweep out a double point from $B$ by a double-point jumping at the innermost triangle.
    Then, we obtain a new bigon with $n-1$ double points.
\end{proof}

\begin{figure}
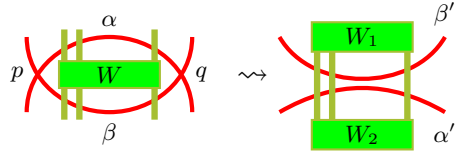

    \centering
        \[
        \tikz[baseline=-.6ex,scale=.1]{
            \coordinate (U) at (0,5);
            \coordinate (D) at (0,-5);
            \draw[webline] (-11,5) to[out=south,in=west] (0,-5) to[out=east, in=south] (11,5);
            \draw[webline] (-11,-5) to[out=north,in=west] (0,5) to[out=east, in=north] (11,-5);
            \begin{scope}
                \draw[wline] (-6,6) -- (-6,-6);
                \draw[wline] (-4,6) -- (-4,-6);
                \draw[wline] (6,6) -- (6,-6);
                \node[draw=olive, thick, fill=green, minimum height=.2cm, text width=1.2cm, text centered, inner sep=2pt] at (0,0) {\scriptsize $W$};
            \end{scope}
            \node at (0,5) [above]{\scriptsize $\alpha$};
            \node at (0,-5) [below]{\scriptsize $\beta$};
            \node at (-10,0) [left]{\scriptsize $p$};
            \node at (10,0) [right]{\scriptsize $q$};
        }\ 
        \rightsquigarrow
        \tikz[baseline=-.6ex,scale=.1]{
            \draw[webline] (-11,5) to[bend right=60] (11,5);
            \draw[webline] (-11,-5) to[bend left] (11,-5);
            \begin{scope}
                \draw[wline] (-6,7) -- (-6,-7);
                \draw[wline] (-4,7) -- (-4,-7);
                \draw[wline] (6,7) -- (6,-7);
                %\node at (1,1) {$\cdots$};
                \node[draw=olive, thick, fill=green, minimum height=.2cm, text width=1.2cm, text centered, inner sep=2pt] at (0,5) {\scriptsize $W_1$};
                \node[draw=olive, thick, fill=green, minimum height=.2cm, text width=1.2cm, text centered, inner sep=2pt] at (0,-8) {\scriptsize $W_2$};
            \end{scope}
            \node at (11,5) [above]{\scriptsize $\beta'$};
            \node at (11,-5) [below]{\scriptsize $\alpha'$};
        }\ 
    \]
    \caption{$W$ is a part of the wall $\bW$ which is divided into $W_1$ and $W_2$. A sequence of double-point jumping and bigon reduction moves reduce the bigon bounded by $\alpha$ and $\beta$.}
    \label{fig:red-bigon}
\end{figure}

\begin{prop}\label{prop:reduction-bigon}
    Let $\bW$ be a taut wall system on a marked surface $\Sigma$, and $\alpha$ and $\beta$ any $\bW$-transverse simple arc diagrams.
    If subarcs of $\alpha$ and $\beta$ bound a bigon $B$ in $\Sigma$, then the bigon $B$ is reduced by a sequence of double-point jumpings and bigon reduction moves in a neighborhood of $B$, see \cref{fig:red-bigon}.
    Moreover, if $\alpha$ and $\beta$ are $\bW$-minimal, then the deformation is associated without bigon reduction moves.
\end{prop}
\begin{proof}
    If a wall in $\bW$ and a boundary of $B$ (which is a subarc of $\alpha$ or $\beta$) bound an (innermost) bigon, then we can remove it by \cref{lem:innermost-bigon} (see also \cref{fig:detour-move}).
    Thus, we only have to consider the case that $B$ is innermost.
    Then, one can apply the same argument in the proof of \cref{lem:innermost-bigon} and the resulting deformation is what we want.
    The bigon reduction moves reduce the number of intersection points with $\bW$. 
    Hence, the above deformation consists only of double-point jumpings if $\alpha$ and $\beta$ are $\bW$-minimal.
\end{proof}

\begin{lem}\label{lem:innermost-annulus}
    Let $\bW = (\sfC, J, \ell)$ be a taut wall system on a marked surface $\Sigma$.
    For any wall $\gamma \in \sfC_{\mathrm{loop}}$ and a simple loop diagram $\alpha$ bounding together an annulus $A$, there exists a sequence of bigon reduction moves and double-point jumpings of $\alpha$ which deforms $\alpha$ into $\alpha'$ as the middle picture in \cref{fig:loop-flip}.
\end{lem}

\begin{proof}
    Firstly, one can remove the innermost bigons adjacent to $\alpha$ by \cref{lem:innermost-bigon}.
    Hence we can assume that every wall in $A$ passes through from $\gamma$ to $\alpha$.
    We prove the claim by induction on the number of double points in the interior of $A$.
    One can obtain $\alpha'$ by applying a double-point jumping when $\bW$ has only one double point in the interior of $A$.
    When $\bW$ has $n>1$ double points in the interior of $A$, we choose an intersection point $p$ of a wall $\eta_0 \in \sfC$ with $\alpha$.
    Start at $p$ and go along $\eta_0$ until arriving at the intersection point with another wall $\eta_1$. 
    We remark that such a pair $\eta_0$ and $\eta_1$ exist because $n$ is positive.
    Then $\eta_0$, $\eta_1$, and $\alpha$ form a triangle $T$. 
    No other walls in $T$ intersect with $\eta_0$.
    The existence of $T$ is guaranteed by removing all bigons adjacent to $\alpha$ in the first step.
    There exists an innermost triangle adjacent to $\alpha$ in $T$ by \cref{lem:innermost-triagle}, and one can apply double-point jumping at this innermost triangle.
    This deformation reduces the number of double points of $\sfC$ in the interior of $A$ into $n-1$. 
%    When $\bW$ has $n$ double points in the interior of $A$, we choose an intersection point $p$ of a component $\zeta\in\bW$ with $\alpha$.
%    Start at $p$ and go along $\zeta$ until arriving at the intersection point of the other component $\zeta_1$.
%    Then $\zeta$, $\zeta_1$, and $\alpha$ make a triangle $T$ with no other components intersect with $\zeta_1$.
%    The existence of $T$ is guaranteed by removing all bigons adjacent to $\alpha$ in the first step.
%    There exists an innermost triangle adjacent to $\alpha$ in $T$ by \cref{lem:innermost-triagle}, and one can apply double-point jumping at this innermost triangle.
%    This deformation makes the number of double points of $\bW$ in the interior of $A$ into $n-1$. 
\end{proof}

\begin{figure}
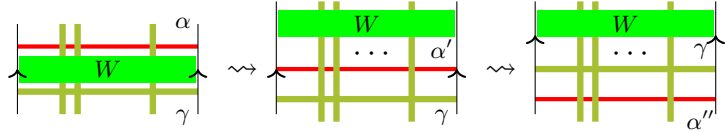

    \begin{align*}
        \tikz[baseline=-.6ex,scale=.1]{
            \draw[webline] (-12,3) -- (12,3);
            \draw[wline] (-12,-3) -- (12,-3);
            \begin{scope}
                \draw[wline] (-6,6) -- (-6,-6);
                \draw[wline] (-4,6) -- (-4,-6);
                \draw[wline] (6,6) -- (6,-6);
                \node[fill=green, minimum height=.2cm, text width=2.2cm, text centered, inner sep=2pt] at (0,0) {\scriptsize $W$};
            \end{scope}
            \draw[->-={.5}{black}] (-12,-6) -- (-12,6);
            \draw[->-={.5}{black}] (12,-6) -- (12,6);
            \node at (10,-4) [below]{\scriptsize $\gamma$};
            \node at (10,4) [above]{\scriptsize $\alpha$};
        }\ 
        \rightsquigarrow
        \tikz[baseline=-.6ex,scale=.1]{
            \draw[webline] (-12,0) -- (12,0);
            \draw[wline] (-12,-4) -- (12,-4);
            \draw[->-={.4}{black}] (-12,-6) -- (-12,9);
            \draw[->-={.4}{black}] (12,-6) -- (12,9);
            \begin{scope}
                \draw[wline] (-6,9) -- (-6,-7);
                \draw[wline] (-4,9) -- (-4,-7);
                \draw[wline] (6,9) -- (6,-7);
                \node at (1,2) {$\cdots$};
                \node[thick, fill=green, minimum height=.2cm, text width=2.2cm, text centered, inner sep=2pt] at (0,6) {\scriptsize $W$};
            \end{scope}
            \node at (10,-4) [below]{\scriptsize $\gamma$};
            \node at (10,0) [above]{\scriptsize $\alpha'$};
        }\ 
        \rightsquigarrow
        \tikz[baseline=-.6ex,scale=.1]{
            \draw[wline] (-12,0) -- (12,0);
            \draw[webline] (-12,-4) -- (12,-4);
            \draw[->-={.7}{black}] (-12,-6) -- (-12,9);
            \draw[->-={.7}{black}] (12,-6) -- (12,9);
            \begin{scope}
                \draw[wline] (-6,9) -- (-6,-7);
                \draw[wline] (-4,9) -- (-4,-7);
                \draw[wline] (6,9) -- (6,-7);
                \node at (1,2) {$\cdots$};
                \node[thick, fill=green, minimum height=.2cm, text width=2.2cm, text centered, inner sep=2pt] at (0,6) {\scriptsize $W$};
            \end{scope}
            \node at (10,0) [above]{\scriptsize $\gamma$};
            \node at (10,-4) [below]{\scriptsize $\alpha''$};
        }\ 
    \end{align*}
    \caption{These pictures represent the annulus $A$ by identifying the left and right sides. The first deformation is in \cref{lem:innermost-annulus}, and the last deformation does not change the representing web in $\SK{\Sigma, \bW}$.}
    \label{fig:loop-flip}
\end{figure}

\begin{figure}
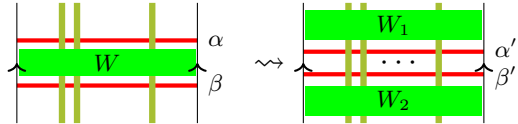

    \centering
    \[
        \tikz[baseline=-.6ex,scale=.1]{
            \draw[webline] (-12,3) -- (12,3);
            \draw[webline] (-12,-3) -- (12,-3);
            \begin{scope}
                \draw[wline] (-6,8) -- (-6,-8);
                \draw[wline] (-4,8) -- (-4,-8);
                \draw[wline] (6,8) -- (6,-8);
                \node[fill=green, minimum height=.2cm, text width=2.2cm, text centered, inner sep=2pt] at (0,0) {\scriptsize $W$};
            \end{scope}
            \draw[->-={.5}{black}] (-12,-8) -- (-12,8);
            \draw[->-={.5}{black}] (12,-8) -- (12,8);
            \node at (12,-3) [right]{\scriptsize $\beta$};
            \node at (12,3) [right]{\scriptsize $\alpha$};
        }\ 
        \rightsquigarrow
        \tikz[baseline=-.6ex,scale=.1]{
            \draw[webline] (-12,1.5) -- (12,1.5);
            \draw[webline] (-12,-1.5) -- (12,-1.5);
            \begin{scope}
                \draw[wline] (-6,8) -- (-6,-8);
                \draw[wline] (-4,8) -- (-4,-8);
                \draw[wline] (6,8) -- (6,-8);
                \node at (1,0) {$\cdots$};
                \node[fill=green, minimum height=.2cm, text width=2.2cm, text centered, inner sep=2pt] at (0,5) {\scriptsize $W_1$};
                \node[fill=green, minimum height=.2cm, text width=2.2cm, text centered, inner sep=2pt] at (0,-5) {\scriptsize $W_2$};
            \end{scope}
            \draw[->-={.5}{black}] (-12,-8) -- (-12,8);
            \draw[->-={.5}{black}] (12,-8) -- (12,8);
            \node at (12,-2) [right]{\scriptsize $\beta'$};
            \node at (12,2) [right]{\scriptsize $\alpha'$};
        }\ 
    \]
    \caption{$W$ is a part of the wall $\bW$ in an annulus $A$ which is divided into $W_1$ and $W_2$. A sequence of double-point jumping, bigon reduction moves, and flip moves sweep $W$ out of the annulus bounded by $\alpha$ and $\beta$.}
    \label{fig:reduct-A}
\end{figure}

\begin{prop}\label{prop:reduction-annulus}
    Let $\bW$ be a taut wall system on $\Sigma$. If $\bW$-transverse simple loop diagrams $\alpha$ and $\beta$ bound an annulus, then a sequence of double-point jumpings, bigon reduction moves and flip moves deform $\alpha$ and $\beta$ into $\alpha'$ and $\beta'$ as depicted in \cref{fig:reduct-A}, respectively.
    Moreover, if $\alpha$ and $\beta$ are $\bW$-minimal, then the deformation is composed only of double-point jumpings and flip moves.
\end{prop}
\begin{proof}
    Using \cref{lem:innermost-annulus} and the deformation in \cref{fig:loop-flip}, one can prove it similarly to the proof of \cref{prop:reduction-bigon}.
\end{proof}

\begin{cor}\label{cor:minimal-curve}
    For a taut wall system $\bW$ on $\Sigma$,
    %walled surface $(\Sigma, \bW)$ with a taut wall system $\bW$,
    any two $\bW$-minimal ideal arcs or loops representing the same homotopy class are related by a sequence of double-point jumpings and flip moves.
\end{cor}
\begin{proof}
    If two homotopic ideal arcs $\alpha$ and $\beta$ are $\bW$-minimal, then there exist subarcs of $\alpha$ and $\beta$ that bound a bigon with vertices $p,q \in \alpha \cap \beta$. 
    These subarcs are related by double-point jumpings by \cref{prop:reduction-bigon}.
    We also use flip moves when $p$ and $q$ are marked points.
    Similarly, one can confirm the assertion for loops by \cref{prop:reduction-annulus}.
\end{proof}

\begin{lem}\label{lem:W-minimal-lift}
    Let $\bW$ be a taut wall system on $\Sigma$.
    For any simple multicurve $D$ on $\Sigma$, there exists a $\bW$-minimal diagram $D_{\bW}$ consisting of mutually disjoint simple arcs and loops without curves bounding a disk or a monogon such that $D_{\bW}$ is homotopic to $D$ in $\Sigma$.  
\end{lem}

\begin{proof}
    For any simple multicurve $D$, we can deform it to a $\bW$-transverse diagram $D'$ on $(\Sigma, \bW)$ by a perturbation if necessary.
    We repeatedly reduce the innermost bigon bounded by $\bW$ and $D'$ by using \cref{lem:innermost-bigon,fig:detour-move}, and obtain a $\bW$-minimal simple multicurve $D_{\bW}$.
    We remark that the bigon reduction makes a simple multicurve into a simple multicurve with a smaller intersection with $\bW$.
\end{proof}

\begin{cor}\label{cor:minimal-multicurve}
    Let $\bW$ be a taut wall system on $\Sigma$.
    For any simple multicurve $D$ in $\Sigma$, the $\bW$-minimal simple multicurve $D_{\bW}$ related to $D$ is unique up to homotopy fixing $\bW$ setwisely, flip moves and double-point jumpings.
    %there exists a unique homotopy class (fixing $\bW$ setwisely) of a $\bW$-minimal simple multicurve $D_{\bW}$ up to flip moves and double-point jumpings.
\end{cor}
\begin{proof}
    Let $D_{\bW}$ and $D'_{\bW}$ be two $\bW$-minimal simple multicurves that are homotopic (in $\Sigma$).
    \cref{prop:reduction-bigon} and \cref{prop:reduction-annulus} imply that $D'_{\bW}$ is deformed into a $\bW$-minimal simple multicurve homotopic to $D_{\bW}$ relative to $\bW$ by a sequence of bigon reduction moves, double-point jumpings, and flip moves.
\end{proof}

\appendix
\section{Generalizations of wall systems}\label{sec:generalization}
These generalizations show the potential to generalize coefficients of cluster algebras.

\subsection{Punctured surfaces}\label{subsec:punctures}
Let $\Sigma$ be a surface with marked points $\bM=\bM_{\partial}\cup\bM_{\circ}$, where $\bM_{\partial}:=\bM \cap \partial\Sigma$ and $\bM_{\circ}:=\bM\cap \Sigma\setminus\partial\Sigma$.
%of punctures.
%Punctures are marked points in $\Sigma\setminus\partial\Sigma$.
The points in $\bM_\circ$ are called punctures. 
In this setting, a wall system $\bW=(\sfC,J,\ell)$ on $\Sigma$ is defined so that the curves $\sfC$ are either loops or incident to $\bM$.
We consider the set $\mathsf{Tang}(\Sigma,\bW)$ of tangles in $(\Sigma,\bW)$ in the same way as in \cref{def:tangle}.
We are going to define skein algebras by adding skein relations at punctures.
Let $\cR$ be a quotient of $\bZ_{A,\bW}[v_p^{\pm 1}\mid p \in \bM_{\circ}]$ by the \emph{puncture condition at $p\in\bM_{\circ}$}:
\[
    Z_{p}:=\prod_{\gamma\in \sfC_p}z_{\ell(\gamma),+}=\prod_{\gamma\in \sfC_p} z_{\ell(\gamma),-}
\]
where $\sfC_p$ is the multiset of walls incident to $p$ whose multiplicity is given by the number of half-edges incident to $p$.\footnote{In our work in progress, we observe that 
%the constraint of 
the puncture condition can be removed if we consider an appropriate skein algebras of punctured surfaces with tagged arcs, modifying the Roger--Yang algebra. The classical limit of this algebra is more close to the cluster algebra, including those with coefficients.}

We define skein relations for each $p\in\bM_{\circ}$ as follows. 

%\begin{dfn}[The puncture-framing relation at $A=\pm 1$]\label{def:pframing-classical}
%    For any puncture $p\in\punc$,
%    \begin{gather}
%        \ \tikz[baseline=-.6ex, scale=.1]{
%            \coordinate (A) at (-8,0);
%            \coordinate (B) at (8,0);
%            \coordinate (P) at (0,0);
%            \draw[webline] (P) circle [radius=4cm];
%            \draw[wline] (P) -- (90:8);
%            \draw[dashed] (P) circle [radius=8cm];
%            \draw[fill=white] (P) circle [radius=20pt];
%            \node at (P) [left]{\scriptsize $p$};
%            \node at (90:8) [below right]{\scriptsize $\sfC_p$};
%        }\ 
%        =A(\mathbf{z}_{p,+}+\mathbf{z}_{p,-})
%        \ \tikz[baseline=-.6ex, scale=.1]{
%            \coordinate (A) at (-8,0);
%            \coordinate (B) at (8,0);
%            \coordinate (P) at (0,0);
%            \draw[wline] (P) -- (90:8);
%            \draw[dashed] (P) circle [radius=8cm];
%            \draw[fill=white] (P) circle [radius=20pt];
%            \node at (P) [left]{\scriptsize $p$};
%            \node at (90:8) [below right]{\scriptsize $\sfC_p$};
%        }\ .\label{rel:pframing-classical}
%    \end{gather}
%\end{dfn}
\begin{dfn}[Wall-passing relation at punctures]
\begin{align*}
        \ \tikz[baseline=-.6ex, scale=.1, yshift=-2cm]{
        \coordinate (A) at (-10,0);
        \coordinate (B) at (10,0);
        \coordinate (P) at (0,0);
        \draw[webline, rounded corners] (P) -- (60:6) -- (120:10);
        \draw[wline] (P) -- (90:10);
        \draw[dashed] (30:10) arc (30:150:10cm);
        \draw[dashed] (P) -- (30:10);
        \draw[dashed] (P) -- (150:10);
        %\bdryline{(A)}{(B)}{2cm}
        \draw[fill=white] (P) circle [radius=20pt];
        \node at (90:10) [below right]{\scriptsize $j$};
    }\ 
    =z_{j,+}
    \ \tikz[baseline=-.6ex, scale=.1, yshift=-2cm]{
        \coordinate (A) at (-10,0);
        \coordinate (B) at (10,0);
        \coordinate (P) at (0,0);
        \draw[webline, rounded corners] (P) -- (120:10);
        \draw[wline] (P) -- (90:10);
        \draw[dashed] (30:10) arc (30:150:10cm);
        \draw[dashed] (P) -- (30:10);
        \draw[dashed] (P) -- (150:10);
        %\bdryline{(A)}{(B)}{2cm}
        \draw[fill=white] (P) circle [radius=20pt];
        \node at (90:10) [below right]{\scriptsize $j$};
    }\ ,
    \ \tikz[baseline=-.6ex, scale=.1, yshift=-2cm]{
        \coordinate (A) at (-10,0);
        \coordinate (B) at (10,0);
        \coordinate (P) at (0,0);
        \draw[webline, rounded corners] (P) -- (120:6) -- (60:10);
        \draw[wline] (P) -- (90:10);
        \draw[dashed] (30:10) arc (30:150:10cm);
        \draw[dashed] (P) -- (30:10);
        \draw[dashed] (P) -- (150:10);
        %\bdryline{(A)}{(B)}{2cm}
        \draw[fill=white] (P) circle [radius=20pt];
        \node at (90:10) [below left]{\scriptsize $j$};
    }\ 
    =z_{j,-}
    \ \tikz[baseline=-.6ex, scale=.1, yshift=-2cm]{
        \coordinate (A) at (-10,0);
        \coordinate (B) at (10,0);
        \coordinate (P) at (0,0);
        \draw[webline, rounded corners] (P) -- (60:10);
        \draw[wline] (P) -- (90:10);
        \draw[dashed] (30:10) arc (30:150:10cm);
        \draw[dashed] (P) -- (30:10);
        \draw[dashed] (P) -- (150:10);
        %\bdryline{(A)}{(B)}{2cm}
        \draw[fill=white] (P) circle [radius=20pt];
        \node at (90:10) [below left]{\scriptsize $j$};
    }\ ,\\
\end{align*}    
\end{dfn}
\begin{dfn}[Puncture-framing relation]\label{def:pframing-quantum}
    For each puncture $p\in\punc$,
    \begin{gather*}
        \ \tikz[baseline=-.6ex, scale=.1]{
            \coordinate (A) at (-8,0);
            \coordinate (B) at (8,0);
            \coordinate (P) at (0,0);
            \draw[webline] (P) circle [radius=4cm];
            \draw[wline] (P) -- (90:8);
            \draw[dashed] (P) circle [radius=8cm];
            \draw[fill=white] (P) circle [radius=20pt];
            \node at (P) [left]{\scriptsize $p$};
            \node at (90:8) [below right]{\scriptsize $\sfC_p$};
        }\ 
        =Z_{p}(A+A^{-1})
        \ \tikz[baseline=-.6ex, scale=.1]{
            \coordinate (A) at (-8,0);
            \coordinate (B) at (8,0);
            \coordinate (P) at (0,0);
            \draw[wline] (P) -- (90:8);
            \draw[dashed] (P) circle [radius=8cm];
            \draw[fill=white] (P) circle [radius=20pt];
            \node at (P) [left]{\scriptsize $p$};
            \node at (90:8) [below right]{\scriptsize $\sfC_p$};
        }\ .\label{rel:pframing-quantum}
    \end{gather*}
\end{dfn}

In the local diagram above, the arc with $\sfC_p$ incident to $p$ means a bunch of parallel half-edges incident to $p$, as shown in \cref{fig:puncture}.
Note that $Z_p$ is independent of an arrangement of the 
%labels of 
half-edges at $p$.

\begin{figure}
    \begin{tikzpicture}[scale=.1]
        \coordinate (A) at (-10,0);
        \coordinate (B) at (10,0);
        \coordinate (P) at (0,0);
        \draw[wline] (P) -- (130:10);
        \draw[wline] (P) -- (110:10);
        \node at (90:8) {\scriptsize $\cdots$};
        \draw[wline] (P) -- (70:10);
        \draw[dashed] (P) circle [radius=10cm];
        \draw[fill=white] (P) circle [radius=20pt];
        \node at (P) [below]{\scriptsize $p$};
        \node at (130:12) {\scriptsize $l_1$};
        \node at (110:12) {\scriptsize $l_2$};
        \node at (90:12) {\scriptsize $\cdots$};
        \node at (70:12) {\scriptsize $l_n$};
    \end{tikzpicture}
    \caption{Here is a local diagram of walls at a puncture $p$. Here $\sfC_p=\{l_1,l_2,\dots,l_n\}$ is the multiset of their labels.}
    \label{fig:puncture}
\end{figure}

\begin{dfn}[Roger--Yang skein relation, {\em cf.}~\cite{RogerYang14}]\label{def:RY-skein}
    For any puncture $p\in \punc$,
    \begin{align*}
        \ \tikz[baseline=-.6ex, scale=.1]{
            \coordinate (A) at (-8,0);
            \coordinate (B) at (8,0);
            \coordinate (P) at (0,0);
            \draw[webline] (P) -- (A);
            \draw[webline, shorten <=.2cm] (P) -- (B);
            \draw[wline] (P) -- (90:8);
            \draw[dashed] (P) circle [radius=8cm];
            \draw[fill=white] (P) circle [radius=20pt];
            \node at (P) [below]{\scriptsize $p$};
            \node at (90:8) [below right]{\scriptsize $\sfC_p$};
        }\ 
        &=v_p^{-1}\Big(
        A^{\frac{1}{2}}
        \ \tikz[baseline=-.6ex, scale=.1]{
            \coordinate (A) at (-8,0);
            \coordinate (B) at (8,0);
            \coordinate (P) at (0,0);
            \draw[webline] (A) -- ($(P)+(-3,0)$) to[out=north, in=west] ($(P)+(0,3)$) to[out=east, in=north] ($(P)+(3,0)$) -- (B);
            \draw[wline] (P) -- (90:8);
            \draw[dashed] (P) circle [radius=8cm];
            \draw[fill=white] (P) circle [radius=20pt];
            \node at (P) [below]{\scriptsize $p$};
            \node at (90:8) [below right]{\scriptsize $\sfC_p$};
        }\ 
        +A^{-\frac{1}{2}}Z_{p}
        \ \tikz[baseline=-.6ex, scale=.1]{
            \coordinate (A) at (-8,0);
            \coordinate (B) at (8,0);
            \coordinate (P) at (0,0);
            \draw[webline] (A) -- ($(P)+(-3,0)$) to[out=south, in=west] ($(P)+(0,-3)$) to[out=east, in=south] ($(P)+(3,0)$) -- (B);
            \draw[wline] (P) -- (90:8);
            \draw[dashed] (P) circle [radius=8cm];
            \draw[fill=white] (P) circle [radius=20pt];
            \node at (P) [above left]{\scriptsize $p$};
            \node at (90:8) [below right]{\scriptsize $\sfC_p$};
        }\ 
        \Big)\\
        \ \tikz[baseline=-.6ex, scale=.1]{
            \coordinate (A) at (-8,0);
            \coordinate (B) at (8,0);
            \coordinate (P) at (0,0);
            \draw[webline, shorten <=.2cm] (P) -- (A);
            \draw[webline] (P) -- (B);
            \draw[wline] (P) -- (90:8);
            \draw[dashed] (P) circle [radius=8cm];
            \draw[fill=white] (P) circle [radius=20pt];
            \node at (P) [below]{\scriptsize $p$};
            \node at (90:8) [below right]{\scriptsize $\sfC_p$};
        }\ 
        &=v_p^{-1}\Big(
        A^{-\frac{1}{2}}
        \ \tikz[baseline=-.6ex, scale=.1]{
            \coordinate (A) at (-8,0);
            \coordinate (B) at (8,0);
            \coordinate (P) at (0,0);
            \draw[webline] (A) -- ($(P)+(-3,0)$) to[out=north, in=west] ($(P)+(0,3)$) to[out=east, in=north] ($(P)+(3,0)$) -- (B);
            \draw[wline] (P) -- (90:8);
            \draw[dashed] (P) circle [radius=8cm];
            \draw[fill=white] (P) circle [radius=20pt];
            \node at (P) [below]{\scriptsize $p$};
            \node at (90:8) [below right]{\scriptsize $\sfC_p$};
        }\ 
        +A^{\frac{1}{2}}Z_{p}
        \ \tikz[baseline=-.6ex, scale=.1]{
            \coordinate (A) at (-8,0);
            \coordinate (B) at (8,0);
            \coordinate (P) at (0,0);
            \draw[webline] (A) -- ($(P)+(-3,0)$) to[out=south, in=west] ($(P)+(0,-3)$) to[out=east, in=south] ($(P)+(3,0)$) -- (B);
            \draw[wline] (P) -- (90:8);
            \draw[dashed] (P) circle [radius=8cm];
            \draw[fill=white] (P) circle [radius=20pt];
            \node at (P) [above left]{\scriptsize $p$};
            \node at (90:8) [below right]{\scriptsize $\sfC_p$};
        }\ 
        \Big)
        .
    \end{align*}
\end{dfn}

One can confirm the following relation from the above skein relations.
\begin{lem}[Punctured loop relation]\label{def:pkink}
    For any puncture $p\in\punc$,
    \begin{align*}
        \ \tikz[baseline=-.6ex, scale=.1]{
            \coordinate (A) at (-8,0);
            \coordinate (B) at (8,0);
            \coordinate (P) at (0,0);
            \draw[webline, rounded corners, shorten <=.2cm] (P) -- (-120:5) -- (-90:5) -- (-60:5) -- (P);
            \draw[wline] (P) -- (90:8);
            \draw[dashed] (P) circle [radius=8cm];
            \draw[fill=white] (P) circle [radius=20pt];
            \node at (P) [left]{\scriptsize $p$};
            \node at (90:8) [below right]{\scriptsize $\sfC_p$};
        }\ 
        =-\frac{Z_{p}}{v_p}A^{\frac{3}{2}}(A-A^{-1})
        \ \tikz[baseline=-.6ex, scale=.1]{
            \coordinate (A) at (-8,0);
            \coordinate (B) at (8,0);
            \coordinate (P) at (0,0);
            \draw[wline] (P) -- (90:8);
            \draw[dashed] (P) circle [radius=8cm];
            \draw[fill=white] (P) circle [radius=20pt];
            \node at (P) [left]{\scriptsize $p$};
            \node at (90:8) [below right]{\scriptsize $\sfC_p$};
        }\ ,\quad
        \ \tikz[baseline=-.6ex, scale=.1]{
            \coordinate (A) at (-8,0);
            \coordinate (B) at (8,0);
            \coordinate (P) at (0,0);
            \draw[webline, rounded corners, shorten >=.2cm] (P) -- (-120:5) -- (-90:5) -- (-60:5) -- (P);
            \draw[wline] (P) -- (90:8);
            \draw[dashed] (P) circle [radius=8cm];
            \draw[fill=white] (P) circle [radius=20pt];
            \node at (P) [left]{\scriptsize $p$};
            \node at (90:8) [below right]{\scriptsize $\sfC_p$};
        }\ 
        =\frac{Z_{p}}{v_p}A^{-\frac{3}{2}}(A-A^{-1})
        \ \tikz[baseline=-.6ex, scale=.1]{
            \coordinate (A) at (-8,0);
            \coordinate (B) at (8,0);
            \coordinate (P) at (0,0);
            \draw[wline] (P) -- (90:8);
            \draw[dashed] (P) circle [radius=8cm];
            \draw[fill=white] (P) circle [radius=20pt];
            \node at (P) [left]{\scriptsize $p$};
            \node at (90:8) [below right]{\scriptsize $\sfC_p$};
        }\ .\label{rel:pkink}
    \end{align*}
\end{lem}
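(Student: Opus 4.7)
The plan is to apply the Roger--Yang skein relation (Definition~\ref{def:RY-skein}) at the puncture $p$ to the kink diagram, and then evaluate each of the two resulting local resolutions using the trivial-loop relation~\eqref{rel:trivial-loop} and the puncture-framing relation of Definition~\ref{def:pframing-quantum}. The two half-edges of the kink incident to $p$ have consecutive elevations (this is what the \texttt{shorten <=} marking encodes), so exactly one of the two Roger--Yang cases applies. In the first equation the \emph{lower}-elevation half-edge is the one going out counter-clockwise first, so the second RY relation applies with leading coefficients $A^{-1/2}$ and $A^{1/2}Z_p$ on the ``above'' and ``below'' resolutions respectively.

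Next, identify the two resolutions globally. The kink lies entirely on one side of $p$ (below, in the picture), with its two half-edges meeting at $p$. The ``above'' resolution replaces the local puncture-vertex by a small arc on the opposite side of $p$ from the kink; combined with the rest of the kink, this produces a simple closed loop that encircles $p$, which by the puncture-framing relation in Definition~\ref{def:pframing-quantum} evaluates to $Z_p(A+A^{-1})$ times the bare diagram at $p$. The ``below'' resolution replaces the vertex by a small arc on the same side as the kink, producing a trivial closed loop disjoint from $p$, which by \eqref{rel:trivial-loop} evaluates to $-A^2-A^{-2}$ times the bare diagram.

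Substituting these values into the Roger--Yang expansion gives
\[
v_p^{-1}\Bigl(A^{-1/2}\cdot Z_p(A+A^{-1}) \;+\; A^{1/2}Z_p\cdot(-A^2-A^{-2})\Bigr)
= v_p^{-1} Z_p\bigl(A^{1/2}+A^{-3/2}-A^{5/2}-A^{-3/2}\bigr)
= -v_p^{-1} Z_p\, A^{3/2}(A-A^{-1}),
\]
which is the first stated formula. The second formula follows by the mirror-symmetric argument: the opposite shortening invokes the first RY relation instead of the second, swapping $A^{1/2}\leftrightarrow A^{-1/2}$ throughout the computation and producing $+v_p^{-1} Z_p\, A^{-3/2}(A-A^{-1})$.

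There is no real obstacle here; the lemma is essentially a direct specialization of RY together with the framing and puncture-framing relations. The only point requiring some care is the bookkeeping: matching the elevation convention in the kink diagram to the correct RY case, and confirming that the ``above'' resolution (not the ``below'' one) produces the loop encircling $p$ once one takes into account the global shape of the kink. Once these local identifications are made the resulting algebraic identity is a short one-line calculation.
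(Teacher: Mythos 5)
Your proposal is correct and follows exactly the argument the paper intends (the paper simply says ``One can confirm the following relation from the above skein relations'' without supplying details). Applying the Roger--Yang relation at $p$, identifying the ``above'' resolution as a loop encircling $p$ (evaluated by the puncture-framing relation to $Z_p(A+A^{-1})$) and the ``below'' resolution as a trivial loop (evaluated to $-A^2-A^{-2}$), and simplifying gives precisely the stated coefficients; your matching of the kink's elevation convention to the appropriate RY case, and the resulting arithmetic, are both correct.
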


\begin{rem}
    The puncture condition is required to combine the Roger--Yang skein relation with the wall-passing relation for both the quantum and classical cases.
    For example, at $A^{\frac{1}{2}}=1$,
    \begin{align*}
        \ \tikz[baseline=-.6ex, scale=.1, yshift=-5cm]{
            \coordinate (A) at (-8,0);
            \coordinate (B) at (8,0);
            \coordinate (P) at (0,0);
            \coordinate (Q) at (90:10);
            \draw[webline, rounded corners] (P) -- (7,5) -- (Q);
            \draw[webline, rounded corners] (P) -- (4,5) -- (Q);
            \draw[wline] (P) -- (Q);
            \bdryline{(A)}{(B)}{2cm}
            \draw[fill=black] (P) circle [radius=20pt];
            \draw[fill=white] (Q) circle [radius=20pt];
            \node at (90:6) [left]{\scriptsize $\sfC_p$};
        }\ 
        =
        \ \tikz[baseline=-.6ex, scale=.1, yshift=-5cm]{
            \coordinate (A) at (-8,0);
            \coordinate (B) at (8,0);
            \coordinate (P) at (0,0);
            \coordinate (Q) at (90:10);
            \draw[webline, rounded corners] (P) -- (-7,5) -- (Q);
            \draw[webline, rounded corners] (P) -- (-4,5) -- (Q);
            \draw[wline] (P) -- (Q);
            \bdryline{(A)}{(B)}{2cm}
            \draw[fill=black] (P) circle [radius=20pt];
            \draw[fill=white] (Q) circle [radius=20pt];
            \node at (90:6) [right]{\scriptsize $\sfC_p$};
        }\ 
    \end{align*}
    holds by wall-passing relation.
    On the other hand, 
    \begin{align*}
        \ \tikz[baseline=-.6ex, scale=.1, yshift=-5cm]{
            \coordinate (A) at (-8,0);
            \coordinate (B) at (8,0);
            \coordinate (P) at (0,0);
            \coordinate (Q) at (90:10);
            \draw[webline, rounded corners] (P) -- (7,5) -- (Q);
            \draw[webline, rounded corners] (P) -- (4,5) -- (Q);
            \draw[wline] (P) -- (Q);
            \bdryline{(A)}{(B)}{2cm}
            \draw[fill=black] (P) circle [radius=20pt];
            \draw[fill=white] (Q) circle [radius=20pt];
            \node at (90:6) [left]{\scriptsize $\sfC_p$};
        }\ 
        =v_p^{-1}\prod_{\gamma\in \sfC_p}z_{\ell(\gamma),+}
        \ \tikz[baseline=-.6ex, scale=.1, yshift=-5cm]{
            \coordinate (A) at (-8,0);
            \coordinate (B) at (8,0);
            \coordinate (P) at (0,0);
            \coordinate (Q) at (90:10);
            \draw[webline, rounded corners] (P) -- (6,4) to[out=north, in=east] (0,14) to[out=west, in=north] (-6,4) -- (P);
            \draw[wline] (P) -- (Q);
            \bdryline{(A)}{(B)}{2cm}
            \draw[fill=black] (P) circle [radius=20pt];
            \draw[fill=white] (Q) circle [radius=20pt];
            \node at (90:6) [left=-1pt]{\scriptsize $\sfC_p$};
        }\ \text{ and }
        \ \tikz[baseline=-.6ex, scale=.1, yshift=-5cm]{
            \coordinate (A) at (-8,0);
            \coordinate (B) at (8,0);
            \coordinate (P) at (0,0);
            \coordinate (Q) at (90:10);
            \draw[webline, rounded corners] (P) -- (-7,5) -- (Q);
            \draw[webline, rounded corners] (P) -- (-4,5) -- (Q);
            \draw[wline] (P) -- (Q);
            \bdryline{(A)}{(B)}{2cm}
            \draw[fill=black] (P) circle [radius=20pt];
            \draw[fill=white] (Q) circle [radius=20pt];
            \node at (90:6) [right]{\scriptsize $\sfC_p$};
        }\ 
        =
        v_p^{-1}\prod_{\gamma\in \sfC_p}z_{\ell(\gamma),-}
        \ \tikz[baseline=-.6ex, scale=.1, yshift=-5cm]{
            \coordinate (A) at (-8,0);
            \coordinate (B) at (8,0);
            \coordinate (P) at (0,0);
            \coordinate (Q) at (90:10);
            \draw[webline, rounded corners] (P) -- (6,4) to[out=north, in=east] (0,14) to[out=west, in=north] (-6,4) -- (P);
            \draw[wline] (P) -- (Q);
            \bdryline{(A)}{(B)}{2cm}
            \draw[fill=black] (P) circle [radius=20pt];
            \draw[fill=white] (Q) circle [radius=20pt];
            \node at (90:6) [left=-1pt]{\scriptsize $\sfC_p$};
        }\ 
    \end{align*}
    hold by Roger-Yang skein relation and wall-passing relation. Hence $\prod_{\gamma\in \sfC_p}z_{\ell(\gamma),+}=\prod_{\gamma\in \sfC_p}z_{\ell(\gamma),-}$ is required.
\end{rem}

\subsection{Branched wall systems}
Let us define a \emph{branched wall system} $\bW=(\sfC,J,\ell)$ on $\Sigma$ by relaxing the condition of $\sfC_{\mathrm{arc}}$ as follows:
\begin{itemize}
    \item $\sfC_{\mathrm{arc}}$ is a set of mutually distinct simple arcs whose endpoints lie in $\Sigma\setminus\partial\Sigma$ or marked points in $M$, and possibly 
    %share the same point, 
    have transverse double points in their interiors. We allow the curves to share the endpoints, while an endpoint on the interior of another curve in $\sfC$ is prohibited.
\end{itemize}
We call a pair $(\gamma,\ell(\gamma))$ a wall, an endpoint of $\sfC_{\mathrm{arc}}$ in $\Sigma\setminus\partial\Sigma$ a \emph{pole} of $\bW$.
We denote the set of poles by $V(\bW)$.
We suppose that a tangle in $(\Sigma,\bW)$ avoids touching the poles of $\bW$.
The coefficient ring is defined to be
$
    \bZ_{A,\bW}:=\bZ[A^{\pm 1/2},z_{j,\pm}^{\pm 1/2} \mid j\in J].
$
We use the notation $a_{j}^{1/2}:=z_{j,+}^{1/2}z_{j,-}^{1/2}$ for any $j\in J$, and $\sfC_p$ denotes the multiset of walls in $\sfC_{\mathrm{arc}}$ whose endpoints share a pole $p\in V(\bW)$ whose multiplicity is given by the number of half-edges incident to $p$.

\begin{dfn}[Pole relation]\label{def:pole}
    For any pole $p\in V(\bW)$,
    \begin{align*}
        \ \tikz[baseline=-.6ex, scale=.1]{
            \coordinate (A) at (-8,0);
            \coordinate (B) at (8,0);
            \coordinate (P) at (0,0);
            \draw[webline] (A) -- ($(P)+(-3,0)$) to[out=north, in=west] ($(P)+(0,3)$) to[out=east, in=north] ($(P)+(3,0)$) -- (B);
            \draw[wline] (P) -- (90:8);
            \draw[dashed] (P) circle [radius=8cm];
            \draw[fill=mygreen] (P) circle [radius=20pt];
            \node at (P) [below]{\scriptsize $p$};
            \node at (90:8) [below right]{\scriptsize $\sfC_p$};
        }\ 
        =\prod_{\gamma\in \sfC_p}a_{\ell(\gamma)}^{\frac{1}{2}}
        \ \tikz[baseline=-.6ex, scale=.1]{
            \coordinate (A) at (-8,0);
            \coordinate (B) at (8,0);
            \coordinate (P) at (0,0);
            \draw[webline] (A) -- ($(P)+(-3,0)$) to[out=south, in=west] ($(P)+(0,-3)$) to[out=east, in=south] ($(P)+(3,0)$) -- (B);
            \draw[wline] (P) -- (90:8);
            \draw[dashed] (P) circle [radius=8cm];
            \draw[fill=mygreen] (P) circle [radius=20pt];
            \node at (P) [above left]{\scriptsize $p$};
            \node at (90:8) [below right]{\scriptsize $\sfC_p$};
        }\ 
    \end{align*}
\end{dfn}

%For any partition $\sfC_p=\sfC_{p,1}\sqcup \sfC_{p,2}$, 
One can confirm the following relation by using \cref{def:pole} and \eqref{rel:wall-pass-int}.

\begin{lem} For any $p\in V(\bW)$ and any partition $\sfC_p=\sfC_{p,1}\sqcup \sfC_{p,2}$,
    \begin{align*}
        \ \tikz[baseline=-.6ex, scale=.1]{
            \coordinate (A) at (-8,0);
            \coordinate (B) at (8,0);
            \coordinate (P) at (0,0);
            \draw[webline] (A) -- ($(P)+(-3,0)$) to[out=north, in=west] ($(P)+(0,3)$) to[out=east, in=north] ($(P)+(3,0)$) -- (B);
            \draw[wline] (P) -- (90:8);
            \draw[wline] (P) -- (-90:8);
            \draw[dashed] (P) circle [radius=8cm];
            \draw[fill=mygreen] (P) circle [radius=20pt];
            \node at (P) [below left]{\scriptsize $p$};
            \node at (90:8) [above]{\scriptsize $\sfC_{p,1}$};
            \node at (-90:8) [below]{\scriptsize $\sfC_{p,2}$};
        }\ 
        =\frac{\prod_{\gamma\in \sfC_{p,1}}a_{\ell(\gamma)}^{\frac{1}{2}}}{\prod_{\eta\in \sfC_{p,2}}a_{\ell(\eta)}^{\frac{1}{2}}}
        \ \tikz[baseline=-.6ex, scale=.1]{
            \coordinate (A) at (-8,0);
            \coordinate (B) at (8,0);
            \coordinate (P) at (0,0);
            \draw[webline] (A) -- ($(P)+(-3,0)$) to[out=south, in=west] ($(P)+(0,-3)$) to[out=east, in=south] ($(P)+(3,0)$) -- (B);
            \draw[wline] (P) -- (90:8);
            \draw[wline] (P) -- (-90:8);
            \draw[dashed] (P) circle [radius=8cm];
            \draw[fill=mygreen] (P) circle [radius=20pt];
            \node at (P) [above left]{\scriptsize $p$};
            \node at (90:8) [above]{\scriptsize $\sfC_{p,1}$};
            \node at (-90:8) [below]{\scriptsize $\sfC_{p,2}$};
        }\ 
    \end{align*}
\end{lem}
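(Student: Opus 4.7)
The plan is to derive this mixed-direction pole relation from \cref{def:pole} (which covers only the ``all up'' configuration) by induction on $|\sfC_{p,2}|$, with the wall-passing relation \eqref{rel:wall-pass-int} used to handle each down-going half-edge. The heuristic is that \cref{def:pole} assigns a factor $a^{1/2}_{\ell(\gamma)}$ to each wall $\gamma$ incident to $p$ when the arc slides from a hump above $p$ to a hump below $p$; the exponent is $+1/2$ when the slide eliminates a crossing (as happens for up-going walls) and $-1/2$ when it creates a crossing (as happens for down-going walls). Hence walls in $\sfC_{p,1}$ contribute the numerator $\prod_{\gamma \in \sfC_{p,1}} a^{1/2}_{\ell(\gamma)}$ and walls in $\sfC_{p,2}$ contribute the denominator $\prod_{\eta \in \sfC_{p,2}} a^{1/2}_{\ell(\eta)}$.

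For the base case $\sfC_{p,2} = \emptyset$, the statement reduces directly to \cref{def:pole}. For the inductive step, I would pick a single $\eta \in \sfC_{p,2}$ and proceed as follows. Starting from the LHS, apply a local $\bW$-isotopy in a small disk around $p$ that deforms the arc to create a bigon against $\eta$ adjacent to $p$; by \eqref{rel:wall-pass-int} this deformation multiplies the skein element by $a_{\ell(\eta)}$, so the original LHS equals $a_{\ell(\eta)}^{-1}$ times the deformed diagram. The deformed diagram can then be reinterpreted as the LHS associated to the modified wall configuration in which $\eta$ is moved from $\sfC_{p,2}$ into $\sfC_{p,1}$ (i.e., effectively treated as up-going near $p$), because the bigon ``flips'' the local orientation with which the arc sees $\eta$. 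Applying the inductive hypothesis to this modified configuration, then undoing the bigon on the other side via \eqref{rel:wall-pass-int} after the arc has been flipped to a hump below $p$, yields the RHS of the lemma. A careful bookkeeping of the three coefficients (one from creating the bigon, one from the inductive pole relation, one from undoing the bigon on the flipped side) combines to give precisely the factor $\prod_{\gamma \in \sfC_{p,1}} a^{1/2}_{\ell(\gamma)} / \prod_{\eta \in \sfC_{p,2}} a^{1/2}_{\ell(\eta)}$.

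The main obstacle will be the rigorous verification that the bigon-creation in step (a) really converts the local picture at $p$ into one in which $\eta$ behaves, with respect to the pole relation, exactly as an element of $\sfC_{p,1}$ would. This requires a careful analysis of the arc's isotopy class relative to the walls in a neighborhood of $p$, and of how the bigon against $\eta$ is carried along by the pole flip; the taut condition on $\bW$ is used to ensure that no other walls obstruct the isotopy, and the $\bW$-minimal-position results of \cref{sec:minimal} underwrite the unambiguity of the diagrammatic manipulations involved.
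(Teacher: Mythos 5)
The overall strategy---treat \cref{def:pole} as the base case and use \eqref{rel:wall-pass-int} for the reduction---is indeed the pair of ingredients the paper cites, and your base case ($\sfC_{p,2}=\emptyset$) is correct. However, your inductive step has a genuine gap at precisely the place you flag as ``the main obstacle,'' and the gap is not merely technical.

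The problem is the ``reinterpretation.'' After you deform the arc to create a bigon against $\eta$, the wall system has not changed: $\eta$ still points into the lower half of the disk, and the deformed arc now crosses $\eta$ \emph{twice} rather than once. This is \emph{not} the local picture one would obtain from the configuration with $\eta\in\sfC_{p,1}$ (where $\eta$ points up and the arc humping above crosses it exactly once). The wall system is fixed data of the skein algebra; creating a bigon is a move on the arc alone and cannot ``move $\eta$ to $\sfC_{p,1}$.'' So the inductive hypothesis you invoke refers to a genuinely different walled surface, and there is no map identifying the two skein elements. This is also visible in the coefficients: creating the bigon contributes $a_{\ell(\eta)}^{-1}$, the putative inductive factor contributes $a_{\ell(\eta)}^{+1/2}\cdot\prod_{\sfC_{p,1}}a^{1/2}/\prod_{\sfC_{p,2}\setminus\eta}a^{1/2}$, and undoing a bigon contributes $a_{\ell(\eta)}^{\pm 1}$; no choice of sign in the last step yields the target $a_{\ell(\eta)}^{-1/2}\cdot\prod_{\sfC_{p,1}}a^{1/2}/\prod_{\sfC_{p,2}\setminus\eta}a^{1/2}$. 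The mismatch is exactly a factor of $a_{\ell(\eta)}$, which is the cost of the illegitimate reinterpretation.

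What \emph{does} work, and is closer to what the one-line proof in the paper intends, is to use \cref{def:pole} as a local relation that is invariant under rotation of the local chart about $p$: applied to a $180^\circ$-rotated chart it gives the relation for the ``all walls down'' configuration, and the general mixed case then follows by comparing both sides of the lemma to a common intermediate diagram via \eqref{rel:wall-pass-int}, \emph{without} ever pretending to change the direction of a wall. If you want to keep the inductive framework, the induction should track the arc's crossing numbers (through bigon moves on the arc only) rather than relabel the partition $\sfC_{p,1}\sqcup\sfC_{p,2}$, and the pole relation should be applied once---to a diagram where all walls at $p$ genuinely lie on one side of the arc---rather than to a fictitious modified configuration.
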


\subsection{Oriented wall systems}
Let us define an \emph{oriented wall system} $\overrightarrow{\bW}=(\overrightarrow{\sfC},J,\vec{\ell})$ on an unpunctured marked surface $\Sigma$ (namely, $\bM_\circ=\emptyset$ as in the body text) as a wall system with oriented underlying curves $\overrightarrow{\sfC}$.
One can similarly consider the set of underlying arcs $\overrightarrow{\sfC}_{\mathrm{arc}}$, the set of underlying loops $\overrightarrow{\sfC}_{\mathrm{loop}}$, the labeling $\vec{\ell}\colon\overrightarrow{\sfC}\to J$, and tangles in $(\Sigma, \overrightarrow{\bW})$.
Let us introduce skein relations for a tangle in $(\Sigma,\overrightarrow{\bW})$ to define the skein algebra $\SK{\Sigma,\overrightarrow{\bW}}$.
The coefficient ring $\cR_{\overrightarrow{\bW}}$ of the skein algebra $\SK{\Sigma,\overrightarrow{\bW}}$ is a quotient of the ring
$
    \bZ[A^{\pm 1/2},(z_{j,\pm}^{\uparrow})^{\pm 1},(z_{j,\pm}^{\downarrow})^{\pm 1}\mid j\in J]
$
%\begin{align*}
%    \bZ[A^{\pm 1/2},(z_{\xi,\pm}^{\uparrow})^{\pm 1},(z_{\xi,\pm}^{\downarrow})^{\pm 1},{a'_{\zeta}}^{\pm 1}\mid \xi\in\vec{\ell}(\overrightarrow{\sfC}_{\mathrm{arc}}), \zeta\in\vec{\ell}(\overrightarrow{\sfC}_{\mathrm{loop}})]
%\end{align*}
modulo an ideal generated by $\{\,z_{j,+}^{\uparrow}z_{j,-}^{\uparrow}-z_{j,+}^{\downarrow}z_{j,-}^{\downarrow}\mid j\in J\,\}$. 
The skein relations are defined as follows.
\begin{dfn}
    \begin{align*}
    \ \tikz[baseline=-.6ex, scale=.1, yshift=-5cm]{
        \draw[webline, rounded corners] (-5,0) -- (5,5) -- (-5,10);
        \draw[wline,->-={.5}{black}] (0,-1) -- (0,11);
        \node at (0,11) [below right]{\scriptsize $j$};
    }\ 
    &=a'_j
    \ \tikz[baseline=-.6ex, scale=.1, yshift=-5cm]{
            \draw[webline, rounded corners] (-5,0) -- (-2,5) -- (-5,10);
            \draw[wline,->-={.5}{black}] (0,-1) -- (0,11);
            \node at (0,11) [below right]{\scriptsize $j$};
    }\ ,&
    \ \tikz[baseline=-.6ex, scale=.1, yshift=-5cm]{
        \draw[webline, rounded corners] (-5,0) -- (5,5) -- (-5,10);
        \draw[wline,-<-={.5}{black}] (0,-1) -- (0,11);
        \node at (0,11) [below right]{\scriptsize $j$};
    }\ 
    &=a'_j
    \ \tikz[baseline=-.6ex, scale=.1, yshift=-5cm]{
            \draw[webline, rounded corners] (-5,0) -- (-2,5) -- (-5,10);
            \draw[wline,-<-={.5}{black}] (0,-1) -- (0,11);
            \node at (0,11) [below right]{\scriptsize $j$};
    }\  
    \\
    \ \tikz[baseline=-.6ex, scale=.1, yshift=-2cm]{
        \coordinate (A) at (-10,0);
        \coordinate (B) at (10,0);
        \coordinate (P) at (0,0);
        \draw[webline, rounded corners] (P) -- (60:6) -- (120:10);
        \draw[wline,->-={.5}{black}] (P) -- (90:10);
        \draw[dashed] (30:10) arc (30:150:10cm);
        \draw[dashed] (P) -- (30:10);
        \draw[dashed] (P) -- (150:10);
        %\bdryline{(A)}{(B)}{2cm}
        \draw[fill=black] (P) circle [radius=20pt];
        \node at (90:10) [below right]{\scriptsize $j$};
    }\ 
    &=z_{j,+}^{\uparrow}
    \ \tikz[baseline=-.6ex, scale=.1, yshift=-2cm]{
        \coordinate (A) at (-10,0);
        \coordinate (B) at (10,0);
        \coordinate (P) at (0,0);
        \draw[webline, rounded corners] (P) -- (120:10);
        \draw[wline,->-={.5}{black}] (P) -- (90:10);
        \draw[dashed] (30:10) arc (30:150:10cm);
        \draw[dashed] (P) -- (30:10);
        \draw[dashed] (P) -- (150:10);
        %\bdryline{(A)}{(B)}{2cm}
        \draw[fill=black] (P) circle [radius=20pt];
        \node at (90:10) [below right]{\scriptsize $j$};
    }\ ,&
    \ \tikz[baseline=-.6ex, scale=.1, yshift=-2cm]{
        \coordinate (A) at (-10,0);
        \coordinate (B) at (10,0);
        \coordinate (P) at (0,0);
        \draw[webline, rounded corners] (P) -- (120:6) -- (60:10);
        \draw[wline,->-={.5}{black}] (P) -- (90:10);
        \draw[dashed] (30:10) arc (30:150:10cm);
        \draw[dashed] (P) -- (30:10);
        \draw[dashed] (P) -- (150:10);
        %\bdryline{(A)}{(B)}{2cm}
        \draw[fill=black] (P) circle [radius=20pt];
        \node at (90:10) [below left]{\scriptsize $j$};
    }\ 
    &=z_{j,-}^{\uparrow}
    \ \tikz[baseline=-.6ex, scale=.1, yshift=-2cm]{
        \coordinate (A) at (-10,0);
        \coordinate (B) at (10,0);
        \coordinate (P) at (0,0);
        \draw[webline, rounded corners] (P) -- (60:10);
        \draw[wline,->-={.5}{black}] (P) -- (90:10);
        \draw[dashed] (30:10) arc (30:150:10cm);
        \draw[dashed] (P) -- (30:10);
        \draw[dashed] (P) -- (150:10);
        %\bdryline{(A)}{(B)}{2cm}
        \draw[fill=black] (P) circle [radius=20pt];
        \node at (90:10) [below left]{\scriptsize $j$};
    }\ ,\\
        \ \tikz[baseline=-.6ex, scale=.1, yshift=-2cm]{
        \coordinate (A) at (-10,0);
        \coordinate (B) at (10,0);
        \coordinate (P) at (0,0);
        \draw[webline, rounded corners] (P) -- (60:6) -- (120:10);
        \draw[wline,,-<-={.5}{black}] (P) -- (90:10);
        \draw[dashed] (30:10) arc (30:150:10cm);
        \draw[dashed] (P) -- (30:10);
        \draw[dashed] (P) -- (150:10);
        %\bdryline{(A)}{(B)}{2cm}
        \draw[fill=black] (P) circle [radius=20pt];
        \node at (90:10) [below right]{\scriptsize $j$};
    }\ 
    &=z_{j,+}^{\downarrow}
    \ \tikz[baseline=-.6ex, scale=.1, yshift=-2cm]{
        \coordinate (A) at (-10,0);
        \coordinate (B) at (10,0);
        \coordinate (P) at (0,0);
        \draw[webline, rounded corners] (P) -- (120:10);
        \draw[wline,-<-={.5}{black}] (P) -- (90:10);
        \draw[dashed] (30:10) arc (30:150:10cm);
        \draw[dashed] (P) -- (30:10);
        \draw[dashed] (P) -- (150:10);
        %\bdryline{(A)}{(B)}{2cm}
        \draw[fill=black] (P) circle [radius=20pt];
        \node at (90:10) [below right]{\scriptsize $j$};
    }\ ,&
    \ \tikz[baseline=-.6ex, scale=.1, yshift=-2cm]{
        \coordinate (A) at (-10,0);
        \coordinate (B) at (10,0);
        \coordinate (P) at (0,0);
        \draw[webline, rounded corners] (P) -- (120:6) -- (60:10);
        \draw[wline,-<-={.5}{black}] (P) -- (90:10);
        \draw[dashed] (30:10) arc (30:150:10cm);
        \draw[dashed] (P) -- (30:10);
        \draw[dashed] (P) -- (150:10);
        %\bdryline{(A)}{(B)}{2cm}
        \draw[fill=black] (P) circle [radius=20pt];
        \node at (90:10) [below left]{\scriptsize $j$};
    }\ 
    &=z_{j,-}^{\downarrow}
    \ \tikz[baseline=-.6ex, scale=.1, yshift=-2cm]{
        \coordinate (A) at (-10,0);
        \coordinate (B) at (10,0);
        \coordinate (P) at (0,0);
        \draw[webline, rounded corners] (P) -- (60:10);
        \draw[wline,-<-={.5}{black}] (P) -- (90:10);
        \draw[dashed] (30:10) arc (30:150:10cm);
        \draw[dashed] (P) -- (30:10);
        \draw[dashed] (P) -- (150:10);
        %\bdryline{(A)}{(B)}{2cm}
        \draw[fill=black] (P) circle [radius=20pt];
        \node at (90:10) [below left]{\scriptsize $j$};
    }\ ,
\end{align*}
and \labelcref{rel:wall-R3} with any oriented walls, where $a'_{j}:=z_{j,+}^{\uparrow}z_{j,-}^{\uparrow}=z_{j,+}^{\downarrow}z_{j,-}^{\downarrow}$ for $j\in J$.
\end{dfn}

We remark that
\[
        \ \tikz[baseline=-.6ex, scale=.1, yshift=-6cm]{
            \draw[webline] (0,0) to[bend left=80] (90:12);
            \draw[wline,->-={.5}{black}] (0,0) -- (90:12);
            \draw[fill=black] (0,0) circle [radius=20pt];
            \draw[fill=black] (90:12) circle [radius=20pt];
        }\ 
        \neq
        \ \tikz[baseline=-.6ex, scale=.1, yshift=-6cm]{
            \draw[webline] (0,0) to[bend right=80] (90:12);
            \draw[wline,->-={.5}{black}] (0,0) -- (90:12);
            \draw[fill=black] (0,0) circle [radius=20pt];
            \draw[fill=black] (90:12) circle [radius=20pt];
        }\ ,
        \ \tikz[baseline=-.6ex, scale=.1]{
            \coordinate (A) at (-8,0);
            \coordinate (B) at (8,0);
            \coordinate (P) at (0,0);
            \draw[webline] (P) circle [radius=3cm];
            \draw[wline,->-={.5}{black}] (P) circle [radius=5cm];
            \draw[dashed] (P) circle [radius=2cm];
            \draw[dashed] (P) circle [radius=8cm];
        }\ 
        =
        \ \tikz[baseline=-.6ex, scale=.1]{
            \coordinate (A) at (-8,0);
            \coordinate (B) at (8,0);
            \coordinate (P) at (0,0);
            \draw[webline] (P) circle [radius=7cm];
            \draw[wline,->-={.5}{black}] (P) circle [radius=5cm];
            \draw[dashed] (P) circle [radius=2cm];
            \draw[dashed] (P) circle [radius=8cm];
        }\ 
\]
in the skein algebra $\SK{\Sigma,\overrightarrow{\bW}}$.
The skein algebra $\SK{\Sigma,\bW}$ for an (unoriented) wall system $\bW=(\sfC,J,\ell)$ is realized in $\SK{\Sigma,\overrightarrow{\bW}}$ whose oriented wall system $\overrightarrow{\bW}=(\overrightarrow{\sfC},J,\vec{\ell})$ is obtained as follows:
\[
        \ \tikz[baseline=-.6ex, scale=.1, yshift=-6cm]{
            \draw[wline] (0,0) -- (90:12);
            \draw[fill=black] (0,0) circle [radius=20pt];
            \draw[fill=black] (90:12) circle [radius=20pt];
            \node at (0,6) [left]{\scriptsize $j$};
%            \node at (0,0) [below]{\scriptsize $\bW$};
        }\ 
        \rightsquigarrow
        \ \tikz[baseline=-.6ex, scale=.1, yshift=-6cm]{
            \draw[wline,->-={.5}{black}] (0,0) to[bend left] (90:12);
            \draw[wline,-<-={.5}{black}] (0,0) to[bend right] (90:12);
            \draw[fill=black] (0,0) circle [radius=20pt];
            \draw[fill=black] (90:12) circle [radius=20pt];
            \node at (-4,6) [above]{\scriptsize $j$};
            \node at (4,6) [above]{\scriptsize $j$};
%           \node at (0,0) [below]{\scriptsize $\overrightarrow{\bW}$};
        }\ ,
        \ \tikz[baseline=-.6ex, scale=.1]{
            \coordinate (A) at (-8,0);
            \coordinate (B) at (8,0);
            \coordinate (P) at (0,0);
            \draw[wline] (P) circle [radius=5cm];
            \draw[dashed] (P) circle [radius=2cm];
            \draw[dashed] (P) circle [radius=8cm];
            \node at (0:7) {\scriptsize $j'$};
        }\ 
        \rightsquigarrow
        \ \tikz[baseline=-.6ex, scale=.1]{
            \coordinate (A) at (-8,0);
            \coordinate (B) at (8,0);
            \coordinate (P) at (0,0);
            \draw[wline,->-={.5}{black}] (P) circle [radius=4cm];
            \draw[wline,-<-={.5}{black}] (P) circle [radius=6cm];
            \draw[dashed] (P) circle [radius=2cm];
            \draw[dashed] (P) circle [radius=8cm];
            \node at (0:8) {\scriptsize $j'$};
            \node at (0:2) {\scriptsize $j'$};
        }\ 
\]
where $j,j'\in J$.
Each wall of $\bW$ corresponds to a pair of anti-parallel curves with the same label. Then we get a $\bZ_{A,\bW}$-algebra embedding
\begin{align*}
    \SK{\Sigma,\bW} \to \SK{\Sigma, \overrightarrow{\bW}},
\end{align*}
where the $\bZ_{A,\bW}$-algebra structure of the latter is given by the ring homomorphism $\bZ_{A,\bW} \to \cR_{\overrightarrow{\bW}}$ such that 
$z_{j,+}\mapsto z_{j,+}^{\uparrow}z_{j,+}^{\downarrow}$, and $z_{j,-}\mapsto z_{j,-}^{\uparrow}z_{j,-}^{\downarrow}$ for any $j\in J$.

%where the $\bZ_{A,\bW}$-algebra structure of the latter is given by the ring homomorphism $\bZ_{A,\bW} \to \bZ_{A,\overrightarrow{\bW}}$ such that 
%This realization sends variables in $\bZ_{A,\bW}$ as 
%$a_{\zeta}\mapsto {a'_{\zeta}}^2$ for any $\zeta\in\ell(\sfC_{\mathrm{loop}})$, $z_{\xi,+}\mapsto z_{\xi,+}^{\uparrow}z_{\xi,+}^{\downarrow}$, and $z_{\xi,-}\mapsto z_{\xi,-}^{\uparrow}z_{\xi,-}^{\downarrow}$ for any  $\xi\in\ell(\sfC_{\mathrm{arc}})$. 
%The right inverse of $\Phi$ is a specialization $\bZ_{A,\overrightarrow{\bW}}\to\bZ_{A,\bW}$ of the coefficients given by $z_{\xi,\pm}^{\downarrow}\mapsto 1$, $z_{\xi,\pm}^{\uparrow}\mapsto z_{\xi,\pm}$ for any $\xi\in\vec{\ell}(\overrightarrow{\sfC}_{\mathrm{arc}})$, and $a_{\zeta}'\mapsto z_{\zeta,+}^{1/2}z_{\zeta,-}^{1/2}$ for any $\zeta\in\vec{\ell}(\overrightarrow{\sfC}_{\mathrm{loop}})$.

%\newpage

\end{document}